\newtheorem{Theorem}{Theorem}[section]
\newtheorem{Lemma}[Theorem]{Lemma}
\newtheorem{Prop}[Theorem]{Proposition}
\newtheorem{Rem}[Theorem]{Remark}
\newtheorem{Exa}[Theorem]{Example}
\def\cL{\mathcal{L}}
\def\cS{\mathcal{S}}
\def\cZ{\mathcal{Z}}
\def\bbD{\mathbb{D}} 
\def\Erw{\mathbb{E}}
\def\N{\mathbb{N}}
\def\Prob{\mathbb{P}} 
\def\R{\mathbb{R}}      
\def\T{\mathbb{T}}
\def\U{\mathbb{U}}
\def\V{\mathbb{V}}
\def\W{\mathbb{W}}
\def\Z{\mathbb{Z}}
\def\bfE{\textbf{E}}
\def\bM{\textit{\bfseries M}}
\def\bfP{\textbf{P}}
\def\bfE{\text{\rm\bfseries E}}
\def\bfP{\text{\rm\bfseries P}}
\def\eps{\varepsilon}
\def\vph{\varphi}
\def\vth{\vartheta}
\def\1{\vec{1}}
\def\3{{\ss}}
\def\eqdist{\stackrel{d}{=}}
\def\RA{\Rightarrow}
\def\LRA{\Leftrightarrow}
\def\wh{\widehat}
\def\ovl{\overline}
\def\sg{\sigma^{>}}
\def\osg{{\ovl{\sigma}}^{>}}
\def\sgn{\sigma_{n}^{>}}
\def\sln{\sigma_{n}^{<}}
\def\sle{\sigma^{\leqslant}}
\def\slen{\sigma_{n}^{\leqslant}}
\def\tg{\tau^{>}}
\def\Mgn{M_{n}^{>}}
\def\Mln{M_{n}^{<}}
\def\Sln{S_{n}^{<}}
\def\Ug{\U^{>}}
\begin{document}

\title*{Fluctuation theory for Markov random walks}
\titlerunning{Fluctuation theory for Markov random walks}
\author{Gerold Alsmeyer and Fabian Buckmann}
\institute{Inst.~Math.~Statistics, Department
of Mathematics and Computer Science, University of M\"unster,
Orl\'eans-Ring 10, D-48149 M\"unster, Germany.\at
\email{gerolda@math.uni-muenster.de, f\_buck01@uni-muenster.de}\at
Research partially supported by the Deutsche Forschungsgemeinschaft (SFB 878).}

\maketitle

\abstract{Two fundamental theorems by Spitzer-Erickson and Kesten-Maller on the fluctuation type (positive divergence, negative divergence or oscillation) of a real-valued random walk $(S_{n})_{n\ge 0}$ with iid increments $X_{1},X_{2},\ldots$ and the existence of moments of various related quantities like the first passage into $(x,\infty)$ and the last exit time from $(-\infty,x]$ for arbitrary $x\ge 0$ are studied in the Markov-modulated situation when the $X_{n}$ are governed by a positive recurrent Markov chain $M=(M_{n})_{n\ge 0}$ on a countable state space $\cS$, thus for a Markov random walk $(M_{n},S_{n})_{n\ge 0}$. Our approach is based on the natural strategy to draw on the results in the iid case for the embedded ordinary random walks $(S_{\tau_{n}(i)})_{n\ge 0}$, where $\tau_{1}(i),\tau_{2}(i),\ldots$ denote the successive return times of $M$ to state $i$, and an analysis of the excursions of the walk between these epochs. However, due to these excursions,  generalizations of the afore-mentioned theorems are surprisingly more complicated and require the introduction of various excursion measures so as to characterize the existence of moments of different quantities.}

\bigskip

{\noindent \textbf{AMS 2000 subject classifications:}
60K15 (60J10, 60K05, 60J15, 60G40)}

{\noindent \textbf{Keywords:} Markov random walk, fluctuation theory, discrete Markov chain, fluctuation-type trichotomy, Kesten trichotomy, null-homology, positive divergence, ladder epoch, Spitzer-Erickson theorem, Kesten-Maller theorem, first exit time, last exit time, ladder epoch, renewal theory}

\section{Introduction}

Classical fluctuation theory deals with the \emph{fine structure} (to steal an expression used by Chung in his textbook \cite[Section 8.4]{Chung:01}) of ordinary random walks on $\R$, i.e., partial sums $S_{n}=\sum_{k=1}^{n}X_{k}$ of iid real-valued random variables $X_{1},X_{2},\ldots$ It comprises, amongst others,
\begin{description}[(4)]\itemsep2pt
\item[(1)] the basic trichotomy regarding the almost sure behavior of $S_{n}$ as $n\to\infty$,
\item[(2)] results about the existence of moments for related quantities like $\min_{n\ge 0}S_{n}$ or the number of nonpositive sums $N(0)=\sum_{n\ge 0}\1_{\{S_{n}\le 0\}}$,
\end{description}
A short review of the results relevant for this article will be given in the Section \ref{sec:ORW}.

\vspace{.2cm}
The present work aims at providing results of type (1) and (2) for the more general situation when the increments $X_{1},X_{2},\ldots$ are \emph{modulated} or \emph{driven} by a positive recurrent Markov chain $\bM=(M_{n})_{n\ge 0}$ with countable state space $\cS$. More precisely, the $X_{n}$ are conditionally independent given $\bM$, and
$$ \Prob((X_{1},\ldots,X_{n})\in\cdot\,|M_{0}=i_{0},\ldots,M_{n}=i_{n})\ =\ K_{i_{0}i_{1}}\otimes\ldots\otimes K_{i_{n-1}i_{n}} $$
for all $n\ge 1$, $s_{0},\ldots,s_{n}\in\cS$ and some stochastic kernel $K$ from $\cS^{2}$ to $\R$. Then $(M_{n},S_{n})_{n\ge 0}$, and sometimes also its additive part $(S_{n})_{n\ge 0}$, is called a \emph{Markov random walk (MRW)} or \emph{Markov additive process} and $\bM$ its \emph{driving chain}. Let $\vec{P}=(p_{ij})_{i,j\in\cS}$ denote the transition matrix of $\bM$ and $\pi=(\pi_{i})_{i\in\cS}$ its unique stationary distribution. For any $i\in\cS$, put further $\Prob_{i}:=\Prob(\cdot|M_{0}=i)$ and let $(\tau_{n}(i))_{n\ge 1}$ denote the renewal sequence of successive return epochs to $i$.

\vspace{.1cm}
The well-known fact that, for each $i\in\cS$, $(S_{\tau_{n}(i)})_{n\ge 0}$ with $\tau_{0}(i):=0$ constitutes an ordinary zero-delayed random walk under $\Prob_{i}$ suggests that results of the above kind for $(S_{n})_{n\ge 0}$ should be obtainable by drawing on the known results for these embedded walks. On the other hand, it should also be clear that the excursions between the successive return epochs require additional analysis and may in fact be intriguing and result in surprising effects. For instance, it is possible that $S_{\tau_{n}(i)}\to\infty$ a.s. \emph{for all} $i\in\cS$ while $(S_{n})_{n\ge 0}$ is oscillating (see Example \ref{exa:infinite petal flower chain}). Our main results will actually show that the attempt to simply ``lift'' fluctuation-theoretic results for ordinary random walks to the class of MRW is not at all straightforward and often even fails without proper adjustments. In other words, despite the fact that $(S_{n})_{n\ge 0}$ can be viewed as the (generally infinite) union of the ordinary random walks $(S_{\tau_{n}(i)})_{n\ge 0}$, $i\in\cS$, the way those are intertwined may lead to nontrivial complications that must be taken care of in the analysis.

\vspace{.1cm}
There is an extensive literature on MRW with \emph{discrete} driving chain and finite stationary drift $\Erw_{\pi}X_{1}$, mostly within the framework of Markov renewal theory and focussing on the Markov renewal theorem and results derived therefrom. \c Cinlar \cite{Cinlar:69,Cinlar:75} provides good accounts of the early developments and references, while Asmussen's monography \cite[Ch. XI]{Asmussen:03} and \cite{Alsmeyer:14} may be consulted for more recent treatments of some aspects of the theory including the discrete Markov renewal theorem, the dual process, and Wiener-Hopf factorization, for the latter see also \cite{Asmussen:89,PrabhuTangZhu:91,FuhLai:98}. The ladder variables and the associated ladder chain of a MRW have been studied in \cite{Alsmeyer:00,Alsmeyer:16a}, see also Section \ref{sec:preliminaries} for further information, an arcsine law for the number of positive sums is derived in \cite{AlsBuck:17b}, and the topological recurrence of $(S_{n})_{n\ge 0}$ in the case when $\Erw_{\pi}X_{1}=0$ is shown in \cite{Alsmeyer:01}. For conditional Markov renewal theorems in the case when $\cS$ is countable, we mention an article by Lalley \cite{Lalley:84}. Finally, there are two papers, by Newbould \cite{Newbould:73} and Prabhu et al. \cite{PrabhuTangZhu:91}, of closer relation to the present work by providing the basic trichotomy for MRW, the first one in the case when the driving chain has finite state space, the second one without this restriction and including a discussion of degeneracy. We refer to Section \ref{sec:classification} for further details.

\vspace{.1cm}
Regarding the more general situation when the driving chain has continuous state space and is positive Harris recurrent, we believe that an extension of our results is possible, at least to some extent, but not without considerable additional work. To explain, we note that the natural substitute in our approach for the return times $\tau_{n}(i)$ to some state $i$, which are generally no longer a.s. finite, is a sequence of regeneration epochs $(\tau_{n})_{n\ge 1}$, marked by the successive epochs where the Harris chain returns to some recurrent small set as defined by Meyn and Tweedie \cite[Sect.~5.2]{MeynTweedie:09} and a regeneration occurs in the sense of Athreya  and Ney \cite{Athreya+Ney:78a,Athreya+Ney:78b} or Nummelin \cite{Nummelin:78a}. Unfortunately, the associated embedded random walk $(S_{\tau_{n}})_{n\ge 1}$ has iid increments only in special cases, namely when the bivariate chain $(M_{n},X_{n})_{n\ge 0}$ satisfies a certain Harris-type condition (see \cite{Athreya+et.al.:78a} and also \cite{Ney+Nummelin:86}). In general, however, the increments are 1-dependent and stationary. Therefore, in all places where we have drawn on rather deep fluctuation-theoretic results for ordinary RW due to Spitzer \cite{Spitzer:56}, Erickson \cite{Erickson:73}, and Kesten and Maller \cite{KesMal:96} (see Section \ref{sec:ORW} for further information) extensions to the case of stationary, 1-dependent increments are needed. Since this cannot be done shortly, we have restricted this work to the case when the driving chain has countable state space.

\vspace{.1cm}
Recently, results of similar type as in this article have been derived by the first author with Iksanov and Meiners \cite{AlsIksMei:15} for another generalization of ordinary RW, called perturbed random walks (PRW), which have interesting connections with perpetuities, the Bernoulli sieve, regenerative and shot-noise processes (see their Sections 1 and 3 for further information). We refer to Section \ref{sec:PRW} for a more detailed discussion of how the results here relate to those in \cite{AlsIksMei:15}.

\vspace{.1cm}
Further organization: Three examples where MRW play a prominent role are described in some detail in Section \ref{sec:examples}, in particular random difference equations in Markovian environment which have been a major motivation for this work. Section \ref{sec:ORW} provides a short survey of the relevant fluctuation-theoretic results for ordinary RW,  followed by a preliminary section giving some basic facts about MRW with discrete driving chain. A classification of MRW as to their fluctuation type, a short discussion of null-homologous MRW, which are the counterpart to ordinary RW with zero increments, and an extension of Kesten's trichotomy to MRW (Theorem \ref{thm:Kesten trichotomy MRW}) form the content of Section \ref{sec:classification}. All main results are presented in Section \ref{sec:main results}. For their proofs, provided in Section \ref{sec:proof main results}, various quantities, defined as functions of $i\in\cS$, must be considered and shown to share certain properties for all $i$. Such solidarity results will be collected in Section \ref{sec:solidarity}. Section \ref{sec:moment sg(x)} is devoted to a further discussion of the level $x$ first passage times $\inf\{n\ge 1:S_{n}>x\}$ the behavior of which is more difficult to describe than for ordinary RW. A short discussion of the strong law of large numbers can be found in Section \ref{sec:SLLN}, while Section \ref{sec:counterexamples} collects some counterexamples that, as a supplement to our main results, will show that various equivalences given in the theorems by Spitzer-Erickson and Kesten-Maller for ordinary RW do not carry over to MRW. Finally, Section \ref{sec:PRW} provides the above-mentioned discussion of PRW, followed by an Appendix containing some auxiliary lemmata of purely technical nature and a Glossary providing a comprehensive list of the most important notation used in this article.

\section{Examples}\label{sec:examples}

Markov modulation forms a common tool in Applied Probability, for instance in queuing, risk or reliability theory, to provide models of greater variability by allowing input parameters (like interarrival or service times, claim sizes, lifetime distributions or the type of agents in the model) to depend on the state of an underlying Markov chain. Examples of this kind may be found e.g. in the monographs by Asmussen \cite[Chs. VI and VIII]{Asmussen:00}, \cite[Ch.~11]{Asmussen:03}, Prabhu \cite[Part III]{Prabhu:98} or Limnios and Opri\c san \cite{LimOpr:01}. In the following, we give three examples where the occurrence of MRW may be less known and begin with one related to random difference equations (iterations of random affine linear maps) that has been an area of very active research over the last twenty years, see the recent monograph by Buraczewski et al. \cite{BurDamMik:16} and also \cite{AlsLoe:13}.

\subsection{Random difference equations (perpetuities) in Markovian environment}\label{subsec:perpetuities}

A main motivation for the present work originated from the question of convergence of iterations of affine linear maps $\Psi_{n}(x)=A_{n}x+B_{n}$, $n=1,2,\ldots$, in the situation when $(A_{n},B_{n})_{n\ge 1}$ of $\R^{2}$-valued random vectors is modulated by a positive recurrent Markov chain $(M_{n})_{n\ge 0}$ with countable state space $\cS$, see \cite{AlsBuck:17a}. This means that, conditioned upon $M_{0}=i_{0},M_{1}=i_{1},\ldots$ for arbitrary $i_{0},i_{1},\ldots\in\cS$,
\begin{itemize}
\item $(A_{1},B_{1}),\,(A_{2},B_{2}),...$ are conditionally independent,
\item the conditional law of $(A_{n},B_{n})$ depends only on $(i_{n-1},i_{n})$ and is temporally homogeneous, i.e. 
$$ \Prob((A_{n},B_{n})\in\cdot|M_{n-1}=i_{n-1},\,M_{n}=i_{n})\ =\ K_{i_{n-1}i_{n}} $$ 
for a stochastic kernel $K$ from $\cS^{2}$ to $\R^{2}$ and all $n\ge 1$.
\end{itemize}
The goal is to find necessary and sufficient conditions for the convergence in law of the iterated function system
\begin{equation}\label{ifsf}
R_{n}\ :=\ \Psi_{n}(R_{n-1})\ =\ \Psi_{n}\circ\ldots\circ\Psi_{1}(R_{0}),\quad n=1,2\ldots,
\end{equation} 
also called forward iterations, as well as such conditions for the convergence, almost surely or in law, of the corresponding backward iterations
\begin{equation}\label{backward sequence}
Z_{n}\:=\ \Psi_{1}\circ\ldots\circ\Psi_{n}(R_{0})\ =\ \Pi_{n}R_{0}\ +\ \sum_{k=1}^{n}\Pi_{k-1}B_k,\quad n=1,2,\ldots
\end{equation}
where
$$ \Pi_{0}\ :=\ 1\quad\text{and}\quad\Pi_{n}\ :=\ A_{1} A_{2} \cdot\ldots\cdot A_{n},\quad n=1,2,\ldots $$
and $R_{0}$ is a real number (this can be generalized but is ignored here for simplicity). Note that, if $R_{0}=0$ and the a.s. limit of $Z_{n}$ exists, then it equals $Z_{\infty}=\sum_{n\ge 1}\Pi_{n-1}B_{n}$, called perpetuity.

\vspace{.1cm}
In the case of iid $(A_{n},B_{n})$, the afore-mentioned stability questions were finally settled by Goldie and Maller \cite[Thms. 2.1 and 3.1]{GolMal:00}, based on earlier work by Vervaat \cite{Vervaat:79} and Grincevi\v{c}ius \cite{Grincev:81,Grincev:82}. One of their central results reads as follows: If
$$ \Prob(A_{1}=0)\,=\,0\quad\text{and}\quad\Prob(B_{1}=0)\,<\,1, $$
then $Z_{n}$ converges a.s. to $Z_{\infty}$ (regardless of the initial value $Z_{0}=R_{0}$) iff
\begin{equation}\label{eq:GM-condition}
\Pi_{n}\,\to\,0\quad\text{a.s.}\quad\text{and}\quad\Erw J(\log^{+}|B|)\,<\,\infty,
\end{equation}
where, for $x>0$,
\begin{align*}
J(0)\,:=\,1\quad\text{and}\quad J(x)\,:=\,
\begin{cases}
\displaystyle{\frac{x}{\Erw((\log^{-}|A|)\wedge x)}},&\text{if }\Prob(\log |A|<0)>0,\\[2mm]
\hfill x,&\text{otherwise}.
\end{cases}
\end{align*}
Since $Z_{n}$ and $R_{n}$ have obviously the same distribution for each $n$, we also infer the convergence in law of $R_{n}$ to $Z_{\infty}$. Let us note here in passing that this equality in law does no longer generally hold in Markovian environment. According to Theorem 2.1 in \cite{AlsIksMei:15}, Condition \eqref{eq:GM-condition} is equivalent to the negative divergence of the PRW
$$ W_{n}\ =\ \log|\Pi_{n-1}|+\log|B_{n}|,\quad n\ge 0, $$
which means that $W_{n}\to-\infty$ a.s. (see also Section \ref{sec:PRW}). This equivalence in turn is obtained by drawing on a fluctuation-theoretic result, stated as Theorem \ref{thm:Spitzer-Erickson} in the next section, due to Spitzer \cite{Spitzer:56} and Erickson \cite{Erickson:73}. 

\vspace{.1cm}
In view of this, it can be expected that an extension of the Goldie-Maller theorem to the Markov-modulated setup requires for an extension of the Spitzer-Erickson result to MRW, this being so because $(\log|\Pi_{n}|)_{n\ge 0}$ now forms a MRW with driving chain $(M_{n})_{n\ge 0}$. The latter extension is indeed obtained here as Theorem \ref{thm:Spitzer-Erickson MRW} and utilized for the proofs of some of the results in \cite{AlsBuck:17a} (stated there in Section 3). For all further details including relevant references, the interested reader is referred to that paper which actually gives a complete description of necessary and sufficient conditions for the convergence of forward and backward iterations. Unlike the iid case, this requires the distinction of various additional subregimes related to the lattice-type of $(M_{n},\log|\Pi_{n}|)_{n\ge 0}$.

\subsection{Branching random walk in random environment}\label{subsec:BRW}

Recently, Mallein and Mi\l os \cite{MalMil:15} have studied the maximal displacement in a supercritical branching random walk (BRW) in random environment, the latter given by a sequence $\cL=(\cL_{n})_{n\ge 0}$ of iid point process laws on $\R$. The basic assumptions about these laws are
\begin{align}
\Prob(\cL_{0}(\text{number of points}=0))\ &=\ 0,\label{eq:MaMi-1}\\
\Prob(\cL_{0}(\text{number of points}>1))\ &>\ 0.\label{eq:MaMi-2}
\end{align}
In other words, if $\cZ=\sum_{k=1}^{N}\delta_{Z_{k}}$ has law $\cL_{0}$, then $N\ge 1$ a.s. and $\Prob(N>1)>0$. Then a BRW in random environment $\cL$ originates from an initial particle $\varnothing$ sitting at the origin at time 0. At time 1, the particle dies while giving birth to a random number of children with random positions relative to their mother in accordance with the law $\cL_{0}$. At time 2, these offspring particles die while independently giving birth to a random number of children with random positions relative to their own position in accordance with the law $\cL_{1}$. Generally, at time $n$ all particles born at time $n-1$ die and independently give birth to a random number of children with random positions relative to their own position in accordance with the law $\cL_{n-1}$. Due to the assumptions on $\cL$, the genealogy of this process is described by an a.s. non-extinctive, thus supercritical Galton-Watson tree $\T$, say, in iid random environment. For $v\in\T$, let $S(v)$ denote the position of the particle $v$. It is obtained by summing the relative displacements of all particles along the unique path from the ancestor $\varnothing$ (root of $\T$) to $v$. Then the maximal displacement of the particles born at time $n$ is defined by
$$ \Lambda_{n}\ :=\ \max_{v\in\T,|v|=n}S(v), $$
where $|v|$ is the generation to which $v$ belongs.

\vspace{.1cm}
Next, let $\vph_{n}:\R_{\geqslant}\to\R\cup\{\infty\}$ for $n\in\N_{0}$ denote the log-Laplace transform of the random point-process law $\cL_{n}$, thus
$$ \vph_{n}(\theta)\ :=\ \log\Erw_{\cL}\sum_{x\in\cZ_{n}}e^{\theta x} $$
where $\cZ_{n}$ is a point process with law $\cL_{n}$ under the conditional measure $\Prob_{\cL}$. Note that $\vph_{n}(\theta)>-\infty$ is guaranteed by \eqref{eq:MaMi-1} and that $\vph_{0},\vph_{1},\ldots$ are iid random functions. As in \cite{MalMil:15}, we further assume that $\vph_{n}(\theta)^{-}$, the negative part of $\vph_{n}(\theta)$, has finite mean for all $\theta\ge 0$. Then $\ovl{\vph}:\R_{\geqslant}\to\R\cup\{\infty\}$, $\theta\mapsto\Erw\vph_{0}(\theta)$ is well-defined, and further a smooth and convex function on $\textit{int}\,(\bbD)$, the interior of $\bbD=\{\theta:\ovl{\vph}(\theta)<\infty\}$, if this interval is nonempty which will also be assumed hereafter along with the existence of $\theta^{*}\in\textit{int}\,(\bbD)$ such that
$$ \theta^{*}\ovl{\vph}'(\theta^{*})-\ovl{\vph}(\theta^{*})\ =\ 0. $$
It then follows that $\ovl{\vph}'(\theta^{*})=\Erw\vph_{0}'(\theta^{*})=\nu$, where
$$ \nu\ :=\ \inf_{\theta>0}\frac{\ovl{\vph}(\theta)}{\theta}. $$
Under these conditions and some further technical ones omitted here (see \cite[(1.6)--(1.8)]{MalMil:15}), the main result of Mallein and Mi\l os asserts that, for some $\beta^{*}>0$,
\begin{align*}
\lim_{n\to\infty}\Prob_{\cL}\left(\Lambda_{n}-\frac{1}{\theta^{*}}\sum_{k=0}^{n-1}\vph_{k}(\theta^{*})\ge -\beta\log n\right)\ =\ 
\begin{cases}
1,&\text{if }\beta>\beta^{*},\\ 0,&\text{if }\beta<\beta^{*}
\end{cases}\quad\text{in $\Prob$-probability},
\end{align*}
see \cite[Thm.~1.1]{MalMil:15}), also for the definition of the threshold $\beta^{*}$. This result extends earlier ones by Addario-Berry and Reed \cite{AddarioReed:09}, Hu and Shi \cite{HuShi:09} and A{\"{\i}}d{\'e}kon \cite{Aidekon:13} for the case of constant environment.

\vspace{.1cm}
An essential tool in the study of the extremal behavior of BRW is the so-called many-to-one lemma. In the present context of iid random environment, it renders the connection with a MRW. Certain fluctuation-theoretic properties of this walk, see \cite[Section 3]{MalMil:15}, are then used in the analysis of $M_{n}$. To see the connection, define the stochastic kernel $K$ by
$$ K(\ell,(-\infty,t])\ :=\ \Erw\left(\sum_{x\in\cZ_{0}}\1_{(-\infty,t]}(x)e^{\theta^{*}x-\vph_{n}(\theta^{*})}\Bigg|\cL_{0}=\ell\right) $$
where $\cZ_{0}$ is a point process with law $\ell$ given $\cL_{0}=\ell,\cL_{1},\cL_{2},\ldots$
Let $(X_{n})_{n\ge 1}$ be a sequence of real-valued random variables which, conditioned upon $\cL$, are conditionally independent and such that the conditional law of $X_{n}$ equals $K(\cL_{n},\cdot)$. Putting $S_{n}:=\sum_{k=1}^{n}X_{k}$ and $W_{n}:=\theta^{*}S_{n}-\sum_{k=1}^{n}\vph_{k}(\theta^{*})$ for $n\ge 1$ and $S_{0}=W_{0}:=0$, it follows that $(\cL_{n},S_{n})_{n\ge 0}$ as well as $(\cL_{n},W_{n})_{n\ge 0}$ constitute MRW with driving chain $\cL$. They fit into the framework of this article if further the $\cL_{n}$ take values in a countable set. The many-to-one lemma for the given model now states the following, see \cite[Lemma 2.1]{MalMil:15}: For any $n\in\N$ and any nonnegative measurable function $f$ on $\R^{n}$, we have
\begin{align*}
\Erw_{\cL}\left(\sum_{|v|=n}f(S(v^{1}),\ldots,S(v^{n}))\right)\ =\ \Erw_{\cL}\left(e^{-W_{n}}f(S_{1},\ldots,S_{n})\right)
\end{align*}
or, equivalently,
\begin{align*}
\Erw_{\cL}\left(\sum_{|v|=n}f(W(v^{1}),\ldots,W(v^{n}))\right)\ =\ \Erw_{\cL}\left(e^{-W_{n}}f(W_{1},\ldots,W_{n})\right),
\end{align*}
where $v^{0}=\varnothing\to v^{1}\to\ldots\to v^{n-1}\to v^{n}=v$ denotes the unique path from the root to $v$ and $W(u):=\theta^{*}S(u)-\sum_{k=1}^{|u|}\vph_{k}(\theta^{*})$ for $u\in\T$. This means that, in quenched regime (under $\Prob_{\cL}$), the average over all walks along the rays in $\T$ up to level $n$ is described by a MRW. The relevance of this walk for the asymptotic behavior of $\Lambda_{n}$ stems from the fact that, roughly speaking, it carries the main information of the extremal paths in the BRW.

\subsection{Superpositions of renewal processes}\label{subsec:superpositions}

Consider a single-server queue with $p\ge 2$ time-slotted input channels which are described by independent, integrable and discrete renewal sequences $(S_{k,n})_{n\ge 0}$, $k=1,\ldots,p$ taking values in $\N_{0}$. Thus,
$$ S_{k,n}\ =\ S_{k,0}+X_{k,1}+\ldots+X_{k,n} $$ 
for each $n\ge 1$ and $k=1,\ldots,p$, where
\begin{itemize}
\item $(X_{k,n})_{n\ge 1}$, $k=1,\ldots,p$, are independent sequences of iid positive integer-valued random variables with $\mu_{k}:=\Erw X_{k,1}<\infty$ for each $k$,
\item $S_{1,0},\ldots,S_{p,0}$ take values in $\N_{0}$ and are mutually independent as well as independent of all $X_{k,n}$.
\end{itemize}
Further defining the residual waiting time sequences $(R_{k,n})_{n\ge 0}$ by
$$ R_{k,n}\ :=\ \min\{S_{k,l}-n:S_{k,l}\ge n,\,l\ge 0\} $$
for $k=1,\ldots,p$, it is a well-known fact from renewal theory that these sequences constitute independent discrete Markov chains on $\N_{0}$ with stationary distributions $\lambda_{k,\bullet}:=(\lambda_{k,n})_{n\ge 0}$, where
$$ \lambda_{k,n}\ =\ \mu_{k}^{-1}\,\Prob(X_{k,1}>n), $$
see, for example, \cite[Thm.~6.2 on p.~62]{Gut:09}. As a consequence, the $p$-variate sequence
$$ R_{n}\ :=\ (R_{1,n},\ldots,R_{p,n}),\quad n\ge 0 $$
forms a positive discrete Markov chain on $\N_{0}^{p}$, its stationary distribution being the product of the $\lambda_{k,\bullet}$, i.e. $\lambda=\lambda_{1,\bullet}\otimes\ldots\otimes\lambda_{p,\bullet}$. Let $(M_{n})_{n\ge 0}$ be the associated hit chain of the set 
$$ \cS\ :=\ \big\{(n_{1},\ldots,n_{p})\in\N_{0}^{p}:n_{k}=0\text{ for some }1\le k\le p\big\}, $$
thus $M_{n}:=R_{\tau_{n}}$ for $n\ge 0$ with $\tau_{0}:=0$ and $\tau_{n}:=\inf\{k>\tau_{n-1}:R_{k}\in\cS\}$ for $n\ge 1$. Its stationary distribution equals $\pi:=\lambda(\cdot\cap\cS)/\lambda(\cS)$. Note that an arrival in one of the channels occurs iff the Markov chain $(R_{n})_{n\ge 0}$ hits the set $\cS$. Now the aggregated arrival process $(S_{n})_{n\ge 0}$, where simultaneous arrivals from different channels are viewed as one arrival epoch, is obtained as the superposition of the $(S_{k,n})_{n\ge 0}$ (that is, the order statistics of the sample $\{S_{k,n}:k=1,\ldots,p,\,n\ge 0\}$ with multiple points aggregated into one) and can be shown to constitute a Markov random walk with driving chain $(M_{n})_{n\ge 0}$. For more details see \cite{Alsmeyer:96} where this has been verified for the (more difficult) case when the renewal sequences are continuous and, as a consequence, the driving chain has continuous state space. We also refer to this article and \cite{LamLehoczky:91} for further results on $(S_{n})_{n\ge 0}$ by making use of Markov renewal theory.

\section{Ordinary random walks}\label{sec:ORW}

It is a well-known fact that any ordinary random walk $(S_{n})_{n\ge 0}$ in $\R$ whose iid increments $X_{1},X_{2},\ldots$ are not degenerate at 0 exhibits exactly one of the following behaviors:
\begin{description}[\sf (Osc)]\itemsep1pt
\item[\sf(PD)] \emph{Positive divergence:} $\lim_{n\to\infty}\,S_{n}=\infty$ a.s.
\item[\sf (ND)] \emph{Negative divergence:} $\lim_{n\to\infty}\,S_{n}=-\infty$ a.s.
\item[\sf(Osc)] \emph{Oscillation:} $\liminf_{n\to\infty}\,S_{n}=-\infty$ and $\limsup_{n\to\infty}\,S_{n}=\infty$ a.s.
\end{description}
Let $X$ denote a generic copy of the $X_{n}$ and suppose that $\Erw X$ exists, i.e. $\Erw X^{-}<\infty$ or $\Erw X^{+}<\infty$. Then
\begin{align*}
\textsf{(PD)}\ &\LRA\ \Erw X^{-}<\Erw X^{+}\le\infty,\\
\textsf{(ND)}\ &\LRA\ \Erw X^{+}<\Erw X^{-}\le\infty,\\
\textsf{(Osc)}\ &\LRA\ \Erw X^{-}=\Erw X^{+}<\infty,\text{ i.e. }\Erw X=0,
\end{align*}
see Chung \cite[Thm.~8.3.4]{Chung:01}. Moreover, if $\Erw|X|=\infty$, thus $\Erw X^{-}\vee\Erw X^{+}=\infty$, then Kesten \cite{Kesten:70} showed that the above trichotomy even holds with $n^{-1}S_{n}$ instead of $S_{n}$. We refer to this result as \emph{Kesten's trichotomy}.

\vspace{.1cm}
Suppose $S_{0}=0$ hereafter. Many authors have dealt with the problem of relating the three fundamental types with other quantities of interest related to $(S_{n})_{n\ge 0}$, notably the level $x$ first passage times
\begin{align*}
\sg(x)\ &:=\ \inf\{n\ge 1:S_{n}>x\},\\
\sle(-x)\ &:=\ \inf\{n\ge 1:S_{n}\le -x\}
\end{align*}
for $x\in\R_{\geqslant}$, the level $x$ last exit time
$$ \rho(x)\,:=\,\sup\{n\ge 0:S_{n}\le x\} $$
for $x\in\R$, the hitting of the minimum epoch
$$ \sigma_{\min}\ := \inf\left\{n\ge 1:S_{n}=\inf_{k\ge 1}S_{k}\right\}, $$
the renewal counting process
$$ N(x)\,:=\,\sum_{n\ge 1}\1_{\{S_{n}\le x\}} $$
for $x\in\R$, and the weighted renewal (or occupation) measures
$$ \sum_{n\ge 1}n^{\alpha-1}\Prob(S_{n}\le x)\ =\ \Erw\left(\sum_{n\ge 1}n^{\alpha-1}\1_{\{S_{n}\le x\}}\right) $$
for $x\in\R$ and $\alpha\ge 0$. The following theorem, treating the positive divergent case, is a blend of results due to Spitzer \cite[Thm.~4.1 on p.~331]{Spitzer:56} and Erickson \cite{Erickson:73} (see also \cite[Thm.~2.1 with $\alpha=0$]{KesMal:96}). Let $A(x):=\Erw(X^{+}\wedge x)-\Erw(X^{-}\wedge x)$ for $x\in\R_{\geqslant}$, and
\begin{align*}
J(0)\,:=\,1\quad\text{and}\quad J(x)\,:=\,
\begin{cases}
\displaystyle{\frac{x}{\Erw(X^{+}\wedge x)}},&\text{if }\Prob(X>0)>0\\[2mm]
\hfill x,&\text{otherwise}
\end{cases},\quad x>0.
\end{align*}

\begin{Theorem}\label{thm:Spitzer-Erickson}
The following assertions are equivalent:
\begin{description}[(d)]\itemsep3pt
\item[(a)] $\lim_{n\to\infty}S_{n}=\infty$ a.s.
\item[(b)] $A(x)>0$ for all sufficiently large $x$ and $\Erw J(X^{-})<\infty$.
\item[(c)] $\sum_{n\ge 1}n^{-1}\Prob(S_{n}\le x)<\infty$ for some/all $x\in\R_{\geqslant}$.
\item[(d)] $\Erw\sg(x)<\infty$ for some/all $x\in\R_{\geqslant}$.
\end{description}
\end{Theorem}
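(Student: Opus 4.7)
My plan is to establish the cycle of equivalences by first handling (c)$\,\Leftrightarrow\,$(d) and (c)$\,\Leftrightarrow\,$(a) via the classical Sparre--Andersen/Spitzer--Baxter identities, and then treating (a)$\,\Leftrightarrow\,$(b), which is Erickson's theorem and constitutes the main obstacle.

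For the ladder-variable part, I would start from the generating-function identity
$$ \sum_{n\ge 0} z^n\,\Prob(\sg(0)>n)\ =\ \exp\!\left(\sum_{n\ge 1}\frac{z^n}{n}\,\Prob(S_n\le 0)\right), $$
a consequence of Sparre--Andersen's combinatorial lemma. Letting $z\uparrow 1$ by monotone convergence yields Spitzer's formula
$$ \Erw\sg(0)\ =\ \exp\!\left(\sum_{n\ge 1}\frac{1}{n}\,\Prob(S_n\le 0)\right), $$
which immediately gives (c)$\,\Leftrightarrow\,$(d) at $x=0$. The companion identity
$$ \Prob(\sle(0)=\infty)\ =\ \exp\!\left(-\sum_{n\ge 1}\frac{1}{n}\,\Prob(S_n\le 0)\right) $$
shows that (c) is equivalent to $\Prob(\sle(0)=\infty)>0$; combined with the trichotomy (\textsf{ND}) and (\textsf{Osc}) each force $\sle(0)<\infty$ a.s., and this yields (a)$\,\Leftrightarrow\,$(c). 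To upgrade the ``some $x\ge 0$'' assertions in (c) and (d) to ``all $x\ge 0$'', I would exploit the ascending ladder process: writing $\sg(x)$ as the sum of the first $N(x)$ inter-ladder durations, where $N(x)$ counts the ladder heights needed to exceed $x$, Wald's identity together with elementary renewal estimates shows that $\Erw\sg(x)<\infty$ is inherited from $\Erw\sg(0)<\infty$ uniformly in $x$; an analogous argument transfers the series in (c).

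The main obstacle is Erickson's equivalence (a)$\,\Leftrightarrow\,$(b), which asserts that convergence of $\sum_{n\ge 1}n^{-1}\Prob(S_n\le 0)$ is governed sharply by $\Erw J(X^-)<\infty$ once $A(x)>0$ for large $x$. I would follow Erickson's truncation scheme: for each level $c>0$, decompose $X_{k}=(X_{k}\wedge c)+(X_{k}-c)^{+}-X_{k}^{-}$ and bound $\Prob(S_{n}\le 0)$ by the sum of a Chebyshev/Etemadi-type concentration estimate for the $c$-truncated walk (whose drift is essentially $A(c)$) and a union-bound correction for the rare event that some $X_{k}^{-}$ exceeds $c$. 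Summing in $n$ and interchanging with an Abel summation in $c$ rewrites the series as an integral $\int_{0}^{\infty}h(x)\,\Prob(X^{-}\in dx)$ whose weight $h$ is, up to constants, precisely $J$. The delicate point -- and the chief difficulty -- is producing matching two-sided bounds so that $J$ emerges sharply; in particular, the lower bound rests on a ``single big jump'' heuristic showing that $\{S_{n}\le 0\}$ is typically realised by one large negative increment balanced against the positive drift of the rest, which is precisely the origin of the factor $\Erw((X^{+}\wedge x))^{-1}$ in $J$. This matching-bound analysis is where the heavy lifting lies.
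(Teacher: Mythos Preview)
The paper does not prove this theorem. It is stated in Section~3 as a background result for ordinary random walks, attributed to Spitzer and Erickson (with a pointer to Kesten--Maller), and is used throughout as a black box. So there is no ``paper's own proof'' to compare against directly.

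Your outline for (c)$\Leftrightarrow$(d) and (a)$\Leftrightarrow$(c) via the Sparre--Andersen/Spitzer--Baxter identities is correct and is the standard route; your extension from $x=0$ to all $x\ge 0$ via ladder heights and Wald's identity is also fine.

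For (a)$\Leftrightarrow$(b), however, your description of ``Erickson's truncation scheme'' does not match Erickson's actual argument, and it is worth noting the difference because the paper exploits the latter heavily in its MRW generalisation. Erickson does not bound $\Prob(S_{n}\le 0)$ by a Chebyshev-plus-union-bound decomposition. Instead, his key step (quoted in the paper as Lemma~\ref{lem:Erickson}) is the almost-sure ratio criterion: for iid nonnegative pairs $(X_{n},Y_{n})$ with $\Erw X+\Erw Y=\infty$,
\[
\limsup_{n\to\infty}\frac{Y_{n+1}}{\sum_{k=1}^{n}X_{k}}\ \in\ \{0,\infty\}\quad\text{a.s.},
\]
with the value determined by finiteness of $\int y/\Erw(X\wedge y)\,\Prob(Y\in dy)$. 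Applying this with $X_{k}=X_{k}^{+}$ and $Y_{k}=X_{k}^{-}$, together with Pruitt's lemma that the same dichotomy governs $\sum_{k\le n}X_{k}^{-}/\sum_{k\le n}X_{k}^{+}$, yields positive divergence directly. This is a pathwise argument, not a moment/truncation estimate, and it is what the paper invokes (see the proof of Theorem~\ref{thm:Spitzer-Erickson MRW}, Cases~1 and~2). Your Chebyshev sketch is plausible in spirit and closer to some of the Kesten--Maller machinery, but making it produce the sharp two-sided bound with $J$ emerging exactly is genuinely delicate, and you have not indicated how the lower bound (the ``single big jump'' half) would be made rigorous. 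If you want a self-contained proof, the Erickson--Pruitt ratio route is both shorter and what the paper relies on downstream.
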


\begin{Rem}\label{rem:Erickson's extension}\rm
Erickson \cite[Corollary 1]{Erickson:73} actually showed that, if $\Erw|X|=\infty$, then the positive divergence of $(S_{n})_{n\ge 0}$ is already entailed by $\Erw J(X^{-})<\infty$ alone, i.e., one may dispense with $A(x)>0$ for all sufficiently large $x$. On the other hand, if $\Erw|X|<\infty$, then $J(x)=O(x)$ as $x\to\infty$ and therefore $\Erw J(X^{-})<\infty$. Consequently, (b) reduces to $0<\Erw X=\lim_{x\to\infty}A(x)$ in this case.
\end{Rem}

Due to the fluctuation-type trichotomy of nontrivial RW, each of $\Prob(\sigma_{\min}<\infty)=1$, $\Prob(\rho(x)<\infty)=1$ for some/all $x\in\R_{\geqslant}$, $\Prob(N(x)<\infty)=1$ for some/all $x\in\R_{\geqslant}$, and $\Prob(\sigma^\leqslant(-x)=\infty)>0$ for some/all $x\in\R_{\geqslant}$ is immediately seen to be also equivalent to the positive divergence of $(S_{n})_{n\ge 0}$.

\vspace{.1cm}
Plainly, the corresponding result for negative divergent $(S_{n})_{n\ge 0}$ can be read off directly from the previous one by replacing $(S_{n})_{n\ge 0}$ with $(-S_{n})_{n\ge 0}$, and the result for oscillating $(S_{n})_{n\ge 0}$ then follows by contraposition. Let us also note that the function $J(x)$ in (b) may be replaced with $xA(x)^{-1}$, see \cite[proof of Lemma 3.1]{KesMal:96}, for $A(x)$ and $\Erw(X^{+}\wedge x)$ are of the same order of magnitude as $x\to\infty$. The series occurring in (d) is the harmonic renewal function of $(S_{n})_{n\ge 0}$ and its evaluation at 0 is often called Spitzer series.

\vspace{.1cm}
Kesten and Maller \cite[Thm.~2.1 and p. 27]{KesMal:96} generalized the above result, stated as Theorem \ref{thm:Kesten-Maller} below, by establishing equivalent conditions for the finiteness of moments of $\sg(x),\rho(x),N(x),\min_{n\ge 0}S_{n}$, and $\sigma_{\min}$. In the case when $0<\Erw X\le\Erw|X|<\infty$, this had already been done by other authors, most notably Gut \cite{Gut:74} and Janson \cite{Janson:86}, the last one by providing a comprehensive result with a total of ten equivalences and economical proofs. For a good survey of the relevant literature, the reader is referred to Gut's monography \cite[Ch.~3]{Gut:09}.

\begin{Theorem}\label{thm:Kesten-Maller}
Given a positive divergent RW $(S_{n})_{n\ge 0}$, the following conditions are equivalent for $\alpha>0$:
\begin{description}[(d)]\itemsep3pt
\item[(a)] $\Erw J(X^{-})^{1+\alpha}<\infty$.
\item[(b)] $\Erw\rho(x)^{\alpha}<\infty$ for some/all $x\in\R_{\geqslant}$.
\item[(c)] $\Erw\sigma_{\min}^{\alpha}<\infty$.
\item[(d)] $\Erw\sle(-x)^{\alpha}\1_{\{\sle(-x)<\infty\}}<\infty$ for some/all $x\in\R_{\geqslant}$.
\item[(e)] $\sum_{n\ge 1}n^{\alpha-1}\Prob(S_{n}\le x)<\infty$ for some/all $x\in\R_{\geqslant}$.
\item[(f)] $\Erw N(x)^{\alpha}<\infty$ for some/all $x\in\R_{\geqslant}$.
\item[(g)] $\Erw\sg(x)^{1+\alpha}<\infty$ for some/all $x\in\R_{\geqslant}$.
\end{description}
\end{Theorem}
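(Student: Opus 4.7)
The plan is to place the first weak descending ladder epoch $\tau:=\sle(0)=\inf\{n\ge 1:S_{n}\le 0\}$ at the center of all seven equivalences. By Theorem \ref{thm:Spitzer-Erickson}, positive divergence yields $q:=\Prob(\tau=\infty)>0$, so $(S_{n})_{n\ge 0}$ decomposes into a geometric number of finite descending excursions followed by an a.s.\ terminating ascent to $+\infty$. In particular, $\sigma_{\min}$ is realized as the geometric sum
\begin{equation*}
\sigma_{\min}\ \eqdist\ \sum_{k=1}^{N_{\mathrm{geo}}}\tau_{k},
\end{equation*}
where $N_{\mathrm{geo}}$ is geometric with parameter $q$ and the $\tau_{k}$ are iid copies of $\tau$ under $\Prob(\cdot\,|\,\tau<\infty)$. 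The standard geometric-sum moment inequality then instantly gives the equivalence \textbf{(c)} $\LRA$ \textbf{(d)} at $x=0$, and iterating the strong Markov property at $\tau$ extends it to all $x\ge 0$ since each finite descending excursion drops $S$ by at least one unit in mean.

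Next, I would derive \textbf{(e)} $\LRA$ \textbf{(f)} via the tail-sum identity $\Erw Z^{\alpha}\asymp\sum_{n\ge 1}n^{\alpha-1}\Prob(Z\ge n)$ applied to $Z=N(x)+1$, combined with the observation that $\{N(x)\ge n\}$ coincides with the event that the $n$-th visit of the walk to $(-\infty,x]$ has occurred; a Wiener-Hopf-style decomposition at successive weak descending ladder epochs shows $\Prob(N(x)\ge n)\asymp\Prob(S_{n}\le x)$ up to multiplicative constants. The comparisons $N(x)\le\rho(x)\le N(x)+\sigma_{\min}^{\,*}$, where $\sigma_{\min}^{\,*}$ is a post-$N(x)$-independent copy of $\sigma_{\min}$ dominating the tail of the last below-$x$ sojourn, then link \textbf{(b)} to \textbf{(c)}, \textbf{(f)} and the rest.

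The analytic heart of the theorem is \textbf{(a)} $\LRA$ \textbf{(d)}: I would prove
\begin{equation*}
\Erw\bigl[\tau^{\alpha}\1_{\{\tau<\infty\}}\bigr]<\infty\ \LRA\ \Erw J(X^{-})^{1+\alpha}<\infty.
\end{equation*}
This reduces, via the Baxter-Spitzer factorization identities for $\Erw(s^{\tau};\tau<\infty)$, to the sharp tail asymptotic $\Prob(\tau>n,\tau<\infty)\asymp\Prob(X^{-}>cA(n))$ (or an appropriate integrated version), where $A$ is the truncated first-moment function. Summing $n^{\alpha-1}$ times this tail bound and using $J(x)\asymp x/A(x)$ (valid as $x\to\infty$) reproduces $\Erw J(X^{-})^{1+\alpha}$. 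This is the step that originally required the technical machinery of Kesten-Maller \cite{KesMal:96}.

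Finally \textbf{(g)} follows from the ladder representation $\sg(x)=\sum_{k=1}^{K_{x}}\sg_{k}'$ with iid $\sg_{k}'\eqdist\sg$ and $K_{x}$ the ascending-ladder renewal count up to level $x$. The extra power ``$1+\alpha$'' instead of ``$\alpha$'' in \textbf{(g)} traces back to the fact that $\sg$ itself is a geometric sum of iid $\tau$-type failed descending excursions terminated by the first ascending success, so that a moment of $\sg^{1+\alpha}$ carries exactly one more factor of $\tau$ than $\sigma_{\min}^{\alpha}$. The main obstacle is the analytic equivalence in the third paragraph: translating $\Erw J(X^{-})^{1+\alpha}<\infty$ into a tail bound for the conditioned ladder epoch requires the full strength of the Spitzer/Wiener-Hopf identities together with Kesten-Maller's truncation estimates, and is the step where I expect the argument to demand the most care.
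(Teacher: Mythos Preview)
The paper does not give its own proof of this theorem. Theorem \ref{thm:Kesten-Maller} appears in Section \ref{sec:ORW} as part of the survey of \emph{known} results for ordinary random walks, with explicit attribution: ``Kesten and Maller \cite[Thm.~2.1 and p.~27]{KesMal:96} generalized the above result \ldots In the case when $0<\Erw X\le\Erw|X|<\infty$, this had already been done by other authors, most notably Gut \cite{Gut:74} and Janson \cite{Janson:86}.'' The paper then \emph{uses} this theorem as input for the Markov-modulated extensions (Theorems \ref{thm:Kesten-Maller MRW}--\ref{thm:connections}); it never reproves the iid case. So there is no ``paper's own proof'' to compare your sketch against.

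That said, your sketch has real gaps that would need to be closed before it could stand on its own. Two in particular:
\begin{itemize}
\item Your route from (e) to (f) rests on the assertion $\Prob(N(x)\ge n)\asymp\Prob(S_{n}\le x)$, justified only by a vague ``Wiener--Hopf-style decomposition at successive weak descending ladder epochs.'' This asymptotic is neither obvious nor, in general, true pointwise in $n$; what is true (and what Kesten--Maller actually prove) is the asymptotic equivalence of the \emph{summed} quantities in \eqref{eq:harmonic series asymptotics, alpha>0}. You would need to work with those sums directly rather than claim a termwise comparison.
\item The inequality $\rho(x)\le N(x)+\sigma_{\min}^{*}$ with an independent post-$N(x)$ copy $\sigma_{\min}^{*}$ does not make sense as stated: $\rho(x)$ is a time index, $N(x)$ is a count, and there is no natural stopping time at which to restart to produce an independent $\sigma_{\min}^{*}$ dominating the gap. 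The correct elementary bounds are $\sigma_{\min}\le\rho(0)$ and $N(x)\le\rho(x)$; the reverse directions require the actual Kesten--Maller estimates via $J$, not a soft domination argument.
\end{itemize}
Your identification of (a)$\LRA$(d) as the analytic core and your acknowledgment that it needs the full Kesten--Maller truncation machinery are accurate; but precisely because that step is where all the work lies, the sketch as written is closer to a table of contents than a proof.
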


Putting $S_{n}^{*}:=\max_{0\le k\le n}S_{k}$ for $n\in\N_{0}$, Kesten and Maller \cite[Thm. 2.2]{KesMal:96} further showed that 
\begin{equation}\label{eq:harmonic series asymptotics}
\sum_{n\ge 1}\frac{1}{n}\,\Prob\left(S_{n}^{*}\le x\right)\,\asymp\,\sum_{n\ge 1}\frac{1}{n}\,\Prob(S_{n}\le x)\,\asymp\,\log J(x)
\end{equation}
holds under the conditions of Theorem \ref{thm:Spitzer-Erickson}, and
\begin{equation}\label{eq:harmonic series asymptotics, alpha>0}
\Erw \rho(x)^{\alpha}\,\asymp\,\Erw N(x)^{\alpha}\,\asymp\,\sum_{n\ge 1}n^{\alpha-1}\,\Prob\left(S_{n}^{*}\le x\right)\,\asymp\,\sum_{n\ge 1}n^{\alpha-1}\,\Prob(S_{n}\le x)\,\asymp\,J(x)^{\alpha}
\end{equation}
for $\alpha>0$ under the conditions of Theorem \ref{thm:Kesten-Maller}. Here $f(x)\asymp g(x)$ means that $f$ and $g$ are of the same oder of magnitude as $x\to\infty$, viz.
\begin{equation}\label{eq:def asymp}
0\ <\ \liminf_{x\to\infty}\frac{f(x)}{g(x)}\quad\text{and}\quad\limsup_{x\to\infty}\frac{f(x)}{g(x)}\ <\ \infty.
\end{equation}
We also write $f(x)\gtrsim g(x)$ and $f(x)\lesssim g(x)$ as shorthand for the left and the right relation in \eqref{eq:def asymp}, respectively. Finally, given two expressions $A,B$ (series or integrals), $A\asymp B$, $A\lesssim B$ and $A\gtrsim B$ will be used if, for some $c\in\R_{>}$, $c^{-1}B\le A\le cB$, $A\le cB$ and $A\ge cB$, respectively. Note that $\{\sg(x)>n\}=\{S_{n}^{*}\le x\}$ for all $x\in\R_{\geqslant}$ and $n\in\N_{0}$ implies
\begin{align*}
\sum_{n\ge 1}n^{\alpha-1}\,\Prob\left(S_{n}^{*}\le x\right)\,\asymp\,
\begin{cases}
\Erw\log\sg(x),&\text{if }\alpha=0,\\
\Erw\sg(x)^{\alpha},&\text{if }\alpha>0.
\end{cases}
\end{align*}


\vspace{.1cm}
Regarding the finiteness of $\Erw|\min_{n\ge 0}S_{n}|^{\alpha}$ for $\alpha>0$, an equivalent condition of similar type as Theorem \ref{thm:Kesten-Maller}(a) has also been given in \cite[Prop.~4.1]{KesMal:96}, but is actually stronger unless $\Erw X^{+}$ is finite and thus $\Erw X>0$.

\begin{Theorem}\label{thm:Kesten-Maller min}
Given a positive divergent RW $(S_{n})_{n\ge 0}$, the following conditions are equivalent for $\alpha>0$:
\begin{description}[(d)]\itemsep3pt
\item[(a)] $\Erw(X^{-})^{\alpha}J(X^{-})<\infty$.
\item[(b)] $\Erw|\min_{n\ge 0}S_{n}|^{\alpha}<\infty$.
\item[(c)] $\Erw|S_{\sle(-x)}|^{\alpha}\1_{\{\sle(-x)<\infty\}}<\infty$ for some/all $x\in\R_{\geqslant}$.
\end{description}
\end{Theorem}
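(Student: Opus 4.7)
The natural starting point is the descending ladder decomposition. Set $\sigma := \sle(0)$, $H := -S_{\sigma}$ on $\{\sigma<\infty\}$, and let $p := \Prob(\sigma<\infty)$, which is strictly less than $1$ because $S_{n}\to\infty$ a.s. By standard fluctuation theory, $|\min_{n\ge 0}S_{n}| = H_{1}+\ldots+H_{N}$, where $N$ is geometric with parameter $1-p$ and $H_{1},H_{2},\ldots$ are iid copies of $H$ under $\Prob(\,\cdot\mid\sigma<\infty)$, independent of $N$. Since $N$ has moments of all orders, combining the lower bound $\sum_{k=1}^{N}H_{k}\ge H_{1}\1_{\{N\ge 1\}}$ with the upper bound $(\sum_{k=1}^{N}H_{k})^{\alpha}\le N^{(\alpha-1)\vee 0}\sum_{k=1}^{N}H_{k}^{\alpha}$ (Jensen for $\alpha\ge 1$, concavity for $\alpha<1$) yields at once
$$\Erw\left|\min_{n\ge 0}S_{n}\right|^{\alpha}<\infty\ \Longleftrightarrow\ \Erw H^{\alpha}\1_{\{\sigma<\infty\}}<\infty,$$
which is \textbf{(b)}\,$\Leftrightarrow$\,\textbf{(c)} for $x=0$. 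The "some/all" uniformity in \textbf{(c)} follows in one direction from $|S_{\sle(-x)}|\le|\min_{n\ge 0}S_{n}|$ on $\{\sle(-x)<\infty\}$, and in the other from a strong Markov bootstrap between levels $0$ and $x$, with the auxiliary overshoot moments absorbed via Theorem~\ref{thm:Kesten-Maller}\,(d).

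The crux is therefore \textbf{(a)}\,$\Leftrightarrow$\,\textbf{(b)}. My plan is to invoke a Wiener-Hopf identity of the form
$$\Prob(H>y,\,\sigma<\infty)\ =\ \int_{[0,\infty)}\Prob\bigl(X_{1}<-(y+z)\bigr)\,U^{>}(dz),\quad y\ge 0,$$
where $U^{>}$ denotes the renewal measure of the strict ascending ladder heights of $(S_{n})_{n\ge 0}$. Under positive divergence, Erickson's renewal theorem together with Theorem~\ref{thm:Spitzer-Erickson} gives the asymptotic $U^{>}([0,z])\asymp J(z)$ as $z\to\infty$. Substituting into
$$\Erw H^{\alpha}\1_{\{\sigma<\infty\}}\ =\ \alpha\int_{0}^{\infty}y^{\alpha-1}\Prob(H>y,\,\sigma<\infty)\,dy,$$
applying Fubini, and splitting the inner integral at $z=y$ together with the subadditivity-type bound $J(y+z)\lesssim J(y)+J(z)$ should collapse the double integral to an expression of order $\Erw(X^{-})^{\alpha}J(X^{-})$ in both directions.

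The main obstacle is controlling the double integral sharply in the infinite-mean regime $\Erw|X|=\infty$, where both $A(\cdot)$ and $J(\cdot)$ diverge and one must invoke Erickson's strong renewal theorem rather than the elementary one, together with careful truncation arguments of the kind developed in \cite{KesMal:96} and \cite{Erickson:73}. In the finite-mean regime $\Erw|X|<\infty$ one has $J(x)\asymp x$ for large $x$, and the whole analysis collapses to the classical equivalence $\Erw(X^{-})^{1+\alpha}<\infty\Leftrightarrow\Erw|\min_{n\ge 0}S_{n}|^{\alpha}<\infty$, familiar from Gut \cite{Gut:74} and Janson \cite{Janson:86}.
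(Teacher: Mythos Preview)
The paper does not give its own proof of this theorem; it is quoted from \cite[Prop.~4.1]{KesMal:96} as a background result for ordinary random walks. Your sketch via the Wiener--Hopf identity $\Prob(H>y,\sigma<\infty)=\int_{[0,\infty)}\Prob(X^{-}>y+z)\,U^{>}(dz)$ together with $U^{>}([0,z])\asymp J(z)$ is essentially the original Kesten--Maller route, and the Fubini calculation does collapse as you claim: rewriting gives $\Erw H^{\alpha}\1_{\{\sigma<\infty\}}=\alpha\,\Erw\!\int_{0}^{X^{-}}y^{\alpha-1}U^{>}([0,X^{-}-y))\,dy$, and using monotonicity of $U^{>}$ on the ranges $y\in[0,X^{-}/2]$ versus $y\in[0,X^{-}]$ yields matching upper and lower bounds of order $\Erw(X^{-})^{\alpha}J(X^{-})$. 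One small simplification: the reduction ``(c) for some $x$ $\Rightarrow$ (c) for $x=0$'' does not need Theorem~\ref{thm:Kesten-Maller}(d); on $\{\sle(0)<\infty\}$ either $|S_{\sle(0)}|\le x$ or $\sle(0)=\sle(-x)$, so $|S_{\sle(0)}|^{\alpha}\1_{\{\sle(0)<\infty\}}\le x^{\alpha}+|S_{\sle(-x)}|^{\alpha}\1_{\{\sle(-x)<\infty\}}$.

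For contrast, the paper's own proof of the MRW extension (Theorem~\ref{thm:Kesten-Maller MRW min}, via Lemma~\ref{lem:K/M (b) and (d)}) proceeds quite differently: it introduces stopping times $\eta_{n}$ marking successive cycle-minimum downcrossings, reduces (b) to a moment condition on $S_{\tau_{\eta-1}}-D_{\eta}^{i}$ via a geometric-sum argument of Janson type, and then matches this to (a) through the renewal estimate of Lemma~\ref{lem:crucial series estimate}. That route is tailored to the Markov-modulated setting, where a clean Wiener--Hopf factorization of the ladder height is unavailable; in the i.i.d.\ case your approach is more direct and is the natural one.
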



\section{Preliminaries}\label{sec:preliminaries}

We return to the Markov-modulated setup and suppose for the rest of this article a MRW $(M_{n},S_{n})_{n\ge 0}$ be given which has positive recurrent discrete driving chain with stationary distribution $\pi=(\pi_{i})_{i\in\cS}$. For any distribution $\lambda=(\lambda_{i})_{i\in\cS}$ on $\cS$, put $\Prob_{\lambda}:=\sum_{i\in\cS}\lambda_{i}\Prob_{i}$. Since $\pi_{i}=(\Erw_{i}\tau(i))^{-1}>0$ for all $i\in\cS$, it follows that ``$\,\Prob_{\pi}$-a.s.'' means the same as ``$\,\Prob_{i}$-a.s. for all $i\in\cS\,$'', and this will henceforth be abbreviated by ``a.s.''

\vspace{.1cm}
Due to its particular Markovian structure, $(M_{n},X_{n})_{n\ge 1}$ forms a stationary sequence under $\Prob_{\pi}$, and for any measurable function $f:\cS\times\R\to\R$ satisfying
$$ \Erw_{\pi}f^{-}(M_{1},X_{1})<\infty\quad\text{or}\quad\Erw_{\pi}f^{+}(M_{1},X_{1})<\infty $$
we have the useful occupation measure formula
\begin{equation}\label{eq:occ measure formula}
\Erw_{\pi}f(M_{1},X_{1})\ =\ \frac{1}{\Erw_{i}\tau(i)}\Erw_{i}\left(\sum_{n=1}^{\tau(i)}f(M_{n},X_{n})\right),
\end{equation}
valid for any $i\in\cS$. As a particular consequence,
\begin{equation}\label{eq:stat mean drift}
\Erw_{\pi}X_{1}\ =\ \frac{1}{\Erw_{i}\tau(i)}\Erw_{i}S_{\tau(i)}\ =\ \pi_{i}\,\Erw_{i}S_{\tau(i)}
\end{equation}
whenever $\Erw_{\pi}X_{1}$ exists, i.e. $(\Erw_{\pi}X_{1}^{-})\wedge(\Erw_{\pi}X_{1}^{+})<\infty$. Note, however, that the right-hand side in \eqref{eq:stat mean drift} may be finite for all $i\in\cS$ even if $\Erw_{\pi}X_{1}$ does not exist. In other words, $\Prob_{\pi}$-integrability of the $S_{\tau(i)}$ does not generally imply the very same for $X_{1}$ (see Example \ref{exa:SLLN}).

\vspace{.2cm}
Ladder variables are well known to form an important tool in the analysis of random walks. We define
\begin{align*}
\sg\ =\ \sg_{1}\,:=\,\inf\{n\ge 1:S_{n}>0\},\quad\sle\ =\sle_{1}\,:=\,\inf\{n\ge 1:S_{n}\le 0\},
\end{align*}
thus $\sg=\sg(0)$ and $\sle=\sle(0)$, and then recursively for $n\ge 2$
\begin{align*}
\sgn\,:=\,\inf\{k>\sg_{n-1}:S_{k}>S_{\sg_{n-1}}\},\quad\slen\,:=\,\inf\{k>\sle_{n-1}:S_{k}\le S_{\sg_{n-1}}\}.
\end{align*}
We further put $\Mgn:=M_{\sgn}\1_{\{\sgn<\infty\}}+M_{\sigma_{*}^{>}}\1_{\{\sgn=\infty\}}$, where $\sigma_{*}^{>}:=\sup\{\sgn:\sgn<\infty\}$.

\vspace{.1cm}
The \emph{dual of} $(M_{n},S_{n})_{n\ge 0}$, denoted $({}^{\#}M_{n},{}^{\#}S_{n})_{n\ge 0}$ herafter, is again a MRW and its driving chain $({}^{\#}M_{n})_{n\ge 0}$ the time reversal of $(M_{n})_{n\ge 0}$ under $\Prob_{\pi}$ with transition matrix 
$$ {}^{\#}\vec{P}\ =\  \left(\frac{\pi_{j}p_{ji}}{\pi_{i}}\right)_{i,j\in\cS}. $$
Moreover,
$$ \Prob({}^{\#}X_{1}\in\cdot\,|{}^{\#}M_{0}=i,{}^{\#}M_{1}=j)\ =\ K_{ji} $$
for all $i,j\in\cS$. More generally, the duality relation
\begin{align}
\begin{split}\label{eq:duality relation}
\pi_{i}\,\Prob_{i}&((M_{k},X_{k})_{0\le k\le n}\in\cdot\,,M_{n}=j)\\
&=\ \pi_{j}\,\Prob_{j}(({}^{\#}M_{n-k},{}^{\#}X_{n-k})_{0\le k\le n}\in\cdot\,,{}^{\#}M_{n}=i)
\end{split}
\end{align}
holds true for all $i,j\in\cS$ and $n\in\N_{0}$. Considering a doubly infinite extension $(M_{n},X_{n})_{n\in\Z}$ of the stationary unilateral chain $(M_{n},X_{n})_{n\ge 1}$ under $\Prob_{\pi}$ and putting $S_{0}:=0$ and $S_{n}:=S_{n-1}+X_{n}$ for $n\ne 0$, thus
\begin{equation*}
S_{n}\,:=\,
\begin{cases}
\hfill X_{1}+\ldots+X_{n},&\text{if }n\ge 1,\\
\hfill 0,&\text{if }n=0,\\
-X_{0}-\ldots-X_{n+1},&\text{if }n\le -1,
\end{cases}
\end{equation*}
one can easily verify that the laws of the dual and of $(M_{-n},-S_{-n})_{n\ge 0}$ are the same under $\Prob_{\pi}$, thus $({}^{\#}M_{n},{}^{\#}X_{n})_{n\ge 1}$ has the law of $(M_{-n},X_{-n+1})_{n\ge 1}$ under $\Prob_{\pi}$. Let ${}^{\#}\sgn,{}^{\#}\slen$ denote the counterparts of $\sgn,\slen$ for $({}^{\#}M_{n},{}^{\#}S_{n})_{n\ge 0}$. For later use, we quote from \cite{Alsmeyer:16a} the following result about the positive recurrence of the \emph{ladder chain} $(\Mgn)_{n\ge 0}$.

\begin{Prop}\label{prop:ladder chain}
Given a MRW $(M_{n},S_{n})_{n\ge 0}$ with positive recurrent driving chain, suppose that the dual sequence $({}^{\#}S_{n})_{n\ge 0}$ is positive divergent, that is ${}^{\#}S_{n}\to\infty$ a.s. Then the ladder chain $(\Mgn)_{n\ge 0}$ has the unique stationary law
\begin{equation*}
\pi_{i}^{>}\ =\ \frac{\pi_{i}\Prob_{i}({}^{\#}\sle=\infty)}{\Prob_{\pi}({}^{\#}\sle=\infty)},\quad i\in\cS,
\end{equation*}
and is positive recurrent on $\cS^{>}=\{i:\pi_{i}^{>}>0\}=\{i:\Prob_{\pi}({}^{\#}\sle=\infty)>0\}$.
\end{Prop}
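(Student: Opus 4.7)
The plan is to realize $\pi^{>}$ as the normalized Palm intensity of strict ascending ladder epochs in a stationary doubly-infinite version of $(M_n,S_n)$, and to deduce from this both the invariance of $\pi^{>}$ under the ladder-chain kernel $Q(i,j):=\Prob_i(M_{\sg}=j,\,\sg<\infty)$ and the positive recurrence of $(\Mgn)_{n\ge 0}$ on $\cS^{>}$. This is a pleasant way to extract the formula; in \cite{Alsmeyer:16a} the same result is obtained by a more computational duality/Wiener--Hopf argument.

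Following the construction outlined just above the proposition, extend $(M_n,X_n)_{n\ge 1}$ under $\Prob_{\pi}$ to a doubly-infinite stationary process $(M_n,X_n)_{n\in\Z}$ with $S_0:=0$ and $S_n-S_{n-1}=X_n$ for every $n\in\Z$, and call $n$ a \emph{(strict ascending) ladder epoch} of $(S_\bullet)$ if $S_n>S_m$ for every $m<n$. Because $(M_{-n},-S_{-n})_{n\ge 0}$ has the same law under $\Prob_{\pi}$ as the dual $({}^{\#}M_n,{}^{\#}S_n)_{n\ge 0}$, conditioning on $M_0=i$ yields
\begin{equation*}
\Prob_{\pi}(0\text{ is a ladder epoch},\,M_0=i)\,=\,\pi_i\,\Prob_{\pi}(S_{-m}<0\ \forall\,m\ge 1\mid M_0=i)\,=\,\pi_i\,\Prob_i({}^{\#}\sle=\infty).
\end{equation*}
Summing over $i$ gives the total ladder rate $\rho:=\Prob_{\pi}({}^{\#}\sle=\infty)$, which is strictly positive by the hypothesis ${}^{\#}S_n\to\infty$ a.s. Ergodicity of the stationary sequence $(M_n,X_n)_{n\in\Z}$ then forces the set of ladder epochs to be $\Prob_{\pi}$-a.s.\ bi-infinite of asymptotic density $\rho$, so that it can be listed as $\cdots<\sg_{-1}<\sg_0\le 0<\sg_1<\sg_2<\cdots$.

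The strong Markov property applied at each $\sg_k$ shows that $(M_{\sg_k})_{k\in\Z}$ is a stationary Markov chain under $\Prob_{\pi}$ with transition kernel $Q$, so its one-dimensional marginal is forced to be the Palm distribution of the marks, which by the displayed formula equals $\pi_i^{>}=\pi_i\Prob_i({}^{\#}\sle=\infty)/\rho$; hence $\pi^{>}Q=\pi^{>}$. (Equivalently, one may verify the balance equation $\sum_i\pi_i^{>}Q(i,j)=\pi_j^{>}$ directly by computing $\Erw_{\pi}[\#\{k:0<\sg_k\le 1,\,M_{\sg_k}=j\}]$ via stationarity on one hand and by first-step conditioning at the ladder epoch $\sg_0\le 0$ on the other.) Since $\pi_i>0$ for every $i\in\cS$, the support of $\pi^{>}$ coincides with $\cS^{>}=\{i:\Prob_i({}^{\#}\sle=\infty)>0\}$, and the standard correspondence between finite invariant measures and positive recurrence for countable-state Markov chains then yields the positive recurrence of $(\Mgn)_{n\ge 0}$ on $\cS^{>}$.

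The main obstacle is the uniqueness assertion, i.e., showing that $Q$ is irreducible on $\cS^{>}$: one has to verify both that the ladder chain cannot reach a state with $\Prob_j({}^{\#}\sle=\infty)=0$ from $\cS^{>}$, and that every pair of states in $\cS^{>}$ communicates under $Q$. The first point follows by duality, because from such a $j$ the dual walk almost surely dips back to $\le 0$, preventing $j$ from occurring as a forward ladder state in the stationary regime; the second is extracted from the irreducibility of $(M_n)$ on $\cS$ together with the ergodic interpretation of $\pi^{>}$ as the asymptotic frequency of ladder states with a given mark. The careful matching of the two descriptions of $\cS^{>}$ and the attendant measurability bookkeeping are where the bulk of the technical work in \cite{Alsmeyer:16a} is invested.
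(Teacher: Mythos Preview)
The paper does not actually prove this proposition: it is quoted verbatim from \cite{Alsmeyer:16a}, so there is no in-paper argument to compare against. Your Palm/point-process route is a valid and rather clean way to obtain the formula, and it is genuinely different from the direct duality computation carried out in \cite{Alsmeyer:16a}; you correctly identify irreducibility on $\cS^{>}$ as the part that still needs honest work.

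One imprecision is worth flagging. You write that $(M_{\sg_k})_{k\in\Z}$ is a \emph{stationary} Markov chain \emph{under $\Prob_\pi$}. Under the time-stationary measure $\Prob_\pi$ the mark sequence is Markov with kernel $Q$, but it is \emph{not} $k$-stationary: the cycle covering the origin is length-biased, so the law of $M_{\sg_0}$ differs in general from that of $M_{\sg_1}$. Stationarity with one-dimensional marginal $\pi^{>}$ holds under the \emph{Palm} measure $\Prob_\pi(\,\cdot\mid 0\text{ is a ladder epoch})$, and it is there that the identification $\Prob(M_{\sg_0}=i)=\pi_i\Prob_i({}^{\#}\sle=\infty)/\rho$ is immediate. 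You clearly have this in mind (you compute exactly the Palm marginal and mention it by name), but the sentence as written conflates the two measures; either restate it for the Palm measure, or run the parenthetical balance-equation computation you sketch, which bypasses the issue.

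A second minor point: for $Q$ to be stochastic on $\cS^{>}$ you need $\Prob_i(\sg<\infty)=1$ there. Your Birkhoff argument gives infinitely many forward ladder epochs and hence $\limsup_n S_n>0$ a.s., but strictly increasing ladder heights could in principle have a finite limit. The quickest fix is to observe that positive divergence of the dual rules out null-homology (the paper notes this after Prop.~\ref{prop:classify null}), whence Prop.~\ref{prop:classify nontrivial} forces $\limsup_n S_n\in\{\pm\infty\}$ and therefore $\limsup_n S_n=+\infty$ a.s.
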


Later, we will also need the strictly ascending ladder epochs of $(S_{\tau_{n}(i)})_{n\ge 0}$, denoted $(\tg_{n}(i))_{n\ge 0}$ and defined by $\tg_{0}(i):=0$, $\tg_{n}(i):=\tau_{\zeta_{n}}(i)$ for $n\ge 1$, where 
\begin{align}
&\hspace{1cm}\zeta_{1}\ =\ \zeta_{1}(i)\ :=\ \inf\{k\ge 1:S_{\tau_{k}(i)}>0\}\label{eq:LI S_tau(i)}
\shortintertext{and}
&\zeta_{n}\ =\ \zeta_{n}(i)\ :=\ \inf\{k>\zeta_{n-1}(i):S_{\tau_{k}(i)}>S_{\tg_{n-1}(i)}\}\nonumber
\end{align}
for $n\ge 2$. Put $\tg(i):=\tg_{1}(i)$. Finally, let
\begin{align}
\begin{split}\label{eq:def of nu(x)}
\nu(x)\ &=\ \nu(i,x)\ :=\ \inf\{n\ge 1:S_{\tau_{n}(i)}>x\},\\
\nu^{>}(x)\ &=\ \nu^{>}(i,x)\ :=\ \inf\{n\ge 1:S_{\tg_{n}(i)}>x\}
\end{split}
\end{align}
for $x\in\R_{\geqslant}$ and notice that $\nu(0)=\zeta_{1}$, $\nu(x)=\zeta_{\nu^{>}(x)}$, $\sg(x)\le\tau_{\nu(x)}=\tg_{\nu^{>}(x)}$ a.s.

\section{Null-homology and classification of Markov random walks}\label{sec:classification}

A natural starting point for our analysis is to provide the basic classification of MRW as to their divergence type. Unlike ordinary RW, which can be bounded only if their increments are a.s. 0 (trivial case), there is an infinite subclass of MRW exhibiting this kind of behavior. After a short discussion in this section, those will therefore be ruled out from the subsequent analysis. For the remaining ones, the same trichotomy as for ordinary RW holds true (Prop.~\ref{prop:classify nontrivial}). This was shown by Newbould \cite[Thm.~1]{Newbould:73} for finite $\cS$ and by Prabhu et al. \cite[Thm.~7]{PrabhuTangZhu:91} in the general situation.

\vspace{.1cm}
Following Lalley \cite{Lalley:86}, we call a MRW $(M_{n},S_{n})_{n\ge 0}$ \emph{null-homologous} hereafter if there exists a function $g:\cS\to\R$ such that
\begin{equation}\label{NH1}
X_{n}\ =\ g(M_{n})-g(M_{n-1})\quad\Prob_{\pi}\text{-a.s.}
\end{equation}
or, equivalently,
\begin{equation}\label{NH2}
S_{n}\ =\ g(M_{n})-g(M_{0})\quad\Prob_{\pi}\text{-a.s.}
\end{equation}
for all $n\in\N$. Otherwise, the MRW is called \emph{nontrivial}. Obviously, $(S_{n})_{n\ge 0}$ fluctuates within a compact subset of $\R$ if $g$ is a bounded function. The following two lemmata show that all embedded RW $(S_{\tau_{n}(i)})_{n\ge 0}$, $i\in\cS$, of a null-homologous MRW are trivial and that the finiteness of one of $\liminf_{n\to\infty}S_{n}$ or $\limsup_{n\to\infty}S_{n}$ entails null-homology. A complete classification of null-homologous MRW as to their asymptotic behavior is then very easy and stated without proof as Prop.~\ref{prop:classify null}. The result is preceded by the following two lemmata the first of which was again obtained by Prabhu et al. \cite[Thm.~2]{PrabhuTangZhu:91} regarding the equivalence of (b) and (c).

\begin{Lemma}\label{lem:zero increments}
Given a MRW $(M_{n},S_{n})_{n\ge 0}$, the following assertions are equivalent:
\begin{description}[xxx]
\item[(a)] $(M_{n},S_{n})_{n\ge 0}$ is null-homologous.
\item[(b)] $(S_{\tau_{n}(i)})_{n\ge 0}$ has zero increments for some $i\in\cS$.
\item[(c)] $(S_{\tau_{n}(i)})_{n\ge 0}$ has zero increments for all $i\in\cS$.
\end{description}
\end{Lemma}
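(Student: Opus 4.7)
The implications (a) $\Rightarrow$ (c) and (c) $\Rightarrow$ (b) are immediate: under (a), \eqref{NH2} gives $S_{\tau_n(i)} = g(M_{\tau_n(i)}) - g(M_0) = g(i) - g(i) = 0$ $\Prob_i$-a.s.\ because $M_{\tau_n(i)} = i$ $\Prob_i$-a.s., while (c) trivially implies (b). The substantive content is (b) $\Rightarrow$ (a).

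Fix $i$ with $S_{\tau_n(i)} = 0$ $\Prob_i$-a.s.\ for every $n$, equivalently $S_{\tau_1(i)} = 0$ $\Prob_i$-a.s. I would condition on the driving-chain path of the first excursion: for each first-return cycle $(i, v_1, \ldots, v_{k-1}, i)$ of positive $\Prob_i$-probability, the increments $X_1, \ldots, X_k$ are conditionally independent with $X_j \sim K_{v_{j-1} v_j}$ and their sum vanishes a.s. Since independent random variables whose sum is deterministic must each be a.s.\ constant (a one-line characteristic-function argument), it follows that $K_{v_{j-1} v_j} = \delta_{h(v_{j-1}, v_j)}$ for some real number $h(v_{j-1}, v_j)$ depending only on the pair $(v_{j-1}, v_j)$, and $\sum_{j=1}^k h(v_{j-1}, v_j) = 0$ along every such cycle. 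By irreducibility of the driving chain, any edge $(v, w)$ with $p_{vw} > 0$ can be embedded into a first-return cycle through $i$ (take a positive-probability path from $i$ to $v$ avoiding $i$ in its interior, append $(v, w)$, then a positive-probability path from $w$ to $i$ avoiding $i$ in its interior), so $h$ is defined on the entire edge set and $X_n = h(M_{n-1}, M_n)$ a.s.

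To produce the function $g$, I would show that $h$ is a coboundary. Given any directed cycle $\gamma = (v_0, v_1, \ldots, v_m = v_0)$ of positive transition probability, pick positive-probability paths $\alpha$ from $i$ to $v_0$ and $\beta$ from $v_0$ to $i$, and compare the two $\Prob_i$-positive trajectories obtained by concatenating $\alpha$ with $\beta$, respectively $\alpha$ with $\gamma$ followed by $\beta$. Each ends at $i$, hence at some return epoch $\tau_k(i)$, so each has $S$-value zero a.s.; the corresponding $h$-sums vanish, and subtracting gives $\sum_{j=1}^m h(v_{j-1}, v_j) = 0$. Thus $h$ sums to zero along every directed cycle of positive probability. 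Setting $g(i) := 0$ and $g(j)$ equal to the common value of $\sum h$ along any positive-probability path from $i$ to $j$ yields a well-defined $g : \cS \to \R$ satisfying $h(v, w) = g(w) - g(v)$. Consequently, $X_n = g(M_n) - g(M_{n-1})$ a.s.\ under every $\Prob_j$ and therefore under $\Prob_\pi$, which is \eqref{NH1}.

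The principal obstacle is the coboundary step: upgrading the vanishing of $h$ from first-return cycles through the distinguished state $i$ to arbitrary directed cycles in the transition graph. Irreducibility is the essential input, and one must verify that the embeddings constructed remain first-return excursions, which is routine because paths in an irreducible graph can always be routed around a single state.
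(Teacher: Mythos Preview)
Your proof is correct and takes a genuinely different route from the paper's. The paper works at the level of first-passage increments between states: it introduces the characteristic functions $\psi_{jk}$ of $S_{\tau(k)}$ under $\Prob_j$, uses the zero-increment hypothesis at the fixed $i$ to obtain the product identity $\psi_{ii_1}\psi_{i_1i_2}\cdots\psi_{i_ni}\equiv 1$ for every finite sequence $i_1,\ldots,i_n\in\cS$, deduces $|\psi_{jk}|\equiv 1$ and hence that each $S_{\tau(k)}$ is $\Prob_j$-a.s.\ a constant $h(j,k)$, and then shows $h(j,k)=g(k)-g(j)$ for $g(\cdot):=h(i,\cdot)$ before descending to single steps in the very last line. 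You instead go straight to the single-step kernels, show each $K_{vw}$ is a Dirac mass by conditioning on a first-return path and using the same ``independent sum deterministic $\Rightarrow$ each summand deterministic'' fact, and then run a graph-theoretic coboundary argument on the edge weights $h(v,w)$. The paper's route is slicker in that it sidesteps all embedding/routing considerations on the transition graph; your route is more elementary and makes the degeneracy $K_{vw}=\delta_{h(v,w)}$ explicit from the outset, which is arguably more informative about the structure of a null-homologous MRW. One minor remark: in your step~4 you do not actually need the paths $\alpha,\beta$ to avoid $i$ (you correctly note the concatenation lands at some $\tau_k(i)$), so your closing worry about routing around $i$ really only pertains to step~3, where your construction is fine.
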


\begin{proof}
Only ``(c)$\RA$(a)'' remains to be proved, for ``(a)$\RA$(c)'' is trival. Let $\psi_{ij}$ be the characteristic function of $S_{\tau(j)}$ under $\Prob_{i}$ for $i,j\in\cS$. Since $(S_{\tau_{n}(i)})_{n\ge 0}$ has zero increments, we easily find that
$$ \psi_{ii_{1}}(t)\psi_{i_{1}i_{2}}(t)\cdot\ldots\cdot\psi_{i_{n-1}i_{n}}(t)\psi_{i_{n}i}(t)\ =\ 1 $$
for all $t\in\R$, $n\ge 2$ and $i_{1},\ldots,i_{n}\in\cS$, in particular
$|\psi_{ij}|=|\psi_{ij}\psi_{ji}|\equiv 1$ for all $i,j\in\cS$. Therefore, with $\vec{i}:=\sqrt{-1}$, we have $\psi_{ij}(t)=e^{\vec{i}h(i,j)t}$ for some function $h:\cS^{2}\to\R$ and 
$$ e^{\vec{i}h(j,i)t}\ =\ \psi_{ji}(t)\ =\ \overline{\psi_{ij}(t)}\ =\ e^{-\vec{i}h(i,j)t}, $$
thus $h(j,i)=-h(i,j)$ and particularly $h(i,i)=0$ for all $i,j\in\cS$. Finally, fix any $i\in\cS$, define $g(j):=h(i,j)$ for $j\in\cS$ and use $\psi_{ij}\psi_{jk}\psi_{ki}\equiv 1$ to infer
$$ 0\ =\ h(i,j)+h(j,k)+h(k,i)\ =\ h(i,j)+h(j,k)-h(i,k)\ =\ g(j)+h(j,k)-g(k), $$
i.e. $h(j,k)=g(k)-g(j)$ for all $j,k\in\cS$. But the latter means that $S_{\tau(k)}=g(k)-g(j)$ $\Prob_{j}$-a.s. and therefore
\begin{align*}
\Prob_{j}(X_{1}=g(M_{1})-g(M_{0}))\ &=\ \sum_{k\in\cS}\Prob_{j}(X_{1}=g(k)-g(j),\,M_{1}=k)\\
&=\ \sum_{k\in\cS}\Prob_{j}(S_{\tau(k)}=g(k)-g(j),\tau(k)=1)\\
&=\ \sum_{k\in\cS}\Prob_{j}(\tau(k)=1)\ =\ 1
\end{align*}
for all $j,k\in\cS$ which shows that $(M_{n},S_{n})_{n\ge 0}$ is indeed null-homologous.\qed
\end{proof}

\begin{Lemma}\label{lem:classify null}
If  $\liminf_{n\to\infty}\,S_{n}$ or $\limsup_{n\to\infty}\,S_{n}$ is $\Prob_{i}$-a.s. finite for some $i\in\cS$, then $(M_{n},S_{n})_{n\ge 0}$ is null-homologous.
\end{Lemma}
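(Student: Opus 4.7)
By symmetry (apply the argument to $(M_n,-S_n)$), it suffices to treat the case that $\liminf_{n\to\infty}S_n$ is $\Prob_i$-a.s.\ finite. My plan is to deduce that $S_{\tau(i)}=0$ $\Prob_i$-a.s., so that $(S_{\tau_n(i)})_{n\ge 0}$ has zero increments under $\Prob_i$, whereupon the implication (b)$\Rightarrow$(a) of Lemma \ref{lem:zero increments} yields null-homology. So the whole task reduces to producing that single a.s.\ identity.

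Step 1 (constancy of $L:=\liminf_{n\to\infty}S_n$). Under $\Prob_i$, the strong Markov property applied at the successive return epochs $\tau_n(i)$ makes the excursions $\xi_n:=(X_{\tau_{n-1}(i)+1},M_{\tau_{n-1}(i)+1},\ldots,X_{\tau_n(i)},M_{\tau_n(i)})$, $n\ge 1$, an iid sequence. A finite permutation of the $\xi_n$ does not change the partial-sum value at any return time past the support of the permutation, hence alters only finitely many of the $S_m$; consequently $L$ is invariant under all finite permutations of $(\xi_n)_{n\ge 1}$. By the Hewitt-Savage zero-one law, $L$ is therefore $\Prob_i$-a.s.\ constant, and by the standing assumption $L=c$ for some $c\in\R$, $\Prob_i$-a.s.

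Step 2 (shift argument). By the strong Markov property at the $\Prob_i$-a.s.\ finite time $\tau(i)$, and since $M_{\tau(i)}=i$, the process $(M_{\tau(i)+n},S_{\tau(i)+n}-S_{\tau(i)})_{n\ge 0}$ has the same $\Prob_i$-law as $(M_n,S_n)_{n\ge 0}$; in particular its liminf equals $c$ $\Prob_i$-a.s. On the other hand, finiteness of $\tau(i)$ gives
$$ \liminf_{n\to\infty}\bigl(S_{\tau(i)+n}-S_{\tau(i)}\bigr)\;=\;L-S_{\tau(i)}\;=\;c-S_{\tau(i)}. $$
Comparing the two expressions yields $S_{\tau(i)}=0$ $\Prob_i$-a.s., which is condition (b) of Lemma \ref{lem:zero increments}, and the proof is complete.

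The main obstacle is Step 1: the strong Markov property by itself only produces the distributional identity $L\eqdist L-S_{\tau(i)}$, which is too weak to force $S_{\tau(i)}=0$ (it implies, e.g., only $\Erw_i S_{\tau(i)}=0$ when $L$ is integrable). The Hewitt-Savage argument is what upgrades it to an almost-sure identity, after which the rest is merely bookkeeping with the strong Markov property and the previously established Lemma \ref{lem:zero increments}.
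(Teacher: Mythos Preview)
Your proof is correct. The Hewitt--Savage argument is sound: under $\Prob_i$ the excursions $\xi_n$ are iid, a finite permutation of them leaves $\tau_N(i)$, $S_{\tau_N(i)}$, and hence all $S_m$ with $m>\tau_N(i)$ unchanged, so $L$ lies in the exchangeable $\sigma$-field and is a.s.\ constant. Step~2 then gives $S_{\tau(i)}=0$ $\Prob_i$-a.s.\ immediately.

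The paper, however, takes a different route, and your closing remark mischaracterises it. The paper uses \emph{only} the distributional identity: the strong Markov property at $\tau(i)$ gives $L\eqdist S_{\tau(i)}+Y$ with $Y$ an \emph{independent} copy of $L$, and then a characteristic-function argument ($\varphi_L(t)=\varphi_{S_{\tau(i)}}(t)\varphi_L(t)$, so $\varphi_{S_{\tau(i)}}\equiv 1$ near $0$, forcing $S_{\tau(i)}=0$ a.s.) finishes. So the distributional equation is \emph{not} too weak once independence is recorded; your parenthetical ``it implies only $\Erw_i S_{\tau(i)}=0$'' overlooks this. What your approach buys is that it bypasses the characteristic-function step entirely: once $L$ is known to be constant, the conclusion $S_{\tau(i)}=0$ is a one-line a.s.\ identity rather than a distributional one. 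Conversely, the paper's approach avoids invoking Hewitt--Savage and the (mild) bookkeeping needed to verify exchangeability of $L$.
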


\begin{proof}
First observe that, if $Y$ is a copy of $\liminf_{n\to\infty}S_{n}$ or $\limsup_{n\to\infty}S_{n}$ and is independent of $S_{\tau(i)}$ under $\Prob_{i}$ for each $i$, then the stochastic fixed point equation
\begin{equation}\label{SFPE limsup}
Y\ \eqdist\ S_{\tau(i)}+Y\quad\text{under }\Prob_{i}
\end{equation}
holds for all $i\in\cS$, where $\eqdist$ means equality in law. This follows from
$$ \liminf_{n\to\infty}\,S_{n}\ =\ S_{\tau(i)}\ +\ \liminf_{n\to\infty}\,(S_{n}-S_{\tau(i)})\quad\Prob_{i}\text{-a.s.} $$
and a similar equation for $\limsup_{n\to\infty}S_{n}$.

Now, if $\liminf_{n\to\infty}\,S_{n}$ or $\limsup_{n\to\infty}\,S_{n}$ is a.s.\ real-valued with respect to some $\Prob_{i}$, then \eqref{SFPE limsup} has a real-valued solution which in turn entails $S_{\tau(i)}=0$ $\Prob_{i}$-a.s. as one can easily check by using characteristic functions. By Lemma \ref{lem:zero increments}, we infer that $(M_{n},S_{n})_{n\ge 0}$ is null-homologous.
\qed
\end{proof}

Here is the classification result for null-homologous MRW, the straightforward proof of which can be omitted.

\begin{Prop}\label{prop:classify null}
If $(M_{n},S_{n})_{n\ge 0}$ is null-homologous with $g$ as in \eqref{NH1}, then exactly one of the following five alternatives holds true:
\begin{description}[\sf (NH-5)]
\item[\sf (NH-1)] $g\equiv 0$ and $S_{n}=0$ a.s. for all $n\ge 1$.\vspace{.1cm}
\item[\sf (NH-2)] $0\ne\sup_{i\in\cS}|g(i)|<\infty$ and 
$$ -\infty\ <\ \liminf_{n\to\infty}\,S_{n}\ \le\ \limsup_{n\to\infty}\,S_{n}\ <\ \infty\quad\Prob_{\pi}\text{-a.s.} $$
\item[\sf (NH-3)] $-\infty=\inf_{i\in\cS}g(i)<\sup_{i\in\cS}g(i)<\infty$ and
$$ \liminf_{n\to\infty}\,S_{n}\ =\ -\infty\quad\text{and}\quad\limsup_{n\to\infty}\,S_{n}\ \in\ \R\quad\Prob_{\pi}\text{-a.s.} $$
\item[\sf (NH-4)] $-\infty<\inf_{i\in\cS}g(i)<\sup_{i\in\cS}g(i)=\infty$ and
$$ \liminf_{n\to\infty}\,S_{n}\ \in\ \R\quad\text{and}\quad\limsup_{n\to\infty}\,S_{n}\ =\ \infty\quad\Prob_{\pi}\text{-a.s.} $$
\item[\sf (NH-5)] $\inf_{i\in\cS}g(i)=-\infty$, $\sup_{i\in\cS}g(i)=\infty$ and
$$ \liminf_{n\to\infty}\,S_{n}\ =\ -\infty\quad\text{and}\quad\limsup_{n\to\infty}\,S_{n}\ =\ \infty\quad\Prob_{\pi}\text{-a.s.} $$
\end{description}
Moreover, whenever $\sigma_{n}^{\dagger}<\infty$ a.s. for some $\dagger\in\{>,<\}$ and all $n\ge 0$, then the associated ladder chain $(M_{n}^{\dagger})_{n\ge 0}$ is transient, that is
$$ \Prob_{i}(M_{n}^{\dagger}=i\text{ \rm infinitely often})\ <\ 1 $$
for all $i\in\cS$.
\end{Prop}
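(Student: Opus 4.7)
The core observation is that null-homology gives, via \eqref{NH2}, the pointwise identity $S_n = g(M_n) - g(M_0)$ a.s.\ for every $n$, so the asymptotics of $(S_n)_{n\ge 0}$ are entirely controlled by those of $g(M_n)$. Since $\bM$ is positive recurrent with $\pi_i > 0$ for every $i \in \cS$, each state is visited infinitely often a.s.\ under every $\Prob_i$, and hence $(g(M_n))_{n\ge 0}$ attains each value $g(i)$ infinitely often. This yields
$$ \limsup_{n\to\infty} S_n \;=\; \sup_{i \in \cS} g(i) - g(M_0) \qquad\text{and}\qquad \liminf_{n\to\infty} S_n \;=\; \inf_{i \in \cS} g(i) - g(M_0) $$
a.s. From here the five-fold classification (NH-1)--(NH-5) follows by an exhaustive split on whether $g \equiv 0$, whether $g$ is bounded but nonzero, and on which of $\inf g$, $\sup g$ are infinite; the a.s.\ finiteness of the constant $g(M_0)$ under each $\Prob_i$ absorbs harmlessly into the stated $\liminf$/$\limsup$ values.

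For the transience claim, by symmetry I would treat only $\dagger = >$, the case $\dagger = <$ being analogous via $-S_n$. Assuming $\sg_n < \infty$ a.s.\ for all $n$, the defining strict inequalities $S_{\sg_n} > S_{\sg_{n-1}}$ combine with $S_{\sg_n} = g(M_n^>) - g(M_0)$ to force $(g(M_n^>))_{n\ge 0}$ to be strictly increasing a.s. Consequently the ladder chain never revisits a previously occupied state, so starting from any $i \in \cS$ we have $M_n^> \neq M_0^> = i$ for all $n \ge 1$, whence $\Prob_i(M_n^> = i \text{ i.o.}) = 0$, which is even stronger than the asserted transience.

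I do not expect any serious obstacle. The one minor point to verify is that the null-homology identity, originally required only $\Prob_\pi$-a.s., automatically upgrades to $\Prob_i$-a.s.\ for every $i \in \cS$ because $\Prob_\pi = \sum_i \pi_i \Prob_i$ with all $\pi_i > 0$; this ensures both arguments above are valid under each starting law and justifies the uniform ``a.s.'' statements in the proposition.
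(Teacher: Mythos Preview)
The paper explicitly omits the proof of this proposition, calling it ``straightforward.'' Your proposal supplies exactly such a straightforward argument and is correct: the identity $S_n = g(M_n) - g(M_0)$ together with positive recurrence (so that every state, hence every value of $g$, is attained by $(M_n)$ infinitely often) immediately yields the $\liminf/\limsup$ formulas and the five-way split, and strict monotonicity of $(g(M_n^{\dagger}))_{n\ge 0}$ along the ladder epochs forces the ladder chain never to revisit any state, giving the transience claim.
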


Notice that alternatives {\sf (NH-3)}--{\sf (NH-5)} are only possible if $\cS$ is infinite. We also point out that the null-homology of $(M_{n},S_{n})_{n\ge 0}$ (with $g$ as in \eqref{NH1}) and its dual (with $-g$) are equivalent as following immediately from the fact that $({}^{\#}M_{n},{}^{\#}X_{n})_{n\ge 1}$ and $(M_{-n},X_{-n+1})_{n\ge 1}$ are equal in law under $\Prob_{\pi}$ (see Section \ref{sec:preliminaries}).

\vspace{.1cm}
With the help of Lemma \ref{lem:classify null}, it is easy to prove the following zero-one law which in turn entails the announced trichotomy for nontrivial MRW, Prop.~\ref{prop:classify nontrivial} below. Recall that $S_{n}^{*}=\max_{0\le k\le n}S_{k}$ for $n\in\N$.

\begin{Prop}\label{prop:classify nontrivial}
For the additive part of a nontrivial MRW $(M_{n},S_{n})_{n\ge 0}$, exactly one of the three alternatives {\sf (PD)}, {\sf (ND)}, or {\sf (Osc)} occurs a.s.
\end{Prop}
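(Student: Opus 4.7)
The plan is to establish the trichotomy through a zero-one law for shift-invariant events, combined with the exclusion of finite $\liminf/\limsup$ values provided by Lemma \ref{lem:classify null}. First I would observe that, under $\Prob_\pi$, the sequence $(M_{n-1},X_n)_{n\ge 1}$ is stationary and ergodic. Ergodicity follows from the classical fact that an irreducible, positive recurrent Markov chain is ergodic under its stationary distribution, together with the conditional-independence structure of the $X_n$: realizing $X_n = F(M_{n-1},M_n,U_n)$ for some measurable $F$ and an external iid uniform sequence $(U_n)_{n\ge 1}$ independent of $(M_n)_{n\ge 0}$ exhibits $(M_{n-1},X_n)_{n\ge 1}$ as a factor of the product of two stationary ergodic systems (with the iid factor mixing), hence ergodic. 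Since $\pi_i>0$ for every $i\in\cS$, any $\Prob_\pi$-triviality carries over to every $\Prob_i$.

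Next I would verify that the events $\{\liminf_n S_n=-\infty\}$, $\{\liminf_n S_n=\infty\}$, $\{\liminf_n S_n\in\R\}$, and the three analogues for $\limsup$, are all invariant under the canonical shift $T$. Since $S_n\circ T=S_{n+1}-X_1$ with $X_1$ a.s.\ finite, both $\liminf$ and $\limsup$ change only by the real constant $X_1$ along every trajectory, so these events pull back to themselves. Ergodicity then forces each to have $\Prob_i$-probability $0$ or $1$ for every $i$. Invoking Lemma \ref{lem:classify null}, nontriviality excludes both $\Prob_i(\liminf_n S_n\in\R)=1$ and $\Prob_i(\limsup_n S_n\in\R)=1$; combined with the preceding dichotomy these probabilities therefore equal $0$, and so $\liminf_n S_n,\limsup_n S_n\in\{-\infty,\infty\}$ a.s. A further application of the zero-one law makes each of them an a.s.\ deterministic element of $\{-\infty,\infty\}$.

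Finally, the constraint $\liminf_n S_n\le\limsup_n S_n$ rules out the combination $(+\infty,-\infty)$, leaving exactly the three configurations $(+\infty,+\infty)=\textsf{(PD)}$, $(-\infty,-\infty)=\textsf{(ND)}$, and $(-\infty,+\infty)=\textsf{(Osc)}$. The main obstacle I anticipate is cleanly justifying the ergodicity of $(M_{n-1},X_n)_{n\ge 1}$ under $\Prob_\pi$; although standard, it requires either the factor-of-products argument sketched above or a direct verification of irreducibility and positive recurrence for the bivariate Markov chain on its natural state space. By contrast, once ergodicity is in hand, the shift-invariance of the relevant $\liminf$/$\limsup$ events and the appeal to Lemma \ref{lem:classify null} are short and routine.
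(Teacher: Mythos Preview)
Your argument is correct and takes a genuinely different route from the paper. The paper establishes the zero--one law for $\{\limsup_n S_n=\infty\}$ directly from the regenerative structure: fixing $i$, it writes $\kappa=\Prob_i(\limsup_n S_n=\infty)$ and, by conditioning on the post-$\tau_m^*(i)$ future (which is an independent copy of the process under $\Prob_i$), obtains $\kappa=\kappa\,\Prob_i(\sup_n S_n>t)$ for every $t$, forcing $\kappa\in\{0,1\}$. Your proof instead invokes ergodicity of the stationary sequence $(M_{n-1},X_n)_{n\ge 1}$ under $\Prob_\pi$ and the shift-invariance of the $\liminf/\limsup$ events, then transfers the $0$--$1$ conclusion to each $\Prob_i$ via $\pi_i>0$. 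Both proofs finish identically by appealing to Lemma~\ref{lem:classify null} to exclude finite values.

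The trade-offs are clear. The paper's argument is entirely self-contained within the regenerative framework already set up for the embedded walks $(S_{\tau_n(i)})_{n\ge 0}$ and requires no external ergodic-theoretic input; it is also phrased for a fixed $i$ from the start, which matches how the rest of the paper operates. Your approach is structurally cleaner once ergodicity is granted, and it makes transparent that the trichotomy is really a consequence of stationarity and ergodicity of the increments rather than of the Markov structure per se---so it would extend verbatim to, say, a random walk with stationary ergodic increments. The cost is the ergodicity justification you flag; your factor-of-a-product-with-a-Bernoulli-system sketch is standard and adequate, but it does import machinery the paper otherwise avoids.
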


\begin{proof}
We first show that $Y\in\{\limsup_{n\to\infty}S_{n},\liminf_{n\to\infty}S_{n}\}$ satisfies
$$ \Prob_{i}(Y=\infty)\ =\ 1-\Prob_{i}(Y=-\infty)\ \in\ \{0,1\} $$
for all $i\in\cS$. Fixing any $i$, it suffices to consider $Y=\limsup_{n\to\infty}S_{n}$. Put $\kappa:=\Prob_{i}(Y=\infty)$ and $\tau_{n}^{*}(i):=\inf\{\tau_{k}(i):\tau_{k}(i)\ge n\}$ for $n\ge 0$. Then
\begin{align*}
\kappa\ &=\ \lim_{m\to\infty}\,\Prob_{i}(Y=\infty,\,S_{m}^{*}>t)\\
&=\ \lim_{m\to\infty}\,\Prob_{i}\left(\limsup_{n\to\infty}\,(S_{n}-S_{\tau_{m}^{*}(i)})=\infty,\,S_{m}^{*}>t\right)\\
&=\ \Prob_{i}(Y=\infty)\,\lim_{m\to\infty}\,\Prob_{i}(S_{m}^{*}>t)\\
&=\ \kappa\,\Prob_{i}\left(\sup_{n\ge 0}\,S_{n}>t\right)
\end{align*}
for all $t\in\R_{>}$. Hence, either $\kappa=0$, or $\sup_{n\ge 0}S_{n}=\infty$ $\Prob_{i}$-a.s. and thus $\kappa=1$.

\vspace{.1cm}
Now, if $(M_{n},S_{n})_{n\ge 0}$ is nontrivial, then Lemma \ref{lem:classify null} implies that any of $\liminf_{n\to\infty}S_{n}$ and $\limsup_{n\to\infty}S_{n}$ equals one of the values $-\infty$ or $+\infty$ with probability one under $\Prob_{\pi}$. Hence, if {\sf (ND)} and {\sf (Osc)} fail to hold, {\sf (PD)} remains as the only alternative.\qed 
\end{proof}

An analog of Kesten's trichotomy in the case when $\Erw_{i}|S_{\tau(i)}|=\infty$ for some $i\in\cS$ can also be given and will be proved in Subsection \ref{subsec:Kesten trichotomy}. On the other hand, Example \ref{exa:infinite petal flower chain} will show that the more natural, but weaker condition $\Erw_{\pi}|X_{1}|=\infty$ is not sufficient for this result.

\begin{Theorem}\label{thm:Kesten trichotomy MRW}
If $\Erw_{i}|S_{\tau(i)}|=\infty$ for some/all $i\in\cS$, then exactly one of the following three alternatives holds true:
\begin{description}[\sf (Osc+)]\itemsep3pt
\item[\sf (PD+)] $\lim_{n\to\infty}\,n^{-1}S_{n}=\infty$ a.s.
\item[\sf (ND+)] $\lim_{n\to\infty}\,n^{-1}S_{n}=-\infty$ a.s.
\item[\sf (Osc+)] $\liminf_{n\to\infty}\,n^{-1}S_{n}=-\infty$ and $\limsup_{n\to\infty}\,n^{-1}S_{n}=\infty$ a.s.
\end{description}
\end{Theorem}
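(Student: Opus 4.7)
The plan is to reduce the statement to Kesten's iid trichotomy for the embedded random walk $(S_{\tau_n(i)})_{n\ge 0}$ under $\Prob_i$ and then transfer the asymptotic rates to $n^{-1}S_n$ by controlling the excursions of $\bM$ between successive returns to $i$. Since the three alternatives clearly exclude each other, it suffices to prove that at least one of them holds.

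First I would settle the solidarity ``some/all''. Applying the occupation measure formula \eqref{eq:occ measure formula} to $f(m,x)=|x|$ gives $\Erw_i\sum_{n=1}^{\tau(i)}|X_n|=\Erw_i\tau(i)\cdot\Erw_\pi|X_1|$; combined with the corresponding solidarity result of Section \ref{sec:solidarity}, this shows that $\Erw_i|S_{\tau(i)}|=\infty$ is independent of $i$ and, in particular, forces $\Erw_\pi|X_1|=\infty$. Now fix any $i\in\cS$. Under $\Prob_i$, the walk $(S_{\tau_n(i)})_{n\ge 0}$ is zero-delayed and iid with infinite-mean increments, so Kesten's iid trichotomy (see the paragraph following Theorem \ref{thm:Spitzer-Erickson}) yields one of $n^{-1}S_{\tau_n(i)}\to+\infty$, $n^{-1}S_{\tau_n(i)}\to-\infty$, or $\liminf=-\infty,\limsup=+\infty$ a.s. Combined with the iid SLLN $\tau_n(i)/n\to\Erw_i\tau(i)=1/\pi_i>0$, the same trichotomy transfers to $S_{\tau_n(i)}/\tau_n(i)$. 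In the oscillating case, along the subsequence $(\tau_n(i))_{n\ge 0}$ the ratio $S_m/m$ already oscillates between $\pm\infty$, whence (Osc+) follows.

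The genuine work lies in transferring divergence. By passing to $-S_n$ we may restrict to $n^{-1}S_{\tau_n(i)}\to+\infty$ a.s.\ and have to conclude $n^{-1}S_n\to+\infty$ a.s. Writing $n=\tau_{k(n)}(i)+r_n$ with $0\le r_n<\tau_{k(n)+1}(i)-\tau_{k(n)}(i)$ and setting $D_k:=\min_{\tau_k(i)\le m\le\tau_{k+1}(i)}(S_m-S_{\tau_k(i)})\le 0$, we have
$$ \frac{S_n}{n}\ \ge\ \frac{S_{\tau_{k(n)}(i)}}{n}+\frac{D_{k(n)}}{n}. $$
The first summand tends to $+\infty$ since $k(n)/\tau_{k(n)+1}(i)\to\pi_i>0$ and $S_{\tau_{k(n)}(i)}/k(n)\to\infty$ a.s., and the second is controlled by $|D_k|/k$, where $(D_k)_{k\ge 0}$ is iid under $\Prob_i$.

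The main obstacle is that $\Erw_i|D_1|$ may be infinite---this occurs precisely when $\Erw_\pi X_1^-=\infty$---so that one cannot conclude $|D_k|/k\to 0$ by the naive Borel--Cantelli argument. Two regimes therefore have to be distinguished. If $\Erw_\pi X_1^-<\infty$, the occupation measure formula gives $\Erw_i|D_1|\le\Erw_i\sum_{n=1}^{\tau(i)}X_n^-=\Erw_i\tau(i)\cdot\Erw_\pi X_1^-<\infty$, hence $|D_k|/k\to 0$ a.s.; alternatively, Birkhoff's ergodic theorem applied directly to the stationary ergodic sequence $(X_n)_{n\ge 1}$ under $\Prob_\pi$ delivers $S_n/n\to\Erw_\pi X_1=+\infty$ in one stroke. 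In the genuinely non-integrable subcase $\Erw_\pi X_1^+=\Erw_\pi X_1^-=\infty$, a sharper tail estimate on $|D_1|$ has to be extracted from the Spitzer--Erickson integrability $\Erw_i J(S_{\tau(i)}^-)<\infty$ that underlies Kesten's regime $S_{\tau_n(i)}/n\to+\infty$ for the embedded walk; a Borel--Cantelli argument for the iid sequence $(D_k)$---possibly along a threshold growing with the superlinear divergence rate of $S_{\tau_k(i)}$---then gives $|D_{k(n)}|/n\to 0$ and completes the proof.
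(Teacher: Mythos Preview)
Your case split is the wrong starting point: you split according to Kesten's trichotomy for the \emph{embedded} walk $(S_{\tau_n(i)})_{n\ge 0}$ and then try to transfer each case to $(S_n)_{n\ge 0}$. But ``embedded PD$^+$'' does not force (PD+) for the MRW, even under $\Erw_i|S_{\tau(i)}|=\infty$. Take the infinite petal flower chain with $p_{0i}\asymp i^{-2}$, $K_{0i}=\delta_{-i^{2}\log i}$, $K_{i0}=\delta_{i^{2}\log i+i}$. Then $S_{\tau(0)}=M_{1}>0$ a.s.\ with $\Erw_{0}S_{\tau(0)}=\infty$, so the embedded walk is PD$^+$; but $\Erw_{0}(S_{\tau(0)}\wedge x)\asymp\log x$, $D^{0}=M_{1}^{2}\log M_{1}$, and $\Erw_{0}J_{0}(D^{0})\asymp\sum_{i}i^{-2}\cdot i^{2}=\infty$. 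By Theorem~\ref{thm:Spitzer-Erickson MRW} the MRW is \emph{not} PD, hence Osc, and (Osc+) holds --- not (PD+). Concretely, $D_{k+1}^{0}/k$ does \emph{not} tend to $0$ (it is $\ge k(\log k)^{3}$ i.o.), so the very conclusion ``$|D_{k(n)}|/n\to 0$'' you aim for in the hard subcase is false. No tail bound on $D^{i}$ can be extracted from $\Erw_{i}J_{i}(S_{\tau(i)}^{-})<\infty$ alone, because the excursion depth $D^{i}$ is decoupled from $S_{\tau(i)}^{-}$.

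The paper instead starts from the basic trichotomy of the MRW itself (Prop.~\ref{prop:classify nontrivial}). If the MRW is PD, Theorem~\ref{thm:Spitzer-Erickson MRW}(a)$\Rightarrow$(b) yields $\Erw_{i}J_{i}(D^{i})<\infty$; together with $\Erw_{i}|S_{\tau(i)}|=\infty$ one is in Case~2 of that proof, so Erickson's and Pruitt's lemmata give \eqref{eq:Pruitt1} and \eqref{eq:Pruitt2}, whence $\liminf n^{-1}S_{n}\ge\liminf(S_{\tau_{n}}-D_{n+1}^{i})/\tau_{n+1}=\infty$. If the MRW is Osc, Kesten's iid trichotomy for $(S_{\tau_{n}(i)})$ still gives, say, $\limsup n^{-1}S_{n}=\infty$; assuming $c:=\liminf n^{-1}S_{n}$ finite, the shifted walk $(M_{n},S_{n}+n(c+1))$ is PD with $\Erw_{i}|S_{\tau(i)}+(c+1)\tau(i)|=\infty$, so the PD case just proved forces $\liminf n^{-1}(S_{n}+n(c+1))=\infty$, a contradiction. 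The missing idea in your attempt is that the crucial control $\Erw_{i}J_{i}(D^{i})<\infty$ comes from the MRW being PD, not from the embedded walk being PD$^+$.
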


\section{Main results}\label{sec:main results}

Our main results are generalizations of the Theorems \ref{thm:Spitzer-Erickson}, \ref{thm:Kesten-Maller} and \ref{thm:Kesten-Maller min} to nontrivial MRW $(M_{n},S_{n})_{n\ge 0}$. In order to state them, a number of quantities must be introduced. For $i\in\cS$, $n\in\N$ and $x\in\R_{\geqslant}$, put
\begin{align*}
&A_{i}(x)\,:=\,\Erw_{i}(S_{\tau(i)}^{+}\wedge x)-\Erw_{i}(S_{\tau(i)}^{-}\wedge x),\\
&J_{i}(0)\,:=\,1\quad\text{and}\quad J_{i}(x)\,:=\,
\begin{cases}
\displaystyle{\frac{x}{\Erw_{i}(S_{\tau(i)}^{+}\wedge x)}},&\text{if }{\Prob_{i}(S_{\tau(i)}> 0)>0}\\[2mm]
\hfill x,&\text{otherwise}
\end{cases},\quad x>0,\\
&D_{n}^{i}\,:=\,\max_{\tau_{n-1}(i)<k\le\tau_{n}(i)}(S_{k}-S_{\tau_{n-1}(i)})^{-}
\end{align*}
and let $D^{i}$ be a copy of the iid $D_{n}^{i}$ under $\Prob_{i}$ which is independent of all other occurring random variables. Note that the $D_{n}^{i}$ describe the maximal downward excursions within the cycles of the driving chain determined by successive visits of state $i$. Their common law, thus $\Prob_{i}(D^{i}\in\cdot)$, is one of the relevant excursion measures that appear in our results, but we also need the measures $\V_{i}^{\alpha}$, defined on $\R_{>}$ by
\begin{align}\label{eq:def V_alpha^s}
\V_{i}^{\alpha}((x,\infty))\,:=\,\Erw_{i}\left(\sum_{n=1}^{\tau(i)}\1_{\{S_{n}^{-}>x\}}\right)^{\alpha},\quad x\in\R_{\geqslant},
\end{align}
for $\alpha>0$ and $i\in\cS$. They obviously satisfy
\begin{equation}\label{eq:tail D^s and V_alpha}
\Prob_{i}(D^{i}>x)\ \le\ \V_{i}^{\alpha}((x,\infty))\ \le\ \Erw_{i}\big(\tau(i)^{\alpha}\1_{\{D^{i}>x\}}\big)
\end{equation}
for all $x\in\R_{\geqslant}$ and have therefore heavier tails than $D^{i}$ under $\Prob_{i}$. In the case $\alpha=1$, we simply write $\V_{i}$ for $\V_{i}^{1}$.

\begin{Theorem}\label{thm:Spitzer-Erickson MRW}
For a nontrivial MRW $(M_{n},S_{n})_{n\ge 0}$, consider the following assertions:
\begin{description}[(b)]\itemsep3pt
\item[(a)] $(M_{n},S_{n})_{n\ge 0}$ is positive divergent, that is $\lim_{n\to\infty}S_{n}=\infty$ a.s.
\item[(b)] $A_{i}(x)>0$ for all sufficiently large $x$ and $\Erw_{i}J_{i}(D^{i})<\infty$ for some/all $i\in\cS$.
\item[(c)] $\sum_{n\ge 1}n^{-1}\,\Prob_{i}(S_{n}\le x)<\infty$ for some/all $(i,x)\in\cS\times\R_{\geqslant}$.
\item[(d)] $\Erw_{i}\sg(x)<\infty$ for all $(i,x)\in\cS\times\R_{\geqslant}$.
\end{description}
Then (a) $\LRA$ (b) $\Longrightarrow$ (c) $\Longrightarrow$ (d). Moreover, if $\Erw_{i}\tau(i)\log\tau(i)<\infty$ for some/all $i\in\cS$ is assumed, then part (c) holds true iff
\begin{equation}\label{eq:Spitzer series criterion}
\int\log J_{i}(x)\ \V_{i}(dx)\ <\ \infty
\end{equation}
for some/all $i\in\cS$.
\end{Theorem}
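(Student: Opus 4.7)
The key tool is the embedded random walk $(S_{\tau_n(i)})_{n\ge 0}$ which, under $\Prob_i$, is an ordinary zero-delayed random walk with iid increments distributed as $S_{\tau(i)}$ under $\Prob_i$, so Theorem \ref{thm:Spitzer-Erickson} applies directly. The gap between this walk and the full MRW is controlled by the iid sequence of maximal downward cycle excursions $(D_n^i)_{n\ge 1}$, which justifies the replacement of $\Erw_iJ_i((S_{\tau(i)})^{-})$ from the iid theory by $\Erw_iJ_i(D^i)$ in (b). Solidarity of the various ``some/all $i$'' quantifiers is deferred to Section \ref{sec:solidarity}, so I fix one $i\in\cS$ throughout.

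\textbf{Proof of (a)$\LRA$(b).} The identity
\[
\inf_{k\ge 0}S_k\ =\ \inf_{n\ge 1}\bigl(S_{\tau_{n-1}(i)}-D_n^i\bigr)\quad\Prob_i\text{-a.s.}
\]
reduces positive divergence of $(S_n)_{n\ge 0}$ to positive divergence of the embedded walk plus finiteness of the right-hand infimum. Since $(S_{\tau(i)})^{-}\le D^i$ and $J_i$ is non-decreasing, $\Erw_iJ_i(D^i)<\infty$ entails $\Erw_iJ_i((S_{\tau(i)})^{-})<\infty$, so $S_{\tau_n(i)}\to\infty$ $\Prob_i$-a.s.\ by Theorem \ref{thm:Spitzer-Erickson}. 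By the strong Markov property $D_n^i$ is independent of $(S_{\tau_{n-1}(i)},D_1^i,\ldots,D_{n-1}^i)$, and a conditional Borel--Cantelli argument mimicking Erickson's proof in \cite{Erickson:73} shows that $\Erw_iJ_i(D^i)<\infty$ is the sharp tail condition ensuring $\sup_n(D_n^i-S_{\tau_{n-1}(i)})<\infty$ $\Prob_i$-a.s., giving (a). Conversely, (a) yields positive divergence of the embedded walk and $\inf_k S_k>-\infty$; the former gives $A_i(x)>0$ for large $x$ and $\Erw_iJ_i((S_{\tau(i)})^{-})<\infty$ via Theorem \ref{thm:Spitzer-Erickson}, and running the Erickson-type argument in reverse, using the known growth rate of $S_{\tau_n(i)}$ (LLN if $\Erw_iS_{\tau(i)}<\infty$, Theorem \ref{thm:Kesten trichotomy MRW} otherwise), upgrades the moment condition from $(S_{\tau(i)})^{-}$ to $D^i$.

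\textbf{Implications (b)$\Rightarrow$(c)$\Rightarrow$(d).} Given (a), Theorem \ref{thm:Spitzer-Erickson} applied to the embedded walk yields $\sum_n n^{-1}\Prob_i(S_{\tau_n(i)}\le x)<\infty$ and $\Erw_i\nu(i,x)<\infty$. Using $n\ge k$ on the $k$-th cycle block, the cycle decomposition
\[
\sum_{n\ge 1}\frac{\1_{\{S_n\le x\}}}{n}\ \le\ \sum_{k\ge 1}\frac{1}{k}\sum_{n=\tau_{k-1}(i)+1}^{\tau_k(i)}\1_{\{S_n\le x\}}
\]
reduces, after taking $\Erw_i$ and applying the occupation-measure formula \eqref{eq:occ measure formula} together with the fact that $S_{\tau_{k-1}(i)}\to\infty$, to the finiteness of the embedded harmonic sum, giving (c). For (c)$\Rightarrow$(d), the bound $\sg(x)\le\tau_{\nu(i,x)}(i)$ a.s.\ and Wald's identity yield $\Erw_i\sg(x)\le\Erw_i\nu(i,x)\cdot\Erw_i\tau(i)$, and finiteness of $\Erw_i\nu(i,x)$ follows from (c) by reading the same cycle decomposition in the reverse direction.

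\textbf{The $\V_i$-criterion and main obstacle.} The final equivalence under the extra moment assumption $\Erw_i\tau(i)\log\tau(i)<\infty$ is where I expect the principal difficulty. The Kesten--Maller asymptotic $\sum_n n^{-1}\Prob_i(S_{\tau_n(i)}\le x)\asymp\log J_i(x)$ from \eqref{eq:harmonic series asymptotics} applied to the embedded walk, together with the cycle decomposition above and an integration-by-parts, should lead to
\[
\sum_{n\ge 1}\frac{\Prob_i(S_n\le x)}{n}\ \asymp\ \int\log J_i(y)\,\V_i(dy),
\]
the measure $\V_i$ recording precisely the expected per-cycle occupation of the negative half-line. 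The $L\log L$ moment on $\tau(i)$ is exactly what is required to replace the random weights $\tau_k(i)^{-1}$ by the deterministic $n^{-1}$ via a Wald-type logarithmic identity and to guarantee that the ensuing error terms are summable; producing these matching two-sided bounds, and in particular controlling the interaction between $\log J_i$ and the random cycle length, is the main technical obstacle and presumably requires the auxiliary estimates deferred to the Appendix.
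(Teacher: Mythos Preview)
Your approach to (a)$\Leftrightarrow$(b) is essentially the paper's, though the paper makes the Erickson-type dichotomy precise via Lemma~\ref{lem:Erickson} (a direct generalization of \cite[Lemma~4]{Erickson:73}) and, for (b)$\Rightarrow$(a) in the infinite-mean case, invokes Pruitt's lemma \cite[Lemma~8.1]{Pruitt:81} to convert the ratio condition on $D_{n+1}^i$ into one on the partial sums of $(S_{\tau_k}-S_{\tau_{k-1}})^-$; your ``conditional Borel--Cantelli'' gesture hides precisely this two-case split. The argument for (c)$\Rightarrow$(d) is correct and matches the paper.

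The genuine gap is in your (b)$\Rightarrow$(c). Your cycle decomposition with $1/n\le 1/k$ on the $k$-th block gives, after taking expectations and conditioning on $S_{\tau_{k-1}(i)}$, the upper bound
\[
\Sigma_0(i,x)\ \le\ \sum_{k\ge 1}\frac{1}{k}\int \V_i\big([y-x,\infty)\big)\,\Prob_i(S_{\tau_{k-1}(i)}\in dy)\ \asymp\ \int\log J_i(y)\,\V_i(dy),
\]
which is exactly the $\V_i$-criterion \eqref{eq:Spitzer series criterion}. But this integral can be infinite even when (a) and (c) hold: see Example~\ref{exa:xlogx for tau(s)}, where $(M_n,S_n)$ is positive divergent with $\Erw_\pi|X_1|<\infty$ yet $\int\log J_0(y)\,\V_0(dy)=\infty$ because $\Erw_0\tau(0)\log\tau(0)=\infty$. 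So your upper bound is useless for establishing (c), and the occupation-measure formula does not rescue it.

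The paper's proof of (a)$\Rightarrow$(c) (Lemma~\ref{lem:Spitzer series}) avoids this by decomposing along the \emph{ladder} epochs $\tg_n(i)$ of the embedded walk rather than the return times $\tau_n(i)$. The point is that between consecutive ladder epochs the number of steps where $1/k$ can be large is controlled by $\tg_{n-1}$ itself; splitting the inner sum at $k=2\tg_{n-1}$ yields two pieces, the first bounded by $\sum_n\Prob_i(S_{\tg_n}-D_{n+1}^{i,>}\le x)$ (finite by Lemma~\ref{lem:harmonic sum * D^s}, using $\Erw_iJ_i^{>}(D^{i,>})<\infty$ which follows from (b)), and the second bounded by $\Erw_i\tg<\infty$. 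This ladder-based splitting is the missing idea.
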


\begin{Rem}\label{rem:Erickson's extension MRW}\rm
Using Erickson's result mentioned in Remark \ref{rem:Erickson's extension}, our proof will show that, if $\Erw_{i}|S_{\tau(i)}|=\infty$ and thus $\Erw_{i}S_{\tau(i)}^{+}+\Erw_{i}D^{i}=\infty$, then (a) follows from (b) without the assumption of ultimate positivity of $A_{i}(x)$. On the other hand, if $\Erw_{i}|S_{\tau(i)}|<\infty$ and thus $J_{i}(x)=O(x)$ as $x\to\infty$, then (b) does not reduce to the ultimate positivity of $A_{i}(x)$ as in the case of ordinary random walks because $\Erw_{i}D^{i}=\infty$ is still possible.
\end{Rem}

Further conditions equivalent to (a) are $\Prob_{i}(\sigma_{\min}<\infty)=1$, $\Prob_{i}(\rho(x)<\infty)=1$ and $\Prob_{i}(N(x)<\infty)=1$ for some/all $(i,x)\in\cS\times\R_{\geqslant}$ and also $\Prob_{i}(\sle(-x)=\infty)>0$ for some $(i,x)\in\cS\times\R_{\geqslant}$. This follows, as for ordinary RW (see after Remark \ref{rem:Erickson's extension}), from the basic fluctuation-type trichotomy for nontrivial MRW. On the other hand, the last condition does not need to be true for all $(i,x)$. For instance, it is easy to provide an example of a nontrivial  positive divergent MRW $(M_{n},S_{n})_{n\ge 0}$ with $\Prob_{i}(X_{1}\le -x)=1$ for some $(i,x)\in\cS\times\R_{\geqslant}$, thus $\Prob_{i}(\sle(-x)=1)=1$.

\vspace{.1cm}
Every ordinary random walk $(S_{n})_{n\ge 0}$ can be viewed as a MRW with single-state, say 0, driving chain $(M_{n})_{n\ge 0}$, giving $S_{\tau(0)}^{+}=X_{1}^{+}$ and $D_{1}^{0}= X_{1}^{-}$. Therefore, assertion (b) of Theorem \ref{thm:Spitzer-Erickson MRW} and of Theorem \ref{thm:Spitzer-Erickson} are identical. On the other hand, parts (c) and (d) of Theorem \ref{thm:Spitzer-Erickson MRW} are no longer sufficient for the positive divergence of $(S_{n})_{n\ge 0}$ as will be shown in Example \ref{exa:infinite petal flower chain}.

\vspace{.1cm}
It would be desirable and is plausible to believe that fluctuation-theoretic properties of $(S_{n})_{n\ge 0}$ are encoded in the stationary increment distribution $\Prob_{\pi}(X_{1}\in\cdot)$. As for Theorem \ref{thm:Spitzer-Erickson MRW}, this could mean to replace $\Erw_{i}J_{i}(D^{i})<\infty$ in part (b) with
$$ \int_{\R_{>}}\frac{x}{\Erw_{\pi}(X_{1}^{+}\wedge x)}\ \Prob_{\pi}(X_{1}^{-}\in dx)\ <\ \infty $$
However, Example \ref{exa:Sisyphus} will show that this fails to work in general. The tail behavior of $X_{1}^{\pm}$ under $\Prob_{\pi}$ can actually be very different from the tail behavior of $S_{\tau(i)}^{\pm}$ under $\Prob_{i}$ in the strong sense that
$$ \lim_{x\to\infty}\frac{\Prob_{\pi}(X_{1}^{\pm}>x)}{\Prob_{i}(S_{\tau(i)}^{\pm}>x)}\ =\ \infty. $$
The occurrence of $D^{i}$ in Theorem \ref{thm:Spitzer-Erickson MRW} and also in some of the subsequent results actually indicates that within a cycle between two successive visits to a state $i$ of the driving chain, it is the extremal behavior of the random walk rather than the average one that determines some of its characteristic features.

\vspace{.2cm}
The next four results combined provide the counterpart of Theorem \ref{thm:Kesten-Maller} and once again a more complex picture than in the case of ordinary random walks. For better presentation, we call a MRW $(M_{n},S_{n})_{n\ge 0}$ with positive recurrent driving chain $(M_{n})_{n\ge 0}$ to be of \emph{type $\alpha$} for $\alpha>0$ if $\Erw_{i}\tau(i)^{1+\alpha}<\infty$ for some/all $i\in\cS$.

\begin{Theorem}\label{thm:Kesten-Maller MRW}
Let $\alpha>0$ and $(M_{n},S_{n})_{n\ge 0}$ be a nontrivial, positive divergent MRW of type $\alpha$. Then the following assertions are equivalent:
\begin{description}[(b)]\itemsep3pt
\item[(a)] $\Erw_{i}J_{i}(D^{i})^{1+\alpha}<\infty$ for some/all $i\in\cS$.
\item[(b)] $\Erw_{i}\rho(x)^{\alpha}<\infty$ for some/all $(i,x)\in\cS\times\R_{\geqslant}$.
\item[(c)] $\Erw_{i}\sigma_{\min}^{\alpha}<\infty$ for some/all $i\in\cS$.
\item[(d)] For some/all $i\in\cS$, there exists $x\in\R_{\geqslant}$ such that $\Prob_{i}(\sle(-x)=\infty)>0$ and $\Erw_{i}\sle(-x)^{\alpha}\1_{\{\sle(-x)<\infty\}}<\infty$. In this case, the last expectation is also finite for any other $x\in\R_{\geqslant}$.
\end{description}
\end{Theorem}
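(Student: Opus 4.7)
The plan is to work via the embedded ordinary random walk $(S_{\tau_n(i)})_{n\ge 0}$, which under $\Prob_i$ is a zero-delayed iid random walk with increment distribution $\Prob_i(S_{\tau(i)}\in\cdot)$ to which the classical Kesten--Maller Theorem \ref{thm:Kesten-Maller} applies. By the solidarity results collected in Section \ref{sec:solidarity}, each ``some/all'' statement in (a)--(d) can be proved for a single fixed $i\in\cS$, so I fix $i$ once and for all. The main novelty compared with the iid case is that the functionals $\rho(x)$, $\sigma_{\min}$ and $\sle(-x)$ of the full walk differ from their embedded analogues by contributions coming from the downward excursions between successive returns to $i$, and it is precisely these contributions that force the stronger condition $\Erw_iJ_i(D^i)^{1+\alpha}<\infty$ in place of the naive $\Erw_iJ_i(S_{\tau(i)}^-)^{1+\alpha}<\infty$ that would suffice for the embedded walk alone.

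For (a)$\Leftrightarrow$(b), the key is to realise the last cycle meeting $(-\infty,x]$ as the last passage below $x$ of an auxiliary perturbed random walk. Set $U_n:=S_{\tau_n(i)}$ and $N^*(x):=\sup\{n\ge 1:U_{n-1}-D_n^i\le x\}$; since $\min_{\tau_{n-1}(i)<k\le\tau_n(i)}S_k=U_{n-1}-D_n^i$, we have $\tau_{N^*(x)-1}(i)<\rho(x)\le\tau_{N^*(x)}(i)$. The sequence $(W_n)_{n\ge 1}:=(U_{n-1}-D_n^i)_{n\ge 1}$ is precisely a perturbed random walk in the sense of \cite{AlsIksMei:15}: the pairs $(U_n-U_{n-1},D_n^i)$ are iid, so $D_n^i$ is independent of $U_{n-1}$ even though not of $U_n-U_{n-1}$. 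Applying the PRW-analogue of Theorem \ref{thm:Kesten-Maller} from \cite{AlsIksMei:15} to $(W_n)$ yields the equivalence of (a) with $\Erw_iN^*(x)^\alpha<\infty$; combining this with a routine Wald-type moment inequality that, under $\Erw_i\tau(i)^{1+\alpha}<\infty$, gives $\Erw_i\tau_{N^*(x)}(i)^\alpha\asymp\Erw_iN^*(x)^\alpha$, one obtains the full chain $\Erw_i\rho(x)^\alpha\asymp\Erw_iN^*(x)^\alpha\asymp\Erw_iJ_i(D^i)^{1+\alpha}$.

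The same cycle decomposition handles (a)$\Leftrightarrow$(c) and (a)$\Leftrightarrow$(d): $\sigma_{\min}$ lies in the unique cycle attaining $\min_n W_n$, and on $\{\sle(-x)<\infty\}$ the level $-x$ is first reached inside the first cycle $m$ with $W_m\le-x$. Both functionals are therefore sandwiched above and below by the hitting time of the minimum, respectively the first strict descending ladder epoch, of the PRW $(W_n)_{n\ge 1}$, and the PRW analogue of Theorem \ref{thm:Kesten-Maller} together with the type $\alpha$ hypothesis again produces $\alpha$-moment equivalence with $\Erw_iJ_i(D^i)^{1+\alpha}<\infty$. That $\Prob_i(\sle(-x)=\infty)>0$ holds for all sufficiently large $x$ is automatic from the positive divergence assumption via Theorem \ref{thm:Spitzer-Erickson MRW}, and standard strong Markov arguments at the first passage to $(-\infty,-x]$ ensure that if the finiteness statement in (d) holds for one value of $x$ then it holds for every $x\in\R_{\geqslant}$.

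The main technical obstacle is to ensure that the moment hypotheses needed to invoke the PRW machinery of \cite{AlsIksMei:15} translate cleanly into the conditions on $D^i$ and the cycle length $\tau(i)$ that appear in the statement. The delicate bookkeeping combines the inequalities \eqref{eq:tail D^s and V_alpha} relating $D^i$ to the excursion measures $\V_i^\alpha$ with the type $\alpha$ hypothesis $\Erw_i\tau(i)^{1+\alpha}<\infty$; getting this matching exact, so that neither an unnecessary moment assumption survives nor one is silently imposed, is the principal subtlety, and it is precisely here that the difference between the naive condition $\Erw_iJ_i(S_{\tau(i)}^-)^{1+\alpha}<\infty$ (which would govern only the embedded walk) and the correct condition $\Erw_iJ_i(D^i)^{1+\alpha}<\infty$ (which accounts for intra-cycle excursions) is exposed.
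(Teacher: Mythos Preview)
Your approach via the perturbed random walk $(W_n)_{n\ge 1}=(S_{\tau_{n-1}(i)}-D_n^i)_{n\ge 1}$ is genuinely different from the paper's and is, for the core equivalence (a)$\Leftrightarrow$(b), explicitly endorsed by the paper itself in Section~\ref{sec:PRW}: Lemma~\ref{lem:T>tau_n formula} is exactly the bridge you describe, since $\{\rho(x)>\tau_n(i)\}$ coincides with the event that the PRW last exits $(-\infty,x]$ after index $n$. (Your label ``Wald-type'' is a slight misnomer because $N^*(x)$ is not a stopping time, but the substance is right.) The trivial chain (b)$\Rightarrow$(c)$\Rightarrow$(d) via $\sigma_{\min}\le\rho(0)$ and $\sle(-x)\1_{\{\sle(-x)<\infty\}}\le\sigma_{\min}$ is the same in both approaches.

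The paper's own argument, however, does \emph{not} invoke \cite{AlsIksMei:15}. For (a)$\Rightarrow$(b) it splits into two cases according to whether $\Erw_iS_{\tau(i)}^+$ is finite: the finite-mean case is reduced to Theorem~\ref{thm:Kesten-Maller MRW min} applied to the drifted walk $(M_n,S_n-\mu n)$, while the infinite-mean case uses an auxiliary Bernoulli randomization that decouples the positive increments $\wh X_k^+$ from the excursion minima $D_k^i$ and reduces to two ordinary random walks. For (b)$\Rightarrow$(a) it works directly with the strictly ascending ladder epochs $\tg_n(i)$ and the asymptotics \eqref{eq:harmonic series asymptotics, alpha>0}. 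Most importantly, for (d)$\Rightarrow$(a) the paper goes through Lemma~\ref{lem:crucial series estimate}, a lower bound $J_i(y)^{1+\alpha}\lesssim\Erw_i\big(\sum_{n\ge 1}\tau_n(i)^\alpha\1_{\{S_{\tau_n(i)}\le y,\,\min_{1\le k\le\tau_n(i)}S_k>-x\}}\big)$ whose proof passes to the \emph{dual} MRW and exploits positive recurrence of the dual ladder chain $({}^{\#}M_n^{>})_{n\ge 0}$. This is a genuinely MRW-specific argument. Your sketch outsources this step to an assumed PRW analogue of Theorem~\ref{thm:Kesten-Maller}(d)$\Rightarrow$(a) in \cite{AlsIksMei:15}; Section~\ref{sec:PRW} only vouches explicitly for the last-exit-time correspondence and immediately cautions that ``this correspondence has its limitations'', so you should verify that the cited reference actually furnishes the descending-ladder-epoch implication for PRW before declaring the loop closed.
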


\begin{Theorem}\label{thm:K/M MRW Spitzer series}
Given a nontrivial MRW $(M_{n},S_{n})_{n\ge 0}$ of type $\alpha>0$, the following assertions are equivalent:
\begin{description}[(b)]\itemsep3pt
\item[(a)] $A_{i}(x)>0$ for all sufficiently large $x$, $\Erw_{i}J_{i}(S_{\tau(i)}^{-})^{1+\alpha}<\infty$ and\begin{equation}\label{eq:MRW Spitzer series}
\int J_{i}(x)^{\alpha}\ \V_{i}(dx)\ <\ \infty
\end{equation}
for some/all $i\in\cS$.
\item[(b)] $\sum_{n\ge 1}n^{\alpha-1}\Prob_{i}(S_{n}\le x)<\infty$ for some/all $(i,x)\in\cS\times\R_{\geqslant}$.
\end{description}
\end{Theorem}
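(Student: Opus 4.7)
The plan is to reduce the series in (b) to the corresponding series for the embedded ordinary random walk $Y_k := S_{\tau_k(i)}$ under $\Prob_i$, plus a single correction term accounting for within-cycle excursions below level $x$. A cycle decomposition gives
\begin{align*}
\sum_{n\geq 1}n^{\alpha-1}\Prob_i(S_n\leq x)\,=\,\sum_{k\geq 1}\Erw_i\Bigg(\sum_{n=\tau_{k-1}(i)+1}^{\tau_k(i)}n^{\alpha-1}\1_{\{S_n\leq x\}}\Bigg).
\end{align*}
Using the type-$\alpha$ assumption $\Erw_i\tau(i)^{1+\alpha}<\infty$, the elementary comparison $(m+n)^{\alpha-1}\asymp\max(m,n)^{\alpha-1}$, and the strong law $\tau_k(i)/k\to 1/\pi_i$ a.s., one can replace $n^{\alpha-1}$ inside the $k$-th cycle by $k^{\alpha-1}$ up to errors that are absorbable on both sides. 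Then, applying the strong Markov property at $\tau_{k-1}(i)$ and writing $\phi(y):=\Erw_i\sum_{n=1}^{\tau(i)}\1_{\{S_n\leq y\}}$, this yields
\begin{align*}
\sum_{n\geq 1}n^{\alpha-1}\Prob_i(S_n\leq x)\,\asymp\,\sum_{k\geq 1}k^{\alpha-1}\Erw_i\phi(x-Y_{k-1}).
\end{align*}

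I would then split the right-hand side according to whether $Y_{k-1}\leq x$ or $Y_{k-1}>x$. Since $\phi$ is bounded by $m_i:=\Erw_i\tau(i)<\infty$ and satisfies $\phi(y_0)>0$ for some $y_0>0$, the \emph{regular} part satisfies
\begin{align*}
\sum_{k\geq 1}k^{\alpha-1}\Erw_i\big[\phi(x-Y_{k-1})\1_{\{Y_{k-1}\leq x\}}\big]\,\asymp\,\sum_{k\geq 1}k^{\alpha-1}\Prob_i(Y_{k-1}\leq x),
\end{align*}
which by Theorem \ref{thm:Kesten-Maller} applied to the ordinary random walk $(Y_k)$ is finite iff $A_i(x)>0$ for all sufficiently large $x$ and $\Erw_i J_i(S_{\tau(i)}^-)^{1+\alpha}<\infty$, i.e., the first two conditions in (a). Note that finiteness of this series already forces positive divergence of $(Y_k)$ via Theorem \ref{thm:Spitzer-Erickson}, because $k^{\alpha-1}\geq k^{-1}$ for every $\alpha>0$ and $k\ge 1$. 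For the \emph{excursion} part, on $\{Y_{k-1}>x\}$ one has $\phi(x-Y_{k-1})=\V_i([Y_{k-1}-x,\infty))$, and Fubini yields
\begin{align*}
\sum_{k\geq 1}k^{\alpha-1}\Erw_i\big[\V_i([Y_{k-1}-x,\infty))\1_{\{Y_{k-1}>x\}}\big]\,=\,\int_{(0,\infty)}\big(U_i^\alpha(x+u)-U_i^\alpha(x)\big)\,\V_i(du),
\end{align*}
with $U_i^\alpha(z):=\sum_{k\geq 1}k^{\alpha-1}\Prob_i(Y_{k-1}\leq z)$. Once $(Y_k)$ is positive divergent, the asymptotic \eqref{eq:harmonic series asymptotics, alpha>0} gives $U_i^\alpha(z)\asymp J_i(z)^\alpha$ as $z\to\infty$, and a translation argument based on the relation $J_i(x+u)/J_i(u)\to 1$ as $u\to\infty$ delivers $U_i^\alpha(x+u)-U_i^\alpha(x)\asymp J_i(u)^\alpha$, so the excursion part is finite iff $\int J_i(u)^\alpha\,\V_i(du)<\infty$, the third condition in (a). Combining the two pieces establishes (a) $\Leftrightarrow$ (b) at the fixed pair $(i,x)$, the ``some/all'' refinement following from the solidarity results of Section \ref{sec:solidarity}.

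The main obstacle is making rigorous the replacement $n^{\alpha-1}\leadsto k^{\alpha-1}$ in the first step: the regimes $\alpha\geq 1$ and $0<\alpha<1$ must be treated separately, and the pointwise discrepancies summed over $n\in(\tau_{k-1}(i),\tau_k(i)]$ and $k\geq 1$ must be bounded using the moment $\Erw_i\tau(i)^{1+\alpha}<\infty$ to confirm that the total error is absorbable by the leading terms in each direction of the equivalence. A secondary technical point is the translation estimate $U_i^\alpha(x+u)-U_i^\alpha(x)\asymp J_i(u)^\alpha$ uniformly for $u$ large; this can be handled either by working with the equivalent surrogate $xA_i(x)^{-1}$ for $J_i(x)$ noted after Theorem \ref{thm:Spitzer-Erickson}, or by direct estimation of the tails of $Y_{k-1}$ via the Kesten-Maller asymptotics. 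All other steps amount to routine bookkeeping once these are in place.
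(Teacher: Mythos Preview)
Your approach is correct and follows essentially the same route as the paper: a cycle decomposition reducing $\Sigma_\alpha(i,x)$ to $\int\sum_{n\ge 1}n^{\alpha-1}\Prob_i(S_{\tau_n}\le x+y)\,\V_i(dy)$, with the replacement $n^{\alpha-1}\leadsto k^{\alpha-1}$ handled by splitting on $\alpha\le 1$ versus $\alpha>1$ and controlling the discrepancy via $\Erw_i\tau(i)^{1+\alpha}<\infty$---precisely the obstacle you flag. The paper does not make your regular/excursion split explicit (it keeps the single integral and invokes the Kesten--Maller asymptotic $\sum_n n^{\alpha-1}\Prob_i(S_{\tau_n}\le x+y)\asymp J_i(y)^\alpha$ directly), and it extracts the first two conditions of (a) from $\Sigma_\alpha(i,0)<\infty$ via the embedded-walk inequality \eqref{eq:Spitzer series embedded} rather than from a separate regular-part bound, but after Fubini these are equivalent bookkeeping choices.
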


As stated at the end of Theorem \ref{thm:Spitzer-Erickson MRW}, the equivalence remains valid in the case $\alpha=0$ when replacing $J_{i}(x)^{\alpha}$ with $\log J_{i}(x)$ in \eqref{eq:MRW Spitzer series} and the assumption $\Erw_{i}\tau(i)^{1+\alpha}<\infty$ with $\Erw_{i}\tau(i)\log\tau(i)<\infty$.

\begin{Theorem}\label{thm:K/M MRW N(x)}
Given a nontrivial, positive divergent MRW $(M_{n},S_{n})_{n\ge 0}$ of type $\alpha$, consider the following assertions:
\begin{description}[(b)]\itemsep2pt
\item[(a)] $\Erw_{i}J_{i}(S_{\tau(i)}^{-})^{1+\alpha}<\infty$ and
\begin{equation}\label{eq:MRW renewal counting}
\int J_{i}(x)\ \V_{i}^{\alpha}(dx)\ <\ \infty
\end{equation}
for some/all $i\in\cS$.
\item[(b)] $\Erw_{i}N(x)^{\alpha}<\infty$ for some/all $(i,x)\in\cS\times\R_{\geqslant}$.
\end{description}
Then (a) $\LRA$ (b) if $\alpha\ge 1$, and (a) $\RA$ (b) if $0<\alpha<1$.
\end{Theorem}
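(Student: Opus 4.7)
The strategy is to decompose $(S_n)_{n\ge 0}$ according to the cycles of the driving chain $\bM$ at a fixed state $i\in\cS$. Let $T_n:=\tau_n(i)-\tau_{n-1}(i)$ denote the successive cycle lengths, $\widetilde{S}_n:=S_{\tau_n(i)}$ the embedded walk, and for $v\in\R_{\geqslant}$ set
\begin{equation*}
E_n(v)\,:=\,\sum_{\tau_{n-1}(i)<k\le\tau_n(i)}\1_{\{(S_k-\widetilde{S}_{n-1})^-\ge v\}}.
\end{equation*}
Then
\begin{equation*}
N(x)\,=\,\sum_{n\ge 1}W_n,\qquad W_n\,\le\,T_n\,\1_{\{\widetilde{S}_{n-1}\le x\}}+E_n(\widetilde{S}_{n-1}-x)\,\1_{\{\widetilde{S}_{n-1}>x\}},
\end{equation*}
with equality in the second case. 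Under $\Prob_i$, the triples $(T_n,\widetilde{S}_n-\widetilde{S}_{n-1},E_n(\cdot))_{n\ge 1}$ are iid by the strong Markov property at $\tau_{n-1}(i)$, each $E_n(\cdot)$ is independent of $\widetilde{S}_{n-1}$, and $(\widetilde{S}_n)_{n\ge 0}$ is a zero-delayed, positive divergent iid random walk with step distributed as $S_{\tau(i)}$ under $\Prob_i$. This structure reduces the two moment conditions in (a) to separate analyses of the low-cycle ($\widetilde{S}_{n-1}\le x$) and high-cycle ($\widetilde{S}_{n-1}>x$) contributions.

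For the implication (a)$\,\Rightarrow\,$(b), split $N(x)\le N_L(x)+N_H(x)$ with $N_L(x):=\sum_n T_n\,\1_{\{\widetilde{S}_{n-1}\le x\}}$ and $N_H(x):=\sum_n E_n(\widetilde{S}_{n-1}-x)\,\1_{\{\widetilde{S}_{n-1}>x\}}$. By Theorem \ref{thm:Kesten-Maller} applied to the iid walk $(\widetilde{S}_n)$, the hypothesis $\Erw_i J_i(S_{\tau(i)}^-)^{1+\alpha}<\infty$ gives $\Erw_i\widetilde{\rho}(x)^\alpha<\infty$, where $\widetilde{\rho}(x):=\sup\{n\ge 0:\widetilde{S}_n\le x\}$. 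Since $N_L(x)$ is a random sum of $\widetilde{\rho}(x)+1$ iid cycle lengths and $\Erw_i\tau(i)^{1+\alpha}<\infty$ by the type-$\alpha$ assumption, a standard moment inequality for random sums (Marcinkiewicz--Zygmund for $\alpha\ge 1$, subadditivity $(\sum T_n)^\alpha\le\sum T_n^\alpha$ for $0<\alpha\le 1$) yields $\Erw_i N_L(x)^\alpha<\infty$. For $N_H(x)$, conditioning on $(\widetilde{S}_n)_{n\ge 0}$ renders the remaining sum one of independent nonnegative variables with $\Erw_i E_n(v)^\alpha=\V_i^\alpha([v,\infty))$; combining a Rosenthal-type bound for $\alpha\ge 1$ (respectively, subadditivity for $0<\alpha\le 1$) with Fubini controls the $\alpha$-moment of $N_H(x)$ by an integral of the form $\int_x^\infty\V_i^\alpha([t-x,\infty))\,\widetilde{U}(dt)$, where $\widetilde{U}$ denotes the renewal measure of $(\widetilde{S}_n)$. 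Since $\widetilde{U}(t)\asymp J_i(t)$ as $t\to\infty$ (by \eqref{eq:harmonic series asymptotics, alpha>0} with $\alpha=1$), integration by parts identifies this quantity as being of the same order as $\int J_i(t)\,\V_i^\alpha(dt)$, which is finite by \eqref{eq:MRW renewal counting}.

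For the converse (b)$\,\Rightarrow\,$(a), available only when $\alpha\ge 1$, the elementary bound $N(x)\ge\widetilde{N}(x):=\sum_{n\ge 1}\1_{\{\widetilde{S}_n\le x\}}$ together with Theorem \ref{thm:Kesten-Maller}(f) applied to the embedded walk yields $\Erw_i J_i(S_{\tau(i)}^-)^{1+\alpha}<\infty$. To extract \eqref{eq:MRW renewal counting}, use the lower bound $N(x)\ge\sum_n E_n(\widetilde{S}_{n-1}-x)\,\1_{\{\widetilde{S}_{n-1}\in(x,x+M]\}}$ for $M\in\R_{>}$; conditioning on $(\widetilde{S}_n)$ and applying a reverse $L^\alpha$-type inequality for sums of independent nonnegative variables (valid because $\alpha\ge 1$) produces a lower bound on $\Erw_i N(x)^\alpha$ comparable to $\int_x^{x+M}\V_i^\alpha([v-x,\infty))\,\widetilde{U}(dv)$. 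Letting $M\to\infty$ and again invoking $\widetilde{U}\asymp J_i$ delivers \eqref{eq:MRW renewal counting}.

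The main obstacle is the converse step when $\alpha\ge 1$: extracting the excursion-measure condition \eqref{eq:MRW renewal counting} from finiteness of $\Erw_i N(x)^\alpha$ requires lower-bounding the $\alpha$-moment of a sum of cycle contributions in terms of the individual $\alpha$-moments, resting on a reverse-Minkowski-type inequality that genuinely requires $\alpha\ge 1$. For $0<\alpha<1$ the subadditivity $(\sum W_n)^\alpha\le\sum W_n^\alpha$ is too wasteful to invert, and indeed (a) can be strictly stronger than (b) in that regime, which is precisely why only the one-sided implication persists.
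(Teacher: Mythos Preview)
Your overall plan---decomposing along cycles at a fixed reference state $i$, splitting $N(x)$ into a low-cycle part $N_L$ and a high-cycle part $N_H$, and reducing to the embedded walk together with the excursion quantities $E_n$---is exactly the paper's route. Your converse (b)$\Rightarrow$(a) for $\alpha\ge 1$ also matches the paper: $N(x)\ge\widetilde N(x)$ yields $\Erw_iJ_i(S_{\tau(i)}^-)^{1+\alpha}<\infty$ via Theorem~\ref{thm:Kesten-Maller}, and superadditivity of $t\mapsto t^\alpha$ (your ``reverse $L^\alpha$ inequality'') applied termwise to the high-cycle contributions gives \eqref{eq:MRW renewal counting} after one Fubini step using the independence of $E_n(\cdot)$ from $\widetilde S_{n-1}$ and $\U_i((0,y])\asymp J_i(y)$; the truncation to $(x,x+M]$ is unnecessary. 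For $0<\alpha\le 1$, your subadditivity arguments for both $N_L$ and $N_H$ are correct and coincide with the paper's Case~1.

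The gap is in (a)$\Rightarrow$(b) when $\alpha>1$, at both pieces. For $N_L(x)=\sum_n T_n\1_{\{\widetilde S_{n-1}\le x\}}$: this is \emph{not} a random sum of iid variables stopped at an independent (or even a stopping) time; $\widetilde\rho(x)$ is a last exit time, and each $T_n$ is correlated with the embedded increment $\widetilde S_n-\widetilde S_{n-1}$, hence with whether later indices contribute. A Marcinkiewicz--Zygmund bound does not apply directly. The paper circumvents this by constructing an auxiliary MRW with the same cycle-length law but zero intra-cycle increments (so that $D'={S'_{\tau'}}^{-}$), and then invoking Theorem~\ref{thm:Kesten-Maller MRW} for that walk to obtain $\Erw_i\big(\sum_n\chi_n\1_{\{S_{\tau_{n-1}}\le 0\}}\big)^\alpha<\infty$; this is Lemma~\ref{lem:rho[s]}. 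For $N_H(x)$: conditioning on $(\widetilde S_m)$ does \emph{not} give $\Erw_i[E_n(v)^\alpha\mid(\widetilde S_m)]=\V_i^\alpha([v,\infty))$, since $E_n$ and $\widetilde S_n-\widetilde S_{n-1}$ both live in cycle $n$ and are dependent; and even granting conditional independence, a Rosenthal bound produces the extra term $\Erw_i\big[\big(\sum_n\V_i^1([\widetilde S_{n-1}-x,\infty))\1_{\{\widetilde S_{n-1}>x\}}\big)^\alpha\big]$, which the hypotheses do not control. The paper avoids this by an induction on $\lfloor\alpha\rfloor$: writing $N_{n-1}(-S_{\tau_{n-1}})=L_n+N_n(-S_{\tau_n})$ and iterating the elementary inequality $(u+v)^\alpha\le u^\alpha+v^\alpha+\alpha 2^{\alpha-1}(uv^{\alpha-1}+u^{q}v^{\delta})$ with $\alpha=q+\delta$, $q\in\N$, $\delta\in(0,1]$, one bounds $\Erw_iN(0)^\alpha$ by $\sum_n\Erw_iL_n^\beta$ for $\beta\le\alpha$ together with $\Erw_iN(0)^\beta$ for $\beta<\alpha$, and closes the induction.
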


\begin{Theorem}\label{thm:connections}
Given a nontrivial, positive divergent MRW $(M_{n},S_{n})_{n\ge 0}$ of type $\alpha$, the following implications hold true:
\begin{align*}
&\alpha\ge 1:\quad\text{Thm. \ref{thm:Kesten-Maller MRW}(a)-(d)}\ \Longrightarrow\ \text{Thm. \ref{thm:K/M MRW Spitzer series}(a),(b)}\ \Longrightarrow\ \text{Thm. \ref{thm:K/M MRW N(x)}(a),(b)}.\\
&\alpha\le 1:\quad\text{Thm. \ref{thm:Kesten-Maller MRW}(a)-(d)}\ \Longrightarrow\ \text{Thm. \ref{thm:K/M MRW N(x)}(a),(b)}\ \Longrightarrow\ \text{Thm. \ref{thm:K/M MRW Spitzer series}(a),(b)}.
\end{align*}
Furthermore, any of the conditions in the afore-mentioned theorems implies
\begin{equation}\label{eq: sigma alpha cond}
\Erw_{i}\sg(x)^{1+\alpha}<\infty \quad\text{ for all }(i,x)\in\cS\times\R_{\geqslant}
\end{equation}
\end{Theorem}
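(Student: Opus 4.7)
The plan is to establish the implications at the level of the $(b)$-conditions by elementary pointwise inequalities, lift them to the $(a)$-conditions through the internal equivalences in each theorem (with one case handled by a direct computation), and finally deduce the $\sg(x)$-moment bound by reducing to the embedded ordinary random walk $(S_{\tau_{n}(i)})_{n\ge 0}$.

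For the downward implications from Theorem~\ref{thm:Kesten-Maller MRW}, the inequalities $N(x)\le\rho(x)$ and
$$\sum_{n\ge 1}n^{\alpha-1}\1_{\{S_n\le x\}}\ \le\ \sum_{n=1}^{\rho(x)}n^{\alpha-1}\ \le\ c_\alpha\,\rho(x)^\alpha$$
take Theorem~\ref{thm:Kesten-Maller MRW}(b) directly to both Theorem~\ref{thm:K/M MRW Spitzer series}(b) and Theorem~\ref{thm:K/M MRW N(x)}(b). To compare the latter two, I enumerate $\{n\ge 1:S_n\le x\}$ as $n_1<\ldots<n_{N(x)}$ and exploit $n_k\ge k$: for $\alpha\ge 1$ the bound $n_k^{\alpha-1}\ge k^{\alpha-1}$ yields $\sum_n n^{\alpha-1}\1_{\{S_n\le x\}}\gtrsim N(x)^\alpha$, whence Theorem~\ref{thm:K/M MRW Spitzer series}(b)~$\Rightarrow$~Theorem~\ref{thm:K/M MRW N(x)}(b); the reverse monotonicity when $0<\alpha\le 1$ gives the opposite implication. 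The internal equivalences of Theorems~\ref{thm:K/M MRW Spitzer series} and~\ref{thm:K/M MRW N(x)} propagate these conclusions to the $(a)$-level in every case except Theorem~\ref{thm:Kesten-Maller MRW}(a)~$\Rightarrow$~Theorem~\ref{thm:K/M MRW N(x)}(a) for $0<\alpha<1$, which has to be argued directly: one has $\Erw_iJ_i(S_{\tau(i)}^-)^{1+\alpha}\le\Erw_iJ_i(D^i)^{1+\alpha}<\infty$ by $S_{\tau(i)}^-\le D^i$ and monotonicity of $J_i$, while integration by parts combined with the tail bound~\eqref{eq:tail D^s and V_alpha} reduces $\int J_i(x)\,\V_i^\alpha(dx)$ to an expression of the form $\V_i^\alpha((0,\infty))+\Erw_i(\tau(i)^\alpha J_i(D^i))$, whose finiteness follows from the type-$\alpha$ hypothesis $\Erw_i\tau(i)^{1+\alpha}<\infty$ together with Hölder's inequality applied with conjugate exponents $(1+\alpha)/\alpha$ and $1+\alpha$.

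For the $\sg(x)$-moment, I use $\sg(x)\le\tau_{\nu(x)}(i)=\sum_{k=1}^{\nu(x)}T_k$, where $T_k:=\tau_k(i)-\tau_{k-1}(i)$ are i.i.d.\ under $\Prob_i$ with $\Erw_iT_1^{1+\alpha}<\infty$ by type~$\alpha$. The embedded walk $(S_{\tau_n(i)})_{n\ge 0}$ is an ordinary, positive divergent RW under $\Prob_i$ with generic increment $S_{\tau(i)}$, so Theorem~\ref{thm:Kesten-Maller}(g) yields $\Erw_i\nu(x)^{1+\alpha}<\infty$ as soon as $\Erw_iJ_i(S_{\tau(i)}^-)^{1+\alpha}<\infty$. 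This moment condition is explicit in the $(a)$-parts of Theorems~\ref{thm:K/M MRW Spitzer series} and~\ref{thm:K/M MRW N(x)}, and follows from Theorem~\ref{thm:Kesten-Maller MRW}(a) via $S_{\tau(i)}^-\le D^i$; any $(b)$-condition is transferred to its $(a)$-companion along the chains already constructed. A standard Marcinkiewicz--Zygmund-type moment inequality for randomly stopped sums of i.i.d.\ nonnegative summands with finite $(1+\alpha)$-moment (see e.g.\ Gut~\cite{Gut:09}) then delivers $\Erw_i\tau_{\nu(x)}(i)^{1+\alpha}<\infty$ and hence \eqref{eq: sigma alpha cond}.

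The main obstacle is the direct argument in the previous step for the range $0<\alpha<1$, since in that regime Theorem~\ref{thm:K/M MRW N(x)} offers only (a)~$\Rightarrow$~(b), so the pointwise comparison at the $(b)$-level alone does not suffice. The Hölder balancing of $\tau(i)^\alpha$ against $J_i(D^i)$ uses precisely the type-$\alpha$ assumption and the full strength of Theorem~\ref{thm:Kesten-Maller MRW}(a); everything else in the proof is essentially bookkeeping once the basic pointwise inequalities and the reduction to the embedded walk have been put in place.
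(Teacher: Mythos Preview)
Your proof is correct and follows essentially the same architecture as the paper's: a first step handling Thm.~\ref{thm:Kesten-Maller MRW} $\Rightarrow$ \{Thm.~\ref{thm:K/M MRW Spitzer series}, Thm.~\ref{thm:K/M MRW N(x)}\}, a second step comparing the Spitzer-series condition with the $N(x)$-moment condition according to whether $\alpha\gtrless 1$, and the reduction to the embedded walk $(S_{\tau_n(i)})_{n\ge 0}$ for the $\sg(x)$-moment via $\sg(x)\le\tau_{\nu(x)}(i)$. There are two minor methodological differences worth noting. First, you enter from Thm.~\ref{thm:Kesten-Maller MRW}(b) using the pointwise bounds $N(x)\le\rho(x)$ and $\sum_n n^{\alpha-1}\1_{\{S_n\le x\}}\le\sum_{n=1}^{\rho(x)}n^{\alpha-1}\lesssim\rho(x)^\alpha$, whereas the paper enters from Thm.~\ref{thm:Kesten-Maller MRW}(a) and derives both integral criteria \eqref{eq:MRW Spitzer series} and \eqref{eq:MRW renewal counting} simultaneously via the H\"older estimate (first lemma in Section~\ref{subsec:connections}), uniformly in $\alpha>0$. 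Your route is a touch more economical for $\alpha\ge 1$ (no H\"older needed there), but you still invoke the identical H\"older step for $0<\alpha<1$, so nothing is gained or lost. Second, your enumeration argument---writing $\{n:S_n\le x\}=\{n_1<\ldots<n_{N(x)}\}$ and exploiting $n_k\ge k$---is a cleaner rendering of the paper's second lemma, which obtains the same comparison between $\Sigma_\alpha(i,x)$ and $\Erw_iN(x)^\alpha$ through a truncation $N(x)\wedge m$ and the identity $N(x)^\alpha=\sum_n N(x)^{\alpha-1}\1_{\{S_n\le x\}}$.
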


Again, the previous theorems are weaker than their counterpart in the iid case. For $\alpha>0$, $\Erw_{i}\sg(x)^{1+\alpha}<\infty$ and the conditions provided in Theorems \ref{thm:K/M MRW Spitzer series} and \ref{thm:K/M MRW N(x)} are only necessary for those in Theorem \ref{thm:Kesten-Maller MRW}. Moreover, Theorem \ref{thm:connections} holds the surprise that $\Erw_{i}N(x)^{\alpha}<\infty$ for all $(i,x)\in\cS\times\R_{\geqslant}$ implies $\sum_{n\ge 1}n^{\alpha-1}\Prob_{i}(S_{n}\le x)<\infty$ for all $(i,x)\in\cS\times\R_{\geqslant}$ if $\alpha\le 1$, whereas the converse is true if $\alpha\ge 1$ (in the case $\alpha=1$ both assertions are obviously identical). Example \ref{exa:integral criteria} will show that equivalence of all stated conditions generally fails to hold. In fact, the assertions of Theorem \ref{thm:K/M MRW Spitzer series} for $\alpha\in(0,1)$ may be valid although $(S_{n})_{n\ge 0}$ is not even positive divergent, thus $\Prob_{i}(\rho(x)<\infty)=0$ for all $i\in\cS$ and $x\in\R$. We further point out that one cannot generally dispense with
\begin{equation}\label{eq:moment tau(s)}
\Erw_{i}\tau(i)^{1+\alpha}<\infty
\end{equation}
for some/all $i\in\cS$. To see this, consider a MRW $(M_{n},S_{n})_{n\ge 0}$ such that, for some distribution $F$ on $\R_{>}$, the conditional law of $X_{n}$ given $(M_{n-1},M_{n})$ equals $F$ if $M_{n}=i$ and $\delta_{0}$ otherwise. Then $\rho(0)+1=N(0)+1=\sigma^>(0)=\tau(i)$ $\Prob_{i}$-a.s. and thus $\Erw_{i}\rho(0)^{1+\alpha}<\infty$ is indeed equivalent to \eqref{eq:moment tau(s)}.

\vspace{.1cm}
The counterpart of Theorem \ref{thm:Kesten-Maller min} is stated as the next theorem.

\begin{Theorem}\label{thm:Kesten-Maller MRW min}
For $\alpha>0$ and a positive divergent MRW $(M_{n},S_{n})_{n\ge 0}$, consider the following assertions:
\begin{description}[(b)]\itemsep3pt
\item[(a)] $\Erw_{i}(D^{i})^{\alpha}J_{i}(D^{i})<\infty$ for some/all $i\in\cS$.
\item[(b)] $\Erw_{i}|\min_{n\ge 0}S_{n}|^{\alpha}<\infty$ for some/all $i\in\cS$.
\item[(c)] $\Erw_{i}|S_{\sle(-x)}|^{\alpha}\1_{\{\sle(-x)<\infty\}}$ for some/all $(i,x)\in\cS\times\R_{\geqslant}$.
\end{description}
Then (a) $\LRA$ (b) $\Longrightarrow$ (c).
\end{Theorem}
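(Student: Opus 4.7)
The plan is to exploit the regenerative structure of the MRW by decomposing along the return times to a fixed state $i\in\cS$. Writing $T_n:=S_{\tau_n(i)}$ and $D_n:=D_n^i$, the sequence $((T_n-T_{n-1},D_n))_{n\ge 1}$ is iid under $\Prob_i$, $(T_n)_{n\ge 0}$ is an ordinary positive divergent random walk (by positive divergence of $(S_n)$ and a solidarity statement developed in Section \ref{sec:solidarity}), and the fundamental identity
\begin{equation*}
-\min_{n\ge 0}S_n\ =\ \sup_{n\ge 1}(D_n-T_{n-1})^+\quad\Prob_i\text{-a.s.}
\end{equation*}
holds because within the $n$-th cycle the minimum of $S$ equals $S_{\tau_{n-1}(i)}-D_n$. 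The right-hand side has the structure of the maximum of a \emph{perturbed random walk} in the sense of \cite{AlsIksMei:15}, with underlying walk $(-T_{n-1})_{n\ge 1}$ drifting to $-\infty$ and perturbation $D_n$; moreover, $J_i$ is precisely the Erickson $J$-function of the embedded walk $(T_n)$. The ``some/all $i$'' equivalence in (a)--(c) will be handled by solidarity, so it suffices to fix one $i$.

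For the implication (a)$\Rightarrow$(b), the key step is the tail bound obtained from a union bound and the independence of $D_n$ from $T_{n-1}$ (they originate from disjoint cycles):
\begin{equation*}
\Prob_i\!\left(-\min_{n\ge 0}S_n>t\right)\ \le\ \sum_{n\ge 1}\Prob_i(D^i>t+T_{n-1})\ =\ \int_{[0,\infty)}\!\Prob_i(D^i>t+y)\,U(dy),
\end{equation*}
where $U$ is the renewal measure of $(T_n)$. Combining this with the Erickson-type estimate $U([0,x])\asymp J_i(x)$ and integrating $\alpha t^{\alpha-1}$ over $t$ yields $\Erw_i|\min_{n\ge 0}S_n|^\alpha\lesssim\Erw_i(D^i)^\alpha J_i(D^i)$, hence (b). For the converse (b)$\Rightarrow$(a), one needs a matching lower bound on the tail $\Prob_i(-\min S>x)$. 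Using the level-$x$ first-passage index $\nu(x)=\nu(i,x)$ of $(T_n)$, for which $\Erw_i\nu(x)\asymp J_i(x)$ holds by Theorem \ref{thm:Spitzer-Erickson}(d) applied to the embedded walk, and the fact that any cycle $n\le\nu(x)$ with $D_n>2x$ forces $-\min S>x$, an inclusion-exclusion or second-moment argument (applied directly or through the PRW moment theorem of \cite{AlsIksMei:15}) delivers $\Prob_i(-\min S>x)\gtrsim J_i(x)\,\Prob_i(D^i>2x)$, whence (a) follows by integration. Finally, (b)$\Rightarrow$(c) is immediate from $|S_{\sle(-x)}|\1_{\{\sle(-x)<\infty\}}\le |\min_{n\ge 0}S_n|$.

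The main obstacle will be the matching-lower-bound argument for (b)$\Rightarrow$(a): the events $\{D_n>2x,\ T_{n-1}\le x\}$ for $n\le\nu(x)$ each imply $-\min S>x$, but they are not disjoint, so their probabilities do not simply add to give a lower bound for the tail of $-\min S$. Overcoming this requires either a disjointification via stopping at the first such $n$ combined with the strong Markov property of $(T_n,D_{n+1})$, or a Paley--Zygmund-style second-moment argument; in either case the within-cycle dependence of $D_n$ on $T_n-T_{n-1}$ must be controlled by conditioning on the cycle structure before summing. A secondary technicality is the uniform renewal estimate $U([0,x])\asymp J_i(x)$ when $\Erw_i S_{\tau(i)}$ may be infinite, for which the refined renewal theorem of Erickson rather than the elementary renewal theorem is required.
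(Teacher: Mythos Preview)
Your framework and the identity $-\min_{n\ge 0}S_n=\sup_{n\ge 1}(D_n^i-S_{\tau_{n-1}(i)})^+$ under $\Prob_i$ are correct, and this is the same starting point the paper uses. For (a)$\Rightarrow$(b), your union-bound route works and is close to the paper's upper estimate: the paper passes through an intermediate quantity $\Erw_i|S_{\tau_{\eta-1}}-D_\eta^i|^\alpha\1_{\{\eta<\infty\}}$ via a Janson-type geometric reduction (with $\eta$ the first $k$ such that $S_{\tau_{k-1}}-D_k^i<0$), but the renewal computation $\sum_n\Prob_i(0\le S_{\tau_{n-1}}\le y)\asymp J_i(y)$ underlying both is the same. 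The implication (b)$\Rightarrow$(c) is identical in both treatments.

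The gap is in (b)$\Rightarrow$(a). First, your appeal to the PRW moment theorem of \cite{AlsIksMei:15} is unavailable: the paper states explicitly in Section \ref{sec:PRW} that Theorem \ref{thm:Kesten-Maller MRW min} ``has no counterpart in \cite{AlsIksMei:15}''. Second, both your disjointification and your second-moment route ultimately require a lower bound of the type
\[
\sum_{n\ge 1}\Prob_i\big(S_{\tau_{n-1}(i)}\le y,\ \text{no overshoot before }n\big)\ \gtrsim\ J_i(y),
\]
and the ``no overshoot'' constraint involves $D_1^i,\ldots,D_{n-1}^i$, each correlated with the embedded increment $S_{\tau_k}-S_{\tau_{k-1}}$ from the same cycle. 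Your obstacle paragraph names this dependence but offers no mechanism to control it: stopping at the first bad index still leaves the joint law of $\big(\nu(x),\1_{\{D_k\le 2x\}}\big)$ entangled, and Paley--Zygmund needs a second-moment bound on $\sum_{n\le\nu(x)}\1_{\{D_n>2x\}}$ that likewise runs into the correlation between $D_m$ and $T_m$.

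The paper's resolution is a different idea altogether: it proves the constrained-renewal lower bound as Lemma \ref{lem:crucial series estimate} by passing to the \emph{dual} MRW $({}^{\#}M_n,{}^{\#}S_n)$ and using the positive recurrence of its strictly ascending ladder chain (Proposition \ref{prop:ladder chain}). Under the duality relation \eqref{eq:duality relation}, the event $\{S_{\tau_n}\le y,\ \min_{1\le k\le\tau_n}S_k>-x\}$ becomes an event about dual ladder heights, to which ordinary renewal estimates apply and yield $\gtrsim J_i(y)$. This MRW-specific duality --- unavailable for a generic PRW --- is what closes the argument, and it is absent from your proposal.
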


Example \ref{exa:general IPFC example} will show that (c) does not generally imply (b).

\vspace{.1cm}
Despite the previous disclaiming remarks regarding the equivalence of all conditions in the above theorems, it is natural to ask whether this is at least true in the case when the driving chain has finite state space. The positive answer is provided by the three subsequent theorems.

\begin{Theorem}\label{thm:Spitzer-Erickson MRW finite S}
Given the situation of Theorem \ref{thm:Spitzer-Erickson MRW} with finite state space $\cS$, all its assertions (a)--(d) as well as
\begin{equation}\label{eq:J_pi condition S finite}
A_{\pi}(x)>0\text{ for all sufficiently large $x$\quad and}\quad\Erw_{\pi}J_{\pi}(X_{1}^{-})<\infty
\end{equation}
are equivalent, where
$A_{\pi}(x):=\Erw_{\pi}(X_{1}\wedge x)^{+}-\Erw_{\pi}(X_{1}\wedge x)^{-}$ and
\begin{equation*}
J_{\pi}(0)\,:=\,1\quad\text{and}\quad J_{\pi}(x)\,:=\,
\begin{cases}
\displaystyle{\frac{x}{\Erw_{\pi}(X_{1}\wedge x)^{+}}},&\text{if }\Prob_{\pi}(X_{1}> 0)>0\\
\hfill x,&\text{otherwise}
\end{cases},\quad x>0.
\end{equation*}
Moreover, $D^{i}$ in \ref{thm:Spitzer-Erickson MRW}(b) may be replaced with $S_{\tau(i)}^{-}$.
\end{Theorem}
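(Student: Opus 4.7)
The plan is to exploit two features of the finite state-space setting: first, every return time $\tau(i)$ has exponential moments under $\Prob_{i}$, so in particular $\Erw_{i}\tau(i)\log\tau(i)<\infty$ and Theorem \ref{thm:Spitzer-Erickson MRW} applies in full, giving the equivalence of (a)--(d) together with the integral criterion \eqref{eq:Spitzer series criterion}; and second, within a single cycle from $i$ back to $i$ the increments $(X_{n})$ are conditionally independent given the Markov path, the path visits only finitely many states, and its length is controlled by the exponential moment of $\tau(i)$. Two tasks remain: to add \eqref{eq:J_pi condition S finite} to the list of equivalent conditions, and to justify the substitution $D^{i}\to S_{\tau(i)}^{-}$ in~(b).

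\textbf{Substitution $D^{i}\to S_{\tau(i)}^{-}$.} Since $S_{\tau(i)}^{-}\le D^{i}$ and $J_{i}$ is non-decreasing, $\Erw_{i}J_{i}(S_{\tau(i)}^{-})\le\Erw_{i}J_{i}(D^{i})$ is immediate. For the reverse, I would establish the tail comparability $\Prob_{i}(D^{i}>x)\asymp\Prob_{i}(S_{\tau(i)}^{-}>x)$ by a path-decomposition argument: conditioning on the first time $k<\tau(i)$ at which $-S_{k}>x$, the remainder $S_{\tau(i)}-S_{k}$ is a sum of conditionally independent increments whose joint law depends only on the (finitely many) states still to be visited before returning to $i$. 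In finite $\cS$ the tail of this remainder is uniformly bounded, so with probability bounded below (independent of $x$) it fails to recover the drop, forcing $-S_{\tau(i)}\ge x/2$. Monotonicity of $J_{i}$ then promotes the tail comparison to an equivalence of $\Erw_{i}J_{i}(\cdot)$-integrals.

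\textbf{Equivalence with \eqref{eq:J_pi condition S finite}.} By the occupation measure formula \eqref{eq:occ measure formula},
$$ \Erw_{\pi}J_{\pi}(X_{1}^{-})\ =\ \pi_{i}\,\Erw_{i}\sum_{n=1}^{\tau(i)}J_{\pi}(X_{n}^{-}). $$
Because $J_{i}$ is concave with $J_{i}(0)=1\ge 0$ it is subadditive, and combined with $D^{i}\le\sum_{n=1}^{\tau(i)}X_{n}^{-}$ this yields $\Erw_{i}J_{i}(D^{i})\le\pi_{i}^{-1}\Erw_{\pi}J_{i}(X_{1}^{-})$. The comparability $J_{i}\asymp J_{\pi}$ on $(0,\infty)$ in the nondegenerate case---valid whenever $A_{\pi}(x)>0$ eventually---is obtained from $\Erw_{i}(S_{\tau(i)}^{+}\wedge x)\asymp \pi_{i}^{-1}\Erw_{\pi}(X_{1}^{+}\wedge x)$: the inequality $\le$ follows from $S_{\tau(i)}^{+}\le\sum_{n=1}^{\tau(i)}X_{n}^{+}$ and the occupation formula, while the reverse uses the excursion-survival argument (a single large $X_{n}^{+}$-increment survives cancellation from independent later increments with uniformly positive probability). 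This delivers \eqref{eq:J_pi condition S finite} $\Rightarrow$ (b). For the converse, invoke (b) $\Rightarrow$ (a) from Theorem \ref{thm:Spitzer-Erickson MRW} and apply classical Spitzer--Erickson (Theorem \ref{thm:Spitzer-Erickson}) to the embedded iid walk $(S_{\tau_{n}(i)})_{n\ge 0}$ to get $A_{i}(x)>0$ eventually and $\Erw_{i}J_{i}(S_{\tau(i)}^{-})<\infty$; these transfer back to the $\pi$-level via the same comparability, and an analogous comparison gives $A_{i}(x)\asymp\pi_{i}^{-1}A_{\pi}(x)$, so the two sign conditions match.

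The main obstacle in both steps is the quantitative excursion analysis: large single-transition increments under $\Prob_{\pi}$, or large cycle-intermediate dips under $\Prob_{i}$, must be shown to survive cancellation with probability uniformly bounded below. Exponential moments of $\tau(i)$, strict positivity of $\pi$, and the finite cardinality of $\cS$ are the ingredients that make this tractable, but care is needed because positive and negative contributions within a cycle may partially cancel, so the survival argument must decompose on the first time the extremal increment occurs and use the strong Markov property to bound the distribution of the independent remainder.
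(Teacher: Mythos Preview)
Your proposal has a genuine gap in its very first sentence. You assert that because $\Erw_{i}\tau(i)\log\tau(i)<\infty$, ``Theorem \ref{thm:Spitzer-Erickson MRW} applies in full, giving the equivalence of (a)--(d)''. But Theorem \ref{thm:Spitzer-Erickson MRW} does \emph{not} give this: it only yields (a)$\Leftrightarrow$(b)$\Rightarrow$(c)$\Rightarrow$(d), and the additional moment hypothesis merely upgrades (c) to an equivalence with the integral criterion \eqref{eq:Spitzer series criterion}. The implication (d)$\Rightarrow$(a) fails for general countable $\cS$ --- Example \ref{exa:infinite petal flower chain} exhibits an oscillating MRW with $\Erw_{i}\sg(x)<\infty$ for all $(i,x)$ --- so closing the loop in the finite case is precisely the new content of Theorem \ref{thm:Spitzer-Erickson MRW finite S}, and your proposal never addresses it.

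The paper closes this loop via the ladder chain. Finiteness of $\cS$ makes the dual MRW positive divergent whenever the original one is, so Proposition \ref{prop:ladder chain} ensures that $(M_{n}^{>})_{n\ge 0}$ is positive recurrent on some finite $\cS^{>}$. Since $\Erw_{i}\sg<\infty$ for each $i$ by (d), finiteness of $\cS^{>}$ gives $\Erw_{\pi^{>}}\sg<\infty$; then for $i\in\cS^{>}$ the first ascending ladder epoch $\tg(i)$ of the embedded walk $(S_{\tau_{n}(i)})_{n\ge 0}$ satisfies $\tg(i)\le\sigma_{\kappa(i)}^{>}$ with $\kappa(i)$ the first return of the ladder chain to $i$, and the occupation measure formula gives $\Erw_{i}\tg(i)<\infty$. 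Classical Spitzer--Erickson for the embedded walk then yields the modified (b). None of your excursion-survival arguments supply this step.

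Your remaining ingredients --- the substitution $D^{i}\to S_{\tau(i)}^{-}$ via a tail comparison, and the transfer between $J_{i}$ and $J_{\pi}$ --- are broadly sound and close in spirit to what the paper does in Lemmata \ref{lem:asymp finite space} and \ref{lem:pi int}, though the paper routes the $D^{i}$--$S_{\tau(i)}^{-}$ comparison through $\Erw_{\pi}J_{\pi}(X_{1}^{-})$ rather than comparing the two tails directly.
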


\begin{Theorem}\label{thm:Kesten-Maller MRW finite S}
Let $(M_{n},S_{n})_{n\ge 0}$ be a MRW of type $\alpha$ and $\cS$ be finite. Then all assertions of Theorems \ref{thm:Kesten-Maller MRW}, \ref{thm:K/M MRW Spitzer series}, \ref{thm:K/M MRW N(x)} and also \eqref{eq: sigma alpha cond} are equivalent to
\begin{equation}\label{eq:J_pi condition S finite K/M}
\Erw_{\pi}J_{\pi}(X^{-})^{1+\alpha}<\infty.
\end{equation}
Moreover, $D^{i}$ in \ref{thm:Kesten-Maller MRW}(b) may be replaced with $S_{\tau(i)}^{-}$.
\end{Theorem}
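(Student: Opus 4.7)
The key simplification in the finite state space case is that under $\Prob_{i}$ the return time $\tau(i)$ has exponential moments, i.e. there exist $c>0$ and $C_{i}<\infty$ with $\Prob_{i}(\tau(i)>n)\le C_{i}e^{-cn}$. Hence $\Erw_{i}\tau(i)^{p}<\infty$ for every $p>0$, so the MRW is automatically of type $\beta$ for every $\beta>0$ and no moment assumption on $\tau(i)$ is needed in what follows. This exponential decay is the one tool that drives all the collapses among the various conditions.

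I would organize the proof in three steps. In the first step I replace $D^{i}$ by $S_{\tau(i)}^{-}$: since $S_{\tau(i)}^{-}\le D^{i}$ a.s., one direction is immediate. For the reverse I use that $J_{i}$ is subadditive (because $x\mapsto J_{i}(x)/x$ is nonincreasing), the crude bound $D^{i}\le\sum_{n=1}^{\tau(i)}X_{n}^{-}$, and a standard excursion/time-reversal argument available because $\cS$ is finite and irreducible: a large cycle maximum $\max_{1\le n\le\tau(i)}X_{n}^{-}$ can, with $\Prob_{j}$-probability uniformly bounded away from $0$ over $j\in\cS$, be propagated into $S_{\tau(j)}^{-}$ in finitely many further steps. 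Combining this with the polynomial moments of $\tau(i)$ via Hölder gives $\Erw_{i}J_{i}(D^{i})^{1+\alpha}\asymp\Erw_{i}J_{i}(S_{\tau(i)}^{-})^{1+\alpha}$ and the ``$D^{i}$ may be replaced by $S_{\tau(i)}^{-}$'' claim. In the second step, the occupation measure formula \eqref{eq:occ measure formula} yields
\begin{equation*}
\Erw_{\pi}J_{\pi}(X_{1}^{-})^{1+\alpha}\ =\ \frac{1}{\Erw_{i}\tau(i)}\,\Erw_{i}\sum_{n=1}^{\tau(i)}J_{\pi}(X_{n}^{-})^{1+\alpha},
\end{equation*}
and applied with $X_{n}^{+}\wedge x$ in place of the integrand together with the bound $S_{\tau(i)}^{+}\wedge x\le\sum_{n=1}^{\tau(i)}(X_{n}^{+}\wedge x)$ and the excursion argument from Step~1 it gives $A_{\pi}(x)\asymp A_{i}(x)$ and hence $J_{\pi}(x)\asymp J_{i}(x)$ as $x\to\infty$. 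Using then subadditivity of $J_{i}$, namely $J_{i}(\max_{n}X_{n}^{-})^{1+\alpha}\le\sum_{n=1}^{\tau(i)}J_{i}(X_{n}^{-})^{1+\alpha}\le\tau(i)\,J_{i}(\max_{n}X_{n}^{-})^{1+\alpha}$, and the polynomial moments of $\tau(i)$ (Hölder again), I can identify $\Erw_{\pi}J_{\pi}(X_{1}^{-})^{1+\alpha}<\infty$ with $\Erw_{i}J_{i}(\max_{n\le\tau(i)}X_{n}^{-})^{1+\alpha}<\infty$, which via Step~1 is the same as $\Erw_{i}J_{i}(D^{i})^{1+\alpha}<\infty$, i.e.\ condition~(a) of Theorem~\ref{thm:Kesten-Maller MRW}.

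In the third step I close the equivalences. The downward implications \ref{thm:Kesten-Maller MRW}(a)--(d)$\Rightarrow$\ref{thm:K/M MRW Spitzer series}(a),(b)$\Rightarrow$\ref{thm:K/M MRW N(x)}(a),(b) and \eqref{eq: sigma alpha cond} are already contained in Theorem~\ref{thm:connections}. To reverse them, I exploit the tail sandwich $\Prob_{i}(D^{i}>x)\le\V_{i}^{\alpha}((x,\infty))\le\Erw_{i}(\tau(i)^{\alpha}\mathbf{1}_{\{D^{i}>x\}})$ from \eqref{eq:tail D^s and V_alpha}: the exponential moments of $\tau(i)$ combined with Hölder push $\V_{i}^{\alpha}((x,\infty))\lesssim \Prob_{i}(D^{i}>x)^{1-\varepsilon}$ for any $\varepsilon>0$, and a Fubini-type rewriting converts the integrals $\int J_{i}(x)^{\alpha}\,\V_{i}(dx)$ and $\int J_{i}(x)\,\V_{i}^{\alpha}(dx)$ into moments of $J_{i}(D^{i})^{1+\alpha}$. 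Thus each integral condition of Theorems~\ref{thm:K/M MRW Spitzer series} and~\ref{thm:K/M MRW N(x)} reduces to \ref{thm:Kesten-Maller MRW}(a), which by Steps~1--2 is equivalent to \eqref{eq:J_pi condition S finite K/M}. The main obstacle will be Step~3, namely the precise ``integral-to-moment'' reduction, because the exponent of $J_{i}$ has to shift from $\alpha$ (or $1$) in the integrals to $1+\alpha$ in the target moment; making this shift rigorous requires carefully tuning the Hölder split against the exponential tail of $\tau(i)$, whereas Steps~1 and~2 are routine once one concedes the exponential moments.
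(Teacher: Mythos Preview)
Your Steps~1 and~2 are essentially the content of the paper's Lemmata~\ref{lem:asymp finite space} and~\ref{lem:pi int}, and they are fine. Step~3, however, contains both an unnecessary complication and a genuine gap.

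The unnecessary part: the ``integral-to-moment reduction'' you propose is not needed. Conditions \ref{thm:K/M MRW Spitzer series}(a) and \ref{thm:K/M MRW N(x)}(a) already \emph{include} $\Erw_{i}J_{i}(S_{\tau(i)}^{-})^{1+\alpha}<\infty$ as a separate clause, so once Step~1 identifies this with $\Erw_{i}J_{i}(D^{i})^{1+\alpha}<\infty$, the reverse implication to \ref{thm:Kesten-Maller MRW}(a) is immediate---no manipulation of the $\V_{i}$- or $\V_{i}^{\alpha}$-integrals is required. Moreover, your H\"older split goes the wrong way: it produces upper bounds on $\V_{i}^{\alpha}((x,\infty))$ in terms of $\Prob_{i}(D^{i}>x)$, so finiteness of $\int J_{i}(x)^{\alpha}\,\V_{i}(dx)$ only yields $\Erw_{i}J_{i}(D^{i})^{\alpha}<\infty$ via the trivial lower bound, and there is no mechanism to lift the exponent from $\alpha$ to $1+\alpha$ by ``tuning the H\"older split''.

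The genuine gap: you never argue the implication \eqref{eq: sigma alpha cond}~$\Rightarrow$~\ref{thm:Kesten-Maller MRW}(a), and this is exactly the substantive step the paper singles out. Theorem~\ref{thm:connections} places \eqref{eq: sigma alpha cond} at the bottom of the implication chain, so closing the cycle means showing that $\Erw_{i}\sg(x)^{1+\alpha}<\infty$ for all $(i,x)$ forces $\Erw_{i}J_{i}(S_{\tau(i)}^{-})^{1+\alpha}<\infty$. This is delicate because $\sg(x)$ can be small due to a large within-cycle upward excursion, with $(S_{\tau_{n}(i)})_{n\ge 0}$ not yet having crossed $x$; indeed Example~\ref{exa:sg(x) all moments} shows that in general \eqref{eq: sigma alpha cond} does not even imply positive divergence. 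The paper handles this by introducing the post-return passage time $\osg(x)=\inf\{n>\tau(M_{0}):S_{n}>x\}$, for which Proposition~\ref{prop: MRW sigma Prop} gives the clean equivalence $\Erw_{i}\osg(x)^{1+\alpha}<\infty\Leftrightarrow\Erw_{i}J_{i}(S_{\tau(i)}^{-})^{1+\alpha}<\infty$. To pass from $\sg$ to $\osg(0)$, one picks $i$ in the recurrent class $\cS^{>}$ of the ladder chain $(\Mgn)_{n\ge 0}$, lets $\kappa\le\tau(i)$ be the epoch of the cycle maximum, and bounds $\osg(0)$ by $\sg$ plus $\kappa$ plus a fresh $\sg$ started from $M_{\kappa}\in\cS^{>}$; finiteness of $\cS^{>}$ then gives $\Erw_{i}\osg(0)^{1+\alpha}\lesssim\Erw_{i}\tau(i)^{1+\alpha}+\sum_{j\in\cS^{>}}\Erw_{j}(\sg)^{1+\alpha}<\infty$. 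Exponential moments of $\tau(i)$ alone do not furnish this step; the ladder-chain structure and $|\cS^{>}|<\infty$ are used essentially.
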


\begin{Theorem}\label{thm:Kesten-Maller MRW min finite S}
Given the situation of Theorem \ref{thm:Kesten-Maller MRW min} with finite state space $\cS$, all its assertions (a)--(c) as well as \eqref{eq: sigma alpha cond} and
\begin{equation}\label{eq:J_pi condition S finite min}
\Erw_{\pi}(X_{1}^{-})^{\alpha}J_{\pi}(X^{-})<\infty.
\end{equation}
are equivalent. Moreover, $D^{i}$ in \ref{thm:Kesten-Maller MRW min}(b) may be replaced with $S_{\tau(i)}^{-}$. 
\end{Theorem}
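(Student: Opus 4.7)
The strategy is to use the finite state space hypothesis to collapse the excursion-based quantities appearing in Theorem \ref{thm:Kesten-Maller MRW min} onto the single stationary condition \eqref{eq:J_pi condition S finite min}, and then to derive the one implication, namely (c)$\Rightarrow$(b), that Theorem \ref{thm:Kesten-Maller MRW min} leaves open. The starting point is that for finite $\cS$, the return time $\tau(i)$ has exponentially decaying tails under $\Prob_{i}$, so $\Erw_{i}\tau(i)^{p}<\infty$ for every $p>0$ and every $i\in\cS$. A solidarity argument (in the spirit of Section \ref{sec:solidarity}) further yields $J_{i}(x)\asymp J_{j}(x)\asymp J_{\pi}(x)$ for all $i,j\in\cS$ as $x\to\infty$, because $\Erw_{i}(S_{\tau(i)}^{+}\wedge x)$ and $\Erw_{\pi}(X_{1}\wedge x)^{+}$ differ only by a factor $\Erw_{i}\tau(i)$ via the occupation measure formula \eqref{eq:occ measure formula}.

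I would first prove the ``Moreover'' part: (a) is equivalent to the statement obtained from it by replacing $D^{i}$ with $S_{\tau(i)}^{-}$. One direction is immediate from the pathwise bound
\begin{equation*}
S_{\tau(i)}^{-}\ \le\ D^{i}\ \le\ \sum_{k=1}^{\tau(i)}X_{k}^{-}\qquad\Prob_{i}\text{-a.s.},
\end{equation*}
since $J_{i}$ is nondecreasing. The reverse is the main technical point: given a tail bound on $S_{\tau(i)}^{-}$, one needs the same bound on $D^{i}$. Here I would decompose each cycle of $\bM$ between successive visits to $i$ into sub-cycles between visits to an auxiliary state $j$; the maximal downward excursion $D^{i}$ is then dominated by the sum of the associated quantities $|S_{\tau(j)}^{-}|$ over at most $\tau(i)$ sub-cycles, and a Hölder split absorbs the resulting power of $\tau(i)$ into its finite moments of all orders.

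Next I would identify (a) with \eqref{eq:J_pi condition S finite min}. For one direction, starting from \eqref{eq:J_pi condition S finite min} and writing $D^{i}\le\sum_{k=1}^{\tau(i)}X_{k}^{-}$, a Marcinkiewicz--Zygmund type inequality for sums up to $\tau(i)$ (valid because $\tau(i)$ has all moments) combined with \eqref{eq:occ measure formula} and $J_{i}\asymp J_{\pi}$ controls $\Erw_{i}(D^{i})^{\alpha}J_{i}(D^{i})$. For the other direction, since $X_{1}^{-}$ under $\Prob_{\pi}$ appears as a generic summand in a $\tau(i)$-cycle, the bound $X_{n}^{-}\le D^{i}+\text{(preceding excursion height)}$ combined with the already established equivalence $D^{i}\leftrightarrow S_{\tau(i)}^{-}$ and the occupation measure formula delivers $\Erw_{\pi}(X_{1}^{-})^{\alpha}J_{\pi}(X_{1}^{-})<\infty$.

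Finally, I would prove (c)$\Rightarrow$(b). Fix $i\in\cS$ and $x\in\R_{\geqslant}$ with $\Erw_{i}|S_{\sle(-x)}|^{\alpha}\1_{\{\sle(-x)<\infty\}}<\infty$. Decompose $|\min_{n\ge 0}S_{n}|\le x+|S_{\sle(-x)}|+\Delta$, where $\Delta$ is the additional drop between $\sle(-x)$ and the first return of $\bM$ to $i$ thereafter; by the strong Markov property $\Delta$ is stochastically bounded by an independent copy of $D^{i}$. Since (c) already implies (a) through the above chain of equivalences, $\Erw_{i}(D^{i})^{\alpha}<\infty$ (as $J_{i}(D^{i})\ge 1$), and combining with $\Erw_{i}|S_{\sle(-x)}|^{\alpha}<\infty$ yields (b). The remaining inclusion of \eqref{eq: sigma alpha cond} in the equivalence list follows at once from Theorem \ref{thm:Kesten-Maller MRW finite S}, since \eqref{eq:J_pi condition S finite min} is strictly stronger than \eqref{eq:J_pi condition S finite K/M} (in the finite-$\cS$ setting, both are implied by (a)). The hard step throughout is the reverse tail comparison between $D^{i}$ and $S_{\tau(i)}^{-}$, where the sub-cycle decomposition must be done carefully enough so that the resulting number of sub-excursions, although random, does not destroy the moment bound.
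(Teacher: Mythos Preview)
Your treatment of the ``Moreover'' part and of (a)$\Leftrightarrow$\eqref{eq:J_pi condition S finite min} matches the paper's route: these are precisely the contents of Lemma~\ref{lem:pi int} applied with $\gamma=1/(1+\alpha)$, so that $J_{i,\gamma}(x)^{1+\alpha}=x^{\alpha}J_{i}(x)$. The sub-cycle decomposition with a H\"older split and the bound $D^{i}\le\sum_{k=1}^{\tau(i)}X_{k}^{-}$ are exactly the ingredients of that lemma, and together with Theorem~\ref{thm:Kesten-Maller MRW min} this yields the equivalence of (a), (b), \eqref{eq:J_pi condition S finite min}, and the $S_{\tau(i)}^{-}$-variant of (a).

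Your argument for (c)$\Rightarrow$(b), however, has two genuine gaps. First, the decomposition $|\min_{n\ge 0}S_{n}|\le x+|S_{\sle(-x)}|+\Delta$, with $\Delta$ the drop between $\sle(-x)$ and the \emph{next} return of $\bM$ to $i$, is wrong: after that return the walk can descend further (and repeatedly), so the residual drop is not dominated by a single copy of $D^{i}$ but by a fresh copy of $|\min_{n}S_{n}|$ started from the state at that return --- this gives a recursion, not a closed bound. Second, the sentence ``(c) already implies (a) through the above chain of equivalences'' is circular: the only chain you have established links (a), its $S_{\tau(i)}^{-}$-variant, and \eqref{eq:J_pi condition S finite min}; none of these involves (c), so you are invoking the very implication under proof to obtain $\Erw_{i}(D^{i})^{\alpha}<\infty$. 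The same incompleteness affects your treatment of \eqref{eq: sigma alpha cond}: you obtain \eqref{eq:J_pi condition S finite min}$\Rightarrow$\eqref{eq:J_pi condition S finite K/M}$\Leftrightarrow$\eqref{eq: sigma alpha cond} via Theorem~\ref{thm:Kesten-Maller MRW finite S}, but the converse \eqref{eq: sigma alpha cond}$\Rightarrow$\eqref{eq:J_pi condition S finite min} is never addressed. The paper itself does not spell out a proof of Theorem~\ref{thm:Kesten-Maller MRW min finite S} beyond pointing to Lemmata~\ref{lem:asymp finite space} and~\ref{lem:pi int}, so a self-contained argument for (c)$\Rightarrow$(a) (and for the role of \eqref{eq: sigma alpha cond}) is genuinely required and not supplied by your proposal.
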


Without the conditions \eqref{eq:J_pi condition S finite}, \eqref{eq:J_pi condition S finite K/M} and \eqref{eq:J_pi condition S finite min} involving $J_{\pi}$, the last three results would merely be corollaries to the previous ones. However, the inclusion of these conditions will cause some extra work provided by Lemmata \ref{lem:asymp finite space} and \ref{lem:pi int}.

\section{Solidarity results}\label{sec:solidarity}

Observe that the additive part of a nontrivial MRW $(M_{n},S_{n})_{n\ge 0}$ is actually a countable union of ordinary RW, namely
$$ \{S_{n}:n\ge 1\}\ =\ \bigcup_{i\in\cS}\left\{S_{\tau_{n}(i)}:n\ge 1\right\}, $$
which are however randomly intertwined. The following simple solidarity lemma shows that these embedded sequences share the fluctuation type, but the subsequent counterexample will disprove the natural conjecture that this type is also shared by $(S_{n})_{n\ge 0}$ itself. It further illustrates that the fluctuation types of $(S_{n})_{n\ge 0}$ and its dual $({}^{\#}S_{n})_{n\ge 0}$ may be different as well.

\begin{Lemma}\label{lem:solidarity}
If $(M_{n},S_{n})_{n\ge 0}$ is nontrivial, then all $(S_{\tau_{n}(i)})_{n\ge 0}$, $i\in\cS$, are nontrivial and of the same fluctuation type {\sf (PD)}, {\sf (ND)} or {\sf (Osc)}.
\end{Lemma}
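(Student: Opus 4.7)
The plan is to first note that, by the strong Markov property applied at the successive return times, each $(S_{\tau_n(i)})_{n\ge 0}$ is under $\Prob_i$ an ordinary random walk with iid increments distributed as $S_{\tau(i)}$. The nontriviality assumption on $(M_n,S_n)_{n\ge 0}$ combined with the equivalence (a)$\Leftrightarrow$(c) of Lemma \ref{lem:zero increments} rules out zero increments for any $i\in\cS$, so each embedded walk is itself a nontrivial ordinary random walk and therefore exhibits exactly one of the three types \textsf{(PD)}, \textsf{(ND)}, \textsf{(Osc)}.

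For solidarity, fix two states $i,j\in\cS$ and, working under $\Prob_i$, consider alongside $(S_{\tau_n(i)})_{n\ge 0}$ the subsequence $(S_{T_m})_{m\ge 1}$ at the successive visits $T_1<T_2<\ldots$ of $\bM$ to $j$, which are a.s.\ finite by positive recurrence. The strong Markov property at $T_1$ identifies $(S_{T_m})_{m\ge 1}$ as a delayed random walk under $\Prob_i$ whose iid increments share the law of those of $(S_{\tau_n(j)})_{n\ge 0}$ under $\Prob_j$, so these two sequences have the same fluctuation type. It thus suffices to prove that, under $\Prob_i$, the properties $\sup_{n}S_{\tau_n(i)}=\infty$ and $\sup_{m}S_{T_m}=\infty$ are a.s.\ equivalent; the analogous equivalence for the infima then follows by applying this result to the MRW $(M_n,-S_n)$, which remains nontrivial, and the three fluctuation types are characterized by the $(\sup,\inf)$-pattern.

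The key implication $\sup_{n}S_{\tau_n(i)}=\infty\Rightarrow\sup_{m}S_{T_m}=\infty$ proceeds as follows. By strong Markov at $\tau_{n-1}(i)$, the pairs $(B_n,\mathbf{1}_{B_n}Y_n')_{n\ge 1}$ are iid under $\Prob_i$ and jointly independent of $\mathcal{F}_{\tau_{n-1}(i)}$, where $B_n$ is the event that the $n$-th excursion from $i$ visits $j$ and, on $B_n$, $Y_n'$ is the displacement from $S_{\tau_{n-1}(i)}$ to the value of $S$ at the first such visit. The irreducibility of $\bM$ gives $\Prob_i(B_1)>0$. Fixing $K>0$ and setting $E_n:=B_n\cap\{S_{\tau_{n-1}(i)}+Y_n'>K\}\in\mathcal{F}_{\tau_n(i)}$, one computes
\begin{equation*}
\Prob_i\big(E_n\mid\mathcal{F}_{\tau_{n-1}(i)}\big)\ =\ q\big(K-S_{\tau_{n-1}(i)}\big),\qquad q(y):=\Prob_i\big(B_1,\,Y_1'>y\big),
\end{equation*}
and $q(y)\uparrow\Prob_i(B_1)>0$ as $y\to-\infty$. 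The hypothesis $\sup_nS_{\tau_n(i)}=\infty$ guarantees that $S_{\tau_{n-1}(i)}$ exceeds any given level infinitely often, so the conditional sum diverges $\Prob_i$-a.s., and L\'evy's conditional Borel--Cantelli lemma yields $E_n$ i.o., forcing $\sup_mS_{T_m}>K$ a.s.; since $K$ was arbitrary, $\sup_mS_{T_m}=\infty$ a.s. The reverse implication is obtained by swapping the roles of $i$ and $j$: the same reasoning under $\Prob_j$, combined with a subsequent strong Markov shift at the first visit to $i$, translates $\sup_nS_{\tau_n(j)}<\infty$ under $\Prob_j$ (which is equivalent to $\sup_mS_{T_m}<\infty$ under $\Prob_i$) back into $\sup_nS_{\tau_n(i)}<\infty$ under $\Prob_i$.

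The main obstacle is the absence of any direct iid structure relating $(S_{\tau_n(i)})$ and $(S_{T_m})$ under $\Prob_i$: although both are random walks, the $j$-visits sit inside the $i$-excursions, so $(S_{T_m})$ is tied to $(S_{\tau_n(i)})$ only through a dependent coupling that precludes a routine iid comparison. The resolution is to isolate the iid sub-data $(B_n,\mathbf{1}_{B_n}Y_n')$ hidden in the $i$-excursion decomposition and to replace ordinary Borel--Cantelli by its conditional form, thereby exploiting the $\Prob_i$-a.s.\ unboundedness of $(S_{\tau_n(i)})$ along an infinite random subsequence to control a quantity defined in terms of the $j$-visits.
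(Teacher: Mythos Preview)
Your proof is correct and follows essentially the same strategy as the paper: nontriviality via Lemma \ref{lem:zero increments}, a Borel--Cantelli type argument to show that $\limsup S_{\tau_n(i)}=\infty$ forces the same for $(S_{\tau_n(j)})$, symmetry in $i,j$, and reflection $(M_n,-S_n)$ to handle the $\liminf$. The paper implements the transfer step slightly differently---it extracts a stopping-time subsequence $(\tau_n'(i))$ along which $S_{\tau_n'(i)}\to\limsup$, then uses a fixed time lag $m$ with $\Prob_i(M_m=j,\,|S_m|\le t)>0$ and a plain geometric-trials argument---whereas you work with the first $j$-visit inside each full $i$-excursion and invoke L\'evy's conditional Borel--Cantelli; these are equivalent realisations of the same idea.
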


\begin{proof}
All $(S_{\tau_{n}(i)})_{n\ge 0}$, $i\in\cS$, are nontrivial by Lemma \ref{lem:zero increments}. Fix any distinct $i,j\in\cS$. Since $\limsup_{n\to\infty}S_{\tau_{n}(i)}=x$ a.s. for $x\in\{\pm\infty\}$, we can choose a subsequence $(\tau_{n}'(i))_{n\ge 1}$ of $(\tau_{n}(i))_{n\ge 1}$ such that each $\tau_{n}'(i)$ is a stopping time for $(M_{n},S_{n})_{n\ge 0}$ and $\lim_{n\to\infty}S_{\tau_{n}'(i)}=x$ a.s. Now pick $m\in\N$ and $t>0$ such that $\Prob_{i}(M_{m}=j,|S_{m}|\le t)>0$. We then infer by a geometric trials argument that
$$ \Prob_{\pi}\left(M_{\tau_{n}'(i)+m}=j,\,|S_{\tau_{n}'(i)+m}-S_{\tau_{n}'(i)}|\le t\text{ infinitely often}\right)\ =\ 1 $$
and thus $\limsup_{n\to\infty}S_{\tau_{n}(j)}\ge\limsup_{n\to\infty}S_{\tau_{n}(i)}$ a.s. The reverse inequality and thus equality follows by interchanging the roles of $i$ and $j$.
Finally, by switching to the reflected MRW $(M_{n},-S_{n})_{n\ge 0}$, we find $\liminf_{n\to\infty}S_{\tau_{n}(j)}=\liminf_{n\to\infty}S_{\tau_{n}(i)}$ a.s. as well.\qed
\end{proof}

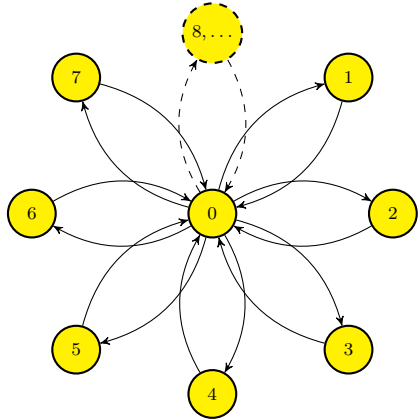
\begin{figure}[b]
\begin{center}
\begin{tikzpicture}[->, >=stealth', auto, thin, node distance=3cm]
\tikzstyle{every state}=[fill=yellow,draw=black,thick,text=black,scale=.8]
\node[state]    (0)                     {$0$};
\node[state]    (1)[above right of=0, node distance=3.2cm]   {$1$};
\node[state]    (2)[right of=0]   {$2$};
\node[state]    (3)[below right of=0, node distance=3.2cm]   {$3$};
\node[state]    (4)[below of=0]   {$4$};
\node[state]    (5)[below left of=0, node distance=3.2cm]   {$5$};
\node[state]    (6)[left of=0]   {$6$};
\node[state]    (7)[above left of=0, node distance=3.2cm]   {$7$};
\node[state,dashed]    (8)[above of=0]   {$8,\ldots$};
\path
(0) edge[bend left]     node{}     (1)
     edge[bend left,below]      node{}      (3)
     edge[bend left]    node{}      (2)
     edge[bend left]    node{}      (4)
     edge[bend left]    node{}      (5)
     edge[bend left]    node{}      (6)
     edge[bend left]    node{}      (7)
     edge[bend left,dashed]    node{}      (8)
(1) edge[bend left]     node{}     (0)
(2) edge[bend left]                node{}           (0)
(3) edge[bend left,above]     node{}         (0)
(4) edge[bend left,above]     node{}         (0)
(5) edge[bend left,above]     node{}         (0)
(6) edge[bend left,above]     node{}         (0)
(7) edge[bend left,above]     node{}         (0)
(8) edge[bend left,above,dashed]     node{}         (0);
\end{tikzpicture}
\end{center}
\caption{Transition graph of an infinite petal flower chain.}
\label{fig:IPFC}
\end{figure}

\begin{Exa}[MRW driven by an infinite petal flower chain]\label{exa:infinite petal flower chain}\rm Cons\-ider a Markov chain $(M_{n})_{n\ge 0}$ on the set $\N_{0}$ of nonnegative integers which, when in state 0, picks an arbitrary $i\in\N$ with positive probability $p_{0i}$ and jumps back to 0, otherwise, thus $p_{i0}=1$. If we figure the $i\in\N$ being placed on a circle around 0, the transition diagram of this chain looks like a \emph{flower with infinitely many petals}, each of the petals representing a transition from 0 to some $i$ and back. With all $p_{0i}$ being positive, the chain is clearly irreducible and positive recurrent with stationary probabilities $\pi_{0}=\frac{1}{2}$ and
$$ \pi_{i}\ =\ \frac{1}{2}\,\Erw_{0}\left(\sum_{n=0}^{\tau(0)-1}\1_{\{M_{n}=i\}}\right)\ =\ \frac{1}{2}\,\Prob_{0}(M_{1}=i)\ =\ \frac{p_{0i}}{2}. $$
In fact, under $\Prob_{0}$, the chain consists of independent random variables which are 0 for even $n$ and iid with common distribution $(p_{0i})_{i\ge 1}$ for odd $n$.

Turning to the additive component, we define the $X_{n}$ by
$$ X_{n}\,:=\,
\begin{cases}
\hfill -p_{0i}^{-1},&\text{if }M_{n-1}=0,\,M_{n}=i,\\
2+p_{0i}^{-1},&\text{if }M_{n-1}=i,\,M_{n}=0
\end{cases}
$$
for $i,n\in\N$, i.e.\ $K_{0i}=\delta_{-p_{0i}^{-1}}$ and $K_{i0}=\delta_{2+p_{0i}^{-1}}$. Then $\Erw_{\pi}|X_{1}|=\infty$ and
$$ S_{n}\,:=\,
\begin{cases}
n-1-p_{0M_{n}}^{-1},&\text{if }n\text{ is odd},\\
\hfill n,&\text{if }n\text{ is even}
\end{cases}
\quad\Prob_{0}\text{-a.s.}
$$
It follows that $(M_{n},S_{n})_{n\ge 0}$ is oscillating, for
\begin{align}
&\hspace{1cm}\lim_{n\to\infty}\frac{S_{2n}}{2n}\ =\ 1\label{eq:SLLN petal flower}
\shortintertext{and}
\liminf_{n\to\infty}S_{2n+1}\ &=\ \liminf_{n\to\infty}\left(n-1-\frac{1}{p_{0M_{2n+1}}}\right)\ =\ -\infty\quad\Prob_{0}\text{-a.s.}\nonumber
\end{align}
The last assertion follows from the fact that, for any $a>0$,
$$ \sum_{n\ge 0}\Prob_{0}\left(\frac{1}{p_{0M_{2n+1}}}>an\right)\ =\ \sum_{n\ge 0}\Prob_{0}(X_{1}^->an)\ =\ \frac{\Erw_{0}X_{1}^-}{a}\ =\ \infty $$
and an appeal to the Borel-Cantelli lemma, giving
\begin{equation}\label{eq:1/p_{0i}>n i.o.}
\Prob_{0}\left(\frac{1}{p_{0M_{2n+1}}}>an\text{ i.o.}\right)\ =\ 1.
\end{equation}
On the other hand, $(S_{\tau_{n}(0)})_{n\ge 0}$, which equals $(S_{2n})_{n\ge 0}$ under $\Prob_{0}$, is positive divergent, whence the same holds true for all other $(S_{\tau_{n}(i)})_{n\ge 0}$ by Lemma \ref{lem:solidarity}. Regarding Theorem \ref{thm:Kesten trichotomy MRW}, this example shows that $\Erw_{\pi}|X_{1}|=\infty$ is not sufficient for Kesten's trichotomy to hold (see \eqref{eq:SLLN petal flower}).

\vspace{.2cm}
Let us further point out that the dual $({}^{\#}M_{n},{}^{\#}S_{n})_{n\ge 0}$ has increments
$$ {}^{\#}X_{n}\,:=\,
\begin{cases}
2+p_{0i}^{-1},&\text{if }{}^{\#}M_{n-1}=0,\,{}^{\#}M_{n}=i,\\
\hfill -p_{0i}^{-1},&\text{if }{}^{\#}M_{n-1}=i,\,{}^{\#}M_{n}=0
\end{cases}
$$
for $n\ge 1$ and is therefore positive divergent, for
$$ {}^{\#}S_{n}\,:=\,
\begin{cases}
n+1+p_{0{}^{\#}\!M_{n}}^{-1},&\text{if }n\text{ is odd},\\
\hfill n,&\text{if }n\text{ is even}
\end{cases}
\quad\Prob_{0}\text{-a.s.}
$$

Regarding the strictly descending ladder epochs $\sln$ and the associated ladder walk $(\Mln,\Sln)_{n\ge 0}$, the most notable consequence of the properties assessed before, especially \eqref{eq:1/p_{0i}>n i.o.}, is that all $\sln$ are a.s.\ finite, but the ladder chain $\Mln$ must be transient, for otherwise $\liminf_{n\to\infty}S_{\tau_{n}(i)}=-\infty$ $\Prob_{i}$-a.s. would hold for at least one $i\in\cS$.

\vspace{.2cm}
If we alter the definition of the $X_{n}$ by setting
$$ X_{n}\,:=\,
\begin{cases}
\hfill -p_{0i}^{-1},&\text{if }M_{n-1}=0,\,M_{n}=i\text{ and $i$ is even},\\
\hfill -p_{0i}^{-1},&\text{if }M_{n-1}=i,\,M_{n}=0\text{ and $i$ is odd},\\
2+p_{0i}^{-1},&\text{if }M_{n-1}=i,\,M_{n}=0\text{ and $i$ is even},\\
2+p_{0i}^{-1},&\text{if }M_{n-1}=0,\,M_{n}=i\text{ and $i$ is odd}
\end{cases}
$$
for $n\ge 1$, then even both, $(S_{n})_{n\ge 0}$ and its dual $({}^{\#}S_{n})_{n\ge 0}$, are easily seen to be oscillating despite still having positive divergent embedded random walks $(S_{\tau_{n}(i)})_{n\ge 0}$. As a consequence, putting $\nu(x)=\inf\{n\ge 1:S_{\tau_{n}(i)}>x\}$ for $x\in\R_{\geqslant}$, we have that $\Erw_{i}\nu(x)<\infty$ and then, by using Wald's identity, also
$$ \Erw_{i}\tau_{\nu(x)}<\infty. $$ 
Since $\sg(x)\le\tau_{\nu(x)}$, we obtain $\Erw_{i}\sg(x)<\infty$ for all $x\in\R_{\geqslant}$, and this also ensures $\Erw_{j}\sg(x)<\infty$ for all $x\in\R_{\geqslant}$ and any other $j\in\cS$, because
\begin{align*}
\infty\ >\ \Erw_{i}\sg(x)\ &\ge\ \int_{(-\infty,x/2]}\Erw_{j}\sg(x-y)\ \Prob_{i}(S_{\tau(j)}\in dy,\sg(x)>\tau(j))\\
&\ge\ \Prob_{j}(S_{\tau(i)}\le x/2,\sg(x)>\tau(j))\,\Erw_{j}\sg(x/2)
\end{align*}
for all $x\in\R_{\geqslant}$. We have thus shown that Theorem \ref{thm:Spitzer-Erickson MRW}(d) may hold although $(S_{n})_{n\ge 0}$ is oscillating. In fact, since $\Erw_{0}\tau(0)\log\tau(0)<\infty$ trivially holds and \eqref{eq:Spitzer series criterion} for $i=0$ is readily verified to take the form
$$ \Erw_{0}\log^{+}S_{\tau(0)-1}^{-}\ =\ \sum_{i\ge 1}p_{0i}|\log p_{0i}|\ <\ \infty, $$
we further see that Theorem \ref{thm:Spitzer-Erickson MRW}(c) may or may not be valid, depending on whether or not $ \sum_{i\ge 1}p_{0i}|\log p_{0i}|$ is finite. In other words, the two assertions (c) and (d) of that theorem are only sufficient but not necessary for the positive divergence of $(S_{n})_{n\ge 0}$.
\end{Exa}

In view of the previous example, a nontrivial MRW $(M_{n},S_{n})_{n\ge 0}$ is called \emph{regular} if it shares its fluctuation type with its dual $({}^{\#}M_{n},{}^{\#}S_{n})_{n\ge 0}$ as well as all its embedded random walks $(S_{\tau_{n}(i)})_{n\ge 0}$. Nonregularity can only occur if $\Erw_{\pi}X_{1}^{+}=\Erw_{\pi}X_{1}^{-}=\infty$ as the next result shows.

\begin{Prop}\label{prop:regular <-> mean exists}
A nontrivial MRW $(M_{n},S_{n})_{n\ge 0}$ is regular if its stationary drift $\mu=\Erw_{\pi}X_{1}$ exists, i.e. $\Erw_{\pi}X_{1}^{+}<\infty$ or $\Erw_{\pi}X_{1}^{-}<\infty$.
\end{Prop}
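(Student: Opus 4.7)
The plan is to run a case distinction on the sign of $\mu=\Erw_{\pi}X_{1}\in[-\infty,\infty]$ and, in each case, to verify the fluctuation type of three objects simultaneously: the walk $(S_{n})_{n\ge 0}$ itself, each embedded walk $(S_{\tau_{n}(i)})_{n\ge 0}$, and the dual $({}^{\#}S_{n})_{n\ge 0}$. The first key observation is that \eqref{eq:stat mean drift} immediately reduces the embedded walks to ordinary walks whose mean inherits the sign of $\mu$: since $\mu$ exists, so does $\Erw_{i}S_{\tau(i)}=\mu/\pi_{i}$ for every $i\in\cS$, and it shares the sign (and the possible value $\pm\infty$) with $\mu$. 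Moreover, the duality description from Section \ref{sec:preliminaries} shows $({}^{\#}M_{n},{}^{\#}X_{n})_{n\ge 1}\stackrel{d}{=}(M_{-n},X_{-n+1})_{n\ge 1}$ under $\Prob_{\pi}$, so $\Erw_{\pi}{}^{\#}X_{1}=\mu$, and the same reduction applies to the dual.

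For $\mu>0$ (finite or infinite), Chung's theorem applied to the iid walk $(S_{\tau_{n}(i)})_{n\ge 0}$ yields positive divergence of each embedded walk. For $(S_{n})_{n\ge 0}$ itself, I would invoke Birkhoff's ergodic theorem applied to the stationary sequence $(M_{n},X_{n})_{n\ge 1}$ under $\Prob_{\pi}$, which is ergodic since $(M_{n})$ is an irreducible positive recurrent Markov chain and the $X_{n}$ are conditionally independent given $\bM$; this gives $n^{-1}S_{n}\to\mu$ a.s.\ (the theorem remains valid for $\mu=+\infty$ under the assumption $\Erw_{\pi}X_{1}^{-}<\infty$), hence $S_{n}\to\infty$ a.s., i.e.\ \textsf{(PD)}. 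The dual is handled identically. The case $\mu<0$ is obtained from the case $\mu>0$ by switching to $(M_{n},-S_{n})_{n\ge 0}$.

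The case $\mu=0$ forces $\Erw_{\pi}|X_{1}|<\infty$, so Birkhoff again applies and gives $n^{-1}S_{n}\to 0$ a.s., which alone does not yet pin down the fluctuation type. However, since $(M_{n},S_{n})_{n\ge 0}$ is nontrivial, Lemma \ref{lem:zero increments} implies that each $S_{\tau(i)}$ is not a.s.\ $0$, so by Chung's theorem the embedded walk $(S_{\tau_{n}(i)})_{n\ge 0}$ oscillates under $\Prob_{i}$. Since it is an infinite subsequence of $(S_{n})_{n\ge 0}$,
$$ \limsup_{n\to\infty}S_{n}\ \ge\ \limsup_{n\to\infty}S_{\tau_{n}(i)}\ =\ \infty\quad\text{and}\quad\liminf_{n\to\infty}S_{n}\ \le\ \liminf_{n\to\infty}S_{\tau_{n}(i)}\ =\ -\infty $$
$\Prob_{i}$-a.s. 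Combined with Prop.~\ref{prop:classify nontrivial}, this forces \textsf{(Osc)} for $(S_{n})_{n\ge 0}$ under every $\Prob_{j}$. The same argument applied to the dual (whose stationary drift is also $0$, whose nontriviality is equivalent to that of $(M_{n},S_{n})_{n\ge 0}$ as noted after Prop.~\ref{prop:classify null}) shows that $({}^{\#}S_{n})_{n\ge 0}$ also oscillates.

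Putting the three cases together, $(S_{n})_{n\ge 0}$, all its embedded walks, and its dual share the same fluctuation type, so the MRW is regular. The only delicate points in the execution are (i) invoking Birkhoff with a possibly infinite mean (standard, but worth a sentence), and (ii) in the $\mu=0$ case, the transfer of oscillation from the embedded walk to $(S_{n})_{n\ge 0}$ via the subsequence comparison together with the trichotomy; no truly hard step arises.
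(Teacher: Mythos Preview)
Your proof is correct and follows essentially the same approach as the paper: both invoke Birkhoff's ergodic theorem for $(S_{n})$ and $({}^{\#}S_{n})$ under $\Prob_{\pi}$, use \eqref{eq:stat mean drift} to transfer the sign of $\mu$ to $\Erw_{i}S_{\tau(i)}$, and apply the SLLN to the embedded walks. Your explicit treatment of the case $\mu=0$ (oscillation of the embedded walk forcing oscillation of the full walk via the subsequence comparison and Prop.~\ref{prop:classify nontrivial}) is more careful than the paper's terse ``But this shows\ldots'', which leaves that step implicit.
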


\begin{proof}
If $\mu=\Erw_{\pi}X_{1}$ exists, then $\Erw_{i}S_{\tau(i)}=\pi_{i}^{-1}\mu$ (see \eqref{eq:stat mean drift}) exists as well for any $i\in\cS$. Now use Birkhoff's ergodic theorem (see e.g.\ \cite[Thm. 6.21]{Breiman:68}), applied to the ergodic stationary sequences $(X_{n})_{n\ge 1}$ and $({}^{\#}X_{n})_{n\ge 1}$ under $\Prob_{\pi}$, to infer $n^{-1}S_{n}\to\mu$ and $n^{-1}{}^{\#}S_{n}\to\mu$ a.s. Moreover, $n^{-1}S_{\tau_{n}(i)}\to\pi_{i}^{-1}\mu$ a.s. for all $i\in\cS$ by the strong law of large numbers. But this shows that $(S_{n})_{n\ge 0}$, its dual and all embedded sequences $(S_{\tau_{n}(i)})_{n\ge 0}$ do indeed share the fluctuation type.\qed
\end{proof}

For the proof of our main results, it is a crucial ingredient that the embedded RW $(S_{\tau_{n}(i)})_{n\ge 0}$, $i\in\cS$, not only share their fluctuation type (Lemma \ref{lem:solidarity}) but also finite power moments of the fluctuation-theoretic quantities 
we have introduced. In other words, we need solidarity of these RW as to validity of Theorems \ref{thm:Kesten-Maller} and \ref{thm:Kesten-Maller min} which in turn follows if the respective criteria
\begin{align}\label{eq:J-criteria}
\Erw_{i}J_{i}(S_{\tau(i)}^{-})^{1+\alpha}\ <\ \infty\quad\text{and}\quad\Erw_{i}(S_{\tau(i)}^{-})^{\alpha}J_{i}(S_{\tau(i)}^{-})\ <\ \infty
\end{align}
are valid for all $i\in\cS$ if valid for some $i$. The simple observation that these criteria may be rewritten as integrals involving the tail functions of $S_{\tau(i)}^{-}$ and $S_{\tau(i)}^{+}$ suggests to arrive at this conclusion by showing $\Prob_{i}(S_{\tau(i)}^{\pm}>y)\asymp\Prob_{j}(S_{\tau(j)}^{\pm}>y)$ as $y\to\infty$ for all $i,j\in\cS$ (tail solidarity). Unfortunately, by another look at the previous example, where $\Prob_0(S_{\tau(0)}>y)=0$ for $y>2$ but $\Prob_{i}(S_{\tau(i)}>y)>0$ for all $y\in\R_{\geqslant}$ and any $i\in\N$, we see that this cannot generally hold. We will therefore resort to a weaker but still sufficient kind of tail solidarity. The result will then be stated as Lemma \ref{lem:solidarity stronger}.

\vspace{.2cm}
For distinct $i,j\in\cS$, let
$$\upsilon\ :=\ \upsilon(i,j)\ :=\ \inf\{n\ge 1: \tau_n(i)>\tau(j)\} $$
denote the first return time to $i$ after the first visit to state $j$. Notice that $\Erw_{i}\upsilon^{1+\alpha}<\infty$ for any $\alpha\ge 0$ because $\Prob_{i}(\upsilon>n)=\Prob_{i}(\tau(i)<\tau(j))^n$ for all $n\ge 1$.

\begin{Lemma}\label{Lemma: tailasymp}
Let $i,j\in\cS$ be distinct states. 
\begin{description}
\item[(a)] There exists $x\in\R_{\geqslant}$ such that, as $y\to\infty$, 
$$ \Prob_{j}(S_{\tau(j)}>y)\ \lesssim\  \Prob_{i}(S_{\tau_{\upsilon}(i)}>y-x). $$
\item[(b)]$\Prob_{j}(S_{\tau(j)}>0)>0$ implies $\Prob_{i}(S_{\tau_{\upsilon}(i)}>0)>0$.
\end{description}
\end{Lemma}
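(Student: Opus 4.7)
The plan is to use the strong Markov property of $(M_{n},S_{n})_{n\ge 0}$ at successive hits of $i$ and $j$ so that a cycle at $j$ under $\Prob_{j}$ can be ``embedded'' in a cycle at $i$ visiting $j$ at least twice under $\Prob_{i}$. Let $p:=\Prob_{i}(\tau(j)<\tau(i))$ and $q:=\Prob_{j}(\tau(i)<\tau(j))$, both strictly positive by irreducibility and recurrence (if $q=1$ the argument below trivializes to $x=0$), and split
\begin{equation*}
\Prob_{j}(S_{\tau(j)}>y)\ =\ \Prob_{j}(S_{\tau(j)}>y,\,\tau(i)<\tau(j))\ +\ \Prob_{j}(S_{\tau(j)}>y,\,\tau(i)>\tau(j)).
\end{equation*}
Strong Markov at $\tau(i)$ under $\Prob_{j}$ writes the first summand as $q\,\Prob(\widetilde{Z}+Y'>y)$ with $Y'$ of law $\Prob_{i}(S_{\tau(j)}\in\cdot)$ and $\widetilde{Z}$ of law $\Prob_{j}(S_{\tau(i)}\in\cdot\,|\,\tau(i)<\tau(j))$ independent, while strong Markov at $\tau(j)$ under $\Prob_{i}$ yields $\Prob_{i}(S_{\tau_{\upsilon}(i)}>y)=\Prob(Y'+Z>y)$ with the same $Y'$ and $Z$ of the \emph{unconditional} law $\Prob_{j}(S_{\tau(i)}\in\cdot)$; restricting the chain underlying $Z$ to its sub-event $\{\tau(i)<\tau(j)\}$ therefore gives $\Prob_{j}(S_{\tau(j)}>y,\,\tau(i)<\tau(j))\le\Prob_{i}(S_{\tau_{\upsilon}(i)}>y)$.

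For the second summand, rewrite it as $(1-q)\,\Prob(C>y)$ with $C\sim\Prob_{j}(S_{\tau(j)}\in\cdot\,|\,\tau(i)>\tau(j))$, and embed $C$ inside a $\Prob_{i}$-path as follows. Let $\eta$ denote the time of the second visit of $M$ to $j$ and put $F:=\{\tau(j)<\tau(i),\,\eta<\tau(i)\}$, so that $\Prob_{i}(F)=p(1-q)$ by strong Markov. On $F$ one has $\tau_{\upsilon}(i)=\tau(i)$, and iterating strong Markov at $\tau(j)$ and at $\eta$ decomposes $S_{\tau_{\upsilon}(i)}=Y+C+Z$ with $Y,C,Z$ independent, $Y$ of law $\Prob_{i}(S_{\tau(j)}\in\cdot\,|\,\tau(j)<\tau(i))$, $Z$ of law $\Prob_{j}(S_{\tau(i)}\in\cdot)$ and $C$ as above. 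Choosing $a,b\ge 0$ with $\Prob(Y\ge-a)\,\Prob(Z\ge-b)>0$ (possible since $Y,Z$ are a.s.\ finite) and setting $x:=a+b$, one obtains
\begin{equation*}
\Prob_{i}(S_{\tau_{\upsilon}(i)}>y-x)\ \ge\ p(1-q)\,\Prob(Y\ge-a)\,\Prob(Z\ge-b)\,\Prob(C>y),
\end{equation*}
and adding this to the first-summand bound proves (a).

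Part (b) is handled by the same decomposition. If $\Prob_{j}(S_{\tau(j)}>0,\,\tau(i)<\tau(j))>0$ the first-summand bound already yields $\Prob_{i}(S_{\tau_{\upsilon}(i)}>0)>0$. Otherwise $\Prob(C>\delta)>0$ for some $\delta>0$; enlarging $F$ to the event $G_{N}$ that the cycle at $i$ contains at least $N$ visits to $j$ (of probability $p(1-q)^{N-1}>0$), the previous decomposition becomes $S_{\tau_{\upsilon}(i)}=Y+C_{1}+\ldots+C_{N-1}+Z$ with iid copies $C_{k}$ of $C$. For $N$ so large that $\Prob(Y+Z>-(N-1)\delta/2)>0$ (valid because $Y+Z$ is a.s.\ finite), the event $\{Y+Z>-(N-1)\delta/2\}\cap\bigcap_{k=1}^{N-1}\{C_{k}>\delta\}$ has positive probability and forces $S_{\tau_{\upsilon}(i)}>(N-1)\delta/2>0$.

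The main technical obstacle will be the careful book-keeping of \emph{which} conditional laws occur on which event when strong Markov is applied: the dominance in the first summand rests precisely on the fact that the unconditional $\Prob_{j}$-law of $S_{\tau(i)}$ contains a $q$-scaled copy of its conditional law on $\{\tau(i)<\tau(j)\}$, and an analogous restriction produces the factor $p$ in the embedding for the second summand.
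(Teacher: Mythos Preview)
Your proof is correct and follows essentially the same approach as the paper: both split $\Prob_{j}(S_{\tau(j)}>y)$ according to whether the $j$-cycle visits $i$, handle the first piece by the same convolution identity, and embed the second piece into an $i$-excursion containing two (or more) visits to $j$, with part (b) obtained by stacking enough such visits. Your bookkeeping for the second summand is in fact slightly more direct than the paper's, which routes through $\Prob_{i}(S_{\tau(i)}>y-x/2)$ as an intermediate quantity, but the underlying idea is identical.
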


\begin{proof}
(a) First note that
$$ \Prob_{j}(S_{\tau(j)}>y)\ =\ \Prob_{j}(S_{\tau(j)}>y,\tau(i)<\tau(j))\ +\ \Prob_{j}(S_{\tau(j)}>y,\, \tau(i)>\tau(j)).$$
Regarding the first term on the right-hand side and using
$$ \Prob_{i}(S_{\tau_{\upsilon}(i)}-S_{\tau(j)}\in\cdot)\ =\ \Prob_{j}(S_{\tau(i)}\in\cdot), $$ 
we infer for any $y\in\R$
\begin{align}
\begin{split}\label{eq:Stau(j)>y}
\Prob_{j}(S_{\tau(j)}>y,\,\tau(i)<\tau(j))\ &=\ \Prob_{j}(S_{\tau(i)} +(S_{\tau(j)}-S_{\tau(i)})>y,\, \tau(i)<\tau(j))\\
&\le\ \int\Prob_{i}(S_{\tau(j)}>y-x)\ \Prob_{j}(S_{\tau(i)}\in dx)\\
&=\ \int\Prob_{i}(S_{\tau(j)}>y-x)\ \Prob_{i}(S_{\tau_{\upsilon}(i)}-S_{\tau(j)}\in dx)\\
&=\ \Prob_{i}(S_{\tau_{\upsilon}(i)}>y),
\end{split}
\end{align}
and this proves the assertion if $\Prob_{j}(\tau(i)>\tau(j))=0$. Otherwise, pick $x\in\R_{\geqslant}$ such that 
$$ p_{1}\ :=\ \Prob_{i}(S_{\tau(j)}\ge -x/4,\, \tau(i)>\tau(j))\ >\ 0\quad\text{and}\quad p_{2}\ :=\ \Prob_{j}(S_{\tau(i)}\ge -x/4)\ >\ 0 $$
and note that $q:=\Prob_{i}(\tau(i)>\tau(j))>0$. Then we obtain with $p:=p_{1}p_{2}q>0$
\begin{align*}
&\Prob_{j}\big(S_{\tau(j)}>y,\, \tau(i)>\tau(j)\big)\\
&=\ q^{-1}\,\Prob_{i}\big(S_{\tau_{2}(j)}-S_{\tau(j)}>y,\,\tau(i)>\tau_{2}(j)\big)\\
&\le\ q^{-1}\,\Prob_{i}\big(S_{\tau_{2}(j)}-S_{\tau(j)}>y,\,M_{k}\ne i\text{ for }\tau(j)<k<\tau_{2}(j)\big)\\
&\le\ p^{-1}\,\Prob_{i}\big(S_{\tau(i)}>y-x/2,\,S_{\tau(j)}\ge -x/4,\, S_{\tau(i)}-S_{\tau_{2}(j)}\ge -x/4,\,\tau(i)>\tau_{2}(j)\big)\\
&\le\ p^{-1}\,\Prob_{i}\big(S_{\tau(i)}>y-x/2\big).
\end{align*}
Moreover,
\begin{align*}
&\Prob_{i}\big(S_{\tau_{\upsilon}(i)}>y-x\big)\\
&=\ \Prob_{i}\big(S_{\tau(i)}>y-x,\, \upsilon=1\big)\ +\ \Prob_{i}\big(S_{\tau(i)}+(S_{\tau_{\upsilon}(i)}-S_{\tau(i)})>y-x,\, \upsilon>1\big)\\
&\ge\ \Prob_{i}\big(S_{\tau(i)}>y-x/2,\,\upsilon=1\big)\\
&\qquad+\ \Prob_{i}\big(S_{\tau(i)}+(S_{\tau_{\upsilon}(i)}-S_{\tau(i)})>y-x,\,
S_{\tau_{\upsilon}(i)}-S_{\tau(i)}\ge -x/2,\,\upsilon>1\big)\\
&\ge\ \Prob_{i}\big(S_{\tau(i)}>y-x/2,\, \upsilon=1\big)\ +\ p_1\, p_{2}\,\Prob_{i}\big(S_{\tau(i)}>y-x/2,\,\upsilon>1\big)\\
&\ge\ p_{1}\,p_{2}\,\Prob_{i}\big(S_{\tau(i)}>y-x/2\big)
\end{align*}
which in combination with the previous estimate provides the assertion (with $x$ as chosen above) if $\Prob_{j}(\tau(i)>\tau(j))>0$.

\vspace{.2cm}	
(b) If $\Prob_{j}(S_{\tau(j)}>0,\tau(i)<\tau(j))>0$, the assertion follows immediately from \eqref{eq:Stau(j)>y}. Otherwise, 
$$\Prob_{j}(S_{\tau(j)}>\varepsilon,\, \tau(i)>\tau(j))\ >\ 0$$
for some $\varepsilon>0$. Then, upon choosing $x\in\R_{\geqslant}$ and $p_{1}, p_{2}>0$ as in (a), the assertion follows from 
\begin{align*}
\Prob_{i}(S_{\tau_{\upsilon}(i)}>0)\ &\ge\ p_{1}\,p_{2}\,\Prob_{j}(S_{\tau_{n}(j)}>x/2,\,\tau(i)>\tau_{n}(j))\\
&\ge\ p_{1}\,p_{2}\,\Prob_{j}(S_{\tau(j)}>\varepsilon,\, \tau(i)>\tau(j))^{n}\ >\ 0,
\end{align*}
where $n=\lceil x/2\eps\rceil$.\qed
\end{proof}

Defining, for $i\in\cS$, $\gamma\in[0,1]$ and $x\in\R_{\geqslant}$,
\begin{align*}
J_{i,\gamma}(x)\, &:=\,
\begin{cases}
\hfill\displaystyle{\frac{x}{[\Erw_{i}(S_{\tau(i)}^{+}\wedge x)]^{\gamma}}},&\text{if }\Prob_{i}(S_{\tau(i)}>0)>0\\
\hfill x,&\text{otherwise}
\end{cases},
\end{align*}
where $0/[\Erw_{i}(S_{\tau(i)}^{+}\wedge 0)]^\gamma:=1$ if $\gamma>0$, the expectations in \eqref{eq:J-criteria} can be treated hereafter in a unified manner because $J_{i}(x)=J_{i,1}(x)$ and $y^{\alpha}J_{i}(x)=J_{i,1/(1+\alpha)}(x)$. The next lemma collects some relevant properties of $J_{i,\gamma}$ and $A_{i}$.

\begin{Lemma}\label{lem:trunc mean}
The following assertions are true for any $\gamma\in[0,1]$:
\begin{description}\itemsep3pt
\item[(a)] $J_{i,\gamma}$ is subadditive and nondecreasing for all $i\in\cS$.
\item[(b)] $J_{i,\gamma}(y)\asymp J_{i,\gamma}(x+y)$ as $y\to\infty$ for all $(i,x)\in\cS\times\R$.
\item[(c)] $J_{i,\gamma}(y)\asymp J_{j,\gamma}(y)$ as $y\to\infty$ for all $i,j\in\cS$.
\end{description}
If the $(S_{\tau_{n}(i)})_{n\ge 0}$ are positive divergent, then furthermore
\begin{description}\itemsep3pt
\item[(d)] $A_{i}(y)>0$ for all sufficiently large $y$,
\item[(e)] $A_{i}(y) \asymp\Erw_{i}(S_{\tau(i)}^{+}\wedge y)\asymp\Erw_{i}(S_{\tg(i)}\wedge y)$ as $y\to\infty$
\end{description}
for all $i\in\cS$.
\end{Lemma}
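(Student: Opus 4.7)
Set $f_i(x) := \Erw_i(S_{\tau(i)}^+ \wedge x)$, so that $J_{i,\gamma}(x) = x/f_i(x)^\gamma$ (modulo the convention when $\Prob_i(S_{\tau(i)}>0)=0$). Since the integrand is concave in $x$ and vanishes at $0$, $f_i$ is concave, nondecreasing, subadditive, and $x\mapsto x/f_i(x)$ is nondecreasing. Part \textbf{(a)} now follows by writing $J_{i,\gamma}(x) = x^{1-\gamma}(x/f_i(x))^\gamma$, a product of nondecreasing factors, and observing that $f_i(x+y)\ge f_i(x)\vee f_i(y)$ entails
\[ J_{i,\gamma}(x+y) \;=\; \frac{x+y}{f_i(x+y)^\gamma} \;\le\; \frac{x}{f_i(x)^\gamma}+\frac{y}{f_i(y)^\gamma}. \]
For \textbf{(b)}, one first checks $J_{i,\gamma}(y)\to\infty$: for $\gamma<1$ use $J_{i,\gamma}(y)\ge y^{1-\gamma}$; for $\gamma=1$ distinguish $\Erw_i S_{\tau(i)}^+<\infty$ (where $f_i$ is bounded) from $\Erw_i S_{\tau(i)}^+=\infty$ (where dominated convergence yields $f_i(y)/y\to 0$). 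Subadditivity and monotonicity then give, for $x\ge 0$, the sandwich $1 \le J_{i,\gamma}(x+y)/J_{i,\gamma}(y) \le 1 + J_{i,\gamma}(x)/J_{i,\gamma}(y) \to 1$, and a symmetric argument handles $x<0$.

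For part \textbf{(c)} I first claim the dichotomy: $\Prob_i(S_{\tau(i)}>0)>0$ either holds for every $i\in\cS$ or fails for every $i$. Indeed, if $\Prob_i(S_{\tau(i)}>0)=0$ the iid cycle increments under $\Prob_i$ are nonpositive, forcing $S_{\tau_\upsilon(i)}\le 0$ a.s., whence Lemma \ref{Lemma: tailasymp}(b) gives $\Prob_j(S_{\tau(j)}>0)=0$ for every $j$. In the degenerate case $J_{i,\gamma}=\mathrm{id}$ and the assertion is trivial; assume hereafter the positive case, so that it suffices to prove $f_i(y)\asymp f_j(y)$ as $y\to\infty$. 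By Lemma \ref{Lemma: tailasymp}(a) there exist $C,x_0>0$ with $\Prob_j(S_{\tau(j)}>y)\le C\,\Prob_i(S_{\tau_\upsilon(i)}>y-x_0)$ for all large $y$; integrating gives
\[ f_j(y) \;\lesssim\; \Erw_i(S_{\tau_\upsilon(i)}^+\wedge y) + \mathrm{const}. \]
Writing $Y_k := S_{\tau_k(i)}-S_{\tau_{k-1}(i)}$, which are iid under $\Prob_i$ with the law of $S_{\tau(i)}$, we have $S_{\tau_\upsilon(i)}^+\wedge y \le \sum_{k=1}^\upsilon (Y_k^+\wedge y)$. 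The crucial point is that $\{\upsilon\ge k\}$ equals the event that the first $k-1$ excursions do not visit $j$, a function of those excursions alone and hence independent of $Y_k$; a Wald-type identity then yields
\[ \Erw_i \sum_{k=1}^\upsilon (Y_k^+\wedge y) \;=\; \sum_{k\ge 1}\Erw_i(Y_k^+\wedge y)\,\Prob_i(\upsilon\ge k) \;=\; f_i(y)\,\Erw_i\upsilon. \]
Hence $f_j(y) \lesssim f_i(y) + \mathrm{const}$; reversing the roles of $i$ and $j$ yields the matching bound. Since each $f_i$ is bounded below by a positive constant for large $y$ (by $\Prob_i(S_{\tau(i)}>0)>0$), the additive constants are absorbed and $f_i(y)\asymp f_j(y)$ follows.

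For parts \textbf{(d)} and \textbf{(e)}, positive divergence of $(S_n)_{n\ge 0}$ passes to the iid walk $(S_{\tau_n(i)})_{n\ge 0}$ by Lemma \ref{lem:solidarity}; then (d) together with the first equivalence $A_i(y)\asymp f_i(y)$ in (e) is precisely the Kesten--Maller observation quoted right after Theorem \ref{thm:Spitzer-Erickson}. For the second equivalence in (e), $S_{\tg(i)}$ is the first strict ascending ladder height of $(S_{\tau_n(i)})_{n\ge 0}$, and the claim follows by combining Erickson's quantitative estimate $\Erw_i\nu(y)\asymp y/A_i(y)$ with Wald's identity $\Erw_i\nu(y)=\Erw_i\zeta_1\cdot\Erw_i\nu^>(y)$ and the renewal-theoretic bound $\Erw_i\nu^>(y)\asymp y/\Erw_i(S_{\tg(i)}\wedge y)$ for the ladder-height renewal process.

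The principal obstacle is the solidarity step \textbf{(c)}: the independence of $Y_k$ from $\{\upsilon\ge k\}$ is not apparent from the definition of $\upsilon$ and must be extracted from the iid structure of the excursions between successive returns to $i$; this is what unlocks the Wald-type identity and, with it, the bound $\Erw_i(S_{\tau_\upsilon(i)}^+\wedge y) \le \Erw_i\upsilon\cdot f_i(y)$ underlying the whole argument.
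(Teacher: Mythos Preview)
Your proof is correct and follows essentially the same route as the paper: parts (a), (b) use the same algebraic facts about $f_i$ and $J_{i,\gamma}$ (the paper writes the monotonicity via the identity $J_{i,\gamma}(y)=y^{1-\gamma}/[\int_0^1\Prob_i(S_{\tau(i)}>xy)\,dx]^\gamma$, which is your factorisation in different dress), and parts (d), (e) are likewise referred back to Kesten--Maller. The one noteworthy difference is in (c): the paper conditions on $\{\upsilon=n\}$ and bounds each $\Erw_i[(Y_k^+\wedge y)\mid\upsilon=n]$ by $\Erw_i[S_{\tau(i)}^+\wedge y\mid\tau(i)\lessgtr\tau(j)]\lesssim f_i(y)$, absorbing the conditioning constants into the $\lesssim$; your observation that $\{\upsilon\ge k\}$ is measurable with respect to the first $k-1$ excursions and hence independent of $Y_k$ gives the exact Wald identity $\Erw_i\sum_{k=1}^\upsilon(Y_k^+\wedge y)=f_i(y)\,\Erw_i\upsilon$ directly, which is a cleaner packaging of the same idea.
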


\begin{proof}
(a) Subadditivity follows from the monotonicity of the denominator in the definition of $J_{i,\gamma}$ and monotonicity from the identity
$$J_{i,\gamma}(y)\ = \ \frac{y^{1-\gamma} }{ [y^{-1}\int_{0}^{y} \Prob_{i}(S_{\tau(i)}^{+}>x)\ dx]^\gamma}\ = \ \frac{y^{1-\gamma}}{[\int_{0}^{1} \Prob_{i}(S_{\tau(i)}>xy)\ dx]^\gamma},\quad y\in\R_{\geqslant}$$
(cf. the proof of \cite[Lemma 5.4(a)]{AlsIksMei:15}).

\vspace{.1cm}
(b) follows immediately from the properties asserted in (a).

\vspace{.1cm}
(c) Fix any distinct $i,j\in\cS$ and then $x\in\R_{\geqslant}$ such that Lemma \ref{Lemma: tailasymp}(a) holds. The assertion is obvious if we verify that
$$ \Erw_{i}(S_{\tau(i)}^{+}\wedge y)\ \asymp\ \Erw_{j}(S_{\tau(j)}^{+}\wedge y) $$
as $y\to\infty$. We obtain
\begin{align*}
\Erw_{j}(S_{\tau(j)}^{+}\wedge y)\ &=\ \int_{0}^{y}\Prob_{j}(S_{\tau(j)}>z)\ dz\ \lesssim\ \int_{0}^{y} \Prob_{i}(S_{\tau_{\upsilon}(i)}>z-x)\ dz\\
&\lesssim\ \int_{0}^{y-x} \Prob_{i}(S_{\tau_{\upsilon}(i)}>z)\ dz\ =\ \Erw_{i}\big[S_{\tau_{\upsilon}(i)}^{+}\wedge (y-x)\big]\\
&=\ \sum_{n\ge 1 }\Prob_{i}(\upsilon=n)\, \Erw_{i}\big[S_{\tau_n(i)}^{+}\wedge (y-x)\big|\upsilon=n\big]\\
&\le\ \sum_{n\ge 1 }\Prob_{i}(\upsilon=n)\, \sum_{k=1}^n \Erw_{i}\big[(S_{\tau_{k}(i)}-S_{\tau_{k-1}(i)})^{+}\wedge (y-x)\big|\upsilon=n\big],
\end{align*}
where Lemma \ref{Lemma: tailasymp}(b) has been utilized in the third step. Notice that assertion (b) particularly yields
$$\Erw_{i}(S_{\tau(i)}^{+}\wedge (y-x))\ \asymp\ \Erw_{i}(S_{\tau(i)}^{+}\wedge y) $$
as $y\to\infty$. Next, we have
\begin{align*}
\Erw_{i}\big[(S_{\tau_n(i)}-S_{\tau_{n-1}(i)})^{+}\wedge (y-x)\big|\upsilon=n\big]\ &=\ \Erw_{i}[S_{\tau(i)}^{+}\wedge (y-x)|\tau(i)>\tau(j)]\\
&\lesssim\ \Erw_{i}[S_{\tau(i)}^{+}\wedge(y-x)]\ \asymp \ \Erw_{i}(S_{\tau(i)}^{+}\wedge y)
\end{align*}
as $y\to\infty$ and, given $\Prob_{i}(\tau(i)<\tau(j))>0$, 
\begin{align*}
\Erw_{i}\big[(S_{\tau_{k}(i)}-S_{\tau_{k-1}(i)})^{+}\wedge (y-x)\big|\upsilon=n\big] \ &=\ \Erw_{i}[S_{\tau(i)}^{+}\wedge (y-x)|\tau(i)<\tau(j)]\\
&\lesssim\ \Erw_{i}(S_{\tau(i)}^{+}\wedge y)\qquad \text{as }y\to\infty
\end{align*}
for $1\le k<n$. Consequently,
$$ \Erw_{j}(S_{\tau(j)}^{+}\wedge y)\ \lesssim\ \sum_{n\ge 1} \Prob_{i}(\upsilon=n)\, n\cdot\Erw_{i}(S_{\tau(i)}^{+}\wedge y) \ = \ \Erw_{i}\upsilon \cdot \Erw_{i}(S_{\tau(i)}^{+}\wedge y), $$ 
thus $\Erw_{j}(S_{\tau(j)}^{+}\wedge y)\lesssim\Erw_{i}(S_{\tau(i)}^{+}\wedge y)$. The reverse relation follows by symmetry of the argument in $i$ and $j$.

\vspace{.1cm}	
(d) and (e) can be extracted from \cite[Lemma 3.2, the proof of Lemma 3.1 and (4.5)]{KesMal:96}.\qed
\end{proof}

\begin{Lemma}\label{lem:solidarity stronger}
The following assertions hold true either for all $i\in\cS$ or none:
\begin{description}\itemsep2pt
\item[(a)] $\Erw_{i}J_{i}(S_{\tau(i)}^{-})^{1+\alpha}<\infty$.
\item[(b)] $\Erw_{i}(S_{\tau(i)}^{-})^{\alpha}\,J_{i}(S_{\tau(i)}^{-})<\infty$.
\item[(c)] $\Erw_{i}S_{\tau(i)}^{-}<\infty$.
\end{description}
\end{Lemma}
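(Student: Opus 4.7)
The plan is to reduce the three conditions to a common template and then transfer finiteness from one state to another via the two key solidarity tools, Lemma \ref{Lemma: tailasymp}(a) and Lemma \ref{lem:trunc mean}. Using the functions $J_{i,\gamma}$ defined before Lemma \ref{lem:trunc mean} and the identities $J_{i,1}=J_i$, $J_{i,1/(1+\alpha)}(x)^{1+\alpha}=x^{\alpha}J_i(x)$ and $J_{i,0}(x)=x$, each of the three assertions takes the unified form $\Erw_i J_{i,\gamma}(S_{\tau(i)}^-)^\beta<\infty$ with $\beta\ge 1$, namely with $(\gamma,\beta)=(1,1+\alpha)$ for (a), $(1/(1+\alpha),1+\alpha)$ for (b), and $(0,1)$ for (c). By symmetry in $i,j$, it suffices to show, for any distinct $i,j\in\cS$, that finiteness at $i$ forces finiteness at $j$.

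Fix such $i,j$ and write $\upsilon=\upsilon(i,j)$. Applying Lemma \ref{Lemma: tailasymp}(a) to the reflected MRW $(M_n,-S_n)_{n\ge 0}$ yields some $x_0\in\R_{\geqslant}$ with $\Prob_j(S_{\tau(j)}^->y)\lesssim\Prob_i(S_{\tau_\upsilon(i)}^->y-x_0)$ as $y\to\infty$. Under $\Prob_i$ the cycle quantities $Y_k:=(S_{\tau_k(i)}-S_{\tau_{k-1}(i)})^-$ are iid copies of $S_{\tau(i)}^-$, the event $\{\upsilon\ge k\}$ depends only on the first $k-1$ cycles (hence is independent of $Y_k$), and $\upsilon$ is geometric with parameter $q:=\Prob_i(\tau(j)\le\tau(i))>0$, so $\Erw_i\upsilon^p<\infty$ for every $p\ge 1$. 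The triangle inequality yields $S_{\tau_\upsilon(i)}^-\le\sum_{k=1}^\upsilon Y_k$. Combining the tail identity $\Erw h(Y)=h(0)+\int_0^\infty\Prob(Y>y)\,dh(y)$, the subadditivity from Lemma \ref{lem:trunc mean}(a), the uniform bound $J_{j,\gamma}^\beta\le C_1 J_{i,\gamma}^\beta+C_2$ derivable from Lemma \ref{lem:trunc mean}(c) (both functions being bounded on compact sets and $\asymp$ at infinity), and $(a+b)^\beta\le 2^{\beta-1}(a^\beta+b^\beta)$ valid for $\beta\ge 1$, one arrives at
\begin{equation*}
\Erw_j J_{j,\gamma}(S_{\tau(j)}^-)^\beta\ \lesssim\ 1\ +\ \Erw_i\Bigl(\sum_{k=1}^\upsilon J_{i,\gamma}(Y_k)\Bigr)^\beta.
\end{equation*}

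Minkowski's inequality in $L^\beta(\Prob_i)$ together with the independence of $Y_k$ and $\1_{\{\upsilon\ge k\}}$ then closes the estimate:
\begin{equation*}
\Bigl\|\sum_{k=1}^\upsilon J_{i,\gamma}(Y_k)\Bigr\|_\beta\ \le\ \bigl(\Erw_i J_{i,\gamma}(S_{\tau(i)}^-)^\beta\bigr)^{1/\beta}\sum_{k\ge 1}(1-q)^{(k-1)/\beta}\ <\ \infty,
\end{equation*}
the series being geometric. The main technical point is absorbing the additive shift $x_0$ produced by the state switch in Lemma \ref{Lemma: tailasymp}(a); this is exactly what the subadditivity and the asymptotic equivalence of the functions $J_{i,\gamma}$ supplied by Lemma \ref{lem:trunc mean} are designed for. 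In case (c), where $\gamma=0$ and $J_{i,0}(x)=x$ for every $i$, Lemma \ref{lem:trunc mean}(c) is trivial and the argument collapses to the Wald-type identity $\Erw_i S_{\tau_\upsilon(i)}^-\le\Erw_i\upsilon\cdot\Erw_i S_{\tau(i)}^-$.
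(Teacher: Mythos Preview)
Your proof is correct and follows essentially the same route as the paper: unify (a)--(c) via the family $J_{i,\gamma}$, apply Lemma~\ref{Lemma: tailasymp}(a) to the reflected MRW to compare the tail of $S_{\tau(j)}^-$ with that of $S_{\tau_\upsilon(i)}^-$, and then control the latter through the iid cycle structure together with the subadditivity of $J_{i,\gamma}$. The only difference is in the last step, where the paper conditions on $\{\upsilon=n\}$ and uses the Jensen-type bound $\bigl(\sum_{k=1}^{n}a_k\bigr)^{1+\alpha}\le n^{\alpha}\sum_{k=1}^{n}a_k^{1+\alpha}$ to arrive at $(c_1\vee c_2)\,\Erw_i\upsilon^{1+\alpha}$, while you use Minkowski together with the independence of $Y_k$ and $\{\upsilon\ge k\}$; both yield the same conclusion.
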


By applying the lemma to the MRW $(M_{n},-S_{n})_{n\ge 0}$, we see it remains valid with $S_{\tau(i)}^{+}$ instead of $S_{\tau(i)}^{-}$, and a combination of both results shows that $\Erw_{i}|S_{\tau(i)}|<\infty$ is either true for all $i\in\cS$ or none.

\begin{proof}
All parts follow if we can prove that, for any $\gamma\in [0,1]$, $\Erw_{i} J_{i,\gamma}(S_{\tau(i)}^{-})^{1+\alpha}<\infty$ is true either for all $i\in\cS$ or none.

\vspace{.2cm}	
Suppose $\Erw_{i} J_{i,\gamma}(S_{\tau(i)}^{-})^{1+\alpha}<\infty$ and pick an arbitrary $j\in\cS\setminus\{i\}$. An application of Lemma \ref{Lemma: tailasymp}(a) to $(M_{n},-S_{n})_{n\ge 0}$ ensures the existence of $x\in\R_{\geqslant}$ with 
$$ \Prob_{j}(S_{\tau(j)}^{-}>y)\ \lesssim\ \Prob_{i}(S_{\tau_{\upsilon}(i)}^{-}>y-x) $$
as $y\to\infty$. Hence, by an appeal to Lemma \ref{lem:trunc mean}(b), it suffices to prove 
$$ \Erw_{i} J_{i,\gamma}(S_{\tau_{\upsilon}(i)}^{-})^{1+\alpha}\ <\ \infty. $$
Setting $Y_{k}:=S_{\tau_{k}(i)}-S_{\tau_{k-1}(i)}$ for $k\ge 1$, subadditivity of $J_{i,\gamma}$ yields
\begin{align*}
\Erw_{i} J_{i,\gamma}(S_{\tau_{\upsilon}(i)}^{-})^{1+\alpha}\ 
&\le\ \Erw_{i}\left[ \sum_{k=1}^{\upsilon} J_{i,\gamma}\big(Y_{k}^{-}\big)\right]^{1+\alpha}\ \le\ \Erw_{i}\left[\upsilon^{\alpha}\sum_{k=1}^{\upsilon}J_{i,\gamma}\big(Y_{k}^{-}\big)^{1+\alpha}\right]\\
&=\ \sum_{n\ge 1} \Prob_{i}(\upsilon=n)\,n^{\alpha}\,\sum_{k=1}^{n}\Erw_{i}\Big[J_{i,\gamma}\big(Y_{k}^{-}\big)^{1+\alpha}\Big|\upsilon=n\Big].\end{align*}
Now use
$$ \Erw_{i}\left[J_{i,\gamma}\big(Y_{n}^{-}\big)^{1+\alpha}\Big |\upsilon=n\right]\,=\,\Erw_{i} \big(J_{i,\gamma}(S_{\tau(i)}^{-})^{1+\alpha}\big|\tau(i)>\tau(j)\big)\,=:\,c_{1}\,<\,\infty $$
and, given $\Prob_{i}(\tau(i)<\tau(j))>0$,
$$ \Erw_{i} \left[J_{i,\gamma}\big(Y_{k}^{-}\big)^{1+\alpha}\Big|\upsilon=n\right]\,=\,\Erw_{i} \big(J_{i,\gamma}(S_{\tau(i)}^{-})^{1+\alpha}\big|\tau(i)<\tau(j)\big)\,=:\,c_{2}\,<\,\infty $$
for $1\le k<n$ to arrive at the desired conclusion
$$ \Erw_{i} J_{i,\gamma}(S_{\tau_{\upsilon}(i)}^{-})^{1+\alpha}\ \le\ (c_{1}\vee c_{2})\, \Erw_{i}\upsilon^{1+\alpha}\ <\ \infty.\eqno\qed $$
\end{proof}

The next solidarity lemma is of similar kind, but for $D^{i}$ instead of $S_{\tau(i)}^{-}$.

\begin{Lemma}\label{lem:solidarity D^s}
The following assertions hold either for all $i\in\cS$ or none:
\begin{description}\itemsep2pt
\item[(a)] $\Erw_{i}J_{i}(D^{i})^{1+\alpha}<\infty$.
\item[(b)] $\Erw_{i}(D^{i})^{\alpha}J_{i}(D^{i})<\infty$.
\item[(c)] $\Erw_{i}(D^{i})^{1+\alpha}<\infty$.
\end{description}
\end{Lemma}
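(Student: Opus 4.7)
The plan is to parallel the proof of Lemma \ref{lem:solidarity stronger}. Reusing the family $J_{i,\gamma}$ for $\gamma\in[0,1]$ introduced there (with $J_{i,0}(x):=x$ and $J_{i,1}=J_i$), all three assertions unify into the single solidarity claim that $\Erw_{i}J_{i,\gamma}(D^{i})^{1+\alpha}<\infty$ holds for every $i\in\cS$ or for none, for each $\gamma\in[0,1]$. Indeed, (a) corresponds to $\gamma=1$, (c) to $\gamma=0$, and (b) to $\gamma=1/(1+\alpha)$ via the identity $(D^{i})^{\alpha}J_{i}(D^{i})=J_{i,1/(1+\alpha)}(D^{i})^{1+\alpha}$. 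Fix distinct $i,j\in\cS$, suppose $\Erw_{i}J_{i,\gamma}(D^{i})^{1+\alpha}<\infty$, and by Lemma \ref{lem:trunc mean}(c) reduce the task to showing $\Erw_{j}J_{i,\gamma}(D^{j})^{1+\alpha}<\infty$.

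The key step is a distributional comparison in the spirit of Lemma \ref{Lemma: tailasymp}(a). By strong Markov at $\tau(j)$ under $\Prob_{i}$, a $j$-cycle is coupled with the $\Prob_{i}$-segment $[\tau(j),\tau_{*}(j)]$, where $\tau_{*}(j):=\inf\{n>\tau(j):M_{n}=j\}$, yielding
\begin{equation*}
D^{j}\ \stackrel{d}{=}\ \max_{\tau(j)\le l\le\tau_{*}(j)}\big(S_{\tau(j)}-S_{l}\big)^{+}\quad(\Prob_{i}).
\end{equation*}
Set $\upsilon:=\upsilon(i,j)$ and $\upsilon^{*}:=\inf\{n:\tau_{n}(i)\ge\tau_{*}(j)\}$. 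Both have geometric tails under $\Prob_{i}$ (the former as in Lemma \ref{Lemma: tailasymp}, the latter by strong Markov at $\tau_{\upsilon}(i)$ which places us back in state $i$ and then calls upon a further $\upsilon(i,j)$-type count before the next $j$-visit), so in particular $\Erw_{i}(\upsilon^{*})^{1+\alpha}<\infty$. Splitting the interval $[\tau(j),\tau_{*}(j)]$ at $\tau_{\upsilon}(i)$ and applying a cycle-by-cycle telescoping estimate analogous to $\max_{0\le k\le\tau_{\upsilon}(i)}S_{k}^{-}\le\sum_{k=1}^{\upsilon}D_{k}^{i}$ used in the proof of Lemma \ref{lem:solidarity stronger}, one shows that the right-hand side of the display above is stochastically dominated by a constant multiple of $\sum_{k=1}^{\upsilon^{*}}D_{k}^{i}$.

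Given this distributional bound, subadditivity of $J_{i,\gamma}$ (Lemma \ref{lem:trunc mean}(a)) and the power-mean inequality give
\begin{equation*}
J_{i,\gamma}(D^{j})^{1+\alpha}\ \le\ C^{1+\alpha}(\upsilon^{*})^{\alpha}\sum_{k=1}^{\upsilon^{*}}J_{i,\gamma}(D_{k}^{i})^{1+\alpha}
\end{equation*}
in distribution. Conditioning on $\upsilon^{*}$ and on the cycle-classification events as in the proof of Lemma \ref{lem:solidarity stronger}, the $D_{k}^{i}$ are conditionally iid with laws absolutely continuous with respect to that of $D^{i}$ under $\Prob_{i}$ and uniformly bounded conditional densities. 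Taking $\Erw_{i}$ then yields $\Erw_{j}J_{i,\gamma}(D^{j})^{1+\alpha}\le C'\Erw_{i}(\upsilon^{*})^{1+\alpha}\Erw_{i}J_{i,\gamma}(D^{i})^{1+\alpha}<\infty$, finishing the proof.

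The main obstacle is the distributional domination of $\max_{[\tau(j),\tau_{*}(j)]}(S_{\tau(j)}-S_{l})^{+}$ by $\sum_{k=1}^{\upsilon^{*}}D_{k}^{i}$: a naive pathwise oscillation estimate introduces the upward-excursion terms $U_{k}^{i}$ of the $i$-cycles, whose $J_{i,\gamma}$-moments are not controlled by the hypothesis, and indeed examples show that pointwise $D^{j}$ can exceed $\sum_{k}D_{k}^{i}$ when the $i$-cycle containing $\tau(j)$ first ascends steeply before descending. Overcoming this requires exploiting the regenerative independence between the pre- and post-$\tau_{\upsilon}(i)$ portions, integrating out the upward excursion of the first enclosing $i$-cycle via Fubini to reduce the problem to the tail comparison of a sum of iid $D^{i}$-copies at a geometric stopping time, where the hypothesis combined with Wald-type estimates closes the argument.
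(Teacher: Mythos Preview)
Your unification via $J_{i,\gamma}$ and the overall strategy of embedding a $j$-cycle inside a stretch of $i$-cycles are exactly right and match the paper. But the central step---the claimed stochastic domination of $\max_{\tau(j)\le l\le\tau_{*}(j)}(S_{\tau(j)}-S_l)^{+}$ by $C\sum_{k=1}^{\upsilon^{*}}D_k^{i}$---is false, as you yourself concede. The obstacle you identify is real: the increment $S_{\tau(j)}-S_{\tau_{\upsilon-1}(i)}$ is an upward excursion inside the $i$-cycle containing $\tau(j)$, it is not controlled by any $D_k^{i}$, and it contaminates every subsequent term of the telescope. Your proposed fix (``integrate out the upward excursion via Fubini'') is too vague to close the gap; there is no Wald-type identity that trades an upward-excursion moment for a $D^{i}$-moment under the hypothesis alone.

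The paper resolves this with a different, simpler idea. Instead of measuring the downward excursion from $S_{\tau(j)}$, it measures it from $0$: define $D_{\upsilon_2}:=\max_{1\le k\le\tau_{\upsilon_2}(i)}S_k^{-}$ where $\upsilon_2:=\inf\{n:\tau_n(i)>\tau_2(j)\}$. This quantity is genuinely bounded pathwise by a sum of at most two independent copies of $D_{\upsilon}\le\sum_{k=1}^{\upsilon}D_k^{i}$, so $\Erw_i J_{i,\gamma}(D_{\upsilon_2})^{1+\alpha}<\infty$ follows exactly as in Lemma~\ref{lem:solidarity stronger}. The passage to $D^{j}$ is then achieved by \emph{conditioning} on the positive-probability event $\{S_{\tau(j)}\le x\}$ for suitable $x\ge 0$: on this event $\max_{\tau(j)<k\le\tau_2(j)}S_k^{-}\ge D^{j}-x$ (since $S_k^{-}\ge (S_k-S_{\tau(j)})^{-}-x$), and the left-hand side is dominated by $D_{\upsilon_2}$. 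This conditioning is precisely what caps the troublesome upward term by the deterministic constant $x$, at the price of a harmless factor $1/\Prob_i(S_{\tau(j)}\le x)$ and a shift absorbed by Lemma~\ref{lem:trunc mean}(b). That is the missing idea in your argument.
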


\begin{proof}
Again, it suffices to prove that, for any $\gamma\in [0,1]$, $\Erw_{i} J_{i,\gamma}(D^{i})^{1+\alpha}<\infty$ holds either for all $i\in\cS$ or none. 

\vspace{.1cm}
Suppose $\Erw_{i} J_{i,\gamma}(D^{i})^{1+\alpha}<\infty$ for some $i\in\cS$ and $\gamma\in[0,1]$. Pick an arbitrary $j\in\cS\backslash\{i\}$, define 
$$ D_{\upsilon}\ := \ \max_{1\le k\le \tau_{\upsilon}(i)} S_{k}^{-} $$
and use $D_{\upsilon}\le \sum_{k=1}^\upsilon D_{k}^{i}$ to infer $\Erw_{i} J_{i,\gamma}(D_{\upsilon})^{1+\alpha^{}}<\infty$ in the same manner as the finiteness of $\Erw_{i} J_{i,\gamma}(S_{\tau_{\upsilon}(i)}^{-})^{1+\alpha}$ in the proof of 
Lemma \ref{lem:solidarity stronger}. Then put
$$ \upsilon_{2}\ :=\ \inf\{n\ge 1: \tau_n(i)>\tau_{2}(j)\}\quad\text{and}\quad D_{\upsilon_{2}}\ := \ \max_{1\le k\le \tau_{\upsilon_{2}}(i)}S_{k}^{-} $$
and notice that
$$ D_{\upsilon_{2}}\ \le\ D_{\upsilon}\ +\ \max_{\tau_{\upsilon}(i)< k\le  \tau_{\upsilon_{2}}(i)} (S_{k}-S_{\tau_{\upsilon}(i)})^{-}. $$
The second summand on the right-hand side is 0 if $\tau_{\upsilon}(i)=\tau_{\upsilon_{2}}(i)$ and an independent copy of $D_{\upsilon}$ otherwise. Hence, in both cases one easily obtains $\Erw_{i} J_{i,\gamma}(D_{\upsilon_{2}})^{1+\alpha}<\infty$. Picking $x\in\R_{\geqslant}$ with $\Prob_{i}(S_{\tau(j)}\le x)>0$, we finally infer with the help of Lemma \ref{lem:trunc mean} 
\begin{align*}
\infty\ &>\ \Erw_{i} J_{i,\gamma}(D_{\upsilon_{2}})^{1+\alpha}\ \gtrsim\ \int_{(0,\infty)} J_{i,\gamma}(y)^{1+\alpha}\ \Prob_{i}(D_{\upsilon_{2}}\in dy\,|S_{\tau(j)}\le x)\\
&\ge \ \int_{(0,\infty)} J_{i,\gamma}(y)^{1+\alpha}\ \Prob_{i}\Big(\max_{\tau_{1}(j)< k\le \tau_{2}(j)}S_{k}^{-}\in dy\,\Big|S_{\tau_{1}(j)}\le x\Big)\\
&\ge\ \int_{(0,\infty)} J_{i,\gamma}(y)^{1+\alpha}\ \Prob_{i}\Big( \max_{\tau_{1}(j)< k\le \tau_{2}(j)}(S_{k}-S_{\tau_{1}(j)})^{-}-x\in dy\,\Big|S_{\tau_{1}(j)}\le x\Big)\\
&=\ \int_{(0,\infty)} J_{i,\gamma}(y)^{1+\alpha}\ \Prob_{j}(D_{1}^{j}-x\in dy)\\
&\asymp\ \Erw_{j} J_{j,\gamma}(D^j)^{1+\alpha}.\hspace{8cm}\qed
\end{align*}
\end{proof}

Our final solidarity lemma provides sufficient conditions for the existence of power moments of $\sigma^{>}$, see Section \ref{sec:moment sg(x)}. Note that a geometric number of cycles marked by successive visits to a state $j\ne i$ contains a visit to $i\in\cS$. Hence $\Erw_{i}\tau(i)^{1+\alpha}<\infty$ is satisfied either for all $i\in\cS$ or none.

\begin{Lemma}\label{lem:solidarity LI}
Let $\alpha\ge 0$ and $\Erw_{i}\tau(i)^{1+\alpha}<\infty$ for some/all $i\in\cS$. The following conditions are equivalent:
\begin{description}[(b)]\itemsep2pt
\item[(a)] $\Erw_{i} \tau_{\nu(x)}(i)^{1+\alpha}<\infty$ for some/all $(i,x)\in\cS\times\R_{\geqslant}$.
\item[(b)] $A_{i}(x)>0$ for all sufficiently large $x$ and $\Erw_{i}J_{i}(S_{\tau(i)}^{-})^{1+\alpha}<\infty$ for some/all $i\in\cS$.
\end{description}	
In particular, these conditions imply \eqref{eq: sigma alpha cond}.
\end{Lemma}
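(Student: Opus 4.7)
The plan is to reduce both directions of the equivalence to the classical Spitzer--Erickson and Kesten--Maller theorems (Theorems \ref{thm:Spitzer-Erickson} and \ref{thm:Kesten-Maller}) applied to the embedded RW $(S_{\tau_n(i)})_{n\ge 0}$. By the strong Markov property at the return epochs $\tau_n(i)$, the pairs $(\tau_k(i)-\tau_{k-1}(i),\,S_{\tau_k(i)}-S_{\tau_{k-1}(i)})$, $k\ge 1$, are iid under $\Prob_i$, so $(S_{\tau_n(i)})_{n\ge 0}$ is an ordinary zero-delayed RW with increment distribution $\Prob_i(S_{\tau(i)}\in\cdot)$; moreover $\nu(i,x)$ is precisely its level-$x$ first passage time, while $\tau_{\nu(x)}(i)=\sum_{k=1}^{\nu(x)}(\tau_k(i)-\tau_{k-1}(i))$ is a randomly stopped sum of iid copies of $\tau(i)$.

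First I would match (b) to conditions on the embedded walk. Since $A_i$ and $J_i$ are exactly the Erickson functionals of that walk, Theorem \ref{thm:Spitzer-Erickson} tells us its positive divergence is equivalent to $A_i(x)>0$ eventually together with $\Erw_i J_i(S_{\tau(i)}^-)<\infty$, while Theorem \ref{thm:Kesten-Maller} (under this positive divergence) provides the further equivalence of $\Erw_i J_i(S_{\tau(i)}^-)^{1+\alpha}<\infty$ with $\Erw_i \nu(x)^{1+\alpha}<\infty$ for some/all $x\ge 0$. Consequently, for each fixed $i$, condition (b) is equivalent to $\Erw_i\nu(x)^{1+\alpha}<\infty$ for some/all $x\in\R_{\geqslant}$; the ``some/all $i$'' clause in (b) is then inherited from Lemma \ref{lem:solidarity stronger}.

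Second, I would move from $\nu(x)^{1+\alpha}$-moments to $\tau_{\nu(x)}(i)^{1+\alpha}$-moments. The pathwise bound $\nu(x)\le\tau_{\nu(x)}(i)$ (cycle lengths are $\ge 1$) immediately yields (a)$\Rightarrow\Erw_i\nu(x)^{1+\alpha}<\infty$. For the reverse direction I would invoke a standard moment inequality for randomly stopped sums of iid nonnegative summands adapted to the filtration of the embedded walk: given $\Erw_i\tau(i)^{1+\alpha}<\infty$ by hypothesis and $\Erw_i\nu(x)^{1+\alpha}<\infty$ from (b), a Gut-type bound (cf.\ \cite[Ch.~3]{Gut:09}) delivers $\Erw_i\tau_{\nu(x)}(i)^{1+\alpha}<\infty$. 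This closes the equivalence (a)$\Leftrightarrow$(b), and the ``some/all $x$'' solidarity within (a) is inherited from the same clause for $\nu(x)$.

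Finally, \eqref{eq: sigma alpha cond} follows at once from the pathwise inequality $\sg(x)\le\tau_{\nu(i,x)}(i)$ $\Prob_i$-a.s., applied with $(i,x)\in\cS\times\R_{\geqslant}$ arbitrary and using that (a) holds for every such pair. The technically most delicate point, in my view, is the randomly-stopped-sum moment bound in the second step: although essentially standard for positive iid summands, one must carefully verify both the adaptedness of $\nu(x)$ to the natural filtration generated by the embedded walk's increments and that the cited inequality is invoked in the correct generality for the exponent $1+\alpha$.
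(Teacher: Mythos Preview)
Your proposal is correct and follows essentially the same route as the paper: reduce (b) to the finiteness of $\Erw_i\nu(x)^{1+\alpha}$ via Theorems \ref{thm:Spitzer-Erickson} and \ref{thm:Kesten-Maller} for the embedded walk, pass between $\nu(x)$ and $\tau_{\nu(x)}(i)$ using $\nu(x)\le\tau_{\nu(x)}(i)$ in one direction and a Gut-type stopped-sum moment bound in the other, and conclude \eqref{eq: sigma alpha cond} from $\sg(x)\le\tau_{\nu(x)}(i)$. Two small refinements: the paper invokes \cite[Theorem~1.5.4]{Gut:09} (Chapter~1, not Chapter~3) for the stopped-sum step, and the solidarity in (b) also uses Lemma~\ref{lem:solidarity} (for the ``$A_i(x)>0$ eventually'' part via shared fluctuation type), not only Lemma~\ref{lem:solidarity stronger}.
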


\begin{proof}
By Lemmata \ref{lem:solidarity}, \ref{lem:solidarity stronger} and Theorem \ref{thm:Spitzer-Erickson}, (b) holds true either for all $i\in\cS$ or none. Moreover, \eqref{eq: sigma alpha cond} follows from $\tau_{\nu(x)}(i)\ge\sigma^{>}(x)$.

\vspace{.1cm}
Suppose $\Erw_{i}\tau_{\nu(x)}(i)^{1+\alpha}<\infty$ for some $(i,x)\in\cS\times\R_{\geqslant}$. Since $\tau_{\nu(x)}(i)\ge \nu(x)$, we obtain $\Erw_{i}\nu(x)^{1+\alpha}<\infty$ which is equivalent to (b) by Theorems \ref{thm:Spitzer-Erickson} and \ref{thm:Kesten-Maller}.

\vspace{.1cm}	
Since $\Erw_{i}\tau(i)^{1+\alpha}<\infty$, the reverse implication follows directly from $\Erw_{i}\nu(x)^{1+\alpha}<\infty$ and \cite[Theorem 1.5.4]{Gut:09}.\qed
\end{proof}

\section{Proofs of the main results}\label{sec:proof main results}

For ease of notation, we write $\tau,\tau_{n},{}^{\#}\tau,\tg$, etc. for $\tau(i),\tau_{n}(i),{}^{\#}\tau(i),\tg(i)$, etc. in all subsequent proofs when a fixed $i\in\cS$ is considered. We further put
$$ \Sigma_{\alpha}(i,x)\ :=\ \sum_{n\ge 1}n^{\alpha-1}\,\Prob_{i}(S_{n}\le x) $$
for $\alpha\in\R_{\geqslant}$ and $(i,x)\in\cS\times\R_{\geqslant}$. In analogy to $J_{i}$, let $J_{i}^{>}$ be defined by
\begin{align*}
J_{i}^{>}(0)\,:=\,1\quad\text{and}\quad J_{i}^{>}(x)\,:=\,
\frac{x}{\Erw_{i}(S_{\tg(i)}\wedge x)}\quad\text{for }x>0,
\end{align*}
and put
$$ D_{n}^{i,>}\,:=\,\max_{\tg_{n-1}(i)<k\le\tg_{n}(i)}(S_{k}-S_{\tg_{n-1}(i)})^{-} $$
for $n\in\N$. Let $D^{i,>}$ denote a generic copy of these iid random variables under $\Prob_{i}$ which is independent of all other occurring random variables. Note that
$$ D_{1}^{i}\ \le\ D_{1}^{i,>}\ \le\ \sum_{k=1}^{\zeta_{1}}D_{k}^{i}, $$
where $\tg(i)=\tau_{\zeta_{1}}(i)$ with $\zeta_{1}=\inf\{n\ge 1:S_{\tau_{n}(i)}>0\}$ should be recalled from the end of Section \ref{sec:preliminaries}. Then use Lemma \ref{lem:trunc mean}(e), giving $J_{i}(x)\asymp J_{i}^{>}(x)$, in combination with the monotonicity and subadditivity of $J_{i}$ and Thm. 1.5.4 in \cite{Gut:09} to infer that, if $(S_{\tau_{n}(i)})_{n\ge 0}$ is positive divergent, then
\begin{equation}\label{eq:equivalence J_s and J_{s}^{>} moments}
\Erw_{i}J_{i}^{>}(D^{i,>})^{1+\beta}\,<\,\infty\quad\Longleftrightarrow\quad\Erw_{i}J_{i}(D^{i})^{1+\beta}\,<\,\infty
\end{equation}
for any $\beta\in\R_{\geqslant}$.

\subsection{Proof of Theorem \ref{thm:Spitzer-Erickson MRW}}\label{subsec:proof Spitzer-Erickson}

The proof of Theorem \ref{thm:Spitzer-Erickson MRW} can be found at the end of this subsection after some auxiliary lemmata. We start by quoting the following straightforward generalization of a result by Erickson \cite[Lemma 4]{Erickson:73}.

\begin{Lemma}\label{lem:Erickson}
Let $(X_{n},Y_{n})_{n\ge 1}$ be a sequence of iid pairs of nonnegative random variables with generic copy $(X,Y)$ and $\Erw X+\Erw Y=\infty$. Then
\begin{align*}
\limsup_{n\to\infty}\frac{Y_{n+1}}{\sum_{k=1}^{n}X_{k}}\ =\ 0\quad\text{or}\quad=\ \infty\quad\text{a.s. \ }
\shortintertext{according as}
\int_{\R_{>}}\frac{y}{\Erw(X\wedge y)}\ \Prob(Y\in dy)\ <\ \infty\quad\text{or}\quad=\ \infty.
\end{align*}
\end{Lemma}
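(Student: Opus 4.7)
The statement is a mild generalization of Erickson's Lemma 4 \cite{Erickson:73}, which treats the ``diagonal'' case $Y_{n}=X_{n}^{-}$; since that argument uses only the independence of $Y_{n+1}$ from $(X_{1},\ldots,X_{n})$, the same strategy applies here. Write $S_{n}=X_{1}+\ldots+X_{n}$, $G(y)=\Prob(Y>y)$, and $A_{n}(M)=\{Y_{n+1}>MS_{n}\}$ for $M>0$. The trivial case $X\equiv 0$ a.s. is handled separately via the convention $y/\Erw(X\wedge y)=\infty$; otherwise $S_{n}\to\infty$ a.s., and the key analytic input is the two-sided renewal estimate
\begin{equation*}
U(c)\,:=\,\sum_{n\ge 1}\Prob(S_{n}\le c)\,\asymp\,\frac{c}{\Erw(X\wedge c)}\quad(c\to\infty),
\end{equation*}
valid for iid nonnegative $X_{n}$ (see, e.g., \cite[Lemma 1]{Erickson:73}). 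As a preliminary reduction, $((X_{n},Y_{n}))_{n\ge 1}$ being iid, swapping finitely many pairs leaves $\limsup Y_{n+1}/S_{n}$ unchanged, so by Hewitt-Savage this limsup is a.s. a deterministic constant in $[0,\infty]$, and it suffices to analyze $\Prob(A_{n}(M)\text{ i.o.})$ for each fixed $M>0$.

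For the convergence half of the dichotomy, independence of $Y_{n+1}$ from $(X_{1},\ldots,X_{n})$ together with Tonelli and the upper bound in the renewal estimate yield
\begin{equation*}
\sum_{n\ge 1}\Prob(A_{n}(M))\,=\,\int U(y/M)\,\Prob(Y\in dy)\,\lesssim\,\int\frac{y}{\Erw(X\wedge y)}\,\Prob(Y\in dy)\,<\,\infty,
\end{equation*}
the factor $M$ being absorbed via the monotonicity and subadditivity of $y\mapsto y/\Erw(X\wedge y)$ (cf.\ Lemma \ref{lem:trunc mean}). The first Borel-Cantelli lemma then gives $\limsup Y_{n+1}/S_{n}\le M$ a.s.; letting $M\downarrow 0$ along a sequence delivers $\limsup Y_{n+1}/S_{n}=0$ a.s.

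For the divergence half, set $\mathcal{F}_{n}=\sigma(X_{k},Y_{k}:k\le n)$. Since $Y_{n+1}$ is independent of $\mathcal{F}_{n}$, $\Prob(A_{n}(M)\,|\,\mathcal{F}_{n})=G(MS_{n})$, and so L\'evy's conditional Borel-Cantelli lemma reduces matters to proving
\begin{equation*}
\sum_{n\ge 1}G(MS_{n})\,=\,\infty\quad\text{a.s.\ for every }M>0.
\end{equation*}
The event in question is exchangeable, hence has probability in $\{0,1\}$ by Hewitt-Savage, and its expectation equals $\int U(y/M)\,\Prob(Y\in dy)\asymp\int y/\Erw(X\wedge y)\,\Prob(Y\in dy)=\infty$ by the lower bound in the renewal estimate. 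The main obstacle---and the technical heart of the argument---is to pass from divergence in expectation to the a.s. divergence required above: naive Borel-Cantelli does not apply because the summands $G(MS_{n})$ are neither independent nor uniformly bounded below. I would follow Erickson's device of rewriting the sum as $\int N(t/M)\,\Prob(Y\in dt)$ with the renewal counting function $N(c)=|\{n\ge 1:S_{n}\le c\}|$ and comparing $N(c_{k})$ to its expectation along a carefully chosen deterministic subsequence $c_{k}\uparrow\infty$ via an SLLN-for-truncated-walks / Kolmogorov-type concentration argument, so as to conclude that $N(c_{k})\ge\tfrac{1}{2}U(c_{k})$ for infinitely many $k$ a.s. Combined with the zero-one dichotomy above, this forces the probability-$1$ alternative and completes the proof.
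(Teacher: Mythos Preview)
The paper does not actually prove this lemma; it merely introduces it as ``a straightforward generalization of a result by Erickson'' and then uses it without further justification. Your outline correctly reconstructs Erickson's argument. The convergence half via the first Borel--Cantelli lemma and the renewal estimate $U(c)\asymp c/\Erw(X\wedge c)$ is complete as written. For the divergence half you correctly reduce, via the conditional Borel--Cantelli lemma, to the a.s.\ divergence of $\sum_{n}G(MS_{n})$, and you are right to flag that upgrading $\Erw\sum_{n}G(MS_{n})=\infty$ to a.s.\ divergence is where the substance lies. Your proposed route---expressing the sum through the renewal counting function $N$ and comparing $N(c_{k})$ to $U(c_{k})$ along a subsequence via truncation---is exactly Erickson's device, and, as you observe, the only feature of his setting actually used is the independence of $Y_{n+1}$ from $X_{1},\ldots,X_{n}$, which is why the argument carries over verbatim to general iid pairs $(X_{n},Y_{n})$.
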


\begin{proof}[of Theorem \ref{thm:Spitzer-Erickson MRW}]
``(a)$\RA$(b)''. Suppose that $(S_{n})_{n\ge 0}$ is positive divergent. Noting that, for any $i\in\cS$, $(S_{\tau_{n}}-D_{n+1}^{i})_{n\ge 0}$ forms a subsequence of $(S_{n})_{n\ge 0}$ in the sense that the former sequence equals $(S_{\xi_{n}})_{n\ge 0}$ for an increasing random sequence $(\xi_{n})_{n\ge 0}$, we see that $S_{n}\to\infty$ a.s. ensures $S_{\tau_{n}}-D_{n+1}^{i}\to\infty$ a.s., hence $S_{\tau_{n}}\to\infty$ a.s. and
$$ \limsup_{n\to\infty}\frac{D_{n+1}^{i}}{\sum_{k=1}^{n}(S_{\tau_{k}}-S_{\tau_{k-1}})^{+}}\ \le\ \limsup_{n\to\infty}\frac{D_{n+1}^{i}}{S_{\tau_{n}}}\ <\ \infty\quad\text{a.s.} $$
Consequently, $\Erw_{i}J_{i}(D^{i})<\infty$ follows by Erickson's Lemma \ref{lem:Erickson}. Finally, $A_{i}(x)>0$ for all sufficiently large $x$ can now be inferred from Lemma 3.2 by Kesten and Maller \cite{KesMal:96}.

\vspace{.2cm}
``(b)$\RA$(a)''. Fix an arbitrary $i\in\cS$ and consider two cases.

\vspace{.1cm}
\textsc{Case 1}. If $\Erw_{i}S_{\tau}^{+}+\Erw_{i}D^{i}<\infty$, then $0<\lim_{y\to\infty}A_{i}(y)=\Erw_{i}S_{\tau}^{+}<\infty$ ensures the positive divergence of $(S_{\tau_{n}})_{n\ge 0}$ and $\lim_{n\to\infty}n^{-1}S_{\tau_{n}}=\Erw_{i}S_{\tau}$ a.s. Moreover,
$$ \sum_{n\ge 1}\Prob_{i}(D_{n}^{i}>n)\ \le\ \Erw_{i}D^{i}\ <\ \infty $$
entails $\lim_{n\to\infty}n^{-1}D_{n}^{i}=0$ a.s. As a consequence,
\begin{align*}
\lim_{n\to\infty}S_{n}\ &\ge\ \lim_{n\to\infty}(S_{\tau_{\Lambda(n)}}-D_{\Lambda(n)+1}^{i})\\
&\ge\ \lim_{n\to\infty}\Lambda(n)\left(\Erw_{i}S_{\tau(i)}-\frac{D_{\Lambda(n)+1}^{i}}{\Lambda(n)}\right)\ =\ \infty\quad\text{a.s.,}
\end{align*}
where $\Lambda(x):=\sup\{n\ge 0:\tau_{n}\le x\}$.

\vspace{.1cm}
\textsc{Case 2}. If $\Erw_{i}S_{\tau}^{+}+\Erw_{i}D^{i}=\infty$, then, again by Erickson's Lemma \ref{lem:Erickson}, $\Erw_{i}J_{i}(D^{i})<\infty$ is equivalent to
\begin{equation}\label{eq:Pruitt1}
Q_{n}\,:=\,\frac{D_{n+1}^{i}}{\sum_{k=1}^{n}(S_{\tau_{k}}-S_{\tau_{k-1}})^{+}}\ \stackrel{n\to\infty}{\longrightarrow}\ 0\quad\text{a.s.}
\end{equation}
Therefore, by invoking a result of Pruitt \cite[Lemma 8.1]{Pruitt:81}, we also have the equivalence of $\Erw_{i}J_{i}(D^{i})<\infty$ with
\begin{equation}\label{eq:Pruitt2}
R_{n}\,:=\,\frac{\sum_{k=1}^{n}(S_{\tau_{k}}-S_{\tau_{k-1}})^{-}}{\sum_{k=1}^{n}(S_{\tau_{k}}-S_{\tau_{k-1}})^{+}}\ \stackrel{n\to\infty}{\longrightarrow}\ 0\quad\text{a.s.}
\end{equation}
Now we arrive at the desired conclusion via
\begin{align*}
S_{n}\ &\ge\ \sum_{k=1}^{\Lambda(n)}(S_{\tau_{k}}-S_{\tau_{k-1}})^{+}-\sum_{k=1}^{\Lambda(n)}(S_{\tau_{k}}-S_{\tau_{k-1}})^{-}-D_{\Lambda(n)+1}^{i}\\
&=\ \sum_{k=1}^{\Lambda(n)}(S_{\tau_{k}}-S_{\tau_{k-1}})^{+}\left(1-R_{\Lambda(n)}-Q_{\Lambda(n)}\right)\ \stackrel{n\to\infty}{\longrightarrow}\ \infty\quad\text{a.s.}
\end{align*}
Note that $A_{i}(x)>0$ for all sufficiently large $x$ has not been used here.

\vspace{.2cm}
Lemma \ref{lem:Spitzer series} below establishes ``(b)$\RA$(c)''. For ``(c)$\LRA$\eqref{eq:Spitzer series criterion}'' under the additional condition on $\tau(i)$, we refer to the proof of Theorem \ref{thm:K/M MRW Spitzer series} for the case $\alpha=0$. Left with ``(c)$\RA$(d)'', Lemma \ref{lem:solidarity increments} in the Appendix provides us with
\begin{align*}
\sum_{n\ge 1}\frac{1}{n}\,\Prob_{i}(S_{\tau_{n}}\le x)\ &\asymp\ \Erw_{i}\left(\frac{1}{\tau_{n}}\,\1_{\{S_{\tau_{n}}\le x\}}\right)\ \le\ \Sigma_{0}(i,x)\ <\ \infty,
\end{align*}
for all $x\in\R_{\geqslant}$, which entails $S_{\tau_{n}}\to\infty$ a.s. and $\Erw_{i}\nu(x)<\infty$ by Theorem \ref{thm:Spitzer-Erickson}. Finally, use Wald's equation to conclude
\begin{equation}\label{eq:mean of sg(x)}
\Erw_{i}\sg(x)\ \le\ \Erw_{i}\tau_{\nu(x)}\ =\ \Erw_{i}\tau\,\Erw_{i}\nu(x)\ <\ \infty
\end{equation}
for any $x\in\R_{\geqslant}$.\qed
\end{proof}

\begin{Lemma}\label{lem:Spitzer series}
Given a nontrivial MRW $(M_{n},S_{n})_{n\ge 0}$, positive divergence implies $\Sigma_{0}(i,x)<\infty$ for some/all $(i,x)\in\cS\times\R_{\geqslant}$, and the converse is also true provided that $\Erw_{\pi}X_{1}$ exists.
\end{Lemma}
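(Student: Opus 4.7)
For the converse direction, assume $\Sigma_{0}(i,x)<\infty$ for some $(i,x)\in\cS\times\R_{\geqslant}$ and that $\Erw_{\pi}X_{1}$ exists. Restricting the Spitzer sum to indices $m$ in the range of $(\tau_{n}(i))_{n\ge 1}$ yields
$$
\Erw_{i}\sum_{n\ge 1}\tau_{n}(i)^{-1}\,\1_{\{S_{\tau_{n}(i)}\le x\}}\ \le\ \Sigma_{0}(i,x)\ <\ \infty,
$$
and Lemma \ref{lem:solidarity increments} from the Appendix supplies the equivalence $\sum_{n\ge 1}n^{-1}\Prob_{i}(S_{\tau_{n}(i)}\le x)\asymp\Erw_{i}\sum_{n\ge 1}\tau_{n}(i)^{-1}\1_{\{S_{\tau_{n}(i)}\le x\}}$. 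Hence the classical Spitzer series for the embedded ordinary random walk $(S_{\tau_{n}(i)})_{n\ge 0}$ is finite, so Theorem \ref{thm:Spitzer-Erickson} forces $(S_{\tau_{n}(i)})$ to be positive divergent. Since $\Erw_{\pi}X_{1}$ exists, Proposition \ref{prop:regular <-> mean exists} then lifts this to positive divergence of $(S_{n})$.

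For the forward direction, positive divergence of $(S_{n})$ yields $\Erw_{i}J_{i}(D^{i})<\infty$ via the already-proved equivalence (a)$\LRA$(b) of Theorem \ref{thm:Spitzer-Erickson MRW}. My plan is to decompose
$$
\Sigma_{0}(i,x)\ =\ \Erw_{i}\sum_{k\ge 0}\sum_{n=\tau_{k}(i)+1}^{\tau_{k+1}(i)}n^{-1}\,\1_{\{S_{n}\le x\}}
$$
along the excursions of the driving chain between successive visits to state $i$. Within the $(k+1)$-th block the event $\{S_{n}\le x\}$ forces $S_{\tau_{k}(i)}\le x+D_{k+1}^{i}$, and using $\sum_{n\in(\tau_{k}(i),\tau_{k+1}(i)]}n^{-1}\le L_{k}/k$ for $k\ge 1$ (where $L_{k}=\tau_{k+1}(i)-\tau_{k}(i)$) together with $\sum_{n=1}^{\tau(i)}n^{-1}\le 1+\log\tau(i)$ for $k=0$ one arrives at
$$
\Sigma_{0}(i,x)\ \le\ \Erw_{i}(1+\log\tau(i))\ +\ \sum_{k\ge 1}k^{-1}\,\Erw_{i}\bigl[L_{k}\,\1_{\{S_{\tau_{k}(i)}\le x+D_{k+1}^{i}\}}\bigr].
$$
The first term is finite since $\Erw_{i}\tau(i)<\infty$. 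For the second, conditioning on $\cF_{\tau_{k}(i)}$ and exploiting the independence of the $(k+1)$-th excursion from $\cF_{\tau_{k}(i)}$ yields the bound $\sum_{k\ge 1}k^{-1}\Erw_{i}u(S_{\tau_{k}(i)}-x)$ with $u(s)=\Erw_{i}[\tau(i)\,\1_{\{D^{i}\ge s\}}]$ for $s>0$ and $u(s)=\Erw_{i}\tau(i)$ for $s\le 0$.

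Splitting according to whether $S_{\tau_{k}(i)}\le x$ or $>x$, the first contribution is bounded by $\Erw_{i}\tau(i)\cdot\sum_{k}k^{-1}\Prob_{i}(S_{\tau_{k}(i)}\le x)<\infty$, which is finite by classical Spitzer--Erickson (Theorem \ref{thm:Spitzer-Erickson}) applied to the positive-divergent embedded random walk (Lemma \ref{lem:solidarity}). For the second, Fubini combined with the harmonic asymptotic $\sum_{k}k^{-1}\Prob_{i}(S_{\tau_{k}(i)}\le z)\asymp\log J_{i}(z)$ from \eqref{eq:harmonic series asymptotics} and the subadditivity $J_{i}(x+y)\le J_{i}(x)+J_{i}(y)$ (Lemma \ref{lem:trunc mean}(a)) reduces the task to bounding the joint expectation $\Erw_{i}\bigl[\tau(i)\log J_{i}(D^{i})\bigr]$. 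The main obstacle lies in verifying this bound from $\Erw_{i}J_{i}(D^{i})<\infty$ and $\Erw_{i}\tau(i)<\infty$ alone; the crucial leverage is that $\log J_{i}$ grows much more slowly than $J_{i}$, so that a truncation argument on $\tau(i)$ together with Jensen's inequality $\Erw_{i}\log J_{i}(D^{i})\le\log\Erw_{i}J_{i}(D^{i})<\infty$ should close the estimate.
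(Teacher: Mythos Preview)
Your converse direction is correct and matches the paper's argument exactly.

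The forward direction, however, has a genuine gap. Your decomposition along $\tau_{k}(i)$ and the bound $\sum_{n\in(\tau_{k},\tau_{k+1}]}n^{-1}\le L_{k}/k$ are valid, but the resulting reduction to $\Erw_{i}[\tau(i)\log J_{i}(D^{i})]<\infty$ is too crude: this joint moment does \emph{not} follow from positive divergence and $\Erw_{i}\tau(i)<\infty$. A concrete counterexample is Example~\ref{exa:xlogx for tau(s)}, where $\tau(0)$ has law $\Gamma$ with $\Erw\Gamma<\infty$ but $\Erw\Gamma\log\Gamma=\infty$, and $S_{\tau(0)}=1$, $D^{0}=\tau(0)-1$, $J_{0}(x)\asymp x$. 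There the MRW is positive divergent (hence $\Sigma_{0}(0,x)<\infty$), yet $\Erw_{0}[\tau(0)\log J_{0}(D^{0})]\asymp\Erw\Gamma\log\Gamma=\infty$. In fact, tracing your bound directly in this example gives
\[
\sum_{k\ge 1}k^{-1}\,\Erw_{0}\bigl[L_{k}\,\1_{\{S_{\tau_{k}(0)}\le x+D_{k+1}^{0}\}}\bigr]\ =\ \sum_{k\ge 1}k^{-1}\,\Erw\bigl[\Gamma\,\1_{\{\Gamma\ge k+1-x\}}\bigr]\ \asymp\ \Erw\Gamma\log\Gamma\ =\ \infty,
\]
so the estimate fails before the final hand-wave; no ``truncation plus Jensen'' can rescue it, because the intermediate quantity is already infinite. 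The underlying issue is that the factor $L_{k}$ couples the block length with $D_{k+1}^{i}$, and you have no control on this correlation.

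The paper sidesteps this by decomposing along the ladder epochs $\tg_{n}=\tg_{n}(i)$ of the \emph{embedded} walk rather than along $\tau_{n}(i)$, and then splitting each block $(\tg_{n-1},\tg_{n}]$ at $2\tg_{n-1}$. On the short piece $(\tg_{n-1},\min(2\tg_{n-1},\tg_{n})]$ one has at most $\tg_{n-1}$ terms, each bounded by $1/(\tg_{n-1}+1)$, so the contribution is at most $\Prob_{i}(S_{\tg_{n-1}}-D_{n}^{i,>}\le x)$, with no block-length factor. Summing over $n$ and using $\sum_{n}\Prob_{i}(S_{\tg_{n}}\le y)\asymp J_{i}^{>}(y)$ then reduces to $\Erw_{i}J_{i}^{>}(D^{i,>})<\infty$, which \emph{does} follow from positive divergence via \eqref{eq:equivalence J_s and J_{s}^{>} moments}. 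The long piece (when $\tg_{n}-\tg_{n-1}>\tg_{n-1}$) is handled separately and only needs $\Erw_{i}\tg<\infty$. The key idea you are missing is precisely this split, which decouples the harmonic sum from the block length.
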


\begin{proof}
Suppose that $S_{n}\to\infty$ a.s. Fixing any $(i,x)\in\cS\times\R_{\geqslant}$ and recalling that $\Erw_{i}\tg<\infty$, we obtain
\begin{align*}
\Sigma_{0}(i,x)\ &=\ \sum_{n\ge 1}\Erw_{i}\left(\sum_{k=\tg_{n-1}+1}^{\tg_{n}}\frac{1}{k}\,\1_{\{S_{\tg_{n-1}}+(S_{k}-S_{\tg_{n-1}})\le x\}}\right)\\
&\le\ \sum_{n\ge 1}\Erw_{i}\left(\frac{1}{\tg_{n-1}+1}\sum_{k=\tg_{n-1}+1}^{\min(2\tg_{n-1},\tg_{n})}\1_{\{S_{\tg_{n-1}}-D_{n}^{i,>}\le x\}}\right)\\
&\hspace{3.5cm}+\ \sum_{n\ge 1}\Erw_{i}\left(\sum_{k=2\tg_{n-1}+1}^{\tg_{n}}\frac{1}{k}\,\1_{\{\tg_{n}-\tg_{n-1}>\tg_{n-1}\}}\right)\\
&\le\ \sum_{n\ge 0}\Prob_{i}(S_{\tg_{n}}-D_{n+1}^{i,>}\le x)\ +\ \sum_{n\ge 1}\Erw_{i}\left(\sum_{k=2\tg_{n-1}+1}^{\tg_{n}}\frac{1}{k}\,\1_{\{\tg_{n}-\tg_{n-1}>\tg_{n-1}\}}\right).
\end{align*}
Finiteness of the first series will be established in the subsequent Lemma \ref{lem:harmonic sum * D^s}. As for the second one, notice that $S_{\tau_{n}}\to\infty$ $\Prob_{i}$-a.s. ensures $\Erw_{i}\tg<\infty$. Therefore,
\begin{align*}
\sum_{n\ge 1}\Erw_{i}&\left(\sum_{k=2\tg_{n-1}+1}^{\tg_{n}}\frac{1}{k}\,\1_{\{\tg_{n}-\tg_{n-1}>\tg_{n-1}\}}\right)\ \le\  \sum_{n\ge 1} \sum_{k\ge 1} \Erw_{i}\left[\frac{1}{2\tg_{n-1}+k}\,\1_{\{\tg_{n}-\tg_{n-1}\ge n-1+k}\right]\\
&\hspace{1cm}\le\ \sum_{n\ge 1} \sum_{k\ge n}\frac{1}{k}\,\Prob_{i}(\tg\ge k)\ =\ \sum_{n\ge 1} \Prob_{i}(\tg\ge n)\  =\ \Erw_{i} \tg\ <\ \infty
\end{align*}
as claimed.

\vspace{.1cm}
Now suppose that $\Erw_{\pi}X_{1}$ exists and, furthermore, that $(S_{n})_{n\ge 0}$ is not positive divergent which, by Proposition \ref{prop:regular <-> mean exists}, entails that the same holds true for any $(S_{\tau_{n}(i)})_{n\ge 0}$, $i\in\cS$, hence $\sum_{n\ge 1}n^{-1}\Prob_{i}(S_{\tau_{n}(i)}\le x)=\infty$ for all $x\in\R_{\geqslant}$. Now use Lemma \ref{lem:solidarity increments} to conclude
$$ \infty\ =\ \Erw_{i}\left(\sum_{n\ge 1}\frac{1}{\tau_{n}(i)}\,\1_{\{S_{\tau_{n}(i)}\le x\}}\right)\ \le\ \Sigma_{0}(i,x) $$
for all $x\in\R_{\geqslant}$.\qed
\end{proof}

\begin{Lemma}\label{lem:harmonic sum * D^s}
Given a nontrivial MRW $(M_{n},S_{n})_{n\ge 0}$, positive divergence implies
\begin{align*}
\sum_{n\ge 1}\Prob_{i}(S_{\tg_{n}(i)}-D_{n+1}^{i,>}\le x)\ <\ \infty
\end{align*}
for all $(i,x)\in\cS\times\R_{\geqslant}$. 
\end{Lemma}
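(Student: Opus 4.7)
The plan is to reduce the claim to a tail integrability statement for $D^{i,>}$ by controlling the renewal function of the positive strict ladder random walk $(S_{\tg_n(i)})_{n\ge 0}$ pointwise by $J_i^{>}$. Fix $i\in\cS$ and write $\tg_n$ for $\tg_n(i)$. Since $\tg_n=\tau_{\zeta_n}(i)$ is a return epoch to state $i$, the strong Markov property under $\Prob_i$ ensures that the excursion $(S_{\tg_n+k}-S_{\tg_n})_{0\le k\le\tg_{n+1}-\tg_n}$, and hence $D_{n+1}^{i,>}$, is independent of $(M_j,S_j)_{j\le\tg_n}$ and distributed as $D^{i,>}$. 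Setting
$$ U(y)\,:=\,\sum_{n\ge 1}\Prob_i(S_{\tg_n}\le y)\,=\,\Erw_i\nu^{>}(i,y)-1, $$
Fubini's theorem then gives
$$ \sum_{n\ge 1}\Prob_i\bigl(S_{\tg_n}-D_{n+1}^{i,>}\le x\bigr)\,=\,\Erw_i U\bigl(x+D^{i,>}\bigr). $$

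Next I will establish the pointwise bound $U(y)\le 2J_i^{>}(y)$ for every $y>0$. Write $Y_k:=S_{\tg_k}-S_{\tg_{k-1}}>0$ for the iid strict ladder increments and introduce the truncated first passage $\widetilde{\nu}(y):=\inf\{n\ge 1:\sum_{k=1}^{n}(Y_k\wedge y)>y\}$. Then $\nu^{>}(i,y)\le\widetilde{\nu}(y)$ and, crucially, $\sum_{k=1}^{\widetilde{\nu}(y)}(Y_k\wedge y)\le 2y$ because each truncated summand is bounded by $y$. The truncated increments are bounded with strictly positive mean (the walk $(S_{\tg_n})$ being nondegenerate by Lemma \ref{lem:solidarity}), so Wald's identity yields $\Erw_i\widetilde{\nu}(y)\cdot\Erw_i(Y_1\wedge y)\le 2y$, whence $\Erw_i\nu^{>}(i,y)\le 2J_i^{>}(y)$ and thus $U(y)\le 2J_i^{>}(y)$.

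To close the argument, note that $J_i^{>}(y)=y/\Erw_i(S_{\tg(i)}\wedge y)$ is nondecreasing and subadditive by exactly the argument of Lemma \ref{lem:trunc mean}(a) applied with $S_{\tg(i)}$ in place of $S_{\tau(i)}^{+}$, so
$$ \Erw_i J_i^{>}\bigl(x+D^{i,>}\bigr)\,\le\,J_i^{>}(x)\,+\,\Erw_i J_i^{>}(D^{i,>}). $$
Hence it suffices to verify $\Erw_i J_i^{>}(D^{i,>})<\infty$. But positive divergence of $(S_n)_{n\ge 0}$, combined with the already-established implication (a)$\RA$(b) of Theorem \ref{thm:Spitzer-Erickson MRW}, yields $\Erw_i J_i(D^i)<\infty$, and the equivalence \eqref{eq:equivalence J_s and J_{s}^{>} moments} with $\beta=0$ then upgrades this to $\Erw_i J_i^{>}(D^{i,>})<\infty$, completing the proof.

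The main obstacle I foresee is obtaining the clean pointwise bound $U(y)\le 2J_i^{>}(y)$ without any moment assumption on $Y_1$; the truncation-plus-Wald device in the second paragraph handles this uniformly in $y$, after which subadditivity of $J_i^{>}$ and the already-available tail control of $D^i$ deliver the finiteness.
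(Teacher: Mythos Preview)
Your proof is correct and follows essentially the same route as the paper's: both condition on $D^{i,>}$ to reduce the series to $\Erw_i U(x+D^{i,>})$ for the renewal function $U$ of the strict ladder heights, bound $U$ by $J_i^{>}$, and then invoke $\Erw_i J_i^{>}(D^{i,>})<\infty$ via \eqref{eq:equivalence J_s and J_{s}^{>} moments}. The only difference is in how the renewal bound is obtained: the paper cites the Kesten--Maller asymptotic $\sum_{n\ge 1}\Prob_i(S_{\tg_n}\le y)\asymp J_i^{>}(y)$ from \eqref{eq:harmonic series asymptotics, alpha>0}, whereas you give a self-contained pointwise inequality $U(y)\le 2J_i^{>}(y)$ via the truncation-plus-Wald argument. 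Your version is slightly more elementary and avoids the need to handle the small-$y$ range separately, but the overall architecture is the same.
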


\begin{proof}
Fix any $(i,x)\in\cS\times\R_{\geqslant}$. We have already proved that $S_{n}\to\infty$ a.s. implies $\Erw_{i}J_{i}(D^{i})<\infty$ and thus also
$\Erw_{i}J_{i}^{>}(D^{i,>})<\infty$ by \eqref{eq:equivalence J_s and J_{s}^{>} moments}. By combining this fact with
$$ \sum_{n\ge 1}\Prob_{i}(S_{\tg_{n}}\le x+y)\ \asymp\ J_{i}^{>}(x+y) $$
as $y\to\infty$ (see \eqref{eq:harmonic series asymptotics, alpha>0}), we obtain
\begin{align*}
\sum_{n\ge 1}\Prob_{i}(S_{\tg_{n}}-D_{n+1}^{i,>}\le x)\ =\ \Erw_{i}J_{i}^{>}(x+D^{i,>})\ \asymp\ \Erw_{i}J_{i}^>(D^{i,>})\ <\ \infty
\end{align*}
which proves the assertion.\qed
\end{proof}

\subsection{Proof of Theorem \ref{thm:Kesten-Maller MRW min}}\label{subsec:proof Kesten-Maller min}

Since (c) is a direct consequence of (b) when noting that 
$$ |S_{\sle(-x)}|\1_{\{\sle(-x)<\infty\}}\ \le\ |\min_{n\ge 0}S_{n}| $$ 
for all $x\in\R_{\geqslant}$, we must only show the equivalence of (a) and (b) which is accomplished by the subsequent lemma.

\begin{Lemma}\label{lem:K/M (b) and (d)}
Let $(M_{n},S_{n})_{n\ge 0}$ be positive divergent and $\alpha>0$. Then
\begin{align}
&\Erw_{i}(D^{i})^{\alpha}J_{i}(D^{i})\ <\ \infty\label{eq:integral criterion min}
\shortintertext{and}
&\hspace{.25cm}\Erw_{i}\Big|\min_{n\ge 0}S_{n}\Big|^{\alpha}\ <\ \infty\label{eq:alpha moment minimum}
\end{align}
are equivalent conditions and, if valid for one $i\in\cS$, actually hold for all $i$.
\end{Lemma}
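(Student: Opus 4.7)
The plan is to realize $|\min_{n\ge 0}S_{n}|$ under $\Prob_{i}$ as the all-time supremum of a perturbed random walk (PRW) built from the ordinary embedded walk $(Y_{k})_{k\ge 0}:=(S_{\tau_{k}(i)})_{k\ge 0}$ and the iid excursion minima $(D_{k}^{i})_{k\ge 1}$, and then to invoke the moment criterion for such suprema from \cite{AlsIksMei:15} (to be reviewed in Section \ref{sec:PRW}).

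First I would observe the pathwise identity
\begin{equation*}
\Big|\min_{n\ge 0}S_{n}\Big|\ =\ \sup_{k\ge 0}\bigl(D_{k+1}^{i}-Y_{k}\bigr)\quad\Prob_{i}\text{-a.s.},
\end{equation*}
which follows from $S_{0}=0$ together with the fact that the minimum of $(S_{n})$ over the cycle $\{\tau_{k-1}(i)+1,\ldots,\tau_{k}(i)\}$ is exactly $S_{\tau_{k-1}(i)}-D_{k}^{i}$. Under $\Prob_{i}$, the strong Markov property at the return epochs $\tau_{k}(i)$ makes the pairs $(Y_{k}-Y_{k-1},D_{k}^{i})_{k\ge 1}$ iid, and the positive divergence of $(M_{n},S_{n})_{n\ge 0}$ combined with Lemma \ref{lem:solidarity} yields $Y_{k}\to\infty$ $\Prob_{i}$-a.s. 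Setting $T_{n}:=-Y_{n}$ and $W_{n}:=T_{n-1}+D_{n}^{i}$ for $n\ge 1$, it follows that $(W_{n})_{n\ge 1}$ is a PRW whose base walk $(T_{n})_{n\ge 0}$ is an ordinary random walk drifting to $-\infty$ $\Prob_{i}$-a.s., and $\sup_{n\ge 1}W_{n}=|\min_{n\ge 0}S_{n}|$ $\Prob_{i}$-a.s.

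Next I would apply the PRW counterpart of Theorem \ref{thm:Kesten-Maller min} (see \cite{AlsIksMei:15}), which in the present notation reads
\begin{equation*}
\Erw_{i}\bigl(\sup_{n\ge 1}W_{n}\bigr)^{\alpha}\ <\ \infty\quad\LRA\quad\Erw_{i}(D^{i})^{\alpha}J^{\star}(D^{i})\ <\ \infty,
\end{equation*}
where $J^{\star}(x)=x/\Erw_{i}(S_{\tau(i)}^{+}\wedge x)$ is Erickson's function for the positive-divergent ordinary RW $(-T_{n})_{n\ge 0}=(Y_{n})_{n\ge 0}$. Since $J^{\star}\equiv J_{i}$, this yields the equivalence \eqref{eq:integral criterion min}$\LRA$\eqref{eq:alpha moment minimum}. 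The accompanying solidarity assertion, namely that the two conditions hold either for all $i\in\cS$ or for none, is already provided for \eqref{eq:integral criterion min} by Lemma \ref{lem:solidarity D^s} and transfers automatically to \eqref{eq:alpha moment minimum} through the equivalence.

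The main obstacle is the PRW moment identity above: the occurrence of $J_{i}$ rests on a delicate balance between the tail of $D^{i}$ and the renewal growth of $(Y_{k})$, and its proof hinges on Erickson's Lemma \ref{lem:Erickson} combined with the sharp Kesten--Maller asymptotics. Should the exact form needed not be readily available in \cite{AlsIksMei:15}, one can derive it in self-contained fashion by first applying Theorem \ref{thm:Kesten-Maller min} to the embedded RW $(Y_{k})$ to control $|\min_{k}Y_{k}|$, and then quantifying the additional excursion contribution through the union-bound / renewal-measure estimate
\begin{equation*}
\Prob_{i}\Big(\sup_{k\ge 0}(D_{k+1}^{i}-Y_{k})>x\Big)\ \asymp\ \int\Prob_{i}(D^{i}>x+y)\,U(dy),
\end{equation*}
where $U(dy)=\sum_{k\ge 0}\Prob_{i}(Y_{k}\in dy)$ is the renewal measure of $(Y_{k})$ and $U((-\infty,x])\asymp J_{i}(x)$ as $x\to\infty$.
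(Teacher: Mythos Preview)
Your pathwise identification of $|\min_{n\ge 0}S_{n}|$ with the supremum $\sup_{k\ge 0}(D_{k+1}^{i}-Y_{k})$ of a perturbed random walk is correct; the paper records the same identity in Section~\ref{sec:PRW}. The gap is in what you cite: the paper states explicitly there that Theorem~\ref{thm:Kesten-Maller MRW min}---whose core is the present lemma---``has no counterpart in \cite{AlsIksMei:15}''. The PRW moment criterion you wish to invoke is therefore not available in that reference; this lemma is in fact where such a result is first established.

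Your fallback sketch points in the right direction but glosses over the real obstacle. A union bound gives the $\lesssim$ half of your asserted tail relation, but the $\gtrsim$ half---that the supremum tail is at least of the order of a single-term contribution---is precisely the hard part and is left unargued. The paper does not attempt a two-sided tail estimate for the supremum. Instead it fixes $i$ with $\Prob_{i}(\sle=\infty)>0$, introduces successive epochs $\eta_{1},\eta_{2},\ldots$ at which $S_{\tau_{k-1}}-D_{k}^{i}$ first becomes negative (their number $\nu$ being geometric), bounds $|\min_{n\ge 0}S_{n}|$ by a sum of $\nu-1$ conditionally iid pieces, and thereby reduces matters to the equivalence
\[
\Erw_{i}\big|S_{\tau_{\eta-1}}-D_{\eta}^{i}\big|^{\alpha}\1_{\{\eta<\infty\}}<\infty\quad\Longleftrightarrow\quad\Erw_{i}(D^{i})^{\alpha}J_{i}(D^{i})<\infty.
\]
The direction $\lesssim$ uses the renewal estimate $\sum_{n}\Prob_{i}(0\le S_{\tau_{n}}\le y)\asymp J_{i}(y)$ (Lemma~\ref{Appendix: asymp}); the direction $\gtrsim$ relies on Lemma~\ref{lem:crucial series estimate}, whose proof exploits the dual MRW and its ladder chain---structure that a generic PRW does not carry and that your renewal-measure heuristic does not capture.
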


\begin{proof}
We first point out that \eqref{eq:integral criterion min}, if true for one $i$, is actually true for all $i$ by Lemma \ref{lem:solidarity D^s}. A similar solidarity holds for \eqref{eq:alpha moment minimum} because
\begin{align*}
\Erw_{i}\left|\min_{n\ge 0}S_{n}\right|^{\alpha}\ &=\ \Erw_{i}\left|\min_{n\ge 1}S_{n}\wedge 0 \right|^{\alpha}\\
&\ge\ \Erw_{i}\left|\left(\min_{n\ge\tau(i)+1} (S_{n}-S_{\tau(i)})+S_{\tau(i)}\right)\wedge 0\right|^{\alpha}\,\1_{\{S_{\tau(i)}\le x\}}\\  
&\ge\ p\,\Erw_{i}\left|\left(\min_{n\ge 1} S_{n}+x\right)\wedge 0\right|^{\alpha}\ \asymp\ \Erw_{i}\left|\min_{n\ge 0}S_{n}\right|^{\alpha},
\end{align*}
where $x\in\R_{\geqslant}$ is chosen so large that $p:= \Prob_{i}(S_{\tau(i)}\le x)>0$.

\vspace{.1cm}
By the positive divergence of $(M_{n},S_{n})_{n\ge 0}$, we can fix $i$ such that $p:=\Prob_{i}(\sle=\infty)>0$. Again, we write $\tau,\tau_{n}$ as shorthand for $\tau(i),\tau_{n}(i)$, respectively. Define
\begin{align*}
\eta\ =\ \eta_{1}\ &:=\ \inf\{k\ge 1:S_{\tau_{k-1}}-D_{k}^{i}< 0\}
\shortintertext{and, recursively,}
\eta_{n}\ &:=\ \inf\{k>\eta_{n-1}:S_{\tau_{k-1}}-S_{\tau_{\eta_{n-1}}}-D_{k}^{i}< 0\}
\end{align*}
with the usual convention $\inf\emptyset:=\infty$. Then
$$ \nu\,:=\,\inf\{n\ge 1:\eta_{n}=\infty\} $$
has a geometric distribution with parameter $p$. Moreover,
\begin{align*}
\Big|\min_{n\ge 0}S_{n}\Big|\ &\le\ \sum_{n=1}^{\nu-1}\left|S_{\tau_{\eta_{n}-1}}-S_{\tau_{\eta_{n-1}}}-D_{\eta_{n}}^{i}\right|
\end{align*}
with $\eta_{0}:=0$ and the fact that the $S_{\tau_{\eta_{k}-1}}-S_{\tau_{\eta_{k-1}}}-D_{\eta_{k}}^{i}$ for $k=1,\ldots,n$ are conditionally iid given $\nu>n$ can be used as in \cite[p.~871,(v)$_{0}\RA$(vii)]{Janson:86} to show that \eqref{eq:alpha moment minimum} holds iff
\begin{equation}\label{eq:auxiliary moment}
\Erw_{i}\left|S_{\tau_{\eta-1}}-D_{\eta}^{i}\right|^{\alpha}\1_{\{\eta<\infty\}}\ <\ \infty.
\end{equation}
Therefore, it suffices to show the equivalence of \eqref{eq:auxiliary moment} and \eqref{eq:integral criterion min}.

\vspace{.1cm}
Use Lemma \ref{Appendix: asymp} to obtain
\begin{align*}
F_{i}(x)\ :\!&=\ \Prob_{i}(-S_{\tau_{\eta-1}(i)}+D_{\eta}^{i}\ge x,\, \eta<\infty)\\
&=\ \sum_{n\ge 1} \Prob_{i}(-S_{\tau_{n-1}(i)}+D_{n}^{i}\ge x,\, \eta=n)\\
&\le\  \sum_{n\ge 1} \Prob_{i}(S_{\tau_{n-1}(i)}\ge 0,\, -S_{\tau_{n-1}(i)}+D_{n}^{i}\ge x)\\
&=\ \int_{[x,\infty)} \sum_{n\ge 1} \Prob_{i}(0\leq S_{\tau_{n-1}(i)}\leq y-x)\ \Prob_{i}(D^{i}\in dy)\\
&\asymp\ \int_{[x,\infty)} J_{i}(y-x)\ \Prob_{i}(D^{i}\in dy)
\end{align*}
for $x\in\R_{\geqslant}$. Since $J_{i}$ is non-decreasing, we then infer
\begin{align*}
\Erw_{i}|S_{\tau_{\eta-1}(i)}-D_{\eta}^{i}|^{\alpha}\,\1_{\{\eta<\infty\}}
&\asymp\ \int_{0}^{\infty}x^{\alpha-1}\,F_{i}(x)\ dx\\
&\lesssim\ \int_0^\infty \bigg(x^{\alpha-1} \int_{[x,\infty)} J_{i}(y-x) \ \Prob_{i}(D^{i}\in dy)\bigg) \ dx\\
&\le\ \int \bigg( \int_0^y x^{\alpha-1}\,J_{i}(y)\ dx\bigg) \ \Prob_{i}(D^{i}\in dy)\\
&\asymp\ \int y^\alpha\, J_{i}(y)\ \Prob_{i}(D^{i}\in dy)\\
&=\ \Erw_{i}(D^{i})^\alpha\,J_{i}(D^{i}).
\end{align*} 
the last integral being finite iff \eqref{eq:integral criterion min} holds.

\vspace{.1cm}
On the other hand, Lemma \ref{lem:crucial series estimate} below (with $\alpha=0$) provides us with
\begin{align*}
F_{i}(x)\ &=\ \sum_{n\ge 1} \Prob_{i}(-S_{\tau_{n-1}(i)}+D_{n}^{i}\ge x,\, \eta=n)\\
&=\ \int_{[x,\infty)}\sum_{n\ge 1}\Prob_{i}\bigg(S_{\tau_{n-1}}\le y-x,\min_{1\le k\le \tau_{n-1}}S_{k}>0\bigg)\ \Prob_{i}(D^{i}\in dy)\\
&\gtrsim\ \int_{(x,\infty)}J_{i}(y-x)\ \Prob_{i}(D^{i}\in dy),
\end{align*}
and this implies
\begin{align*}
\Erw_{i}\left|S_{\tau_{\eta-1}}-D_{\eta}^{i}\right|^{\alpha}\1_{\{\eta<\infty\}}\ &\gtrsim\ \int_{1}^{\infty}x^{\alpha-1}\int_{(2x,\infty)}J_{i}(y-x)\ \Prob_{i}(D^{i}\in dy)\ dx\\
&=\ \int_{(2,\infty)}\int_{1}^{y/2}x^{\alpha-1}J_{i}(y-x)\ dx\ \Prob_{i}(D^{i}\in dy)\\
&\gtrsim\ \Erw_{i}(D^{i})^{\alpha}J_{i}(D^{i}).
\end{align*}
We thus see that \eqref{eq:auxiliary moment} and \eqref{eq:integral criterion min} are indeed equivalent.\qed
\end{proof}

The next lemma will also be needed in the next section (see proof of Lemma \ref{lem:K/M (d) implies (a)}).

\begin{Lemma}\label{lem:crucial series estimate}
Suppose that $(M_{n},S_{n})_{n\ge 0}$ is positive divergent and let $(i,x)\in\cS\times\R_{\geqslant}$ be such that $\Prob_{i}(\sle(-x)=\infty)>0$. Then, as $y\to\infty$,
\begin{align}\label{eq:crucial series estimate}
J_{i}(y)^{1+\alpha}\,\lesssim\ \Erw_{i}\left(\sum_{n\ge 1}\tau_{n}(i)^{\alpha}\,\1_{\{S_{\tau_{n}(i)}\le y,\min_{1\le k\le\tau_{n}(i)}S_{k}>-x\}}\right)
\end{align}
for any $\alpha>0$.
\end{Lemma}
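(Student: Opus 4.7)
The strategy is to restrict the sum on the right-hand side to the strictly ascending ladder indices of the embedded walk $(S_{\tau_n(i)})_{n\ge 0}$, thereby reducing the problem to a classical renewal-theoretic estimate for a positive iid random walk, and then to control the pathwise minimum constraint by a strong Markov argument exploiting the assumption $\Prob_i(\sle(-x)=\infty)>0$.

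Let $p := \Prob_i(\sle(-x)=\infty)>0$. First, I restrict the outer sum to the subsequence $m=\zeta_n(i)$, for which $\tau_{\zeta_n(i)}(i)=\tg_n(i)$ and $M_{\tg_n(i)}=i$, so that $(S_{\tg_n(i)})_{n\ge 0}$ is a positive iid random walk under $\Prob_i$. Since its values are strictly increasing, $S_{\tg_n(i)}\le y$ iff $n\le\nu^>(i,y)-1$, and since $n\mapsto \min_{k\le\tg_n(i)}S_k$ is non-increasing, the global event $\{\sle(-x)=\infty\}$ enforces the minimum constraint uniformly in $n$. Combining this with $\tg_n(i)\ge n$ and the elementary estimate $\sum_{n=1}^{N-1}n^\alpha\asymp N^{1+\alpha}$ (valid as $N\to\infty$, $\alpha\ge 0$) yields, for all sufficiently large $y$,
\begin{equation*}
\Erw_i\sum_{m\ge 1}\tau_m(i)^\alpha\,\1_{\{S_{\tau_m(i)}\le y,\,\min_{k\le\tau_m(i)}S_k>-x\}}\ \gtrsim\ \Erw_i\bigl[\nu^>(i,y)^{1+\alpha}\,\1_{\{\sle(-x)=\infty\}}\bigr].
\end{equation*}

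By Lemma~\ref{lem:trunc mean}(e), the $J$-function of the positive iid walk $(S_{\tg_n(i)})_{n\ge 0}$ coincides with $J_i$ up to multiplicative constants, so the classical Kesten-Maller asymptotic \eqref{eq:harmonic series asymptotics, alpha>0} (applied with $1+\alpha$ in place of $\alpha$) gives $\Erw_i\nu^>(i,y)^{1+\alpha}\asymp J_i(y)^{1+\alpha}$ as $y\to\infty$. It therefore suffices to show that restricting to $\{\sle(-x)=\infty\}$ costs at most a constant factor. To that end, I apply the strong Markov property at the ladder stopping time $\tg_{\nu^>(i,y)}(i)$: since $M_{\tg_{\nu^>(i,y)}(i)}=i$ and $S_{\tg_{\nu^>(i,y)}(i)}>y\ge 0$, the continuation has conditional probability at least $p$ of staying above $-x$, because the shifted threshold $-x-S_{\tg_{\nu^>(i,y)}(i)}\le -x$ is only easier to avoid. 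This yields
\begin{equation*}
\Erw_i\bigl[\nu^>(i,y)^{1+\alpha}\,\1_{\{\sle(-x)=\infty\}}\bigr]\ \ge\ p\,\Erw_i\bigl[\nu^>(i,y)^{1+\alpha}\,\1_{G_y}\bigr],
\end{equation*}
where $G_y:=\{\min_{k\le\tg_{\nu^>(i,y)}(i)}S_k>-x\}$.

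The hard part is then to establish $\Erw_i[\nu^>(i,y)^{1+\alpha}\,\1_{G_y}]\gtrsim J_i(y)^{1+\alpha}$, since $G_y$ and large values of $\nu^>(i,y)$ are a priori negatively correlated (large $\nu^>(i,y)$ can arise from deep excursions below $-x$). My plan for this final step is to iterate the strong Markov property across the early ladder epochs $\tg_1(i),\tg_2(i),\ldots$: each such epoch carries a conditional probability at least $p$ of having the preceding cycle's excursion remain above $-x-S_{\tg_{n-1}(i)}$, and $S_{\tg_n(i)}\to\infty$ a.s. quickly makes this constraint essentially trivial. After a geometrically bounded number of successful attempts the walk has been lifted to a level $>x$, from which the remaining first-passage contribution to $\nu^>(i,y)^{1+\alpha}$ is of the same order $J_i(y)^{1+\alpha}$ by iid renewal theory, and the event $G_y$ holds with probability bounded away from $0$ uniformly in $y$, closing the chain of estimates.
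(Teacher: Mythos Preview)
Your opening reduction is correct: restricting to the ladder subsequence $m=\zeta_n(i)$ and then to the global event $B:=\{\sle(-x)=\infty\}$ legitimately gives
\[
\text{RHS of \eqref{eq:crucial series estimate}}\ \gtrsim\ \Erw_i\bigl[\nu^{>}(i,y)^{1+\alpha}\1_{B}\bigr],
\]
and your strong Markov step at $\tg_{\nu^>(i,y)}(i)$ correctly shows this is $\asymp\Erw_i[\nu^{>}(i,y)^{1+\alpha}\1_{G_y}]$. You also identify the right target $\Erw_i\nu^{>}(i,y)^{1+\alpha}\asymp J_i(y)^{1+\alpha}$.

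However, the ``hard part'' is where the argument breaks down. You need $\Erw_i[\nu^{>}(i,y)^{1+\alpha}\1_{G_y}]\gtrsim J_i(y)^{1+\alpha}$, and you correctly note that $G_y$ and large $\nu^{>}(i,y)$ are negatively correlated. Your proposed fix---iterate the strong Markov property until the walk exceeds level $x$, then argue the remaining constraint is uniformly controlled---does not close this gap. Even after reaching a high level, each subsequent ladder cycle still imposes the constraint $D_n^{i,>}<S_{\tg_{n-1}(i)}+x$, and these constraints remain coupled to the ladder increments $\xi_n$ (the pairs $(\xi_n,D_n^{i,>})$ are iid but $\xi_n$ and $D_n^{i,>}$ are in general dependent). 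Restarting from a level in $(x,L]$ just reproduces the same problem with $y$ replaced by $y-L$ and $x$ by $2x$; nothing becomes vacuous. A crude decoupling like $\{D_n^{i,>}<x\text{ for all }n\le\nu^>(i,y)\}$ inserts a factor $q^n$ with $q<1$ that kills the sum. The ``geometrically bounded number of successful attempts'' picture applies to the \emph{initial} lifting, not to the $\asymp J_i(y)$ subsequent constraints, which is where the mass of $\nu^>(i,y)^{1+\alpha}$ lives.

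The paper's proof avoids this obstacle entirely by passing to the dual MRW. Under duality, the forward constraint $\{\min_{1\le k\le\tau_n}S_k>0\}$ becomes the statement that the corresponding time is a strict ascending record for the dual walk; the sum then reduces to (a constant multiple of) the weighted renewal measure of a genuinely iid positive walk $({}^{\#}S_{\kappa_m})_{m\ge 0}$ built from the dual ladder chain, to which \eqref{eq:harmonic series asymptotics, alpha>0} applies directly with no residual pathwise constraint. Time reversal is the missing idea: it converts the awkward running-minimum condition into the ladder structure itself.
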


\begin{proof}
We begin with some preliminary considerations. Pick $j\in\cS$ (possibly $=i$) such that $\Prob_{j}(\sle=\infty)>0$. By Proposition \ref{prop:ladder chain}, $j$ is a recurrent state for the dual ladder chain $({}^{\#}\Mgn)_{n\ge 0}$. Hence, if $\kappa_{m}$ denotes the $m^{th}$ strictly ascending ladder epoch of $({}^{\#}S_{n})_{n\ge 0}$ with ${}^{\#}M_{\kappa_{m}}=j$ for each $m\in\N$, then these epochs are all $\Prob_{i}$-a.s. finite and $({}^{\#}S_{\kappa_{n}})_{n\ge 0}$ forms a subsequence of $({}^{\#}S_{{}^{\#}\tg_{n}(j)})_{n\ge 0}$ and an ordinary RW with positive increments. Moreover, 
$$ \kappa_{1}\ =\ {}^{\#}\tg_{\vth}(j) $$
for a stopping time $\vth$ with respect to the filtration
$$ \sigma\left({}^{\#}\tg_{1}(j),...,{}^{\#}\tg_{n}(j),\,({}^{\#}M_{k},{}^{\#}S_{k})_{1\le k\le{}^{\#}\tg_{n}(j)}\right),\quad n\ge 0, $$
and $\Erw_{j}\vth\le\Erw_{j}\kappa_{1}<\infty$. As a consequence, by using Wald's identity,
\begin{align*}
\Erw_{j}\left({}^{\#}S_{{}^{\#}\tau(j)}^{+}\wedge y\right)\ &\le\ \Erw_{j}\left({}^{\#}S_{\kappa_{1}}\wedge y\right)\\
&\le\ \Erw_{j}\left(\sum_{k=1}^{\vth}\left(({}^{\#}S_{{}^{\#}\tg_{k}(j)}-{}^{\#}S_{{}^{\#}\tg_{k-1}(j)})\wedge y\right)\right)\\
&=\ \Erw_{j}({}^{\#}S_{{}^{\#}\tg(j)}\wedge y)\,\Erw_{j}\vth,
\end{align*}
giving
\begin{equation}\label{eq:Erw S_kappa}
\Erw_{j}\left({}^{\#}S_{\kappa_{1}}\wedge y\right)\ \asymp\ \Erw_{j}\big({}^{\#}S_{{}^{\#}\tau(j)}^{+}\wedge y\big)\ =\ \Erw_{j}\big(S_{\tau(j)}^{+}\wedge y\big)
\end{equation}
when recalling Lemma \ref{lem:trunc mean}(e) and observing that $S_{\tau(j)},{}^{\#}S_{{}^{\#}\tau(j)}$ have the same distribution under $\Prob_{j}$. Finally, we infer with the help of \eqref{eq:harmonic series asymptotics, alpha>0} for the ordinary RW $({}^{\#}S_{\kappa_{m}})_{m\ge 0}$ that
\begin{align}\label{eq:renewal measure S_kappa}
\sum_{m\ge 1}m^{\alpha}\,\Prob_{j}\left({}^{\#}S_{\kappa_{m}}\le y\right)\ &\asymp\ \left(\frac{y}{\Erw_{j}\left({}^{\#}S_{\kappa_{1}}\wedge y\right)}\right)^{1+\alpha}\ \asymp\ J_{j}(y)^{1+\alpha}
\end{align}
as $y\to\infty$. 

\vspace{.2cm}
Now we are ready to prove \eqref{eq:crucial series estimate}. Since $\Prob_{i}(\sle(-x)=\infty)>0$, there exist $x_{1}\in\R_{\geqslant}$ and $n_{1},n_{2}\in\N$ such that
$$ E\ :=\ \left\{\min_{1\le k\le n_{1}}S_{k}>-x,\,S_{n_{1}}\le x_{1},\,M_{n_{1}}=j,\,\tau_{n_{2}}\le n_{1}<\tau_{n_{2}+1}\right\} $$
has positive probability under $\Prob_{i}$.
\begin{align*}
\Erw_{i}&\left(\sum_{n\ge 1}\tau_{n}^{\alpha}\,\1_{\{S_{\tau_{n}}\le y,\min_{1\le k\le\tau_{n}}S_{k}>-x\}}\right)\\
&\ge\ \Erw_{i}\left(\1_{E}\sum_{n>n_{2}}\tau_{n}^{\alpha}\,\1_{\{S_{\tau_{n}}-S_{n_{1}}\le y-x_{1},\min_{n_{1}<k\le\tau_{n}}(S_{k}-S_{n_{1}})>0\}}\right)\\
&\ge\ \Prob_{i}(E)\,\Erw_{j}\left(\sum_{n\ge 1}\tau_{n}^{\alpha}\,\1_{\{S_{\tau_{n}}\le y-x_{1},\min_{1\le k\le\tau_{n}}S_{k}>0\}}\right)\\
&\gtrsim\ \sum_{m,n\ge 1}m^{\alpha}a_{m,n}
\shortintertext{where}
&a_{m,n}\ :=\ \Prob_{j}\big(S_{\tau_{n}}\le y-x_{1},\min_{1\le k\le\tau_{n}}S_{k}>0,\,\tau_{m}(j)\le\tau_{n}<\tau_{m+1}(j)\big).
\end{align*}
Now use the duality relation (see \eqref{eq:duality relation})
\begin{align*}
a_{m,n}\ &=\ \frac{\pi_{i}}{\pi_{j}}\,\Prob_{i}\big(y-x_{1}\ge{}^{\#}S_{^{\#}\tau_{m+1}(j)}>{}^{\#}S_{k}\text{ for }0\le k<{}^{\#}\tau_{m+1}(j),\\
&\hspace{6.3cm}{}^{\#}\tau_{n-1}\le{}^{\#}\tau_{m+1}(j)<{}^{\#}\tau_{n}\big)\\
&=\ \frac{\pi_{i}}{\pi_{j}}\sum_{l=1}^{m+1}\Prob_{i}\big({}^{\#}S_{\kappa_{l}}\le y-x_{1},{}^{\#}\tau_{n-1}\le\kappa_{l}<{}^{\#}\tau_{n},{}^{\#}\tau_{m+1}(j)=\kappa_{l}\big)
\end{align*}
and choose $x_{2}$ so large that $\Prob_{i}({}^{\#}S_{\kappa_{1}}\le x_{2})>0$. Then
\begin{align*}
\sum_{m,n\ge 1}m^{\alpha}a_{m,n}\ &\ge\ \frac{\pi_{i}}{\pi_{j}}\sum_{m,n\ge 1}\sum_{l=1}^{m+1}l^{\alpha}\,\Prob_{i}\big({}^{\#}S_{\kappa_{l}}\le y-x_{1},{}^{\#}\tau_{n-1}\le\kappa_{l}<{}^{\#}\tau_{n},{}^{\#}\tau_{m+1}(j)=\kappa_{l}\big)\\
&=\ \frac{\pi_{i}}{\pi_{j}}\sum_{l\ge 1}l^{\alpha}\,\Prob_{i}\big({}^{\#}S_{\kappa_{l}}\le y-x_{1}\big)\\
&\gtrsim\ \sum_{l\ge 1}l^{\alpha}\,\Prob_{j}\big({}^{\#}S_{\kappa_{l}}\le y-x_{1}-x_{2}\big)\ \asymp\ J_{j}(y-x_{1}-x_{2}),
\end{align*}
where \eqref{eq:renewal measure S_kappa} has been used for the last relation. The proof is herewith complete because $J_{j}(y-x_{1}-x_{2})\asymp J_{j}(y)\asymp J_{i}(y)$ as $y\to\infty$ by Lemma \ref{lem:trunc mean}.\qed
\end{proof}

\subsection{Proof of Theorem \ref{thm:Kesten-Maller MRW}}\label{subsec:proof Kesten-Maller}

We have organized the proof of the theorem as follows: After the auxiliary Lemma \ref{lem:T>tau_n formula}, Lemmata \ref{lem:K/M (a) implies (b)} and \ref{lem:series involving D^{i,>}} will establish ``(a)$\RA$(b)'' and its converse, respectively, the latter even without the assumption that $\Erw_{i}\tau(i)^{1+\alpha}<\infty$. Then ``(c)$\RA$(d)'' will be shown by Lemma \ref{lem:K/M (c) implies (d)} and ``(d)$\RA$(a)'' by Lemma \ref{lem:K/M (d) implies (a)}. Since (b)$\RA$(c)'' is clear upon noting that $\sigma_{\min}\le\rho(0)$, this completes the proof of the equivalence of (a)--(d).

\begin{Lemma}\label{lem:T>tau_n formula}
Let $(M_{n},S_{n})_{n\ge 0}$ be a nontrivial MRW with $\Erw_{i}\tau(i)^{1+\alpha}<\infty$ and $T$ an arbitrary nonnegative random variable.
Then
$$ \Erw_{i}T^{\alpha}<\infty\quad\text{iff}\quad\sum_{n\ge 1}n^{\alpha-1}\,\Prob_{i}(T>\tau_{n}(i))<\infty. $$
\end{Lemma}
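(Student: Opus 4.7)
The strategy is to compare $T$ directly with the near-deterministic drift of the renewal sequence $(\tau_n(i))_{n\ge 0}$. Writing $\tau=\tau(i)$, $\tau_n=\tau_n(i)$, and $\mu:=\Erw_i\tau\in(0,\infty)$, I would observe that $\tau_n-n\mu=\sum_{k=1}^{n}[(\tau_k-\tau_{k-1})-\mu]$ is a centered ordinary random walk under $\Prob_i$ whose iid increments have finite $(1+\alpha)$-th moment. The Baum--Katz--Marcin\-kie\-wicz--Zygmund theorem (see Gut \cite{Gut:09}) then translates $\Erw_i\tau^{1+\alpha}<\infty$ into the summable deviation bound
$$ \sum_{n\ge 1}n^{\alpha-1}\,\Prob_i(|\tau_n-n\mu|>\eps n)\ <\ \infty\qquad\text{for every }\eps>0, $$
which is the one quantitative input the argument needs.

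With this tail estimate in hand, the equivalence follows from two complementary sandwich inclusions, neither of which requires any independence between $T$ and $(\tau_n)_{n\ge 1}$. For the forward direction, I would pick $c\in(0,\mu)$ and use $\{T>\tau_n\}\subseteq\{T>cn\}\cup\{\tau_n\le cn\}$ to get
$$ \sum_{n\ge 1}n^{\alpha-1}\,\Prob_i(T>\tau_n)\ \le\ \sum_{n\ge 1}n^{\alpha-1}\,\Prob_i(T>cn)\ +\ \sum_{n\ge 1}n^{\alpha-1}\,\Prob_i(\tau_n\le cn). $$
By the standard tail characterization $\sum_{n\ge 1}n^{\alpha-1}\Prob_i(Y>n)\asymp\Erw_i Y^{\alpha}$ (valid for any nonnegative $Y$ and $\alpha>0$), the first right-hand sum is $\asymp c^{-\alpha}\Erw_i T^{\alpha}<\infty$, and the second is finite by the Baum--Katz bound applied with $\eps=\mu-c>0$.

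For the converse, I would choose $c>\mu$; now the reverse inclusion $\{T>cn\}\subseteq\{T>\tau_n\}\cup\{\tau_n>cn\}$ gives
$$ \sum_{n\ge 1}n^{\alpha-1}\,\Prob_i(T>cn)\ \le\ \sum_{n\ge 1}n^{\alpha-1}\,\Prob_i(T>\tau_n)\ +\ \sum_{n\ge 1}n^{\alpha-1}\,\Prob_i(\tau_n>cn), $$
and a second application of Baum--Katz (with $\eps=c-\mu>0$) renders the last sum finite. Hence the left-hand side is finite, which is equivalent to $\Erw_i T^{\alpha}<\infty$.

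I do not foresee any genuine obstacle: the only non-elementary ingredient is the Baum--Katz large-deviation-type series bound, already implicit in the paper through the repeated use of \cite[Thm.~1.5.4]{Gut:09}. Everything else reduces to set-theoretic inclusions and the classical equivalence between finite $\alpha$-th moments and convergence of $\sum_{n}n^{\alpha-1}\Prob(\cdot>n)$, and the Markov structure of $(M_n,S_n)_{n\ge 0}$ enters only via the fact that the inter-arrival increments $(\tau_k-\tau_{k-1})_{k\ge 1}$ are iid under $\Prob_i$.
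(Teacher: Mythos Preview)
Your proof is correct and follows essentially the same approach as the paper's: both compare $\{T>\tau_n\}$ with $\{T>cn\}$ via sandwich inclusions, the error terms being controlled by the summability of $\sum_{n\ge 1} n^{\alpha-1}\Prob_i(\tau_n>bn)$ for $b>\Erw_i\tau$. The only cosmetic difference is that the paper invokes its internal estimate \eqref{eq:Spitzer condition tau_n} (derived via the Spitzer--Erickson/Kesten--Maller series criterion applied to the negative-divergent walk $(\tau_n-bn)_{n\ge 0}$) and handles the lower side through the trivial bound $\tau_n\ge n$, whereas you cite the two-sided Baum--Katz theorem directly.
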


\begin{proof}
Use \eqref{eq:Spitzer condition tau_n} in the Appendix (cf. proof of Lemma \ref{lem:series tau_n(s)}) to infer
\begin{align*}
\Erw_{i}T^{\alpha}\ &\asymp\ \sum_{n\ge 1}n^{\alpha-1}\,\Prob_{i}\left(T>\frac{n}{2\,\Erw_{i}\tau},\,\tau_{n}\le 2n\,\Erw_{i}\tau\right)\\
&\asymp\ \sum_{n\ge 1}n^{\alpha-1}\,\Prob_{i}\left(T>\tau_{n},\,\tau_{n}\le 2n\,\Erw_{i}\tau\right)\\
&\asymp\ \sum_{n\ge 1}n^{\alpha-1}\,\Prob_{i}(T>\tau_{n}),
\end{align*}
and thus the asserted equivalence.\qed
\end{proof}

\begin{Lemma}\label{lem:K/M (a) implies (b)}
Let $(M_{n},S_{n})_{n\ge 0}$ be a positive divergent MRW with $\Erw_{i}\tau(i)^{1+\alpha}<\infty$. Then $\Erw_{i}J_{i}(D^{i})^{1+\alpha}<\infty$ implies $\Erw_{i}\rho(x)^{\alpha}<\infty$ for all $x\in\R_{\geqslant}$ and then also $\Erw_{j}\rho(x)^{\alpha}<\infty$ for all $(j,x)\in\cS\times\R_{\geqslant}$.
\end{Lemma}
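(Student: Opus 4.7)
My strategy is to reduce the claim to Kesten--Maller for the embedded ordinary random walk $(S_{\tau_n(i)})_{n\ge 0}$ and to account for the fluctuations within each cycle via the iid sequence $(D_n^i)_{n\ge 1}$. Since $S_{\tau(i)}^{-}\le D^i$ pointwise, monotonicity of $J_i$ gives $\Erw_i J_i(S_{\tau(i)}^{-})^{1+\alpha}<\infty$, and by Lemma \ref{lem:solidarity} the embedded walk is positive divergent. An application of Theorem \ref{thm:Kesten-Maller} then supplies finite $\alpha$-moments of its last exit time $\sup\{n:S_{\tau_n(i)}\le y\}$ and its renewal counting function, together with the sharp harmonic-series asymptotics $\sum_{m\ge 1}m^{\alpha-1}\Prob_i(S_{\tau_m(i)}\le y)\asymp J_i(y)^{\alpha}$ as $y\to\infty$ guaranteed by \eqref{eq:harmonic series asymptotics, alpha>0}.

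Next, if $\rho(x)\in(\tau_{m-1}(i),\tau_m(i)]$ then some $S_k$ in that excursion is $\le x$, which forces $S_{\tau_{m-1}(i)}-D_m^i\le x$; consequently
\begin{equation*}
\rho(x)\ \le\ \tau_{\tilde n(x)}(i),\qquad\tilde n(x)\,:=\,\sup\big\{m\ge 1:\,S_{\tau_{m-1}(i)}-D_m^i\le x\big\}\quad\text{a.s.}
\end{equation*}
Since $\Erw_i\tau(i)^{1+\alpha}<\infty$, Lemma \ref{lem:T>tau_n formula} reduces $\Erw_i\rho(x)^{\alpha}<\infty$ to the summability of $\sum_{n\ge 1}n^{\alpha-1}\Prob_i(\tilde n(x)>n)$. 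Each $D_m^i$ is independent of $(S_{\tau_k(i)})_{0\le k<m}$ with common law $\Prob_i(D^i\in\cdot\,)$, so conditional on the embedded walk the events $E_m:=\{S_{\tau_{m-1}(i)}-D_m^i\le x\}$ are independent Bernoullis with success probabilities $\bar F_D(S_{\tau_{m-1}(i)}-x)$, $\bar F_D(y):=\Prob_i(D^i>y)$. Swapping sum and expectation and integrating first with respect to $D^i$, then using Lemma \ref{lem:trunc mean}(b) to absorb the shift via $J_i(y+x)\asymp J_i(y)$ as $y\to\infty$, the bound should collapse to $\int J_i(y)^{1+\alpha}\,\Prob_i(D^i\in dy)=\Erw_i J_i(D^i)^{1+\alpha}<\infty$, as desired.

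The main obstacle lies precisely in this last step: a naive union bound $\Prob_i(\tilde n(x)>n)\le\sum_{m>n}\Prob_i(E_m)$ introduces an extraneous factor $m$ in the resulting series and would seemingly call for the stronger moment $\Erw_i J_i(S_{\tau(i)}^{-})^{2+\alpha}<\infty$, which is not available. The delicate point is therefore to exploit the conditional independence of the $(E_m)_{m\ge 1}$ (via a second-moment estimate, or by viewing $(S_{\tau_{m-1}(i)}-D_m^i)_{m\ge 1}$ as a perturbed random walk and invoking the refined renewal-theoretic bounds of \cite{AlsIksMei:15}) so as to recover the correct power. Finally, the extension from the fixed $i$ to an arbitrary $j\in\cS$ is a standard regeneration argument: under $\Prob_j$ one splits $\rho(x)\le\tau(i)+\rho'(x+|S_{\tau(i)}|)$, where $\rho'$ is a $\Prob_i$-distributed copy of $\rho$ independent of the pre-$\tau(i)$ trajectory by the strong Markov property, uses $\Erw_j\tau(i)^{1+\alpha}<\infty$ (obtained from $\Erw_j\tau(j)^{1+\alpha}<\infty$ by the geometric-trials argument noted before Lemma \ref{lem:solidarity LI}), and invokes the $\Prob_i$-bound at the shifted random level together with Lemma \ref{lem:trunc mean}(b) to handle the shift.
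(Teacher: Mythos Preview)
Your reduction is correct: $\rho(x)\le\tau_{\tilde n(x)}(i)$ with $\tilde n(x)=\sup\{m\ge 1:S_{\tau_{m-1}(i)}-D_m^i\le x\}$, and via Lemma~\ref{lem:T>tau_n formula} the task becomes $\Erw_i\tilde n(x)^{\alpha}<\infty$, i.e.\ a last-exit moment for the perturbed random walk $(\sum_{k<m}\wh X_k-D_m^i)_{m\ge 1}$ with $\wh X_k:=S_{\tau_k(i)}-S_{\tau_{k-1}(i)}$. This is a legitimate route---the paper itself records it in Section~\ref{sec:PRW}---but you leave the decisive step open. Your ``second-moment estimate'' is not spelled out, and it is unclear how conditional independence alone recovers the missing power; the alternative of quoting \cite{AlsIksMei:15} does work (their moment criterion for the PRW last exit time is precisely $\Erw_i J_i(D^i)^{1+\alpha}<\infty$), but you must name the result and verify its hypotheses. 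One technical slip: conditional on the full embedded walk $(S_{\tau_k})_{k\ge 0}$, the law of $D_m^i$ depends on $\wh X_m$, so the conditional success probability of $E_m$ is \emph{not} $\bar F_D(S_{\tau_{m-1}}-x)$; only the unconditional $\Prob_i(E_m)=\Erw_i[\bar F_D(S_{\tau_{m-1}}-x)]$ has that form, by independence of $D_m^i$ from the first $m{-}1$ cycles.

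The paper's own proof is self-contained and takes a different path, splitting into two cases. If $\Erw_i S_{\tau(i)}^{+}<\infty$ (so $J_i(x)\asymp x$ and the hypothesis reads $\Erw_i(D^i)^{1+\alpha}<\infty$), one tilts to $(M_n,S_n-\mu n)$ with $\mu=\Erw_i S_{\tau(i)}/(2\,\Erw_i\tau(i))$, applies Theorem~\ref{thm:Kesten-Maller MRW min} to obtain $\Erw_i|\min_n(S_n-\mu n)|^{\alpha}<\infty$, and concludes via $\mu\rho(x)\le x-\min_n(S_n-\mu n)$. If $\Erw_i S_{\tau(i)}^{+}=\infty$, the paper introduces iid symmetric Bernoulli variables $\theta_k$ to split $\wh X_k^{+}$ and $D_k^i$ into two auxiliary ordinary random walks $\sum_k R_k^{\eps}$, $\eps\in\{0,1\}$, each satisfying the Kesten--Maller criterion of Theorem~\ref{thm:Kesten-Maller}; their last exit times together dominate $\tilde n(x)$. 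For the extension to all $j\in\cS$ the paper invokes solidarity of the hypothesis itself (Lemma~\ref{lem:solidarity D^s}) rather than a regeneration bound on $\rho$; your random-level argument would still require control of $\Erw_i\rho(y)^{\alpha}$ integrated against the $\Prob_j$-law of $y=x+|S_{\tau(i)}|$, which the asymptotic $J_i(y+x)\asymp J_i(y)$ alone does not give.
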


\begin{proof}
Under the stated assumptions, $(S_{n})_{n\ge 0}$ is positive divergent and either
\begin{align}
&\Erw_{i}S_{\tau}^{+}\ <\ \infty\quad\text{and}\quad\Erw_{i} J_{i}(D^{i})^{1+\alpha}\ \asymp\ \Erw_{i}(D^{i})^{1+\alpha}\ <\ \infty,\label{eq:K/M (b), case 1}
\shortintertext{or}
&\hspace{2.5cm}\Erw_{i}J_{i}(D^{i})^{1+\alpha}\ <\ \infty\ =\ \Erw_{i}S_{\tau}^{+}.\label{eq:K/M (b), case 2}
\end{align}
These two cases will be treated separately hereafter.

\vspace{.2cm}
Suppose \eqref{eq:K/M (b), case 1} be true, in particular $0<\lim_{y\to\infty}A_{i}(y)=\Erw_{i}S_{\tau}<\infty$, for $(S_{\tau_{n}})_{n\ge 0}$ is positive divergent. Put $\mu:=\Erw_{i}S_{\tau}/(2\,\Erw_{i}\tau)$. Then
$$ \Erw_{i}(S_{\tau}-\mu\tau)\ <\ \infty, $$
and since $\Erw_{i}\tau^{1+\alpha}<\infty$, we have
$$ \Erw_{i}(D^{i}+\mu\tau)^{1+\alpha}\ <\ \infty. $$
Consequently, we may use Theorem \ref{thm:Kesten-Maller MRW min} for the MRW $(M_{n},S_{n}-\mu n)_{n\ge 0}$ to infer
$$ \Erw_{i}\left|\min_{n\ge 0}\left(S_{n}-\mu n\right)\right|^{\alpha}\ <\ \infty. $$
Now $\Erw_{i}\rho(x)<\infty$ for all $x\in\R_{\geqslant}$ follows from
\begin{align*}
\mu\rho(x)\ \le\ x-\left(S_{\rho(x)}-\mu\rho(x)\right)\ \le\ x-\min_{n\ge 0}\left(S_{n}-\mu n\right)
\end{align*}
(see \cite[p.~871,(i)$\RA$(ii)]{Janson:86}).

\vspace{.2cm}
Assuming \eqref{eq:K/M (b), case 2}, it suffices to prove $\Erw_{i}\wh{\rho}_{i}(x)^{\alpha}<\infty$ for all $x\in\R_{\geqslant}$ and
$$ \wh{\rho}_{i}(x)\,:=\,\sup\{n\ge 0:S_{\tau_{n}}-D_{n+1}^{i}\le x\} $$
because $\{\rho(x)>\tau_{n}\}=\{\wh{\rho}_{i}(x)\ge n\}$ for all $n\ge 1$ and an appeal to Lemma \ref{lem:T>tau_n formula}. Put $\wh{X}_{n}:=S_{\tau_{n}}-S_{\tau_{n-1}}$ for $n\ge 1$ and observe that
\begin{align}\label{eq: inequ w eps}
S_{\tau_{n}}-D_{n+1}^{i}\ &=\ \sum_{k=1}^{n}\wh{X}_{k}^{+}-\sum_{k=1}^{n}\wh{X}_{k}^{-}-D_{n+1}^{i}\ \ge\ \sum_{k=1}^{n}\wh{X}_{k}^{+}-\sum_{k=1}^{n+1}D_{k}^{i}\nonumber\\
&\ge\ \sum_{\eps\in\{0,1\}}\left(\sum_{k=1}^{n}R_{k}^{\eps}+(R_{n+1}^{\eps})^{-}\right),
\shortintertext{where}
&\hspace{-1.8cm}R_{n}^{0}\ :=\ \wh{X}_{k}^{+}\1_{\{\theta_{k}=1\}}-D_{k}^{i}\1_{\{\theta_{k}=0\}}\quad\text{and}\quad R_{n}^{1}\ :=\ \wh{X}_{k}^{+}\1_{\{\theta_{k}=0\}}-D_{k}^{i}\1_{\{\theta_{k}=1\}}\nonumber
\end{align}
for a sequence $(\theta_{n})_{n\ge 1}$ of iid symmetric Bernoulli variables which are independent of all other occurring random variables. Notice that $(\sum_{k=1}^{n}R_{k}^{\eps})_{n\ge 0}$ forms an ordinary random walk for each $\eps\in\{0,1\}$. We claim that its last level $x$ exit time $\rho_{\eps}(x)$ is the same as 
$$ \rho_{\eps}'(x)\ :=\ \sup\left\{n\ge 0:\sum_{k=1}^{n}R_{k}^{\eps}-(R_{n+1}^{\eps})^{-}\le x\right\}. $$
Indeed, $\rho_{\eps}'(x)\ge\rho_{\eps}(x)$ is obvious. For the reverse inequality, suppose $\rho_{\eps}'(x)=n$ and thus
\begin{equation}\label{eq: w eps}
\sum_{k=1}^{n}R_{k}^{\eps}-(R_{n+1}^{\eps})^{-}\ \le\ x.
\end{equation}
Then $R_{n+1}^{\eps}\ge 0$ entails $\rho_{\eps}(x)\ge n$, while $R_{n+1}^{\eps}<0$ entails
$x\ge\sum_{k=1}^{n}R_{k}^{\eps}-(R_{n+1}^{\eps})^{-}=\sum_{k=1}^{n+1}R_{k}^{\eps}$ and therefore $\rho_{\eps}(x)\ge n+1$.

\vspace{.1cm}
Now \eqref{eq: inequ w eps} implies $\rho(x)\le\rho_{0}(x)\vee\rho_{1}(x)$. Since
$$ (R_{1}^{\eps})^{+}\ =\ S_{\tau}^{+}\1_{\{\theta_{1}=\eps\}}\quad\text{and}\quad (R_{1}^{\eps})^{-}\ =\ D_{1}^{i}\1_{\{\theta=1-\eps\}}, $$
we have, by \eqref{eq:K/M (b), case 2}, that
\begin{align*}
C(\beta)\ :=\ \int\left(\frac{y}{\Erw_{i}((R_{1}^{\eps})^{+}\wedge y)}\right)^{1+\beta}\ \Prob_{i}((R_{1}^{\eps})^{-}\in ds)\ <\ \infty
\end{align*}
for any $\beta\in [0,\alpha]$ and $\Erw_{i}|R_{1}^{\eps}|\ge\Erw_{i}(R_{1}^{\eps})^{+}=\Erw_{i}S_{\tau}^{+}/2=\infty$. The latter in combination with $C(0)<\infty$ ensures the positive divergence of $(\sum_{k=1}^{n}R_{k}^{\eps})_{n\ge 0}$ for each $\eps\in\{0,1\}$ as pointed out in Remark \ref{rem:Erickson's extension}. Consequently, $\Erw_{i}\rho(x)^{\alpha}\le\Erw_{i}(\rho_{0}(x)\vee\rho_{1}(x))^{\alpha}<\infty$, and the extension of the last conclusion to all $i\in\cS$ follows because $\Erw_{i}\tau(i)^{1+\alpha}<\infty$ and $\Erw_{i}J_{i}(D^{i})^{1+\alpha}<\infty$ are all solidarity properties (for the last two assertion use Lemma  \ref{lem:solidarity D^s} with $\gamma=1$).\qed
\end{proof}

\begin{Lemma}\label{lem:series involving D^{i,>}}
If $\Erw_{i}\rho(x)^{\alpha}<\infty$ for some $(i,x)\in\cS\times\R_{\geqslant}$ and $\alpha>0$, then $\Erw_{i}J_{i}(D^{i})^{1+\alpha}<\infty$.
\end{Lemma}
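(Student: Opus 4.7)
The strategy is to reduce the claim to a statement about a perturbed random walk (PRW) in the sense of \cite{AlsIksMei:15} and then to mimic the Kesten-Maller step (b)$\Rightarrow$(a) of Theorem \ref{thm:Kesten-Maller}. First, $\Erw_{i}\rho(x)^{\alpha}<\infty$ forces $\rho(x)<\infty$ $\Prob_{i}$-a.s., so $(M_{n},S_{n})_{n\ge 0}$ is positive divergent by Proposition \ref{prop:classify nontrivial}, and Lemma \ref{lem:solidarity} in turn ensures that every embedded random walk $(S_{\tau_{n}(j)})_{n\ge 0}$ is positive divergent. Since the target condition $\Erw_{j}J_{j}(D^{j})^{1+\alpha}<\infty$ is a solidarity property by Lemma \ref{lem:solidarity D^s}, it suffices to establish it for the given $i$; via the equivalence \eqref{eq:equivalence J_s and J_{s}^{>} moments} this further reduces to proving $\Erw_{i}J_{i}^{>}(D^{i,>})^{1+\alpha}<\infty$.

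The decisive advantage of passing to the strict ascending ladder epochs $\tg_{n}=\tg_{n}(i)$ is that $(S_{\tg_{n}})_{n\ge 0}$ is an ordinary RW with strictly positive iid increments under $\Prob_{i}$ and that $D_{n+1}^{i,>}$ is iid and independent of $\mathcal{F}_{\tg_{n}}:=\sigma((M_{k},S_{k}):k\le\tg_{n})$; hence $W_{n}:=S_{\tg_{n}}-D_{n+1}^{i,>}$ is a PRW. Defining its last exit time $\wh{\rho}_{i}^{>}(x):=\sup\{n\ge 0:W_{n}\le x\}$, the combinatorial identity $\{\rho(x)>\tg_{n}\}=\{\wh{\rho}_{i}^{>}(x)\ge n\}$---whose verification is completely analogous to the identity $\{\rho(x)>\tau_{n}\}=\{\wh{\rho}_{i}(x)\ge n\}$ used in the proof of Lemma \ref{lem:K/M (a) implies (b)}---combined with $\tg_{n}\ge n$ gives $\wh{\rho}_{i}^{>}(x)\le\tg_{\wh{\rho}_{i}^{>}(x)}\le\rho(x)$, and therefore $\Erw_{i}\wh{\rho}_{i}^{>}(x)^{\alpha}\le\Erw_{i}\rho(x)^{\alpha}<\infty$.

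The main obstacle is the last step: lifting the $\alpha$-th moment bound on $\wh{\rho}_{i}^{>}(x)$ to the $(1+\alpha)$-th moment bound on $J_{i}^{>}(D^{i,>})$---precisely the familiar jump-in-exponent that also appears in the ordinary-RW case of Theorem \ref{thm:Kesten-Maller}(b)$\Rightarrow$(a). The direct route, combining $\Prob_{i}(\wh{\rho}_{i}^{>}(x)\ge n)\ge\Prob_{i}(W_{n}\le x)$, Fubini (using the independence of $D_{n+1}^{i,>}$ from $\mathcal{F}_{\tg_{n}}$), and the renewal estimate $\sum_{n}n^{\alpha-1}\Prob_{i}(S_{\tg_{n}}\le y)\asymp J_{i}^{>}(y)^{\alpha}$ from \eqref{eq:harmonic series asymptotics, alpha>0}, produces only $\Erw_{i}J_{i}^{>}(D^{i,>})^{\alpha}<\infty$, one power short of what is needed. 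To recover the missing factor I would combine the sharper lower bound $\rho(x)\ge\tg_{\wh{\rho}_{i}^{>}(x)}$ with an estimate of the form
\begin{align*}
J_{i}^{>}(y)^{1+\alpha}\,\lesssim\,\Erw_{i}\Bigg(\sum_{n\ge 1}\tg_{n}^{\alpha}\,\1_{\{S_{\tg_{n}}\le y,\,\min_{k\le\tg_{n}}S_{k}>-z\}}\Bigg)\qquad(y\to\infty),
\end{align*}
a variant of Lemma \ref{lem:crucial series estimate} adapted to the strictly positive walk $(S_{\tg_{n}})_{n\ge 0}$ and to some $z\in\R_{\geqslant}$ with $\Prob_{i}(\sle(-z)=\infty)>0$, obtained by repeating the dual-and-ladder-chain argument given there. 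Integrating this bound against $\Prob_{i}(D^{i,>}\in dy)$, interchanging sum and integral, and controlling the resulting expectation via $\tg_{n}\1_{E_{n}}\le\rho(x)\1_{E_{n}}$ on the restricted event $E_{n}:=\{D_{n+1}^{i,>}\ge S_{\tg_{n}},\,\min_{k\le\tg_{n}}S_{k}>-z\}$ is where the real technical work is expected to lie; the min-constraint is what keeps the sum of indicators from blowing up and secures the desired right-hand side $\rho(x)^{\alpha}$ (rather than $\rho(x)^{1+\alpha}$), which then matches the available hypothesis.
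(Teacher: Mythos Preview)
Your setup is correct and coincides with the paper's: pass to the ladder walk $(S_{\tg_{n}})_{n\ge 0}$, form the PRW $W_{n}=S_{\tg_{n}}-D_{n+1}^{i,>}$, observe $\{\rho(x)>\tg_{n}\}=\{\wh{\rho}_{i}^{>}(x)\ge n\}$ and hence $\wh{\rho}_{i}^{>}(x)\le\rho(x)$, and reduce via \eqref{eq:equivalence J_s and J_{s}^{>} moments} to showing $\Erw_{i}J_{i}^{>}(D^{i,>})^{1+\alpha}<\infty$. You also pinpoint the real issue: the naive route only yields $\Erw_{i}J_{i}^{>}(D^{i,>})^{\alpha}<\infty$.

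The gap is in your remedy. After integrating your proposed lower bound against $\Prob_{i}(D^{i,>}\in dy)$ you are left with $\Erw_{i}\big(\sum_{n}\tg_{n}^{\alpha}\1_{E_{n}}\big)$ where $E_{n}=\{S_{\tg_{n}}\le D_{n+1}^{i,>},\,\min_{k\le\tg_{n}}S_{k}>-z\}$. On $E_{n}$ you do have $\tg_{n}\le\rho(x)$, but pulling out $\rho(x)^{\alpha}$ leaves $\sum_{n}\1_{E_{n}}$, and this sum is \emph{not} controlled by the min-constraint: that constraint looks only at the past of the walk and does nothing to prevent arbitrarily many future indices $n$ with $S_{\tg_{n}}\le D_{n+1}^{i,>}$. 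At best $\sum_{n}\1_{E_{n}}\le N(0)$, which gives $\Erw_{i}[\rho(x)^{\alpha}N(0)]$---not available from the hypothesis.

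The paper gains the missing power by a disjointification using a \emph{future} constraint rather than a past one. Fix $x_{0}$ with $p:=\Prob_{i}(\sle(-x_{0})=\infty)>0$ and, for $n/2<k\le n$, append to $\{S_{\tg_{k}}\le D_{k+1}^{i,>}-x_{0}\}$ the independent event $\{\inf_{l\ge\tg_{k+1}}(S_{l}-S_{\tg_{k+1}})>-x_{0}\}$ of probability $p$. Since $S_{\tg_{k+1}}>0$, the latter forces $S_{l}>-x_{0}$ for all $l\ge\tg_{k+1}$, so the combined events are pairwise disjoint (only the last excursion below $-x_{0}$ can satisfy them), while each is contained in $\{\rho(-x_{0})>n/2\}$. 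Using that $k\mapsto\Prob_{i}(S_{\tg_{k}}\le D_{k+1}^{i,>}-x_{0})$ is nonincreasing, summation over $n/2<k\le n$ yields
\[
\Prob_{i}(\rho(-x_{0})>n/2)\ \ge\ \frac{pn}{3}\,\Prob_{i}(S_{\tg_{n}}\le D_{n+1}^{i,>}-x_{0}),
\]
so that $\sum_{n}n^{\alpha}\Prob_{i}(S_{\tg_{n}}\le D_{n+1}^{i,>}-x_{0})\lesssim\sum_{n}n^{\alpha-1}\Prob_{i}(\rho(-x_{0})>n/2)<\infty$. Now the exponent on the left is $\alpha$ rather than $\alpha-1$, and \eqref{eq:harmonic series asymptotics, alpha>0} applied to $(S_{\tg_{n}})_{n\ge 0}$ gives exactly $\Erw_{i}J_{i}^{>}(D^{i,>})^{1+\alpha}<\infty$. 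The extra factor of $n$ produced by the disjoint union is precisely the missing power your approach was trying to recover through Lemma~\ref{lem:crucial series estimate}.
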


\begin{proof}
Since $(S_{n})_{n\ge 0}$ is positive divergent under the proviso, there exists $x_{0}>0$ such that $\Prob_{i}(\sle(-x_{0})=\infty)=:p>0$. Plainly, $\Erw_{i}\rho(-x_{0})^{\alpha}\le\Erw_{i}\rho(x)^{\alpha}<\infty$. Use
\begin{align*}
&\{\rho(-x_{0})>n/2\}\ \supset\ \bigcup_{n/2<k\le n}\left\{S_{\tg_{k}}\le D_{k+1}^{i,>}-x_{0}\right\}\\
&\quad\supset\ \bigcup_{n/2<k\le n}\left\{S_{\tg_{k}}\le D_{k+1}^{i,>}-x_{0},\,\inf_{l\ge k+1}(S_{\tg_{l}}-D_{l+1}^{i,>})>-x_{0}\right\}\\
&\quad\supset\ \bigcup_{n/2<k\le n}\left\{S_{\tg_{k}}\le D_{k+1}^{i,>}-x_{0},\,\inf_{l\ge\tg_{k+1}}(S_{l}-S_{\tg_{k+1}})>-x_{0}\right\}
\end{align*}
to infer that
$$ \Prob_{i}(\rho(-x_{0})>n/2)\ \ge\ \frac{pn}{3}\,\Prob_{i}\big(S_{\tg_{n}}\le D_{n+1}^{i,>}-x_{0}\big) $$
and thereupon
\begin{align*}
\sum_{n\ge 1}n^{\alpha}\,\Prob_{i}\big(S_{\tg_{n}}\le D_{n+1}^{i,>}-x_{0}\big)\ \lesssim\ \sum_{n\ge 1}n^{\alpha-1}\,\Prob_{i}(\rho(-x_{0})>n/2)\ <\ \infty,
\end{align*}
as $\Erw_{i}\rho(-x_{0})^{\alpha}<\infty$. Now use \eqref{eq:harmonic series asymptotics, alpha>0} for the ordinary RW $(S_{\tg_{n}})_{n\ge 0}$ to infer
$$ \sum_{n\ge 1}n^{\alpha}\,\Prob_{i}\left(S_{\tg_{n}}\le y\right)\ \asymp\ J_{i}^{>}(y)^{1+\alpha} $$
and thereupon
\begin{align*}
\infty\ >\ \sum_{n\ge 1}n^{\alpha}\,\Prob_{i}\big(S_{\tg_{n}}\le D_{n+1}^{i,>}-x_{0}\big)\ \asymp\ \int_{[x_{0},\infty)}J_{i}^{>}(y-x_0)^{1+\alpha}\ \Prob_{i}(D^{i,>}\in dy).
\end{align*}
Finally, Lemma \ref{lem:trunc mean}(e) provides us with
$$ J_{i}^{>}(y-x_{0})\ \asymp\ J_{i}(y) $$
for all $y\in\R_{\geqslant}$, and since these functions are nondecreasing in $x$ and $D^{i}\le D^{i,>}$, we see that $\Erw_{i}J_{i}(D^{i})^{1+\alpha}<\infty$ as claimed.\qed
\end{proof}

\begin{Lemma}\label{lem:moments sigma_min}
For any $i\in\cS$, $\Erw_{i}\rho(0)^{\alpha}<\infty$ implies $\Erw_{i}\sigma_{\min}^{\alpha}<\infty$.
\end{Lemma}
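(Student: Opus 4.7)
The plan is to exploit the strong Markov property of $(M_{n},S_{n})_{n\ge 0}$ at the first return time $\tau(i)$ of the driving chain in order to reduce the desired moment bound on $\sigma_{\min}$ to moment bounds on $\tau(i)$ and on $\rho(0)$, both of which are available. The essential path-wise inequality is
$$ \sigma_{\min}\ \le\ \rho(S_{\tau(i)}), $$
which holds because $\sigma_{\min}$ attains $\inf_{n\ge 1}S_{n}$ while $\tau(i)\ge 1$ is one of the indices entering this infimum; hence $S_{\sigma_{\min}}\le S_{\tau(i)}$, placing $\sigma_{\min}$ inside $\{n\ge 0:S_{n}\le S_{\tau(i)}\}$, whose supremum is precisely $\rho(S_{\tau(i)})$.

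Next, I would unfold $\rho(S_{\tau(i)})$ via the post-$\tau(i)$ shifted MRW. Setting $M_{k}':=M_{\tau(i)+k}$ and $S_{k}':=S_{\tau(i)+k}-S_{\tau(i)}$ for $k\ge 0$ and $\rho'(0):=\sup\{k\ge 0:S_{k}'\le 0\}$, a split of the supremum defining $\rho(S_{\tau(i)})$ into indices $n<\tau(i)$ (contributing at most $\tau(i)-1$) and $n\ge\tau(i)$ (for which $S_{n}\le S_{\tau(i)}$ is equivalent to $S_{n-\tau(i)}'\le 0$, contributing $\tau(i)+\rho'(0)\ge\tau(i)$) yields the path-wise identity $\rho(S_{\tau(i)})=\tau(i)+\rho'(0)$. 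Since $\tau(i)$ is a stopping time with $M_{\tau(i)}=i$, the strong Markov property gives that $(M_{k}',S_{k}')_{k\ge 0}$ is distributed as $(M_{k},S_{k})_{k\ge 0}$ under $\Prob_{i}$, and in particular $\rho'(0)\eqdist\rho(0)$ under $\Prob_{i}$.

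Combining, $\sigma_{\min}\le\tau(i)+\rho'(0)$ pointwise, so the elementary inequality $(a+b)^{\alpha}\le 2^{\alpha}(a^{\alpha}+b^{\alpha})$ for $a,b\ge 0$ yields
$$ \Erw_{i}\sigma_{\min}^{\alpha}\ \le\ 2^{\alpha}\big(\Erw_{i}\tau(i)^{\alpha}+\Erw_{i}\rho'(0)^{\alpha}\big)\ =\ 2^{\alpha}\big(\Erw_{i}\tau(i)^{\alpha}+\Erw_{i}\rho(0)^{\alpha}\big). $$
The first summand is finite because the standing type-$\alpha$ assumption $\Erw_{i}\tau(i)^{1+\alpha}<\infty$ of Section \ref{sec:proof main results} implies $\Erw_{i}\tau(i)^{\alpha}<\infty$, and the second is finite by hypothesis.

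The main obstacle --- and the reason this lemma is not quite a triviality --- is locating the correct path-wise bound. The naive $\sigma_{\min}\le\rho(0)$ fails on the event $\{\sigma^{\le}(0)=\infty\}$ where the walk stays strictly positive, forcing $\rho(0)=0$ while $\sigma_{\min}\ge 1$, an event of generally positive probability. Replacing the deterministic threshold $0$ by the random threshold $S_{\tau(i)}$ is the key move that salvages the argument, and the strong Markov property at the return time $\tau(i)$ then reduces the resulting shifted $\rho$-functional back to the distribution of $\rho(0)$ under $\Prob_{i}$ at no extra cost.
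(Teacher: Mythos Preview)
Your proof is correct and follows exactly the same route as the paper's: the paper's one-line argument records precisely the two facts $\rho(S_{\tau(i)})\ge\sigma_{\min}$ $\Prob_{i}$-a.s.\ and $\rho(S_{\tau(i)})-\tau(i)\eqdist\rho(0)$ under $\Prob_{i}$, leaving the remaining (elementary) estimate implicit. Your version simply unpacks these two facts and the concluding inequality in full detail, and correctly notes that the finiteness of $\Erw_{i}\tau(i)^{\alpha}$ is supplied by the standing type-$\alpha$ assumption.
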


\begin{proof}
The assertion follows directly from $\rho(S_{\tau})-\tau\eqdist\rho(0)$ under $\Prob_{i}$ and $\rho(S_{\tau})\ge\sigma_{\min}$ $\Prob_{i}$-a.s.\qed
\end{proof}

\begin{Lemma}\label{lem:K/M (c) implies (d)}
Let $\alpha>0$ and $i\in\cS$. Then $\Erw_{i}\sigma_{\min}^{\alpha}<\infty$ implies 
$$ \Erw_{i}\sle(-x)^{\alpha}\1_{\{\sle(-x)<\infty\}}\ <\ \infty $$
for all $x\in\R_{\geqslant}$ as well as $\Prob_{i}(\sle(-x)=\infty)>0$ for all sufficiently large $x$.
\end{Lemma}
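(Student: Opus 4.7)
The key observation is that $\Erw_i\sigma_{\min}^\alpha<\infty$ forces $\sigma_{\min}<\infty$ $\Prob_i$-a.s., so the infimum $\inf_{k\ge 1}S_k = S_{\sigma_{\min}}$ is $\Prob_i$-a.s.\ attained and real-valued. I would exploit this by a direct pathwise domination of $\sle(-x)$ by $\sigma_{\min}$ on the event $\{\sle(-x)<\infty\}$.

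For the first assertion, my plan is to argue as follows. On $\{\sle(-x)<\infty\}$ we have $S_{\sle(-x)}\le -x$, hence $\inf_{k\ge 1}S_k\le -x$, and since this infimum is attained at $\sigma_{\min}$, we get $S_{\sigma_{\min}}\le -x$, which by definition of $\sle(-x)$ as the \emph{first} passage below $-x$ yields $\sle(-x)\le\sigma_{\min}$. Consequently, pointwise $\Prob_i$-a.s.,
\begin{equation*}
\sle(-x)^{\alpha}\,\1_{\{\sle(-x)<\infty\}}\ \le\ \sigma_{\min}^{\alpha},
\end{equation*}
so taking $\Erw_i$ and using the hypothesis gives $\Erw_i\sle(-x)^{\alpha}\1_{\{\sle(-x)<\infty\}}\le\Erw_i\sigma_{\min}^{\alpha}<\infty$ for every $x\in\R_{\geqslant}$.

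For the second assertion, I would use that $\inf_{k\ge 1}S_k$ is $\Prob_i$-a.s.\ finite (by the same reasoning as above), so $\Prob_i(\inf_{k\ge 1}S_k>-x)\to 1$ as $x\to\infty$; in particular this probability is positive for all sufficiently large $x$. But on $\{\inf_{k\ge 1}S_k>-x\}$ no partial sum ever reaches the level $-x$, i.e.\ $\sle(-x)=\infty$, whence $\Prob_i(\sle(-x)=\infty)>0$ for all sufficiently large $x$.

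There is no real obstacle here: both statements are immediate consequences of the $\Prob_i$-a.s.\ finiteness and attainment of the global minimum, which is built into the assumption $\Erw_i\sigma_{\min}^{\alpha}<\infty$. The slightly delicate point — essentially bookkeeping — is just the observation $S_{\sigma_{\min}}\le -x$ on $\{\sle(-x)<\infty\}$ that yields the pathwise bound $\sle(-x)\le\sigma_{\min}$; everything else is monotonicity of expectation.
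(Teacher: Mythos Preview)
Your proof is correct and follows essentially the same route as the paper: both use the pathwise inequality $\sle(-x)^{\alpha}\1_{\{\sle(-x)<\infty\}}\le\sigma_{\min}^{\alpha}$ for the first assertion. For the second assertion you argue directly from $\Prob_i$-a.s.\ finiteness of $\inf_{k\ge 1}S_k$, whereas the paper phrases it as ``$\sigma_{\min}<\infty$ $\Prob_i$-a.s.\ entails positive divergence of $(S_n)_{n\ge 0}$''; these are equivalent observations, yours being slightly more direct.
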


\begin{proof}
The first assertion follows from the obvious inequality
$$ \sle(-x)^{\alpha}\1_{\{\sle(-x)<\infty\}}\ \le\ \sigma_{\min}\1_{\{\sle(-x)<\infty\}} $$
\end{proof}
for any $x\in\R_{\geqslant}$, while the second one must hold because $\sigma_{\min}<\infty$ $\Prob_{i}$-a.s. entails the positive divergence of $(S_{n})_{n\ge 0}$.\qed

\begin{Lemma}\label{lem:K/M (d) implies (a)}
If, for some $(i,x)\in\cS\times\R_{\geqslant}$, $\Erw_{i}\sle(-x)^{\alpha}\1_{\{\sle(-x)<\infty\}}<\infty$ and $\Prob_{i}(\sle(-x)=\infty)>0$, then $A_{i}(y)>0$ for all sufficiently large $y$ and $\Erw_{i}J_{i}(D^{i})^{1+\alpha}<\infty$.
\end{Lemma}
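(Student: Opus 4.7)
The proof naturally decomposes into two tasks: verifying $A_i(y)>0$ for all sufficiently large $y$, and establishing the moment bound $\Erw_i J_i(D^i)^{1+\alpha}<\infty$.

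The first task is handled quickly. Since $\Prob_i(\sle(-x)=\infty)>0$ rules out both fluctuation types {\sf (ND)} and {\sf (Osc)} (in each of which $\liminf_{n\to\infty}S_n=-\infty$ $\Prob_i$-a.s.\ and thus $\sle(-x)<\infty$ $\Prob_i$-a.s.), Prop.~\ref{prop:classify nontrivial} forces $(M_n,S_n)_{n\ge 0}$ to be positive divergent. By the solidarity Lemma \ref{lem:solidarity}, the embedded walk $(S_{\tau_n(j)})_{n\ge 0}$ is then positive divergent for every $j\in\cS$, and Lemma \ref{lem:trunc mean}(d) yields $A_j(y)>0$ for all sufficiently large $y$ and every $j$.

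For the moment statement, the plan is to invoke Lemma \ref{lem:crucial series estimate} (applicable by the just-established positive divergence) to obtain, for some $y_0\in\R_{\geqslant}$ and $C>0$,
\begin{equation*}
J_i(y)^{1+\alpha}\ \le\ C\,\Erw_i\bigg(\sum_{n\ge 1}\tau_n(i)^{\alpha}\,\1_{\{S_{\tau_n(i)}\le y,\ \sle(-x)>\tau_n(i)\}}\bigg)\qquad(y\ge y_0),
\end{equation*}
and then to integrate both sides with respect to $\Prob_i(D^i\in dy)$. Since $D^i$ is independent of the chain, Fubini's theorem will rewrite the right-hand side as
\begin{equation*}
C\,\Erw_i\bigg(\sum_{n\ge 1}\tau_n(i)^{\alpha}\,\bar G\bigl(S_{\tau_n(i)}\bigr)\,\1_{\{\sle(-x)>\tau_n(i)\}}\bigg),
\end{equation*}
where $\bar G(s):=\Prob_i(D^i\ge s)$, and the contribution from $y<y_0$ is absorbed using the type-$\alpha$ hypothesis $\Erw_i\tau(i)^{1+\alpha}<\infty$. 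To show this sum is finite, split the range according to $\{\sle(-x)<\infty\}$ versus $\{\sle(-x)=\infty\}$. On the former event, a strong-Markov decomposition at $\sle(-x)$ together with iid-cycle arguments should reduce the contribution to a quantity controlled by a product of $\Erw_i\sle(-x)^{\alpha}\1_{\{\sle(-x)<\infty\}}$ and $\Erw_i\tau(i)^{1+\alpha}$, both of which are finite. On the latter event, $S_{\tau_n(i)}\to\infty$ a.s.\ and the tail decay $\bar G(s)\lesssim 1/J_i(s)^{1+\alpha}$ (obtainable by reapplying Lemma \ref{lem:crucial series estimate} at threshold $s$) will make the corresponding sum summable.

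The main obstacle is the moment accounting on $\{\sle(-x)<\infty\}$: a naive estimate $\sum_{n:\tau_n<\sle(-x)}\tau_n(i)^{\alpha}\le\sle(-x)^{1+\alpha}$ would require a $(1+\alpha)$-moment of $\sle(-x)$, which exceeds the available $\alpha$-moment. The key will be to replace this crude bound by a decomposition into iid cycle-pieces of length $\tau(i)$, in which the factor $\sle(-x)^{1+\alpha}$ never appears multiplicatively: each cycle contributes independently through its $\tau(i)^{1+\alpha}$-moment, while $\sle(-x)$ enters only as a counting index with $\alpha$-moment. This careful separation should complete the verification of $\Erw_i J_i(D^i)^{1+\alpha}<\infty$, and the extension to every $j\in\cS$ then follows from the solidarity Lemma \ref{lem:solidarity D^s}.
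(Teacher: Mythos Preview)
Your first paragraph and the setup of the second part (integrate the Lemma~\ref{lem:crucial series estimate} bound against $\Prob_i(D^i\in dy)$, then apply Fubini) are correct and are essentially the paper's strategy. The gap is in how you propose to finish. On $\{\sle(-x)=\infty\}$ you invoke a tail bound $\bar G(s)\lesssim J_i(s)^{-(1+\alpha)}$, supposedly ``obtainable by reapplying Lemma~\ref{lem:crucial series estimate}''. That lemma gives a \emph{lower} bound on a certain series, not a tail estimate on $D^i$; a pointwise bound $\Prob_i(D^i\ge s)\,J_i(s)^{1+\alpha}\lesssim 1$ is at least as strong as the conclusion $\Erw_i J_i(D^i)^{1+\alpha}<\infty$ you are trying to prove, so this step is circular. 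The $\{\sle(-x)<\infty\}$ part is also left at the level of a promise: the ``decomposition into iid cycle-pieces'' that would avoid a $(1+\alpha)$-moment of $\sle(-x)$ is never spelled out, and it is not clear it can be, since the number of completed cycles before $\sle(-x)$ is not independent of the cycle lengths.

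The fix is to drop the case split entirely. If you apply Lemma~\ref{lem:crucial series estimate} at level $y-x$ instead of $y$ (harmless since $J_i(y-x)\asymp J_i(y)$), Fubini produces
\[
\sum_{n\ge 1}\Erw_i\Big[\tau_n(i)^{\alpha}\,\bar G\big(S_{\tau_n(i)}+x\big)\,\1_{\{\sle(-x)>\tau_n(i)\}}\Big].
\]
By independence of $D_{n+1}^i$ from $\cF_{\tau_n(i)}$, the factor $\bar G(S_{\tau_n(i)}+x)\,\1_{\{\sle(-x)>\tau_n(i)\}}$ equals $\Prob_i\big(\tau_n(i)<\sle(-x)\le\tau_{n+1}(i)\,\big|\,\cF_{\tau_n(i)}\big)$, so the sum collapses to
\[
\sum_{n\ge 1}\Erw_i\Big[\tau_n(i)^{\alpha}\,\1_{\{\tau_n(i)<\sle(-x)\le\tau_{n+1}(i)\}}\Big]\ \le\ \Erw_i\big[\sle(-x)^{\alpha}\1_{\{\sle(-x)<\infty\}}\big]\ <\ \infty,
\]
and you are done. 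This is precisely the paper's argument (read from the target side rather than the hypothesis side), and it requires neither a case split nor the type-$\alpha$ hypothesis on $\tau(i)$.
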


\begin{proof}
First, $\Prob_{i}(\sle(-x)=\infty)>0$ implies $S_{n}\to\infty$ $\Prob_{i}$-a.s. and thus $A_{i}(y)>0$ for all sufficiently large $y$ by Theorem \ref{thm:Spitzer-Erickson MRW}. Next, we infer
\begin{align*}
\infty\ &>\ \Erw_{i}\sle(-x)^{\alpha}\1_{\{\sle(-x)<\infty\}}\ =\ \Erw_{i}\left(\sum_{n\ge 1}n^{\alpha}\,\1_{\{\sle(-x)=n\}}\right)\\
&\ge\ \Erw_{i}\left(\sum_{n\ge 1}\sum_{k=\tau_{n}+1}^{\tau_{n+1}}k^{\alpha}\,\1_{\{\sle(-x)=k\}}\right)\\
&\ge\ \Erw_{i}\left(\sum_{n\ge 1}\tau_{n}^{\alpha}\,\1_{\{\tau_{n}<\sle(-x)\le\tau_{n+1}\}}\right)\\
&=\ \int\Erw_{i}\left(\sum_{n\ge 1}\tau_{n}^{\alpha}\,\1_{\{S_{\tau_{n}}\le y-x,\min_{1\le k\le\tau_{n}}S_{k}>-x\}}\right)\,\Prob_{i}(D^{i}\in dy),
\end{align*}
and then $\Erw_{i}J_{i}(D^{i})^{1+\alpha}<\infty$ by an appeal to Lemma \ref{lem:crucial series estimate}.\qed
\end{proof}

\subsection{Proof of Theorem \ref{thm:K/M MRW Spitzer series}}\label{subsec:proof K/M MRW Spitzer series}

We first point out that, if $\Sigma_{\alpha}(i,0)<\infty$ for some $i\in\cS$, then $\Sigma_{\alpha}(j,0)<\infty$ for all $j\in\cS$. To see this, we note that
\begin{align}\label{eq:Spitzer series embedded}
\sum_{n\ge 1}n^{\alpha-1}\Prob_{i}(S_{\tau_{n}(i)}\le x)\ \asymp\ \Erw_{i}\left(\sum_{n\ge 1}\tau_{n}(i)^{\alpha-1}\1_{\{S_{\tau_{n}(i)}\le x\}}\right)\ \le\ \Sigma_{\alpha}(i,x)\ <\ \infty
\end{align}
(see Lemma \ref{lem:solidarity increments}(b) in the Appendix) in combination with Theorem \ref{thm:Kesten-Maller} implies $A_{i}(x)>0$ for all sufficiently large $x$ and $\Erw_{i}J_{i}(S_{\tau(i)}^{-})<\infty$. In particular, we obtain $\Erw_{i}\tg(i)^{1+\alpha}<\infty$ and then also $\Erw_{j}\tg(i)^{1+\alpha}<\infty$ by a straightforward argument. Now,
\begin{align*}
\Sigma_{\alpha}(i,0)\ &\le\ \Erw_{j}\left(\sum_{n=1}^{2 \tg(i)}n^{\alpha-1} + \sum_{n\ge 1} (\tg(i)+n)^{\alpha-1}\, \1_{\{(S_{\tg(i)+n}-S_{\tg(i)})+S_{\tg(i)}\leq 0\}}\right)\\
&\lesssim\ \Erw_{j}\tg(i)^{1+\alpha}\ +\ \Erw_{j}\left( \sum_{n\ge 1} (2n)^{\alpha-1}\,\1_{\{S_{\tg(i)+n}-S_{\tg(i)}\leq 0\}}\right)\\
&\le\ \Erw_{j}\tg(i)^{1+\alpha}\ +\ 2^{\alpha-1}\Sigma_{\alpha}(i,0)
	 \end{align*}

In order to prove Theorem \ref{thm:K/M MRW Spitzer series}, it is therefore enough to show that, for any fixed $(i,x)\in\cS\times\R_{\geqslant}$ (and with $\tau_{n}=\tau_{n}(i)$ and $\tg=\tg(i)$),
\begin{align}\label{eq:Spitzer series approx V_{1}-integral}
\Sigma_{\alpha}(i,x)\ \asymp\ \int\sum_{n\ge 1}n^{\alpha-1}\Prob_{i}(S_{\tau_{n}}\le x+y)\ \V_{i}(dy)
\end{align}
where 
\begin{align*}
\sum_{n\ge 1}n^{\alpha-1}\Prob_{i}(S_{\tau_{n}}\le x+y)\ \asymp\ 
\begin{cases}
\log J_{i}(x+y)\,\asymp\,\log J_{i}(y),&\text{if }\alpha=0,\\
\hfill J_{i}(x+y)^{\alpha}\,\asymp\,J_{i}(y)^{\alpha},&\text{if }\alpha>0,
\end{cases}
\end{align*}
as $y\to\infty$ should be recalled (see Lemma \ref{lem:solidarity increments}, \eqref{eq:harmonic series asymptotics, alpha>0} and the subsequent remark). Note that we may replace $\V_{i}$ with $\W_{i}:=\Erw_{i}\big(\sum_{n=1}^{\tau(i)}\1_{\{-S_{n}\in\cdot\}}\big)$ in \eqref{eq:Spitzer series approx V_{1}-integral}, for
\begin{align*}
\int &\sum_{n\ge 1}n^{\alpha-1}\Prob_{i}(S_{\tau_{n}}\le x+y)\ \V_{i}(dy)\\
&\le\ \int\sum_{n\ge 1}n^{\alpha-1}\Prob_{i}(S_{\tau_{n}}\le x+y)\ \W_{i}(dy)\ +\ \V_{i}(\{0\})\sum_{n\ge 1}n^{\alpha-1}\Prob_{i}(S_{\tau_{n}}\le x)\\
&\le\ \int\sum_{n\ge 1}n^{\alpha-1}\Prob_{i}(S_{\tau_{n}}\le x+y)\ \V_{i}(dy)\ +\ \V_{i}(\{0\})\sum_{n\ge 1}n^{\alpha-1}\Prob_{i}(S_{\tau_{n}}\le x)\\
&\le\ 2\int\sum_{n\ge 1}n^{\alpha-1}\Prob_{i}(S_{\tau_{n}}\le x+y)\ \V_{i}(dy).
\end{align*}
We now distinguish between the cases $0\le\alpha\le 1$ and $\alpha>1$ and put $\chi_{n}:=\tau_{n}-\tau_{n-1}$ for $n\ge 1$ which are iid copies of $\tau=\tau(i)$ under $\Prob_{i}$.

\vspace{.2cm}
\textsc{Case 1}. $0\le\alpha\le 1$.\\[1mm]
Using $(\tau_{n-1}+k)^{\alpha-1}\le n^{\alpha-1}$ for $k=1,...,\chi_{n}$ and $n\ge 1$, we find
\begin{align*}
\Sigma_{\alpha}(i,x)\ &=\ \Erw_{i}\left(\sum_{n\ge 1}\sum_{k=1}^{\chi_{n}}(\tau_{n-1}+k)^{\alpha-1}\1_{\{S_{\tau_{n-1}+k}\le x\}}\right)\\
&\le\ \sum_{n\ge 1}n^{\alpha-1}\,\W_{i}([y-x,\infty))\ \Prob_{i}(S_{\tau_{n-1}}\in dy)\\
&=\ \int\sum_{n\ge 1}n^{\alpha-1}\,\Prob_{i}(S_{\tau_{n-1}}\le x+y)\ \W_{i}(dy)\\
&\lesssim\ \int\sum_{n\ge 1}n^{\alpha-1}\,\Prob_{i}(S_{\tau_{n}}\le x+y)\ \V_{i}(dy).
\end{align*}
For the reverse inequality note first that
\begin{align*}
\Erw_{i}&\left(\sum_{n\ge 1}\sum_{k=1}^{\chi_{n}}(\tau_{n-1}+k)^{\alpha-1}\1_{\{\tau_{n-1}>2n\,\Erw_{i}\tau\}}\right)\\
&\lesssim\ \Erw_{i}\left(\sum_{n\ge 1}n^{\alpha-1}\chi_{n}\,\1_{\{\tau_{n-1}>2n\,\Erw_{i}\tau\}}\right)\\
&=\ \Erw_{i}\tau\sum_{n\ge 1}n^{\alpha-1}\,\Prob_{i}\left(\tau_{n-1}>2n\,\Erw_{i}\tau\right)\ <\ \infty,
\end{align*}
the finiteness of the last series following from \eqref{eq:Spitzer condition tau_n} in the Appendix. We also need that
\begin{align}
\begin{split}\label{eq:moment of tau condition}
\Erw_{i}&\left(\sum_{n\ge 1}n^{\alpha-1}\chi_{n}\1_{\{\chi_{n}>n\}}\right)\ =\ \Erw_{i}\left(\sum_{n\ge 1}\sum_{k=1}^{\chi_{n}}n^{\alpha-1}\1_{\{\chi_{n}>n\}}\right)\\
&\le\ \Erw_{i}\left(\sum_{n\ge 1}\sum_{k=1}^{n}n^{\alpha-1}\1_{\{\chi_{n}>n\}}\right)\ +\ \Erw_{i}\left(\sum_{n\ge 1}\sum_{k\ge n}n^{\alpha-1}\1_{\{\chi_{n}>k\}}\right)\\
&\lesssim\ \sum_{n\ge 1}n^{\alpha}\,\Prob_{i}(\tau>n)\ +\ \sum_{k\ge 1}\sum_{n=1}^{k}n^{\alpha-1}\,\Prob_{i}(\tau>k)\\
&\lesssim\ \Erw_{i}\tau^{1+\alpha}\vee\Erw_{i}(\tau\log\tau)<\ \infty,
\end{split}
\end{align}
where $\sum_{k=1}^{n}k^{-1}\asymp\log n$ and $\sum_{k=1}^{n}n^{\alpha-1}\asymp n^{\alpha}$ for any $\alpha>0$ has been utilized for the final estimate. With this at hand, we infer
\begin{align*}
\Sigma_{\alpha}(i,x)\ &=\ \Erw_{i}\left(\sum_{n\ge 1}\sum_{k=1}^{\chi_{n}}(\tau_{n-1}+k)^{\alpha-1}\1_{\{S_{\tau_{n-1}+k}\le x\}}\right)\\
&\ge\ \Erw_{i}\left(\sum_{n\ge 1}\sum_{k=1}^{\chi_{n}}(\tau_{n-1}+k)^{\alpha-1}\1_{\{S_{\tau_{n-1}+k}\le x,\,\tau_{n-1}\le 2n\,\Erw_{i}\tau\}}\right)\\
&\gtrsim\ \Erw_{i}\left(\sum_{n\ge 1}\sum_{k=1}^{\chi_{n}}(n+k)^{\alpha-1}\1_{\{S_{\tau_{n-1}+k}\le x,\,\chi_{n}\le n\}}\right)\\
&\gtrsim\ \sum_{n\ge 1}n^{\alpha-1}\,\Erw_{i}\left(\sum_{k=1}^{\chi_{n}}\1_{\{S_{\tau_{n-1}+k}\le x\}}\right)\qquad\text{[here \eqref{eq:moment of tau condition} enters]}\\
&\asymp\ \int\sum_{n\ge 1}n^{\alpha-1}\,\Prob_{i}(S_{\tau_{n}}\le x+y)\ \V_{i}(dy)
\end{align*}
as required.

\vspace{.2cm}
\textsc{Case 2}. $\alpha>1$.\\[1mm]
Here $(\tau_{n-1}+k)^{\alpha-1}\ge n^{\alpha-1}$ for $k=1,...,\chi_{n}$ and $n\ge 1$ implies
\begin{align*}
\Sigma_{\alpha}(i,x)\ &=\ \Erw_{i}\left(\sum_{n\ge 1}\sum_{k=1}^{\chi_{n}}(\tau_{n-1}+k)^{\alpha-1}\1_{\{S_{\tau_{n-1}+k}\le x\}}\right)\\
&\ge\ \int\sum_{n\ge 1}n^{\alpha-1}\,\Prob_{i}(S_{\tau_{n-1}}\le x+y)\ \W_{i}(dy)\\
&\gtrsim\ \int\sum_{n\ge 1}n^{\alpha-1}\,\Prob_{i}(S_{\tau_{n}}\le x+y)\ \V_{i}(dy).
\end{align*}
For the reverse estimation, we embark on the inequality
\begin{align*}
&\Erw_{i}\left(\sum_{n\ge 1}\sum_{k=1}^{\chi_{n}}(\tau_{n-1}+k)^{\alpha-1}\1_{\{S_{\tau_{n-1}+k}\le x\}}\right)\ \lesssim\ I_{1}+I_{2},
\shortintertext{where}
&I_{1}\ :=\ \Erw_{i}\left(\sum_{n\ge 1}\sum_{k=1}^{\chi_{n}}(n+k)^{\alpha-1}\1_{\{S_{\tau_{n-1}+k}\le x\}}\right)
\shortintertext{and}
&I_{2}\ :=\ \Erw_{i}\left(\sum_{n\ge 1}\sum_{k=1}^{\chi_{n}}(\tau_{n-1}+k)^{\alpha-1}\1_{\{\tau_{n-1}>2n\,\Erw_{i}\tau\}}\right).
\end{align*}
As for $I_{1}$, we then obtain
\begin{align*}
I_{1}\ &\lesssim\ \Erw_{i}\left(\sum_{n\ge 1}n^{\alpha-1}\sum_{k=1}^{\chi_{n}}\1_{\{S_{\tau_{n-1}+k}\le x\}}\right)\ +\ \Erw_{i}\left(\sum_{n\ge 1}\sum_{k=1}^{\chi_{n}}(n+k)^{\alpha-1}\1_{\{S_{\tau_{n-1}+k}\le x,\,\chi_{n}>n\}}\right)\\
&\lesssim\ \int\sum_{n\ge 1}n^{\alpha-1}\,\Prob_{i}(S_{\tau_{n}}\le x+y)\ \V_{i}(dy)\ +\ \Erw_{i}\left(\sum_{n\ge 1}\chi_{n}^{\alpha}\1_{\{\chi_{n}>n\}}\right)\\
&\lesssim\ \int\sum_{n\ge 1}n^{\alpha-1}\,\Prob_{i}(S_{\tau_{n}}\le x+y)\ \V_{i}(dy)\ +\ \Erw_{i}\tau^{\alpha+1}.
\end{align*}
Finally, since $(\tau_{n-1}+k)^{\alpha-1}\le 2^{\alpha-1}(\tau_{n-1}^{\alpha-1}+\chi_{n}^{\alpha-1})$ for $k=1,...,\chi_{n}$ and $n\ge 1$,
\begin{align*}
I_{2}\ &\lesssim\ \Erw_{i}\left(\sum_{n\ge 1}\sum_{k=1}^{\chi_{n}}(\tau_{n-1}^{\alpha-1}+\chi_{n}^{\alpha-1})\1_{\{\tau_{n-1}>2n\,\Erw_{i}\tau\}}\right)\\
&\lesssim\ \Erw_{i}\left(\sum_{n\ge 1}\tau_{n}^{\alpha-1}\1_{\{\tau_{n}>2n\,\Erw_{i}\tau\}}\right)\ +\ \Erw_{i}\left(\sum_{n\ge 1}\chi_{n}^{\alpha}\,\1_{\{\tau_{n-1}>2n\,\Erw_{i}\tau\}}\right),
\end{align*}
and the last two expectations are finite because $\Erw_{i}\tau^{\alpha+1}<\infty$ (for the second expectation this is obvious, for the first one see Lemma \ref{lem:series tau_n(s)} in the Appendix).

\subsection{Proof of Theorem \ref{thm:K/M MRW N(x)}}\label{subsec:proof K/M MRW N(x)}

Here we start with two auxiliary lemmata.

\begin{Lemma}\label{lem:solidarity N(x)}
Let $\alpha>0$ and $\Erw_{i}\tau(i)^{\alpha}<\infty$ for some/all $i\in\cS$. Then the set
$$ \{(i,x)\in\cS\times\R_{\geqslant}:\Erw_{i}N(x)^{\alpha}<\infty\} $$ 
is either empty or equal to $\cS\times\R_{\geqslant}$.
\end{Lemma}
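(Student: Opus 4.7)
The plan is to establish the moment propagation in two stages: solidarity in the level $x$ (with $i$ fixed) and solidarity in the state $i$ (with $x$ fixed). At the outset, the hypothesis $\Erw_i N(x)^\alpha < \infty$ entails $N(x) < \infty$ $\Prob_i$-a.s., hence positive divergence of the MRW by the remark after Theorem \ref{thm:Spitzer-Erickson MRW}, so that $N(y) < \infty$ a.s.\ for all $(j,y)$. The task therefore reduces to the transfer of moment finiteness.

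For the level propagation with $i$ fixed, the case $y \leq x$ is immediate from monotonicity of $N$. For $y > x$ I would write
\[
N(y) \;=\; N(x) \;+\; W, \qquad W := \big|\{n \geq 1 : x < S_n \leq y\}\big|,
\]
and bound $W$ via the excursion decomposition at successive visits of $\bM$ to $i$. The $k$-th cycle contributes to $W$ only if $S_{\tau_{k-1}(i)} - D_k^i \leq y$, and in that case at most $\chi_k := \tau_k(i) - \tau_{k-1}(i)$. Splitting according to whether $S_{\tau_{k-1}(i)} \leq y$, the ``near'' part is controlled by the ordinary-RW Kesten--Maller theorem (Theorem \ref{thm:Kesten-Maller}) applied to the embedded iid walk $(S_{\tau_n(i)})_{n\geq 0}$, noting that $|\{k : S_{\tau_k(i)} \leq x\}| \leq N(x)$ already has a finite $\alpha$-moment; the ``far'' part requires $D_k^i > S_{\tau_{k-1}(i)} - y$ and is controlled through the tail of $D_k^i$ via Lemma \ref{lem:solidarity D^s}, combined with $\Erw_i \chi_k^\alpha < \infty$.

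For the state propagation with $x$ fixed, the strong Markov property at the first visit $\tau(i)$ under $\Prob_j$ yields
\[
N(x) \;\leq\; \tau(i) \;+\; \wh N\big(x - S_{\tau(i)}\big),
\]
where $\wh N$ is a copy of $N$ under $\Prob_i$ independent of $\cF_{\tau(i)}$. Raising to the $\alpha$-th power via $(a+b)^\alpha \leq 2^\alpha(a^\alpha + b^\alpha)$ reduces the task to bounding $\Erw_j \tau(i)^\alpha$ (which follows from $\Erw_j \tau(j)^\alpha < \infty$ by the geometric-trials argument sketched in the paper's comment preceding the lemma) and the integral $\int \Erw_i N(x - s)^\alpha\,\Prob_j(S_{\tau(i)} \in ds)$. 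The latter splits at $s = 0$: for $s \geq 0$ monotonicity gives the bound $\Erw_i N(x)^\alpha$, while for $s < 0$ the level propagation yields $\Erw_i N(x+|s|)^\alpha < \infty$, with overall integrability ensured by the tail comparability of Lemma \ref{Lemma: tailasymp}.

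The hard part will be the upward level propagation, since naively shifting the cutoff upwards introduces strip-visits that must be accounted for without appealing to Theorem \ref{thm:K/M MRW N(x)}, whose proof depends on this very lemma. The existing solidarity toolkit (Lemmas \ref{lem:solidarity}, \ref{lem:solidarity stronger}, \ref{lem:solidarity D^s}) together with $\Erw_i \tau(i)^\alpha < \infty$ should provide the cycle-level moment controls required, but a delicate balance is needed to match the excursion count against $\Erw_i N(x)^\alpha$ while avoiding any circularity.
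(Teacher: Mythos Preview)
Your two-stage plan takes a genuinely different route from the paper, and in doing so accumulates gaps that the paper's argument sidesteps entirely. The paper's proof rests on a single clean inequality: once $\Erw_i N(0)^\alpha<\infty$, the embedded ordinary random walk $(S_{\tau_n(i)})_{n\ge 0}$ inherits $\Erw_i\bigl(\sum_{n\ge 1}\1_{\{S_{\tau_n}\le 0\}}\bigr)^\alpha<\infty$, which by Theorem~\ref{thm:Kesten-Maller}(f)$\Leftrightarrow$(g) yields the \emph{stronger} moment $\Erw_i\nu(x)^{1+\alpha}<\infty$ for every $x$. Gut's stopped-sum bounds then give $\Erw_j\tau_{\nu(x)}^\alpha<\infty$ (the state transfer being a one-line conditioning). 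The key point is that $S_{\tau_{\nu(x)}}>x$, so any $n>\tau_{\nu(x)}$ with $S_n\le x$ satisfies $S_n-S_{\tau_{\nu(x)}}\le 0$; hence $N(x)\le\tau_{\nu(x)}+\widetilde N(0)$ with $\widetilde N(0)$ a fresh copy of $N(0)$ under $\Prob_i$. This handles level and state solidarity simultaneously, and never requires integrating $\Erw_i N(\cdot)^\alpha$ over a random shift.

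There are two concrete problems with your plan. First, in the ``far'' part of level propagation you invoke Lemma~\ref{lem:solidarity D^s} to control the tail of $D_k^i$. That lemma is a solidarity statement, not a moment bound: it tells you that $\Erw_iJ_i(D^i)^{1+\alpha}<\infty$ holds for all $i$ \emph{if} it holds for one, but from $\Erw_iN(x)^\alpha<\infty$ you only obtain $\Erw_iJ_i(S_{\tau(i)}^-)^{1+\alpha}<\infty$ via Theorem~\ref{thm:Kesten-Maller}. Example~\ref{exa:integral criteria} shows the $D^i$ condition can fail while the $S_{\tau(i)}^-$ condition holds, so your ``far'' part has no input to work with. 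Second, your state propagation hinges on the finiteness of $\int_{(-\infty,0)}\Erw_iN(x-s)^\alpha\,\Prob_j(S_{\tau(i)}\in ds)$, which in turn needs quantitative control on $t\mapsto\Erw_iN(x+t)^\alpha$. Lemma~\ref{Lemma: tailasymp} only compares tails of $S_{\tau(j)}$ with those of $S_{\tau_\upsilon(i)}$; it says nothing about the growth of $\Erw_iN(\cdot)^\alpha$, and at this stage of the argument no such growth bound is available without circularity. The paper avoids both issues by arranging that the post-regeneration term is a copy of $N(0)$, not $N(\text{random level})$.
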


\begin{proof}
Suppose that $\Erw_{i}N(0)^{\alpha}<\infty$ for some $i\in\cS$, so that in particular
$$ \Erw_{i}\left(\sum_{n\ge 1}\1_{\{S_{\tau_{n}}\le 0\}}\right)^{\alpha}\ <\ \infty, $$
which in turn is equivalent to $\Erw_{i}\nu(x)^{1+\alpha}<\infty$ for all $x\in\R_{\geqslant}$ by Theorem \ref{thm:Kesten-Maller}. Using \cite[Theorem 1.5.1]{Gut:09} for $\alpha\in(0,1)$ and \cite[Theorem 1.5.2]{Gut:09} for $\alpha\ge 1$, we obtain 
$$ \Erw_{i}\tau_{\nu(x)}^{\alpha}\ \lesssim \ \Erw_{i}\tau^{\alpha} \cdot \Erw_{i}\nu(x)^{1\vee \alpha}\ <\ \infty $$
for all $x\in\R_{\geqslant}$. For arbitrary $j\in\cS$, pick $x_{1}\in\R_{\geqslant}$ such that 
$$ 0\ <\ p\ :=\ \Prob_{i}(S_{\tau(j)}\le x_{1},\,\tau\ge\tau(j)). $$
It follows that
$$ \infty\ >\ \Erw_{i} \tau_{\nu(x+x_{1})}^{\alpha}\ \ge \ \Erw_{i} \tau_{\nu(x+x_{1})}^{\alpha}\,\1_{\{S_{\tau(j)}\le x_1,\, \tau\ge\tau(j)\}}\ \ge\ p\,\Erw_{j}\tau_{\nu(x)}^{\alpha}$$
for all $x\in\R_{\geqslant}$. Now put
$$ \widetilde{N}(0)\ :=\ \sum_{n\ge\tau_{\nu(x)}+1} \1_{\{S_{n}-S_{\tau_{\nu(x)}}\le 0\}}, $$
having the law $\Prob_{i}(N(0)\in\cdot)$ under any $\Prob_{j}$, and observe that $N(x)\le\tau_{\nu(x)}+\widetilde{N}(0)$ $\Prob_{j}$-a.s. Therefore, 
$$ \Erw_{j}N(x)^{\alpha}\ \lesssim\ \Erw_{j}\tau_{\nu(x)}^{\alpha}\,+\,\Erw_{i}\, N(0)^{\alpha}\ <\ \infty $$
for all $x\in\R_{\geqslant}$ which completes the proof because $j\in\cS$ was arbitrarily chosen.\qed
\end{proof}

For the next result, recall that $\chi_{n}(i)=\tau_{n}(i)-\tau_{n-1}(i)$ for $n\in\N$.

\begin{Lemma}\label{lem:rho[s]}
For $\alpha>0$, let $\Erw_{i}\tau(i)^{1+\alpha}<\infty$ and $\Erw_{i}J_{i}(S_{\tau(i)}^{-})^{1+\alpha}<\infty$. Then 
\begin{equation*}
\Erw_{i}\left(\sum_{n\ge 1}\chi_{n}(i)\,\1_{\{S_{\tau_{n-1}(i)}\le 0\}}\right)^{\alpha}\ <\ \infty.
\end{equation*}
\end{Lemma}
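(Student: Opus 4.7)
Write $\tau=\tau(i)$, $\chi_n=\chi_n(i)$, $\mu:=\Erw_i\tau$, $Y_n:=S_{\tau_n(i)}-S_{\tau_{n-1}(i)}$ and $I_n:=\1_{\{S_{\tau_{n-1}(i)}\le 0\}}$, and set $T:=\sum_{n\ge 1}\chi_n I_n$ and $N^*:=\sum_{n\ge 1}I_n$. The embedded sequence $(S_{\tau_n(i)})_{n\ge 0}$ is an ordinary iid random walk under $\Prob_i$ with step distribution that of $S_\tau$, so $N^*-1$ equals $N(0)$ for this walk; by Theorem~\ref{thm:Kesten-Maller}, (a)$\LRA$(f), the hypothesis $\Erw_iJ_i(S_\tau^-)^{1+\alpha}<\infty$ therefore yields $\Erw_i(N^*)^\alpha<\infty$. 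The decisive structural observation is that $(\chi_n,Y_n)_{n\ge 1}$ is iid under $\Prob_i$, so $\chi_n$ is independent of $\cF_{n-1}:=\sigma((\chi_k,Y_k)_{1\le k<n})$ while $I_n$ is $\cF_{n-1}$-measurable; in particular $\chi_n$ and $I_n$ are independent, even though $\chi_n$ may be correlated with $Y_n$ within the same cycle.

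For $\alpha\in(0,1]$ the claim is immediate: subadditivity of $x\mapsto x^\alpha$ together with the independence of $\chi_n$ and $I_n$ gives
$$
\Erw_i T^\alpha\,\le\,\sum_{n\ge 1}\Erw_i(\chi_n I_n)^\alpha\,=\,\Erw_i\tau^\alpha\,\Erw_i N^*\,<\,\infty.
$$

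For $\alpha>1$ I decompose $T=\mu N^*+V_\infty$, where
$$
V_n\,:=\,\sum_{k=1}^n(\chi_k-\mu)\,I_k
$$
is an $L^\alpha$-martingale for $(\cF_n)_{n\ge 0}$: indeed $\chi_k-\mu$ is independent of $\cF_{k-1}$ with mean $0$, and $I_k$ is $\cF_{k-1}$-measurable. The first summand satisfies $\Erw_i(\mu N^*)^\alpha<\infty$ by the previous step. For the second, the Burkholder-Davis-Gundy inequality together with the martingale form of Rosenthal's inequality bounds $\Erw_i\sup_n|V_n|^\alpha$ by a constant multiple of
$$
\Erw_i\sum_{k\ge 1}|\chi_k-\mu|^\alpha I_k\ +\ \Erw_i\bigg(\sum_{k\ge 1}\Erw_i\big[(\chi_k-\mu)^2\bigm|\cF_{k-1}\big]\,I_k\bigg)^{\alpha/2}.
$$
Because $\chi_k$ is independent of $\cF_{k-1}$, the inner conditional second moment equals the constant $\Erw_i(\tau-\mu)^2$, and the independence of $\chi_k$ from $I_k$ reduces the two summands to $\Erw_i|\tau-\mu|^\alpha\cdot\Erw_i N^*$ and $(\Erw_i(\tau-\mu)^2)^{\alpha/2}\,\Erw_i(N^*)^{\alpha/2}$, respectively. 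Both are finite by $\Erw_i\tau^{1+\alpha}<\infty$ (whence $\Erw_i\tau^\alpha<\infty$ and $\Erw_i(\tau-\mu)^2<\infty$ since $\alpha>1$) and $\Erw_i(N^*)^\alpha<\infty$ (whence $\Erw_i(N^*)^{\alpha/2}<\infty$). A routine Fatou argument then gives $\Erw_i|V_\infty|^\alpha<\infty$, and the assertion follows.

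The main technical obstacle is exactly the within-cycle coupling of $\chi_n$ and $Y_n$, which prevents any direct use of Wald-type identities or iid Rosenthal inequalities for sums with a randomly-indexed upper limit depending on the walk. The martingale scheme $T=\mu N^*+V_\infty$ bypasses this because $\chi_k$, while possibly correlated with $Y_k$, is nonetheless independent of the entire past filtration $\cF_{k-1}$ on which the indicator $I_k$ lives; this minimal conditional-independence structure is what allows the predictable quadratic variation to collapse to the constant $\Erw_i(\tau-\mu)^2$ times $N^*$, at which point the already established moment bound $\Erw_i(N^*)^\alpha<\infty$ takes over.
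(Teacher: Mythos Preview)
Your martingale route is genuinely different from the paper's construction of an auxiliary MRW, and for $\alpha\ge 1$ it is correct: once $\Erw_i(N^*)^\alpha<\infty$ is established, $\alpha\ge 1$ gives $\Erw_iN^*<\infty$, so both terms in your Rosenthal bound are finite (note also $1+\alpha\ge 2$ gives $\Erw_i\tau^2<\infty$). This is an attractive direct argument that bypasses the paper's somewhat artificial auxiliary chain.

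However, for $\alpha\in(0,1)$ there is a real gap. Your subadditivity bound
\[
\Erw_i T^\alpha\ \le\ \sum_{n\ge 1}\Erw_i(\chi_n I_n)^\alpha\ =\ \Erw_i\tau^\alpha\cdot\Erw_iN^*
\]
needs $\Erw_iN^*<\infty$, but the hypothesis $\Erw_iJ_i(S_\tau^-)^{1+\alpha}<\infty$ only yields $\Erw_i(N^*)^\alpha<\infty$ via Theorem~\ref{thm:Kesten-Maller}(f). By that same theorem with exponent $1$, $\Erw_iN^*<\infty$ is equivalent to $\Erw_iJ_i(S_\tau^-)^{2}<\infty$, which for $\alpha<1$ is strictly stronger than the hypothesis and can fail: take $\Erw_iS_\tau^+<\infty$ (so $J_i(x)\asymp x$) and $\Prob_i(S_\tau^->x)\asymp x^{-\gamma}$ with $\gamma\in(1+\alpha,2)$ to get $\Erw_i(S_\tau^-)^{1+\alpha}<\infty=\Erw_i(S_\tau^-)^{2}$. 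Your martingale decomposition does not rescue this case either: BDG for $\alpha<1$ reduces to bounding $\Erw_i[V]_\infty^{\alpha/2}$ with $[V]_\infty=\sum_k(\chi_k-\mu)^2I_k$, which is the same type of sum; centering again requires $\Erw_i(\tau-\mu)^2<\infty$, not guaranteed when $1+\alpha<2$, and the crude subadditivity bound again lands on $\Erw_iN^*$.

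The paper circumvents this by building an auxiliary MRW $(M_n',S_n')$ whose cycles have length $\tau'$ with $\Prob(\tau'\in\cdot)=\Prob_i(\tau\in\cdot)$, with all increments zero except at the return to $0$, where the jump has law $\Prob_i(S_\tau\in\cdot\,|\,\tau=n)$. Then $D'\eqdist S_\tau^-$, so the hypothesis becomes exactly condition (a) of Theorem~\ref{thm:Kesten-Maller MRW} for the auxiliary walk, yielding $\bfE_0\rho'(0)^\alpha<\infty$; and $\sum_{n\ge1}\chi_n\1_{\{S_{\tau_{n-1}}\le0\}}$ is, up to an additive constant, $\bfE_0$-distributed as $\sum_{n\ge1}\1_{\{S_n'\le0\}}\le\rho'(0)$. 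This works uniformly in $\alpha>0$ because it leverages the full Kesten--Maller equivalence rather than a direct moment bound.
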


\begin{proof}
Consider an auxiliary MRW $(M_{n}',S_{n}')_{n\ge 0}$ such that $M'=(M_{n}')_{n\ge 0}$ has state space $\cS'\subset\{0\}\cup\N^{2}$, transition probabilities (with $\tau=\tau(i)$ as usual)
\begin{align*}
p_{0,(n,1)}'=\Prob_{i}(\tau=n),\quad p_{(n,n-1),0}'=1\quad\text{and}\quad p_{(n,k-1),(n,k)}'=1
\end{align*}
for $n\in\N$ and $k=2,...,n-1$, and the stationary distribution $\pi_{0}'=(\Erw_{i}\tau)^{-1}$, $\pi_{(n,k)}'=(\Erw_{i}\tau)^{-1}\Prob_{i}(\tau=n)$ for $n\in\N$ and $k=1,...,n-1$. The conditional increment distributions of $(S_{n}')_{n\ge 0}$ are given by
\begin{align*}
\Prob(S_{1}'\in\cdot|&M_{0}'=j,M_{1}'=k)\\
&=\ 
\begin{cases}
\Prob_{i}(S_{\tau}\in\cdot|\tau=n),&\text{if }j=(n,n-1)\text{ for }n\in\N\text{ and }k=0,\\
\hfill\delta_{0},&\text{otherwise.}
\end{cases}
\end{align*}
Then $\tau':=\inf\{n\ge 1:M_{n}'=0\}$ and $D':=\max_{0\le k\le\tau'}{S_{k}'}^{-}$ satisfy
$$ \bfP_{0}(S_{\tau'}'\in\cdot)\ =\ \Prob_{i}(S_{\tau}\in\cdot)\quad\text{and}\quad\bfP_{0}(D'\in\cdot)\ =\ \bfP_{0}({S_{\tau'}'}\hspace{-3pt}^{-}\in\cdot), $$
where $\bfP_{0}:=\Prob(\cdot|M_{0}'=0)$. Hence, Theorem \ref{thm:Kesten-Maller MRW} for the given $\alpha$ applies to the MRW $(M_{n}',S_{n}')_{n\ge 0}$ and provides us with $\bfE_{0}\rho'(0)^{\alpha}<\infty$ for $\rho'(0):=\sup\{n\ge 0:S_{n}'\le 0\}$. Furthermore, one can easily infer from the definition of the $S_{n}'$ that
\begin{align*}
\Erw_{i}\left(\sum_{n\ge 1}\chi_{n}\,\1_{\{S_{\tau_{n-1}}\le 0\}}\right)^{\alpha}\ \le\ \bfE_{0}\left(\sum_{n\ge 1}\1_{\{S_{n}'\le 0\}}\right)^{\alpha}\ \le\ \bfE_{0}\rho'(0)^{\alpha}\ <\ \infty
\end{align*}
which completes the proof.\qed
\end{proof}

\begin{proof}[of Theorem \ref{thm:K/M MRW N(x)}]
By Lemma \ref{lem:solidarity N(x)}, it suffices to verify the equivalence of $\Erw_{i}N(0)^{\alpha}<\infty$ with $A_{i}(x)>0$ for all large $x$, $\Erw_{i}J_{i}(S_{\tau(i)}^{-})^{1+\alpha}<\infty$ and \eqref{eq:MRW renewal counting} for any fixed $i\in\cS$. Let $\U_{i}$ denote the renewal measure of $(S_{\tau_{n}})_{n\ge 0}$ under $\Prob_{i}$ and note that $\U_{i}((0,x])\asymp J_{i}(x)$ by Lemma \ref{Appendix: asymp} in the Appendix. In the following, the cases $\alpha\le 1$ and $\alpha>1$ are treated separately.

\vspace{.2cm}
\textsc{Case 1}. $0<\alpha\le 1$.\\[2mm]
``(a)$\RA$(b)'' Since
\begin{align*}
N(0)\ \le\ \sum_{n\ge 1}\chi_{n}\1_{\{S_{\tau_{n-1}}\le 0\}}\ +\ \sum_{n\ge 1}\sum_{k=1}^{\chi_{n}}\1_{\{S_{\tau_{n-1}+k}\le 0<S_{\tau_{n-1}}\}}
\end{align*}
it suffices to show by the previous lemma that
\begin{equation}\label{eq:def of term I}
I\ :=\ \Erw_{i}\left(\sum_{n\ge 1}\sum_{k=1}^{\chi_{n}}\1_{\{S_{\tau_{n-1}+k}\le 0<S_{\tau_{n-1}}\}}\right)^{\alpha}\ <\ \infty.
\end{equation}
Using the subadditivity of $x\mapsto x^{\alpha}$, this follows from
\begin{align}
\begin{split}\label{eq:estimate for I}
I\ &\le\ \sum_{n\ge 1}\Erw_{i}\left(\sum_{k=1}^{\chi_{n}}\1_{\{S_{\tau_{n-1}+k}\le 0<S_{\tau_{n-1}}\}}\right)^{\alpha}\ =\ \int_{\R_{>}}\V_{i}^{\alpha}([y,\infty))\ \U_{i}(dy)\\
&=\ \int\U_{i}((0,y])\ \V_{i}^{\alpha}(dy)\ \asymp\ \int J_{i}(y)\ \V_{i}^{\alpha}(dy).
\end{split}
\end{align}

\vspace{.2cm}
\textsc{Case 2}. $\alpha\ge 1$.\\[2mm]
``(b)$\RA$(a)'' If $\Erw_{i}N(0)^{\alpha}<\infty$ and thus \eqref{eq:def of term I} holds, then the superadditivity of $x\mapsto x^{\alpha}$ implies that \eqref{eq:estimate for I} holds with reverse inequality sign, giving \eqref{eq:MRW renewal counting}. Moreover, $N'(0):=\sum_{n\ge 1}\1_{\{S_{\tau_{n}}\le 0\}}\le N(0)$ a.s. entails $\Erw_{i}N'(0)^{\alpha}<\infty$ and thus $A_{i}(x)>0$ for all large $x$ and $\Erw_{i}J_{i}(S_{\tau}^{-})^{1+\alpha}<\infty$ by an appeal to Theorem \ref{thm:Kesten-Maller}. Note that this completes already the proof in the case $\alpha=1$.

\vspace{.2cm}\noindent
``(a)$\RA$(b)'' Regarding the proviso, when stated as $\int_{\R_{>}}\hspace{-2pt}\V_{i}^{\alpha}([x,\infty))\,\U_{i}(dx)<\infty$ (see \eqref{eq:estimate for I}), we point out that it may be extended to
\begin{equation*}
c\ :=\ \int_{\R}\V_{i}^{\alpha}([x,\infty))\ \U_{i}(dx)\ <\ \infty,
\end{equation*}
for the integral over $\R_{\leqslant}$ is bounded by $\Erw_{i}\tau^{\alpha}\,\U_{i}(\R_{\leqslant})<\infty$. As a consequence, we also have
$$ \sup_{\beta\in (0,\alpha]}\int_{\R}\V_{i}^{\beta}([x,\infty))\ \U_{i}(dx)\ =\ c. $$
Let $q\in\N$ and $\delta\in (0,1]$ be such that $\alpha=q+\delta$. The subsequent inductive argument (in $m$) will show that (a) for some $\alpha\le m\in\N$ implies
\begin{equation}\label{eq:E_sN(0)^beta finite}
\Erw_{i}N(0)^{\beta}\ <\ \infty\quad\text{for all }0\le\beta\le\alpha.
\end{equation}
Since this has already been verified for $m=1$, we proceed to the inductive step $m\to m+1$, assume $q=m$ and \eqref{eq:E_sN(0)^beta finite} be true for $\alpha=m$ (inductive hypothesis). The following argument is taken from \cite{AlsIksMei:15} (see their Thm.~3.7). Defining
$$ N_{n}(x)\ :=\ \sum_{k\ge\tau_{n}+1}\1_{\{S_{k}-S_{\tau_{n}}\le x\}}\quad\text{and}\quad L_{n}\ :=\ \sum_{k=1}^{\chi_{n}}\1_{\{S_{\tau_{n-1}+k}\le 0\}} $$
for $n\in\N$ and $x\in\R$, we obviously have
$$ \Erw_{i}\left(\sum_{n\ge 1}L_{n}^{\beta}\right)\ \le\ \Erw_{i}\left(\sum_{n\ge 1}\V_{i}^{\beta}([S_{\tau_{n-1}},\infty))\right)\ \le\ c\ <\ \infty. $$
Observe that $N_{n}(0)$ and $L_{n}$ are independent and
$$ \Prob_{i}(N_{n}(0)\in\cdot)\ =\ \Prob_{i}(N(0)\in\cdot) $$
for all $n\ge 1$. Moreover,
$$ N_{n}(-S_{\tau_{n}})\ =\ L_{n+1}+N_{n+1}(-S_{\tau_{n+1}}) $$
for all $n\in\N_{0}$. By making use of the inequality \cite[Lemma 5.6]{AlsIksMei:15}
$$ (x+y)^{\alpha}\ \le\ x^{\alpha}+y^{\alpha}+\alpha\,2^{\alpha-1}\big(xy^{\alpha-1}+x^{q}y^{\delta}\big), $$
valid for all $x,y\in\R_{\geqslant}$ and $\alpha=q+\delta\ge 1$, we infer
\begin{align*}
N(0)^{\alpha}\ &=\ (L_{1}+N_{1}(-S_{\tau}))^{\alpha}\\
&\le\ L_{1}^{\alpha}\ +\ N_{1}(-S_{\tau})^{\alpha}\ +\ \alpha\,2^{\alpha-1}\left(L_{1}N_{1}(-S_{\tau})^{\alpha-1}+L_{1}^{q}N_{1}(-S_{\tau})^{\delta}\right).
\end{align*}
Since $\Erw_{i}N(0)<\infty$ and $S_{\tau_{n}}\to\infty$ a.s., we have $N_{n}(-S_{\tau_{n}})\le N(-S_{\tau_{n}})\to 0$ a.s. Therefore, by an iteration of the previous inequality, we find
\begin{align*}
N(0)^{\alpha}\ \le\ \sum_{n\ge 1}L_{n}^{\alpha}\ +\ \alpha\,2^{\alpha-1}\sum_{n\ge 1}\left(L_{n}N_{n}(-S_{\tau_{n}})^{\alpha-1}+L_{n}^{q}N_{n}(-S_{\tau_{n}})^{\delta}\right).
\end{align*}
Using further
\begin{align*}
N_{n}(-S_{\tau_{n}})\ &\le\ \sum_{l\ge n}\sum_{k=1}^{\chi_{l+1}}\left(\1_{\{S_{\tau_{l}+k}-S_{\tau_{l}}\le-S_{\tau_{l}}<0\}}+\1_{\{S_{\tau_{l}+k}-S_{\tau_{l}}\le-S_{\tau_{l}},\,S_{\tau_{l}}\le 0\}}\right)\\
&\le\ N_{n}(0)\ +\ \sum_{n\ge 1}\chi_{n}\1_{\{S_{\tau_{n-1}}\le 0\}},
\end{align*}
Lemma \ref{lem:rho[s]} and the inductive hypothesis \eqref{eq:E_sN(0)^beta finite} yield upon taking means
$$ \Erw_{i}N(0)^{\alpha}\ \lesssim\ c\left(1+\alpha\,2^{\alpha-1}\left[\Erw_{i}N(0)^{\alpha-1}+\Erw_{i}N(0)^{\delta}\right]\right)\ <\ \infty.\eqno\qed $$
\end{proof}

\subsection{Proof of Theorem \ref{thm:connections}}\label{subsec:connections}

The following two lemmata will easily establish the asserted implications of Theorem \ref{thm:connections}.

\begin{Lemma}
Let $\alpha>0$ and $\Erw_{i}\tau(i)^{1+\alpha}<\infty$ for some $i\in\cS$. Then $\Erw_{i}J_{i}(D^{i})^{1+\alpha}<\infty$ implies $\Erw_{i}J_{i}(S_{\tau(i)}^{-})^{1+\alpha}<\infty$ as well as \eqref{eq:MRW Spitzer series} and \eqref{eq:MRW renewal counting}.
\end{Lemma}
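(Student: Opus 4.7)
My plan is to prove the three conclusions in sequence, relying on monotonicity and subadditivity of $J_{i}$ (Lemma \ref{lem:trunc mean}(a)), the tail bound \eqref{eq:tail D^s and V_alpha}, and a single application of H\"older's inequality with conjugate exponents $\frac{1+\alpha}{\alpha}$ and $1+\alpha$. No deep ingredient beyond these is required, so I do not expect a genuine obstacle; the only care needed is the correct Fubini representation of integrals against $\V_{i}$ and $\V_{i}^{\alpha}$.

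First, since $S_{\tau(i)}^{-}\le D^{i}$ a.s. under $\Prob_{i}$ and $J_{i}$ is nondecreasing, monotonicity immediately gives
$$ \Erw_{i}J_{i}(S_{\tau(i)}^{-})^{1+\alpha}\ \le\ \Erw_{i}J_{i}(D^{i})^{1+\alpha}\ <\ \infty, $$
settling the first claim. For the remaining two integral bounds, the key observation is that both $\V_{i}$ and $\V_{i}^{\alpha}$ have tails controlled by $D^{i}$: indeed \eqref{eq:tail D^s and V_alpha} reads $\V_{i}((x,\infty))\le\Erw_{i}\bigl(\tau(i)\1_{\{D^{i}>x\}}\bigr)$ and $\V_{i}^{\alpha}((x,\infty))\le\Erw_{i}\bigl(\tau(i)^{\alpha}\1_{\{D^{i}>x\}}\bigr)$. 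Since $J_{i}$ is nondecreasing, approximating it by simple functions and using monotone convergence together with these tail bounds yields
$$ \int J_{i}(x)^{\alpha}\,\V_{i}(dx)\ \le\ \Erw_{i}\bigl[\tau(i)\,J_{i}(D^{i})^{\alpha}\bigr]\quad\text{and}\quad\int J_{i}(x)\,\V_{i}^{\alpha}(dx)\ \le\ \Erw_{i}\bigl[\tau(i)^{\alpha}\,J_{i}(D^{i})\bigr]. $$

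Now I apply H\"older's inequality to each right-hand side. For the first, taking conjugate exponents $1+\alpha$ and $(1+\alpha)/\alpha$,
$$ \Erw_{i}\bigl[\tau(i)\,J_{i}(D^{i})^{\alpha}\bigr]\ \le\ \bigl(\Erw_{i}\tau(i)^{1+\alpha}\bigr)^{1/(1+\alpha)}\bigl(\Erw_{i}J_{i}(D^{i})^{1+\alpha}\bigr)^{\alpha/(1+\alpha)}, $$
which is finite by hypothesis, yielding \eqref{eq:MRW Spitzer series}. For the second, reversing the roles and taking conjugates $(1+\alpha)/\alpha$ and $1+\alpha$,
$$ \Erw_{i}\bigl[\tau(i)^{\alpha}\,J_{i}(D^{i})\bigr]\ \le\ \bigl(\Erw_{i}\tau(i)^{1+\alpha}\bigr)^{\alpha/(1+\alpha)}\bigl(\Erw_{i}J_{i}(D^{i})^{1+\alpha}\bigr)^{1/(1+\alpha)}, $$
which is again finite, establishing \eqref{eq:MRW renewal counting}. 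Thus both integral conditions follow by distributing the $1+\alpha$-th moment between $\tau(i)$ and $J_{i}(D^{i})$ in the two natural ways permitted by H\"older's inequality.
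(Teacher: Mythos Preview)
Your proof is correct and follows essentially the same route as the paper's: the first claim via $S_{\tau(i)}^{-}\le D^{i}$ and monotonicity of $J_{i}$, then the tail bound \eqref{eq:tail D^s and V_alpha} plus Fubini to reduce both integrals to $\Erw_{i}[\tau(i)\,J_{i}(D^{i})^{\alpha}]$ and $\Erw_{i}[\tau(i)^{\alpha}\,J_{i}(D^{i})]$, followed by the identical H\"older split. The only cosmetic difference is that the paper carries out the Fubini step via integration by parts with $J_{i}'$ (writing $\int J_{i}(x)\,\V_{i}^{\alpha}(dx)\asymp\int J_{i}'(x)\,\V_{i}^{\alpha}((x,\infty))\,dx$), whereas you use monotone approximation by simple functions; both arrive at the same expectation.
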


\begin{proof}
The first implication is obvious because $S_{\tau}^{-}\le D^{i}$. Regarding the other two, use \eqref{eq:def V_alpha^s} and H\"older's inequality to infer
\begin{align*}
\int_{\R_{>}}J_{i}(x)\ &\V_{i}^{\alpha}(dx)\ \asymp\ \int_{\R_{>}}J_{i}'(x)\,\V_{i}^{\alpha}((x,\infty))\ dx\\
&\le\ \Erw_{i}\left(\tau^{\alpha}\int_{0}^{D^{i}}J_{i}'(x)\ dx\right)\ =\ \Erw_{i}\tau^{\alpha}J_{i}(D^{i})\\
&\le\ \left(\Erw_{i}\tau^{1+\alpha}\right)^{\alpha/(1+\alpha)}\left(\Erw_{i}J_{i}(D^{i})^{1+\alpha}\right)^{1/(1+\alpha)}\ <\ \infty.
\end{align*}
Similarly, one finds
\begin{align*}
\int_{\R_{>}}J_{i}(x)^{\alpha}\ &\V_{i}(dx)\ \asymp\ \int_{\R_{>}}J_{i}'(x)J_{i}(x)^{\alpha-1}\,\V_{i}((x,\infty))\ dx\ =\ \Erw_{i}\tau J_{i}(D^{i})^{\alpha}\\
&\le\ \left(\Erw_{i}\tau^{1+\alpha}\right)^{1/(1+\alpha)}\left(\Erw_{i}J_{i}(D^{i})^{1+\alpha}\right)^{\alpha/(1+\alpha)}\ <\ \infty.\qquad\qed
\end{align*}
\end{proof}

\begin{Lemma}
Let $\alpha>0$, $\Erw_{i}\tau(i)^{1+\alpha}<\infty$ for some $i\in\cS$ and $x\in\R_{\geqslant}$. If $\alpha\ge 1$, then $\Sigma_{\alpha}(i,x)<\infty$ implies $\Erw_{i}N(x)^{\alpha}<\infty$, while the reverse implication holds true if $0<\alpha\le 1$.
\end{Lemma}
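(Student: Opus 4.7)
The plan is to argue directly and pathwise, bypassing the $J_{i}$- and $\V_{i}^{\alpha}$-conditions of Theorems \ref{thm:K/M MRW Spitzer series} and \ref{thm:K/M MRW N(x)}. After writing, via Fubini,
\[
\Sigma_{\alpha}(i,x)\ =\ \Erw_{i}\left(\sum_{n\ge 1}n^{\alpha-1}\1_{\{S_{n}\le x\}}\right),
\]
I would enumerate, on the event $\{N(x)\ge 1\}$, the (random) indices $n\ge 1$ with $S_{n}\le x$ in increasing order as $n_{1}<n_{2}<\ldots<n_{N(x)}$, so that
\[
\sum_{n\ge 1}n^{\alpha-1}\1_{\{S_{n}\le x\}}\ =\ \sum_{j=1}^{N(x)}n_{j}^{\alpha-1}\quad\Prob_{i}\text{-a.s.}
\]

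The key point is the trivial inequality $n_{j}\ge j$ for every $j$. By the monotonicity of $t\mapsto t^{\alpha-1}$, this gives $n_{j}^{\alpha-1}\ge j^{\alpha-1}$ when $\alpha\ge 1$ and $n_{j}^{\alpha-1}\le j^{\alpha-1}$ when $0<\alpha\le 1$. Combined with the elementary sandwich $\sum_{j=1}^{N}j^{\alpha-1}\asymp N^{\alpha}$, valid uniformly in $N\ge 1$ with constants depending only on $\alpha$ (obtained by comparison with $\int_{0}^{N}t^{\alpha-1}\,dt=N^{\alpha}/\alpha$ using monotonicity in each of the two cases), this yields $\Prob_{i}$-a.s.
\[
\sum_{n\ge 1}n^{\alpha-1}\1_{\{S_{n}\le x\}}\ \gtrsim\ N(x)^{\alpha}\quad\text{if }\alpha\ge 1,
\]
and the reverse relation with $\lesssim$ if $0<\alpha\le 1$. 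Taking $\Erw_{i}$ on both sides then gives $\Sigma_{\alpha}(i,x)\gtrsim\Erw_{i}N(x)^{\alpha}$ in the first case and $\Sigma_{\alpha}(i,x)\lesssim\Erw_{i}N(x)^{\alpha}$ in the second, whence both implications of the lemma follow at once.

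Note that neither the positive divergence of $(S_{n})_{n\ge 0}$ nor the moment hypothesis $\Erw_{i}\tau(i)^{1+\alpha}<\infty$ is actually used in the argument; the latter is inherited only from the standing setup of the section. Accordingly, there is no genuine technical obstacle here: the only auxiliary item to verify is the Riemann-sum estimate $\sum_{j=1}^{N}j^{\alpha-1}\asymp N^{\alpha}$, and the direction of the monotonicity of $t\mapsto t^{\alpha-1}$ — which flips exactly at $\alpha=1$ — is precisely what accounts for the two different directions of implication in the statement.
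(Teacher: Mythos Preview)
Your argument is correct, and it is a somewhat cleaner route than the paper's. Both proofs are purely pathwise and hinge on the monotonicity of $t\mapsto t^{\alpha-1}$, but the organization differs. For $\alpha\ge 1$ the paper writes $(N(x)\wedge m)^{\alpha}=(N(x)\wedge m)^{\alpha-1}\sum_{n}\1_{\{S_{n}\le x\}}$, splits the sum according to $\{N(x)\wedge m\gtrless 2n\}$, and obtains $\Erw_{i}(N(x)\wedge m)^{\alpha}\le\tfrac12\Erw_{i}(N(x)\wedge m)^{\alpha}+2^{\alpha-1}\Sigma_{\alpha}(i,x)$ before letting $m\to\infty$; for $0<\alpha\le 1$ it splits instead on $\{N(x)\lessgtr n\}$ and gets $\Sigma_{\alpha}(i,x)\le 2\,\Erw_{i}N(x)^{\alpha}$. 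Your enumeration $n_{1}<n_{2}<\ldots$ together with $n_{j}\ge j$ replaces both truncation and case-splitting by a single pointwise inequality $\sum_{j}n_{j}^{\alpha-1}\gtrless\sum_{j}j^{\alpha-1}\asymp N(x)^{\alpha}$, which is more transparent. You are also right that neither argument uses the hypothesis $\Erw_{i}\tau(i)^{1+\alpha}<\infty$; the paper's proof does not invoke it either.
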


\begin{proof}
If $\alpha\ge 1$ and $\Sigma_{\alpha}(i,x)<\infty$, then
\begin{align*}
\Erw_{i}(N(x)\wedge m)^{\alpha}\ &\le\ \Erw_{i}\left(\sum_{n\ge 1}(N(x)\wedge m)^{\alpha-1}\1_{\{S_{n}\le x\}}\right)\\
&\le\ \Erw_{i}\left(\sum_{n\ge 1}(N(x)\wedge m)^{\alpha-1}\1_{\{N(x)\wedge m\ge 2n\}}\right)\ +\ 2^{\alpha-1}\Sigma_{\alpha}(i,x)\\
&\le\ \frac{1}{2}\,\Erw_{i}(N(x)\wedge m)^{\alpha}\ +\ 2^{\alpha-1}\Sigma_{\alpha}(i,x)
\end{align*}
for all $m\ge 1$, thus $\Erw_{i}N(x)^{\alpha}\le 2^{\alpha}\Sigma_{\alpha}(i,x)<\infty$.

\vspace{.1cm}
If $0<\alpha\le 1$ and $\Erw_{i}N(x)^{\alpha}<\infty$, then
\begin{align*}
\Erw_{i}N(x)^{\alpha}\ &=\ \Erw_{i}\left(\sum_{n\ge 1}N(x)^{\alpha-1}\1_{\{S_{n}\le x\}}\right)\ \ge\ \Erw_{i}\left(\sum_{n\ge 1}N(x)^{\alpha-1}\1_{\{N(x)\le n,\,S_{n}\le x\}}\right)\\
&\ge\ \Erw_{i}\left(\sum_{n\ge 1}n^{\alpha-1}\1_{\{N(x)\le n,\,S_{n}\le x\}}\right)\ \ge\ \Sigma_{\alpha}(i,x)\ -\ \Erw_{i}\left(\sum_{n=1}^{N(x)}n^{\alpha-1}\right)\\
&\ge\ \Sigma_{\alpha}(i,x)\ -\ \Erw_{i}N(x)^{\alpha},
\end{align*}
thus $\Sigma_{\alpha}(i,x)\le 2\,\Erw_{i}N(x)^{\alpha}<\infty$.\qed
\end{proof}

\begin{proof}[of Theorem \ref{thm:connections}]
In view of the previous two lemmata, the stated implications between Theorems \ref{thm:Kesten-Maller MRW}, \ref{thm:K/M MRW Spitzer series} and \ref{thm:K/M MRW N(x)} are now immediate. As for the finiteness of $\Erw_{i}\sg(x)^{1+\alpha}$ for all $(i,x)\in\cS\times\R_{\geqslant}$ and any given $\alpha>0$, fix $i\in\cS$ and recall that Theorem \ref{thm:K/M MRW Spitzer series}(b) entails $\sum_{n\ge 1}n^{\alpha-1}\Prob_{i}(S_{\tau_{n}}\le x)<\infty$ (see \eqref{eq:Spitzer series embedded}) and that Theorem \ref{thm:K/M MRW N(x)}(b) trivially entails $\Erw_{i}(\sum_{n\ge 1}\1_{\{S_{\tau_{n}}\le x\}})^{\alpha}<\infty$ for all $x\in\R_{\geqslant}$. Hence, under any of these conditions, we infer from Theorem \ref{thm:Kesten-Maller}, applied to the ordinary random walk $(S_{\tau_{n}})_{n\ge 0}$, that $\Erw_{i}\tau_{\nu(x)}^{1+\alpha}<\infty$ for all $x\in\R_{\geqslant}$. Since $\sg(x)\le\tau_{\nu(x)}$, this completes the proof.\qed
\end{proof}

\subsection{Proofs of Theorems \ref{thm:Spitzer-Erickson MRW finite S} and \ref{thm:Kesten-Maller MRW finite S}}\label{subsec:finite S}

Throughout this subsection the state space $\cS$ of $(M_{n})_{n\ge 0}$ is assumed to be finite. It is a well-known fact that the return times $\tau(i)$ then have exponential moments, in particular $\Erw_{i}\tau(i)^{1+\alpha}<\infty$ for all $\alpha\in\R_{\geqslant}$. The proofs of the theorems will be furnished by two subsequent lemmata.

\begin{Lemma}\label{lem:asymp finite space}
Given a MRW $(M_{n},S_{n})_{n\ge 0}$ with finite state space $\cS$, the following assertions hold true for all $i\in\cS$:
\begin{description}[(b)]\itemsep2pt
\item[(a)] There exists $x\in\R_{\geqslant}$ such that $\Prob_{i}(S_{\tau(i)}^{\pm}>y)\gtrsim\Prob_{\pi}(X_{1}^{\pm}>y+x)$ as $y\to\infty$.
\item[(b)] $\Erw_{\pi}|X_{1}|<\infty$ if and only if $\Erw_{i}|S_{\tau(i)}|<\infty$.
\item[(c)] $\Erw_{i}(S_{\tau(i)}^{\pm}\wedge y)\asymp \Erw_{\pi}(X_{1}^{\pm}\wedge y)$ as $y\to\infty$.
\end{description}
\end{Lemma}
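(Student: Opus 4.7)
The plan is to prove part (a) directly by a path-decomposition argument that exploits the finiteness of $\cS$, then derive (c) from (a) together with the occupation measure formula \eqref{eq:occ measure formula}, and finally obtain (b) from (c) on letting $y\to\infty$. Throughout I fix $i\in\cS$ and write $\tau=\tau(i)$.

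For (a), since $\cS$ is finite and the driving chain is irreducible, there are only finitely many transition pairs $(j,k)$ with $p_{jk}>0$, and for each such pair there is a simple cycle $\gamma_{jk}=(i_{0}=i,i_{1},\ldots,i_{m_{jk}}=i)$ from $i$ back to $i$ that traverses the transition $j\to k$ at some position $n_{jk}$. Conditional on the event $\{\gamma_{jk}\}$ that the chain follows this cycle (having probability $p_{\gamma_{jk}}=\prod_{l}p_{i_{l-1}i_{l}}>0$), the increments $X_{1},\ldots,X_{m_{jk}}$ are independent with $X_{l}\sim K_{i_{l-1}i_{l}}$, so $Z_{jk}:=S_{\tau}-X_{n_{jk}}$ is a finite sum of a.s.-finite independent random variables, independent of $X_{n_{jk}}$. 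I choose $x$ large enough that $\Prob_{i}(Z_{jk}>-x\mid\gamma_{jk})\ge 1/2$ for all (finitely many) pairs $(j,k)$ simultaneously. Then
\begin{equation*}
\Prob_{i}(S_{\tau}>y)\ \ge\ \Prob_{i}(\gamma_{jk})\,\Prob_{i}(X_{n_{jk}}>y+x\mid\gamma_{jk})\,\Prob_{i}(Z_{jk}>-x\mid\gamma_{jk})\ \ge\ \tfrac{1}{2}\,p_{\gamma_{jk}}\,K_{jk}((y+x,\infty))
\end{equation*}
for every such pair. Taking the maximum over $(j,k)$ and using
\begin{equation*}
\Prob_{\pi}(X_{1}>y+x)\ =\ \sum_{(j,k):p_{jk}>0}\pi_{j}p_{jk}K_{jk}((y+x,\infty))\ \le\ |\cS|^{2}\max_{(j,k)}K_{jk}((y+x,\infty))
\end{equation*}
then yields $\Prob_{i}(S_{\tau}^{+}>y)\gtrsim\Prob_{\pi}(X_{1}^{+}>y+x)$. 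The analogous bound for $S_{\tau(i)}^{-}$ follows by applying the same argument to the reflected MRW $(M_{n},-S_{n})_{n\ge 0}$.

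For (c), the upper bound rests on the elementary inequality $(a_{1}+\ldots+a_{m})\wedge y\le\sum_{l=1}^{m}(a_{l}\wedge y)$, valid for $a_{l},y\ge 0$ by a two-term induction. Applied with $a_{n}=X_{n}^{+}$ and $m=\tau$, it gives $S_{\tau}^{+}\wedge y\le\sum_{n=1}^{\tau}(X_{n}^{+}\wedge y)$, so by \eqref{eq:occ measure formula}
\begin{equation*}
\Erw_{i}(S_{\tau}^{+}\wedge y)\ \le\ \Erw_{i}\sum_{n=1}^{\tau}(X_{n}^{+}\wedge y)\ =\ \Erw_{i}\tau\cdot\Erw_{\pi}(X_{1}^{+}\wedge y).
\end{equation*}
The matching lower bound follows by integrating (a):
\begin{equation*}
\Erw_{i}(S_{\tau}^{+}\wedge y)\ =\ \int_{0}^{y}\Prob_{i}(S_{\tau}^{+}>t)\,dt\ \gtrsim\ \int_{0}^{y}\Prob_{\pi}(X_{1}^{+}>t+x)\,dt\ =\ \Erw_{\pi}(X_{1}^{+}\wedge(y+x))-\Erw_{\pi}(X_{1}^{+}\wedge x),
\end{equation*}
and a routine case analysis (treating $\Erw_{\pi}X_{1}^{+}$ finite and infinite separately) confirms that the right-hand side is $\asymp\Erw_{\pi}(X_{1}^{+}\wedge y)$ as $y\to\infty$. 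The $-$-part is handled symmetrically. Finally, (b) is obtained from (c) by letting $y\to\infty$ and applying monotone convergence to both $X_{1}^{\pm}$ and $S_{\tau}^{\pm}$.

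The main obstacle is the lower bound in (a). No moment assumption is available, so one cannot directly compare $\Prob_{i}(S_{\tau}>y)$ to the stationary tail by a ``one large jump'' union bound: a large positive increment within a cycle may be cancelled by an arbitrarily heavy negative excursion elsewhere in the \emph{same} cycle, whose tail we have no control over. The path-selection device above overcomes this by restricting attention to a deterministically short cycle $\gamma_{jk}$; the compensating sum $Z_{jk}$ is then a sum of a fixed, finite number of independent real-valued random variables, whose left tail is automatically tight regardless of the individual tails of the $K_{i_{l-1}i_{l}}$, and finiteness of $\cS$ ensures this can be done uniformly over all transitions.
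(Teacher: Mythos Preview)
Your approach is essentially the paper's: both prove (a) by embedding each transition $(j,k)$ into a fixed-length excursion from $i$ back to $i$ and controlling the remaining increments uniformly over the finitely many pairs, then obtain (c) by integrating (a) for the lower bound and using the occupation measure formula for the upper bound. One correction is needed in (a): a \emph{simple} cycle through a prescribed edge need not exist (take $\cS=\{0,1,2\}$ with only the transitions $0\leftrightarrow 1$ and $1\leftrightarrow 2$, $i=0$, edge $(1,2)$; every returning path revisits~$1$). What you actually need, and what irreducibility does guarantee, is a cycle $i\to\cdots\to j\to k\to\cdots\to i$ that avoids $i$ in its \emph{interior}; other vertices may repeat. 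With that adjustment your argument goes through unchanged, since you only use conditional independence of the increments given the fixed path and a.s.\ finiteness of $Z_{jk}$.

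The order of (b) and (c) differs from the paper, and this matters. The paper proves (b) directly from (a) (integrating the tail bound gives $\Erw_i|S_\tau|<\infty\Rightarrow\Erw_\pi|X_1|<\infty$; the converse is \eqref{eq:occ measure formula}) and then argues (c) only for $\Erw_\pi X_1^+=\infty$. Your ``routine case analysis'' for the lower bound in (c) when $\Erw_\pi X_1^+<\infty$ is not routine: if $X_1^+$ happens to be bounded by the shift $x$ you chose, then $\Erw_\pi(X_1^+\wedge(y+x))-\Erw_\pi(X_1^+\wedge x)\equiv 0$ and the bound is vacuous. In fact (c) as literally stated can fail in such situations (e.g.\ $\cS=\{0,1\}$, $p_{01}=p_{10}=1$, $K_{01}=\delta_1$, $K_{10}=\delta_{-2}$: here $S_{\tau(0)}^+\equiv 0$ while $\Erw_\pi(X_1^+\wedge y)\to 1/2$). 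The paper is shielded because it only invokes (c) downstream when $\Erw_\pi X_1^+=\infty$, but your derivation of (b) via (c) inherits the gap. The fix is simply to prove (b) straight from (a), as the paper does.
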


\begin{proof}
We will prove (a) for $(X_{1}^{-},S_{\tau(i)}^{-})$ and (c) for $(X_{1}^{+},S_{\tau(i)}^{+})$ because this allows us to directly refer to results stated in this paper. The other cases then follow by switching to $(M_{n},-S_{n})_{n\ge 0}$.

\vspace{.2cm}
(a) Fixing an arbitrary $i\in\cS$, we will show that for all $i,j\in\cS$ with $p_{ij}>0$, there exists $x_{ij}\in\R_{\geqslant}$ such that
\begin{equation}\label{eq1:asymp finite space}
\Prob_{i}(S_{\tau(i)}^{-}>y)\ \gtrsim\ \Prob_{i}(X_{1}^{-}>y+x_{ij}|M_{1}=j)
\end{equation}
as $y\to\infty$. Then, by the finiteness of $\cS$, we can choose $x:=\max_{i,j\in\cS}x_{ij}<\infty$ to obtain the desired result
$$ \Prob_{i}(S_{\tau(i)}^->y)\ \gtrsim  \ \sum_{i\in\cS} \pi_{i}\,p_{ij}\,\Prob_{i}(X_{1}^{-}>y+x|M_{1}=j)\ = \ \Prob_{\pi}(X_{1}^{-}>y+x)$$
as $y\to\infty$.

\vspace{.1cm}
For $u\in\cS$, define $\tau^{0}(u):=\inf\{n\ge 0: M_{n}=u\}$. Pick $i,j\in\cS$ with $p_{ij}>0$. There exist $m_{1},m_{2}\in\N_{0}$ and $z\in\R_{\geqslant}$ such that
\begin{align*}
&p_{1}\ :=\ \Prob_{i}(\tau^{0}(i)=m_1<\tau(i),\,|S_{m_{1}}|\le z)\ >\ 0
\shortintertext{and} 
&\hspace{.6cm}p_{2}\ :=\ \Prob_{j}(\tau^{0}(i)=m_{2},\, |S_{m_{2}}|\le z)\ >\ 0.
\end{align*}
With $m:=m_{1}+m_{2}+1$, it follows that
\begin{align*}
\Prob_{i}&(S_{\tau(i)}^{-}>y)\\
&\ge\ \Prob_{i}(\tau^{0}(j)=m_{1},\, |S_{m_{1}}|\le z,\, M_{m_{1}+1}=j,\,\tau(j)=m,\,|S_{m}-S_{m_{1}+1}|\le z,\, S_{\tau(i)}^{-}>y)\\
&\ge\ p_{1}p_{2}\,\Prob_{i}(X_{1}^{-}>y+2z|M_{1}=j)\ \asymp\ \Prob_{i}(X_{1}^{-}>y+2z|M_{1}=j).
\end{align*}
and this proves \eqref{eq1:asymp finite space}.

\vspace{.1cm}
(b) By using (a) for the positive and the negative part, we infer that $\Erw_{i}|S_{\tau(i)}|<\infty$ implies $\Erw_{\pi}|X_{1}|<\infty$. The reverse implication follows from \eqref{eq:stat mean drift}.	

\vspace{.1cm}
(c) In view of (b), we must only consider the case when $\Erw_{\pi}X_{1}^{+}=\infty$. Let $x\in\R_{\geqslant}$ be the constant provided by part (a) for $(S_{\tau(i)}^{+},X_{1}^{+})$. Then
$$ \Erw_{i}(S_{\tau(i)}^{+}\wedge y) \ =\ \int_{0}^{y} \Prob_{i}(S_{\tau(i)}^{+}>z)\ dz\ 	\gtrsim\ \int_{0}^{y} \Prob_{\pi}(X_{1}^{+} >z+x)\ dz $$
as $y\to\infty$, and the last integral is positive because $\Erw_{\pi} X_{1}^{+}=\infty$. Therefore,
$$ \int_{0}^{y}\Prob_{\pi}(X_{1}^{+} >z+x)\ dz\ \asymp\ \int_{0}^{y+x}\Prob_{\pi}(X_{1}^{+}>z)\ dz\ =\ \Erw_{\pi}[X_{1}^{+}\wedge (y+x)] $$
as $y\to\infty$. Using $\Erw_{\pi}[X_{1}^{+}\wedge (y+x)]\asymp \Erw_{\pi}(X_{1}^{+}\wedge y)$ (cf. Lemma \ref{lem:trunc mean}(b)), we arrive at the conclusion $\Erw_{\pi}(X_{1}\wedge y)\lesssim\Erw_{i}(S_{\tau(i)}^{+}\wedge y)$. For a reverse estimate, use the occupation measure formula \eqref{eq:occ measure formula}
to infer
$$ \Erw_{\pi}(X_{1}^{+}\wedge y)\ =\ \pi_{i}\, \Erw_{i}\left[\sum_{k=1}^{\tau(i)}(X_{k}^{+}\wedge y)\right]\ \geq\ \pi_{i}\,\Erw_{i}\left[\left(\sum_{k=1}^{\tau(i)}X_{k}^{+}\right)\wedge y\right]\ \ge\ \pi_{i}\,\Erw_{i}(S_{\tau(i)}^{+}\wedge y) $$
for all $y\in\R_{\geqslant}$.\qed
\end{proof}

For $\gamma\in [0,1]$, let
\begin{align*}
J_{\pi,\gamma}(x)\ &:=\ 
\begin{cases}
\hfill\displaystyle{\frac{x}{[\Erw_{\pi}(X_{1}^{+}\wedge x)]^{\gamma}}},&\text{if }\Prob_{\pi}(X_{1}>0)>0\\
\hfill x, &\text{otherwise}
\end{cases},
\end{align*}
where $0/[\Erw_{\pi}(X_{1}^{+}\wedge 0)]^{\gamma}:=1$ if $\gamma>0$. Note that $J_{\pi}=J_{\pi,1}$. The second lemma relates the moments of $J_{i,\gamma}(S_{\tau(i)}^{-})$ and $J_{i,\gamma}(D^{i})$ to the respective moments of $J_{\pi,\gamma}(X_{1}^{-})$.

\begin{Lemma}\label{lem:pi int}
Given a nontrivial MRW $(M_{n},S_{n})_{n\ge 0}$ with finite state space $\cS$, the following assertions
are equivalent for any $\alpha\in\R_{\geqslant}$ and $\gamma\in [0,1]$.
\begin{description}[(b)]\itemsep2pt
\item[(a)] $\Erw_{i} J_{i,\gamma}(S_{\tau(i)}^{-})^{1+\alpha} < \infty$ for some/all $i\in\cS$.
\item[(b)] $\Erw_{i}J_{i,\gamma}(D^{i})^{1+\alpha} < \infty$ for some/all $i\in\cS$.
\item[(c)] $\Erw_{\pi}J_{\pi,\gamma}(X_{1}^{-})^{1+\alpha}<\infty$.
\end{description}
\end{Lemma}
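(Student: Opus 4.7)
The plan is to leverage the tail and truncated-mean comparisons provided by Lemma \ref{lem:asymp finite space}, together with the subadditivity and asymptotic properties of $J_{i,\gamma}$ and $J_{\pi,\gamma}$. A first observation: Lemma \ref{lem:asymp finite space}(c) applied to $S_{\tau(i)}^+$ yields $J_{i,\gamma}(y)\asymp J_{\pi,\gamma}(y)$ as $y\to\infty$, while both functions are bounded on every compact subset of $[0,\infty)$, so for any non-negative random variable $Y$,
\[ \Erw J_{i,\gamma}(Y)^{1+\alpha}<\infty\ \iff\ \Erw J_{\pi,\gamma}(Y)^{1+\alpha}<\infty. \]
Combined with the solidarity provided by Lemmata \ref{lem:solidarity stronger} and \ref{lem:solidarity D^s}, this reduces the task to proving, for a single fixed $i\in\cS$, the equivalence of (a), (b), (c) with $J_{\pi,\gamma}$ in place of $J_{i,\gamma}$. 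The implication (b)$\Rightarrow$(a) is then immediate from $S_{\tau(i)}^-\le D^i$ and monotonicity.

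For (c)$\Rightarrow$(b) my approach starts from $D^i\le\sum_{n=1}^{\tau(i)}X_n^-$ and uses that $J_{\pi,\gamma}$ is subadditive (by the same argument as in Lemma \ref{lem:trunc mean}(a)) together with the elementary inequality $(\sum_{n=1}^m a_n)^{1+\alpha}\le m^\alpha\sum_{n=1}^m a_n^{1+\alpha}$, valid for $1+\alpha\ge 1$, to obtain
\[ J_{\pi,\gamma}(D^i)^{1+\alpha}\ \le\ \tau(i)^\alpha\sum_{n=1}^{\tau(i)}J_{\pi,\gamma}(X_n^-)^{1+\alpha}. \]
Conditioning on the trajectory of $\bM$ makes the right-hand side tractable: the $X_n$ are then conditionally independent with conditional law $K_{M_{n-1}M_n}$, so $\Erw_i[J_{\pi,\gamma}(X_n^-)^{1+\alpha}\,|\,(M_k)_{k\ge 0}]=f(M_{n-1},M_n)$ with $f(j,k):=\Erw[J_{\pi,\gamma}(X_1^-)^{1+\alpha}\,|\,M_0=j,\,M_1=k]$. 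The occupation measure formula \eqref{eq:occ measure formula} rewrites (c) as $\sum_{j,k}\pi_j p_{jk} f(j,k)<\infty$; since $\cS$ is finite and $\pi_j>0$ for every $j$, this forces $C:=\max\{f(j,k):p_{jk}>0\}<\infty$. Hence $\Erw_i J_{\pi,\gamma}(D^i)^{1+\alpha}\le C\,\Erw_i\tau(i)^{1+\alpha}<\infty$, the last expectation being finite because $\tau(i)$ has exponential moments when $\cS$ is finite.

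The main obstacle is (a)$\Rightarrow$(c), since one has to pass from the cycle sum $S_{\tau(i)}^-$ (which may enjoy cancellations) back to a single increment $X_1^-$. Here Lemma \ref{lem:asymp finite space}(a) applied to $(M_n,-S_n)_{n\ge 0}$ provides a constant $x\in\R_{\geqslant}$ with $\Prob_\pi(X_1^->y+x)\lesssim\Prob_i(S_{\tau(i)}^->y)$ as $y\to\infty$, equivalently $\Prob_\pi(X_1^->z)\lesssim\Prob_i(S_{\tau(i)}^-+x>z)$ for all sufficiently large $z$. Writing $G(y):=J_{\pi,\gamma}(y)^{1+\alpha}$ (nondecreasing and absolutely continuous on $(0,\infty)$), the layer-cake identity $\Erw G(Y)=G(0)+\int_0^\infty G'(y)\Prob(Y>y)\,dy$ converts this tail bound into
\[ \Erw_\pi J_{\pi,\gamma}(X_1^-)^{1+\alpha}\ \lesssim\ \Erw_i J_{\pi,\gamma}(S_{\tau(i)}^-+x)^{1+\alpha}\ +\ \textrm{const}, \]
the bounded small-$y$ contribution being controlled by $\int_0^{z_0}G'(y)\,dy<\infty$. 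Finally, subadditivity of $J_{\pi,\gamma}$ together with $(u+v)^{1+\alpha}\le 2^\alpha(u^{1+\alpha}+v^{1+\alpha})$ gives $\Erw_i J_{\pi,\gamma}(S_{\tau(i)}^-+x)^{1+\alpha}\lesssim \Erw_i J_{\pi,\gamma}(S_{\tau(i)}^-)^{1+\alpha}+J_{\pi,\gamma}(x)^{1+\alpha}$, which is finite by (a) and the asymptotic equivalence $J_{i,\gamma}\asymp J_{\pi,\gamma}$.
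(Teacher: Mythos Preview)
Your proof is correct and follows the same overall scheme as the paper: first reduce to $J_{\pi,\gamma}$ via the asymptotic $J_{i,\gamma}\asymp J_{\pi,\gamma}$, then handle (b)$\Rightarrow$(a) by $S_{\tau(i)}^-\le D^i$ and (a)$\Rightarrow$(c) via the tail domination of Lemma \ref{lem:asymp finite space}(a).

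The one genuine difference is your treatment of (c)$\Rightarrow$(b). The paper introduces an explicit coupling: it writes $\wh X_n=F_{M_{n-1}M_n}^{-1}(U_n)$ and dominates these by $W_n:=G^{-1}(U_n)$ with $G(y)=\max_{j,k}\Prob_j(X_1^-\le y\,|\,M_1=k)$, so that the $W_n$ are iid and independent of $(M_n)$; the bound $\Erw_i\big(\sum_{k=1}^{\tau(i)}J_{\pi,\gamma}(W_k)\big)^{1+\alpha}<\infty$ then follows from Gut's Theorem 1.5.4, and $\Erw J_{\pi,\gamma}(W_1)^{1+\alpha}<\infty$ from (c) via the positive lower bound $c=\min\{\pi_j p_{jk}:p_{jk}>0\}$. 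You instead condition on the full path $(M_k)_{k\ge 0}$, bound each term $\Erw_i[J_{\pi,\gamma}(X_n^-)^{1+\alpha}\,|\,(M_k)]=f(M_{n-1},M_n)$ by the finite maximum $C=\max\{f(j,k):p_{jk}>0\}$, and arrive directly at $\Erw_i J_{\pi,\gamma}(D^i)^{1+\alpha}\le C\,\Erw_i\tau(i)^{1+\alpha}$. Your route is more elementary --- it avoids both the quantile coupling and the external moment theorem --- while the paper's coupling has the virtue of reducing the problem to a standard result about iid randomly stopped sums. Both arguments hinge on $|\cS|<\infty$ in exactly the same way (finiteness of the max over, or positivity of the min over, transition pairs $(j,k)$ with $p_{jk}>0$).
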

	
\begin{proof}
If $\Erw_{\pi}X_{1}^{+}<\infty$, then, by another application of the occupation measure formula \eqref{eq:occ measure formula},
$$ \Erw_{\pi}X_{1}^{+}\ =\ \pi_{i}\,\Erw_{i}\left(\sum_{k=1}^{\tau(i)}X_{k}^{+}\right)\ \ge\ \pi_{i}\,\Erw_{i}S_{\tau(i)}^{+} $$ 
and therefore $J_{\pi,\gamma}(y)\asymp y\asymp J_{i,\gamma}(y)$ as $y\to\infty$. 
If $\Erw_{\pi} X_{1}^{+}=\infty$, then Lemma \ref{lem:asymp finite space}(c) 
entails $J_{\pi,\gamma}(y) \asymp J_{i,\gamma}(y)$ as $y\to\infty$. In any case, it therefore suffices prove the lemma when replacing $J_{i,\gamma}$ with $J_{\pi,\gamma}$ in (a) and (b). Let $i\in\cS$ be an arbitrarily chosen state hereafter.

\vspace{.2cm}	
``(b)$\Rightarrow$(a)'' follows directly from $D^{i}\ge S_{\tau(i)}^{-}$.

\vspace{.2cm}	
``(a)$\Rightarrow$(c)'' With $x\in\R_{\geqslant}$ as provided by Lemma \ref{lem:asymp finite space}(a), we have $J_{\pi,\gamma}(y-x)\asymp J_{\pi,\gamma}(y)$ and thus
\begin{align*}
\infty\ &>\ \Erw_{i} J_{\pi,\gamma}(S_{\tau(i)}^- )^{1+\alpha}\ \gtrsim \ \Erw_{\pi}J_{\pi,\gamma}((X_{1}^{-} -x )^+)^{1+\alpha} \ 
\asymp\ \Erw_{\pi} J_{\pi,\gamma}(X_{1}^{-})^{1+\alpha}.
\end{align*}

\vspace{.2cm}	
``(c)$\Rightarrow$(b)'' Let $F_{jk}$ be the distribution function of $X_{1}^{-}$ given $M_{0}=j$ and $M_{1}=k$ and $F_{jk}^{-1}$ its pseudo-inverse. Given a sequence $(U_{n})_{n\ge 1}$ of iid uniformly distributed random variables on $(0,1)$ which are independent of all other occurring random variables, the sequence $(M_{n},\wh{X}_{n})_{n\ge 1}$ with $\wh{X}_{n}:=F_{M_{n-1},M_{n}}^{-1}(U_{n})$ forms a distributional copy of $(M_{n},X_{n}^{-})_{n\ge 1}$. Put $\wh{S}_n=\sum_{k=1}^{n}\wh{X}_{k}$ for $n\ge 1$ and
$$ G(y)\ :=\ \max_{j,k\in\cS}\Prob_{j}(X_{1}^{-}\le y|M_{1}=k), $$
which is a proper distribution function as $\cS$ is finite. Now $(W_{n})_{n\ge 1}:=(G^{-1}(U_{n}))_{n\ge 1}$ forms an iid sequence independent of $(M_{n})_{n\ge 0}$ and with $\wh{X}_{n}\le W_{n}$ for all $n\ge 1$. Use that $J_{\pi,\gamma}$ is a subadditive and nondecreasing to infer
\begin{align*}
\Erw_{i}J_{\pi,\gamma}(D^{i})^{{1+\alpha}}\ &\le\ \Erw_{i}J_{\pi,\gamma}\left(\sum_{k=1}^{\tau(i)}X_{k}^{-}\right)^{{1+\alpha}}\ =\ \Erw_{i}J_{\pi,\gamma}(\wh{S}_{\tau(i)})^{{1+\alpha}}\\
&\le\ \Erw_{i}\left(\sum_{k=1}^{\tau(i)}J_{\pi,\gamma}(\wh{X}_{k})\right)^{{1+\alpha}}\ \le\ \Erw \left(\sum_{k=1}^{\tau(i)}J_{\pi,\gamma}(W_{k})\right)^{{1+\alpha}}.	
\end{align*}
By \cite[Theorem 1.5.4]{Gut:09}, the upper bound is finite iff $\Erw_{i}\tau(i)^{{1+\alpha}}$ and $\Erw J_{\pi,\gamma}(W_{1})^{1+\alpha}$ are both finite. We must only verify the finiteness of the second expectation which follows from
\begin{align*}	
\infty\ >\ \Erw_{\pi} J_{\pi,\gamma}(X_{1}^{-})^{{1+\alpha}}\ &=\ \sum_{j,k\in\cS} \pi_{j}\,p_{jk}\, \Erw_{j}(J_{\pi,\gamma}(X_{1}^{-})^{1+\alpha}|M_{1}=k)\\
&\ge\ c \sum_{j,k\in\cS:\, p_{jk}>0 }\,\int J_{\pi,\gamma}(y)^{1+\alpha}\ \Prob_{j}(X_{1}^{-}\in dy|M_{1}=k)\\
&\ge\ c\,\Erw J_{\pi,\gamma}(W)^{1+\alpha},
\end{align*}
where $c:=\min\{\pi_{j}\,p_{jk}: j,k\in\cS\text{ and }p_{jk}>0\}$.\qed
\end{proof}

\begin{proof}[of Theorem \ref{thm:Spitzer-Erickson MRW finite S}]
By Lemma \ref{lem:pi int}, we may replace $D^{i}$ with $S_{\tau(i)}^{-}$ in (b) which is assumed hereafter. Let us establish the equivalence of this modified condition and \eqref{eq:J_pi condition S finite}. If $\Erw_{\pi}|X_{1}|<\infty$, then $\Erw_{i}|S_{\tau(i)}|<\infty$ by Lemma \ref{lem:asymp finite space}(b), and since, by another use of \eqref{eq:occ measure formula}, we then also have
$$ \pi_{i}\,\lim_{y\to\infty}A_{i}(y)\ =\ \pi_{i}\,\Erw_{i}S_{\tau(i)}\ =\ \Erw_{\pi}X_{1}\ =\ \lim_{y\to\infty}A_{\pi}(y), $$
the asserted equivalence follows. If $\Erw_{\pi}|X_{1}|=\infty$ and thus $\Erw_{\pi}|S_{\tau(i)}|=\infty$, then only equivalence of $\Erw_{i}J_{i}(S_{\tau(i)}^{-})<\infty$ and $\Erw_{\pi}J_{\pi}(X_{1}^{-})<\infty$ must be verified, as pointed out in Remark \ref{rem:Erickson's extension}. But this is ensured by the previous lemma for $\gamma=1$.

\vspace{.1cm}
It remains to verify that \ref{thm:Spitzer-Erickson MRW}(d), i.e. $\Erw_{i}\sg(x)<\infty$ for all $(i,x)\in\cS\times\R_{\geqslant}$, implies the modified condition \ref{thm:Spitzer-Erickson MRW}(b). By Prop. \ref{prop:ladder chain} (note that the dual MRW is trivially also positive divergent if $|\cS|<\infty$), the ladder chain $(M_{n}^{>})_{n\ge 0}$ is positive recurrent on some $\cS^{>}\subset\cS$ with unique stationary law $\pi^{>}$ vanishing outside $\cS^{>}$. In particular, $\kappa(i):=\inf\{n:\Mgn=s\}$ has finite mean under $\Prob_{i}$ for each $i\in\cS^{>}$. Moreover, the sequence $(M_{n}^{>},\sgn)_{n\ge 0}$ forms a MRW with $\Erw_{\pi^>}\sg<\infty$, for $\cS$ is finite. Now fix any $i\in\cS^{>}$ and recall that $\tg(i)$ denotes the first ascending ladder epoch of $(S_{\tau_{n}(i)})_{n\ge 0}$. Then we obviously have $\tg(i)\le\sigma_{\kappa(i)}=\inf\{\sgn:M_{\sgn}=s\}$ and thus, by making use of the occupation measure formula \eqref{eq:occ measure formula}
$$ \Erw_{i}\tg(i)\ \le\ \Erw_{i}\sigma_{\kappa(i)}\ =\ \Erw_{\pi^>}\sg\,\Erw_{i}\kappa(i)\ <\ \infty $$
which in turn implies the modified \ref{thm:Spitzer-Erickson MRW}(b) by invoking Theorem \ref{thm:Spitzer-Erickson} for the ordinary random walk $(S_{\tau_{n}(i)})_{n\ge 0}$.\qed
\end{proof}

\begin{proof}[of Theorem \ref{thm:Kesten-Maller MRW finite S}]
By Lemmata \ref{lem:pi int} and \ref{lem:solidarity LI}, only ``\eqref{eq: sigma alpha cond}$\,\RA\Erw_{i}J_{i}(S_{\tau(i)}^-)^{1+\alpha}<\infty$'' remains to be proved. On the other hand, this requires further results on the moments of $\sg(x)$, notably Proposition \ref{prop: MRW sigma Prop}, and will therefore be postponed to the end of Section \ref{sec:moment sg(x)}.
\end{proof}

\section{A closer look at the moments of $\sg(x)$}\label{sec:moment sg(x)}

This section is devoted to a more detailed discussion of some aspects regarding the moments of $\sg(x)$ for which none of the conditions provided by Theorem \ref{thm:Kesten-Maller MRW}(a), \ref{thm:K/M MRW Spitzer series}(a) and \ref{thm:K/M MRW N(x)}(a) appears to be necessary. 

\subsection{A counterexample}\label{subsec:sg(x) moments counterexample}

We begin with an example that will actually show that, for any given $\alpha\ge 0$, we can define a nontrivial \emph{oscillating} MRW $(M_{n},S_{n})_{n\ge 0}$ having \emph{negative divergent} embedded RW $(S_{\tau_{n}(i)})_{n\ge 0}$ and yet $\Erw_{i}\sg(x)^{1+\alpha}<\infty$ for all $(i,x)\in\cS\times\R_{\geqslant}$. In other words,  the last property does not entail the positive divergence of $(S_{n})_{n\ge 0}$, nor any of the other assertions stated in the theorems of Section \ref{sec:main results}, which is in sharp contrast to the case of ordinary RW.

\begin{Exa}\label{exa:sg(x) all moments}\rm
Let $(W_{n})_{n\ge 0}$ be an ordinary zero-delayed integer-valued random walk with generic increment $Y$ satisfying $\Prob(Y=-n)>0$ for all $n\in\N_{0}$,
\begin{equation*}
\Prob(Y^{-}>n)\,=\,\frac{1}{n^{1+\alpha}}\quad\text{for all sufficiently large }n\in\N,
\end{equation*}
and $\Prob(Y^{+}\in\cdot)$ such that $\Erw\big(\inf\{n:W_{n}>0\})^{1+\alpha}=\infty$ for any fixed $\alpha\ge 0$, a particular choice being $Y^{+}\equiv 0$. Further defining $f:\R_{>}\to\R_{>}$ by $f(x):=2^{\theta x^{1+\alpha}}$ for some $\theta>1+\alpha$, we have $f(x)\ge x$ for all $x\ge 1$ and also find that
\begin{align}
\begin{split}\label{eq:tail condition on Y^{-}}
&\lim_{n\to\infty}n\,\Prob(f(Y^{-})>c2^{n})\ =\ \lim_{n\to\infty}n\,\Prob\left(Y^{-}>f^{-1}(c2^{n})\right)\\
&\hspace{1.5cm}=\ \lim_{n\to\infty}n\,\Prob\left(Y^{-}>\left(\frac{n+\log_{2}c}{\theta}\right)^{1/(1+\alpha)}\right)\ =\ \theta
\end{split}
\end{align}
for all $c\in\R_{>}$.

\vspace{.1cm}
Now consider a MRW $(M_{n},S_{n})_{n\ge 0}$ such that $(M_{n})_{n\ge 0}$ has state space $\N_{0}$ and transition probabilities $p_{00}=\Prob(Y\ge 0)$, $p_{0i}=\Prob(Y=-i)$ and $p_{i0}=1$ for $i\in\N$. Furthermore,
$$ K_{00}\ =\ \Prob(Y\in\cdot\,|Y\ge 0),\quad K_{0i}\ =\ \delta_{f(i)}\quad\text{and}\quad K_{i0}\ =\ \delta_{-f(i)-i} $$
for all $i\in\N$. Notice that $\tau(0)\le 2$ a.s. and that the law of $(S_{\tau_{n}(0)})_{n\ge 0}$ under $\Prob_{0}$ equals the law of $(W_{n})_{n\ge 0}$.

\vspace{.1cm}
Fixing any $x>0$, the following property of the MRW under $\Prob_{0}$ is essential for our considerations, namely
$$ X_{\tau_{n}(0)+1}\ \le\  x\quad\Longrightarrow\quad S_{\tau_{n+1}(0)}-S_{\tau_{n}(0)}\ \ge\ -x. $$
As a consequence of this property, we infer that
\begin{align*}
\{\sg(x)>\tau(0),M_{0}=0\}\ &\subset\ \{S_{\tau(0)}\ge -x,M_{0}=0\},\\
\{\sg(x)>\tau_{2}(0),M_{0}=0\}\ &\subset\ \{S_{\tau(0)}\ge -x,X_{\tau(0)+1}\le 2x,M_{0}=0\}\\
&\subset\ \{S_{\tau_{2}(0)}\ge -3x,M_{0}=0\}
\shortintertext{and then inductively}
\{\sg(x)>\tau_{n}(0),M_{0}=0\}\ &\subset\ \left\{S_{\tau_{n}(0)}\ge -(2^{n}-1)x,M_{0}=0\right\}
\end{align*}
for all $n\in\N$. Defining $\wh{\sigma}(x):=\inf\{n\ge 1:X_{\tau_{n}(0)+1}>x2^{n}\}$, this implies $\sg(x)\le\wh{\sigma}(x)$ and the subsequent argument will show that $\Erw_{0}\wh{\sigma}(x)^{1+\alpha}<\infty$ for all $x\in\R_{\geqslant}$, giving
$$ \Erw_{i}\sg(x)^{1+\alpha}\ \le\ \Erw_{0}[1+\sg(x+i+f(i))]^{1+\alpha}\ <\ \infty $$
for all $i\in\N$ and $x\in\R_{\geqslant}$ as desired.

\vspace{.1cm}
Put $F(x):=\Prob_{0}(X_{1}\le x)$ and note that
$$ 1-F(x)\ =\ \Prob(f(Y^{-})>x) $$
for $x\in\R_{>}$. We start by pointing out that
\begin{align*}
\Erw_{0}\wh{\sigma}(x)^{1+\alpha}\ \asymp\ \sum_{n\ge 1}n^{\alpha}\,\Prob_{0}(\wh{\sigma}(x)>n)\ =\ \sum_{n\ge 1}n^{\alpha}\prod_{k=1}^{n}F(x2^{k})
\end{align*}
because the $X_{\tau_{n}(0)+1}$ are iid under $\Prob_{0}$ with distribution function $F$.
Put $b_{n}:=n^{\alpha}\prod_{k=1}^{n}F(x2^{k})$ for $n\in\N$. Now $\Erw_{0}\wh{\sigma}^{1+\alpha}<\infty$ follows by Raabe's test (see e.g. Stromberg \cite[(7.16)]{Stromberg:81}) if 
$$ \liminf_{n\to\infty}n\left(\frac{b_{n}}{b_{n+1}}-1\right)\ >\ 1. $$
To this end, use \eqref{eq:tail condition on Y^{-}} to obtain
\begin{align*}
n(1-F(x2^{n+1}))\ =\ n\,\Prob(f(Y^{-})>x2^{n+1})\ =\ \theta+o(1)
\end{align*}
as $n\to\infty$ and then finally conclude
\begin{align*}
\liminf_{n\to\infty}n\left(\frac{b_{n}}{b_{n+1}}-1\right)\ &=\ \liminf_{n\to\infty}n\left(\frac{n^{\alpha}-(n+1)^{\alpha}F(x2^{n+1})}{(n+1)^{\alpha}F(x2^{n+1})}\right)\\
&=\ \liminf_{n\to\infty}\frac{n}{F(x2^{n+1})}\left(\left(1+\frac{1}{n}\right)^{-\alpha}-F(x2^{n+1})\right)\\
&=\ \liminf_{n\to\infty}n\left(1-\frac{\alpha}{n}+o\left(\frac{1}{n}\right)-F(x2^{n+1})\right)\\
&=\ \theta-\alpha\ >\ 1.
\end{align*}
Finally, the finiteness of $\Erw_{0}\sg(x)^{1+\alpha}$ particularly entails $\limsup_{n\to\infty}S_{n}=\infty$ a.s. and thus confirms that $(S_{n})_{n\ge 0}$ is indeed oscillating.\qed
\end{Exa}

\subsection{Solidarity}\label{subsec:sg(x) moments solidarity}

It is clear that $\Erw_{i}\sg(x)^{1+\alpha}<\infty$ for some $(i,x)\in\cS\times\R_{\geqslant}$ does not necessarily entail
\begin{equation}\label{eq:solidarity moments sg(x)}
\Erw_{i}\sg(x)^{1+\alpha}\,<\,\infty\quad\text{for all }(i,x)\in\cS\times\R_{\geqslant},
\end{equation}
as we may have $\Prob_{i}(\sg(x)=1)=1$ for some $(i,x)$ even when $(M_{n},S_{n})_{n\ge 0}$ is negative divergent and thus $\Prob_{j}(\sg(y)=\infty)>0$ for some other $(j,y)\in\cS\times\R_{\geqslant}$. The subsequent lemma provides sufficient conditions for \eqref{eq:solidarity moments sg(x)}. Consider the condition
\begin{equation}\label{eq:cond sg(x)>tau(i)}
q(i,x)\,:=\,\Prob_{i}(\sg(x)>\tau(i),S_{\tau(i)}<0)\,>\,0
\end{equation}
for some $(i,x)\in\cS\times\R_{\geqslant}$ and note that $q(i,x)$ is nondecreasing in $x$.

\begin{Lemma}\label{lem:sg(x) moments solidarity}
Let $\alpha\ge 0$ and $(M_{n},S_{n})_{n\ge 0}$ be a nontrivial MRW. Then any of the following three assumptions implies \eqref{eq:solidarity moments sg(x)}.
\begin{description}[(b)]\itemsep2pt
\item[(a)] $\Erw_{i}\sg(x)^{1+\alpha}<\infty$ for some $i\in\cS$ and all $x\in\R_{\geqslant}$.
\item[(b)] $\Erw_{i}\sg(x)^{1+\alpha}<\infty$ and $q(i,x)>0$ for some $(i,x)\in\R_{\geqslant}$.
\item[(c)] $\Erw_{i}\sg(0)^{1+\alpha}<\infty$ and $\Erw_{i}\tau(i)^{1+\alpha}<\infty$ for all $i\in\cS$.
\end{description}
\end{Lemma}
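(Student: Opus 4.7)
The plan is to derive each case by reducing it to (b), whose proof furnishes the core technical step, and I would organize the argument as follows.

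First, I would prove (b) $\Rightarrow$ \eqref{eq:solidarity moments sg(x)}. Set $q:=q(i,x)>0$ and proceed in two stages: bootstrap in the level $y$ (fixing state $i$), then transfer between states. For the level bootstrap, $\sg(y)\le\sg(x)$ is trivial when $y\le x$; for $y>x$, pick $\epsilon,\delta>0$ with $\Prob_i(\sg(x)>\tau(i),\,S_{\tau(i)}\le-\epsilon)\ge\delta$. Iterating the strong Markov property at successive returns to $i$, I would build stopping times $T_0:=0<T_1<\cdots$ where $T_k$ records a ``successful'' full cycle (one not exceeding $x+S_{T_{k-1}}$ and ending with $S_{T_k}-S_{T_{k-1}}\le-\epsilon$). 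Each $T_k-T_{k-1}$ is, conditionally, dominated by a copy of $\sg(x)\mathbf{1}_{\{\sg(x)>\tau(i)\}}$ and thus has a $(1+\alpha)$-moment bounded by $\Erw_i\sg(x)^{1+\alpha}$, while the number of iterations actually needed before either reaching $y$ or aborting is geometrically distributed. Minkowski's inequality summed across this geometric number of terms yields $\Erw_i\sg(y)^{1+\alpha}<\infty$. For the state transfer, given $j\in\cS$ and $y\in\R_{\geqslant}$, pick $m\in\N$ and $z>0$ with $p:=\Prob_j(M_m=i,\,S_m\in[-z,0])>0$; on this event the strong Markov property at time $m$ bounds $\sg(y)$ above by $m+\tilde\sg(y+z)$, where $\tilde\sg(y+z)\eqdist\sg(y+z)$ under $\Prob_i$ has finite $(1+\alpha)$-moment by the first stage. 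Iterating attempts (each cycle of $j$ offering a geometric chance of success) and combining with $\Erw_i\sg(y+z)^{1+\alpha}<\infty$ completes the argument.

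Next, (a) $\Rightarrow$ (b) is a short check: since $\sg(x)$ is a.s.\ finite and nondecreasing in $x$, $\Prob_i(\sg(x)>\tau(i))\uparrow 1$ as $x\to\infty$, hence $q(i,x)\to\Prob_i(S_{\tau(i)}<0)$. If $\Prob_i(S_{\tau(i)}<0)>0$, then (b) holds at any sufficiently large $x$. Otherwise $S_{\tau(i)}\ge 0$ a.s.\ under $\Prob_i$, and nontriviality forces $\Prob_i(S_{\tau(i)}>0)>0$ (by Lemma \ref{lem:zero increments}), so $(S_{\tau_n(i)})_{n\ge 0}$ is a positive-mean random walk with nonnegative iid increments. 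In this degenerate regime the embedded walk is strictly positive divergent, $\sg(x)\le\tau_{\nu(x)}(i)$ and the bound $\Erw_i\tau_{\nu(x)}(i)^{1+\alpha}<\infty$ is easily inherited, after which the state transfer of the first step gives the result.

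For (c) $\Rightarrow$ \eqref{eq:solidarity moments sg(x)}, I would bypass (b) and use Lemma \ref{lem:solidarity LI} directly. Under (c), $\Erw_i\tau(i)^{1+\alpha}<\infty$ is the ambient hypothesis of that lemma; I need only verify condition (b) there, i.e.\ $A_i(y)>0$ for all large $y$ and $\Erw_iJ_i(S_{\tau(i)}^-)^{1+\alpha}<\infty$. Since $\sg(0)<\infty$ a.s.\ under $\Prob_i$, we have $\limsup_n S_n > 0$ a.s., and Gut's random-sum inequality (\cite[Thm.~1.5.4]{Gut:09}) applied to $\Erw_i\sg(0)^{1+\alpha}<\infty$ together with $\Erw_i\tau(i)^{1+\alpha}<\infty$ yields $\Erw_i\zeta_1(i)^{1+\alpha}<\infty$, where $\zeta_1(i)$ is the first strict ascending ladder epoch of $(S_{\tau_n(i)})_{n\ge 0}$. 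Theorem \ref{thm:Kesten-Maller} applied to the embedded RW then provides both $A_i(y)>0$ eventually and $\Erw_iJ_i(S_{\tau(i)}^-)^{1+\alpha}<\infty$, so Lemma \ref{lem:solidarity LI} gives $\Erw_i\tau_{\nu(x)}(i)^{1+\alpha}<\infty$ for all $(i,x)$, and since $\sg(x)\le\tau_{\nu(x)}(i)$ the result follows.

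The principal obstacle is the level bootstrap in (b): naive upper bounds of the form $\sg(y)\le\sg(x)+\tilde\sg(y-x)$ fail because the continuation after time $\sg(x)$ starts at the random state $M_{\sg(x)}$ for which no a priori moment bound is available, so the regeneration must be taken at $\tau(i)$ rather than at $\sg(x)$. The condition $q(i,x)>0$ is precisely what makes this regeneration productive—it provides the controlled downward drift needed to cumulatively shift the ``effective level'' and, critically, ensures the geometric tail of the iteration count, which must be combined with the bound $\Erw_i\sg(x)^{1+\alpha}<\infty$ via Minkowski to keep all moment estimates finite.
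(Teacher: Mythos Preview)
Your level-bootstrap in (b) is backwards. The ``successful'' cycles you isolate (relative maximum $\le x$, net displacement $\le -\epsilon$) drive the walk \emph{down}, away from level $y$; stringing together a geometric number of them gives no upper bound on $\sg(y)$, and ``aborting'' on an unsuccessful cycle does not mean $y$ has been passed. The paper's device is the opposite inequality: on the event $\{\sg(x)>\tau(i),\,S_{\tau(i)}\le -h\}$ (probability $p(x)>0$), the walk at time $\tau(i)$ is at state $i$, has not exceeded $x$, and sits at height $\le -h$, so $\sg(x)\ge$ a fresh copy of $\sg(x+h)$ under $\Prob_i$. Hence
\[
\Erw_i\sg(x)^{1+\alpha}\ \ge\ p(x)\,\Erw_i\sg(x+h)^{1+\alpha},
\]
and since $p(\cdot)$ is nondecreasing one inducts to get $\Erw_i\sg(x+nh)^{1+\alpha}<\infty$ for all $n$. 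The same lower-bound trick yields the state transfer directly (this is the paper's part (a)): on $\{\sg(2x)>\tau(j),\,S_{\tau(j)}\le x\}$ one has $\sg(2x)\ge$ a copy of $\sg(x)$ under $\Prob_j$, whence $\Erw_j\sg(x)^{1+\alpha}<\infty$. No Minkowski or geometric summation is needed.

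Your argument for (c) has a genuine gap. You claim that $\Erw_i\sg(0)^{1+\alpha}<\infty$ together with $\Erw_i\tau(i)^{1+\alpha}<\infty$ forces $\Erw_i\zeta_1(i)^{1+\alpha}<\infty$ via Gut's stopped-sum theorem, but the inequality goes the wrong way: $\sg(0)\le\tg(i)=\tau_{\zeta_1(i)}(i)$, so a moment of $\sg(0)$ gives no control on $\zeta_1(i)$. In fact Example~\ref{exa:sg(x) all moments} with $Y^{+}\equiv 0$ satisfies all of (c) (there $\tau(0)\le 2$, and $\Erw_i\sg(0)^{1+\alpha}<\infty$ is established for every $i$), yet the embedded walk $(S_{\tau_n(0)})_{n\ge 0}$ is negative divergent, so $\zeta_1(0)=\infty$ a.s.\ and Lemma~\ref{lem:solidarity LI} does not apply. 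The paper's proof of (c) is accordingly more delicate: when no state has $q(i,0)>0$, it locates some $j\in\cS$ and $n\in\N$ with $\Prob_j(\sg(0)>\tau_n(j),\,S_{\tau_n(j)}<0)>0$ by a path-rearrangement argument, and then runs the (b)-type induction with cycles of length $n$.
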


\begin{proof}
(a) Pick any $j\ne i$. Then there exists $x_{0}\in\R_{\geqslant}$ such that 
$$ \Prob_{i}(\sg(2x)>\tau(j),S_{\tau(j)}\le x)\ \ge\ \Prob_{i}(\sg(2x_{0})>\tau(j),S_{\tau(j)}\le x_{0})\ =:\ p\ >\ 0 $$
for all $x\ge x_{0}$. For any such $x$, we now infer
\begin{align*}
\infty\ >\ \Erw_{i}\sg(2x)^{1+\alpha}\ \ge\ \Erw_{i}\sg(2x)^{1+\alpha}\1_{\{\sg(2x)>\tau(j),\,S_{\tau(j)}\le x\}})\ \ge\ p\,\Erw_{j}\sg(x)^{1+\alpha},
\end{align*}
that is $\Erw_{j}\sg(x)^{1+\alpha}<\infty$. Since $\sg(x)$ is nondecreasing in $x$, \eqref{eq:solidarity moments sg(x)} follows.

\vspace{.1cm}
(b) Since $q(i,x)>0$, we can find $h>0$ small enough such that 
$$ p(x)\ :=\ \Prob_{i}(\sg(x)>\tau(i),S_{\tau(i)}\le-h)\ >\ 0. $$
Consequently,
\begin{align*}
\infty\ >\ \Erw_{i}\sg(x)^{1+\alpha}\1_{\{\sg(x)>\tau(i),S_{\tau(i)}\le-h\}}\ \ge\ p(x)\,\Erw_{i}\sg(x+h)^{1+\alpha}.
\end{align*}
Since $p(x+nh)\ge p(x+(n-1)h)$ for all $n\in\N$, an induction over $n$ provides us with $\Erw_{i}\sg(x+nh)^{1+\alpha}<\infty$, and this implies \eqref{eq:solidarity moments sg(x)} by an appeal to (a).

\vspace{.1cm}
(c) If $\Prob_{i}(S_{\tau(i)}\ge 0)=1$ for some $i\in\cS$, thus $\Prob_{i}(S_{\tau(i)}>0)>0$ by nontriviality, then the assertion follows easily from Lemma \ref{lem:solidarity LI}.

\vspace{.1cm}
Left with the case $\Prob_{i}(S_{\tau(i)}<0)>0$ for all $i\in\cS$, fix $i$. If $q(i,0)>0$, then the assertion follows from (b). Assuming $q(i,0)=0$ and thus $\Prob_{i}(\sg(0)<\tau(i),S_{\tau(i)}<0)>0$, we will show below that
$$ q_{n}(j,0)\,:=\,\Prob_{j}(\sg(0)>\tau_{n}(j),S_{\tau_{n}(j)}<0)\,>\,0 $$
for some $j\in\cS$ and $n\in\N$. Then \eqref{eq:solidarity moments sg(x)} can be concluded in a similar manner as in (b).

\vspace{.1cm}
Define
$$ \kappa\ :=\ \inf\left\{0\le n<\tau(i):S_{n}=\max_{0\le k\le\tau(i)}S_{k}\right\} $$
By assumption, there exists $j\in\cS\backslash\{i\}$ such that
$$ \Prob_{i}(M_{\kappa}=i,\kappa=\tau_{m}(j)\ge\sg(0),S_{\tau(i)}<0,\tau_{m+l}(j)<\tau(i)<\tau_{m+l+1}(j))\ =:\ p'\ >\ 0 $$
for some $m\in\N$ and $l\in\N_{0}$. Put
$E:=\{S_{k}\le 0\text{ for }1\le k\le\tau(i),\tau_{l}(j)<\tau(i)<\tau_{l+1}(j)\}$. Then
\begin{align*}
p'\ &=\ \Prob_{i}\big(S_{k}<S_{\tau_{m}(j)}\text{ for }0\le k<\tau_{m}(j),\,S_{\tau(i)}-S_{\tau_{m}(j)}\le-S_{\tau_{m}(j)},\\
&\hspace{0.8cm}S_{\tau_{m}(j)+k}-S_{\tau_{m}(j)}\le 0\text{ for }1\le k\le\tau(i)-\tau_{m}(j),\tau_{m+l}(j)<\tau(i)<\tau_{m+l+1}(j)\big)\\
&=\ \int_{\R_{>}}\Prob_{j}\big(E\cap\{S_{\tau(i)}<-x\}\big)\ \Prob_{i}(S_{\tau_{m}(j)}\in dx,S_{k}<S_{\tau_{m}(j)}\text{ for }0\le k<\tau_{m}(j)<\tau(i))\\
&=\ \int_{\R_{>}}\Prob_{j}\big(E\cap\{S_{\tau(i)}<-x\}\big)\ \\
&\hspace{1.5cm}\times\Prob_{j}(S_{\tau_{m+l}(j)}-S_{\tau(i)}\in dx,S_{k}<S_{\tau_{m+l}(j)}\text{ for }\tau(i)\le k<\tau_{m}(j)<\tau_{2}(i))\\
&=\ \Prob_{j}\big(E\cap\{S_{\tau_{m+l}(j)}<0,S_{k}<S_{\tau_{m+l}(j)}\text{ for }\tau(i)\le k<\tau_{m+l}(j)<\tau_{2}(i)\}\big)\\
&\le\ \Prob_{j}\big(S_{\tau_{m+l}(j)}<0,\,S_{k}\le 0\text{ for }1\le k<\tau_{m+l}(j)\big)\\
&=\ q_{m+l}(j,0)
\end{align*}
hence $q_{m+l}(j,0)>0$.\qed
\end{proof}

\begin{Rem}\label{rem:Sisyphus chain}\rm
Returning to part (c) of the previous lemma, its proof holds the surprise that $\Erw_{i}\tau(i)^{1+\alpha}<\infty$ is needed if $\Prob_{i}(S_{\tau(i)}\ge 0)=1$ for all $i\in\cS$, but not otherwise. The following simple example illustrates that one cannot dispense with this assumption there. Given any $\alpha\ge 0$, consider a \emph{Sisyphus chain} on $\N_{0}$ with transition probabilities
$$ p_{01}\ =\ 1\quad\text{and}\quad p_{n,n+1}\ =\ 1-p_{n0}\ =\ \left(\frac{n}{n+1}\right)^{1+\alpha}\text{ for }n\ge 1, $$
so that $\Prob_{0}(\tau(0)>n)=p_{01}\cdot...\cdot p_{n-1,n}=n^{-1-\alpha}$ for $n\ge 1$ and thus $\Erw_{0}\tau(0)^{1+\alpha}=\infty$. Further defining $X_{n}=(M_{n}+1)^{-2}>0$, we obviously have $\sg(0)=1$ a.s. and
$$ S_{\tau(0)}\ =\ 1+\sum_{k=0}^{\tau(0)-1}\frac{1}{(M_{k}+1)^{2}}\ <\ 1+\frac{\pi^{2}}{6}\ =:\ x\quad\Prob_{0}\text{-a.s.} $$
Consequently, $\Prob_{0}(\sg(x)>\tau(0))=1$ and therefore $\Erw_{0}\sg(x)^{1+\alpha}=\infty$.
\end{Rem}

\subsection{Moment result for a modification of $\sg(x)$}

In view of Example \ref{exa:sg(x) all moments} and Lemma \ref{lem:sg(x) moments solidarity}, a look at the stopping time
$$ \osg(x)\ :=\ \inf\{n>\tau(M_{0}):S_{n}>x\} $$
appears to be natural. When the driving chain has initial state $i\in\cS$, it is the \emph{post-$\tau(i)$-passage time} of $(S_{n})_{n\ge 0}$ beyond $x$. Evidently, $\sg(x)\le\osg(x)$ a.s. for all $(i,x)\in\cS\times\R_{\geqslant}$. The next result provides an equivalent condition for the finiteness of $\Erw_{i}\osg(x)^{1+\alpha}$.

\begin{Prop}\label{prop: MRW sigma Prop}
Let $\alpha\ge 0$ and $(M_{n},S_{n})_{n\ge 0}$ be a nontrivial MRW with positive divergent embedded RW $(S_{\tau_{n}(i)})_{n\ge 0}$ and $\Erw_{i}\tau(i)^{1+\alpha}<\infty$ for some/all $i\in\cS$. Then $A_{i}(x)$ is ultimately positive for all $i\in\cS$ and the following assertions are equivalent:
\begin{description}[(b)]\itemsep2pt
\item[(a)] $\Erw_{i}J_{i}(S_{\tau(i)}^{-})^{1+\alpha}<\infty$ for some/all $i\in\cS$.
\item[(b)] $\Erw_{i}\osg(x)^{1+\alpha}<\infty$ for some/all $(i,x)\in\cS\times\R_{\geqslant}$.
\item[(c)] $\Erw_{i}\tau_{\nu(i,x)}(i)^{1+\alpha}<\infty$ for some/all $(i,x)\in\cS\times\R_{\geqslant}$.
\end{description}
Moreover, these conditions imply $\Erw_{i}\sg(x)^{1+\alpha}<\infty$ for all $(i,x)\in\cS\times\R_{\geqslant}$.
\end{Prop}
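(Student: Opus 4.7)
The first step is to observe that, under the standing hypothesis of positive divergence of the embedded walk $(S_{\tau_{n}(i)})_{n\ge 0}$, Lemma \ref{lem:trunc mean}(d) immediately yields $A_{i}(x)>0$ for all sufficiently large $x$ and every $i\in\cS$. Moreover, both the equivalence (a) $\Leftrightarrow$ (c) and the concluding assertion $\Erw_{i}\sg(x)^{1+\alpha}<\infty$ for all $(i,x)\in\cS\times\R_{\geqslant}$ are precisely the content of Lemma \ref{lem:solidarity LI}. The remaining task therefore reduces to fitting (b) into the chain of equivalences.

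For the implication (c) $\Rightarrow$ (b), I will exploit the strong Markov property at $\tau(i)$: under $\Prob_{i}$, the post-$\tau(i)$ MRW is a fresh copy initialized at state $i$, so that
\[
\osg(x)\ =\ \tau(i)+\wt\sg(x-S_{\tau(i)}),
\]
where $\wt\sg$ is an independent copy of $\sg$ under $\Prob_{i}$. The $C_{r}$-inequality gives $\Erw_{i}\osg(x)^{1+\alpha}\lesssim\Erw_{i}\tau(i)^{1+\alpha}+\Erw_{i}h(x-S_{\tau(i)})$ with $h(y):=\Erw_{i}\sg(y^{+})^{1+\alpha}$. Invoking the bound $\sg(y)\le\tau_{\nu(i,y)}(i)$ (noted after \eqref{eq:def of nu(x)}), together with Gut's moment inequality \cite[Thm.~1.5.4]{Gut:09} and the Kesten--Maller asymptotics $\Erw_{i}\nu(i,y)^{1+\alpha}\asymp J_{i}(y)^{1+\alpha}$ from Theorem \ref{thm:Kesten-Maller} applied to the embedded RW, one deduces $h(y)\lesssim 1+J_{i}(y^{+})^{1+\alpha}$. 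Combining with subadditivity of $J_{i}$ (Lemma \ref{lem:trunc mean}(a)), so that $J_{i}((x-S_{\tau(i)})^{+})\le J_{i}(x)+J_{i}(S_{\tau(i)}^{-})$, and with condition (a), the expectation is finite and (b) follows.

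The main obstacle is the converse (b) $\Rightarrow$ (c), which in view of Lemma \ref{lem:solidarity LI} amounts to (b) $\Rightarrow$ (a). My strategy is to reverse the above regenerative decomposition: the independence of $\wt\sg$ from $(\tau(i),S_{\tau(i)})$ yields
\[
\Erw_{i}\osg(x)^{1+\alpha}\ \ge\ \Erw_{i}\bigl[\1_{\{S_{\tau(i)}<0\}}\,h(x+S_{\tau(i)}^{-})\bigr],
\]
and the crux is to supply a matching lower bound $h(z)\gtrsim J_{i}(z)^{1+\alpha}$ as $z\to\infty$. For this I would introduce the auxiliary stopping time $\mu(i,z):=\inf\{n\ge 1:\max_{\tau_{n-1}(i)<k\le\tau_{n}(i)}S_{k}>z\}$; since the events $\{\max\text{ in cycle }n>z\}$ are iid under $\Prob_{i}$, $\mu(i,z)$ is geometrically distributed with parameter $\psi(i,z):=\Prob_{i}(\max_{1\le k\le\tau(i)}S_{k}>z)$, and the pointwise identity $\sg(z)>\tau_{\mu(i,z)-1}(i)\ge\mu(i,z)-1$ gives $h(z)\gtrsim\psi(i,z)^{-(1+\alpha)}$. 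The remaining and technically hardest step is the tail comparison $\psi(i,z)\lesssim J_{i}(z)^{-1}$ for large $z$, which I expect to derive from the Kesten--Maller estimate $\Erw_{i}\nu(i,z)\asymp J_{i}(z)$ for the positive divergent embedded walk combined with a Wald-type identity linking $\psi$ to the upcrossing probability $\Prob_{i}(\nu(i,z)=1)$. Once this tail bound is secured, Lemma \ref{lem:trunc mean}(b) to pass from $x+S_{\tau(i)}^{-}$ to $S_{\tau(i)}^{-}$ in the argument of $J_{i}$ delivers $\Erw_{i}J_{i}(S_{\tau(i)}^{-})^{1+\alpha}<\infty$, i.e.\ (a), closing the equivalences.
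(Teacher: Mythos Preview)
Your reductions (a)$\Leftrightarrow$(c) via Lemma \ref{lem:solidarity LI}, and the final implication for $\sg(x)$, match the paper exactly. For (a)/(c)$\Rightarrow$(b) the paper is more direct than you: it simply observes $\osg(x)\le\tau_{\nu(i,x)}(i)$ $\Prob_{i}$-a.s.\ (with $\osg$ read as $\inf\{n\ge\tau(i):S_{n}>x\}$), so (c) gives (b) immediately without your regenerative decomposition and asymptotic control of $h$. Your route there is workable but unnecessarily heavy.

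The genuine gap is in (b)$\Rightarrow$(a). You correctly identify that everything hinges on the lower bound $h(z)=\Erw_{i}\sg(z)^{1+\alpha}\gtrsim J_{i}(z)^{1+\alpha}$, and your regenerative decomposition at $\tau(i)$ coincides with the paper's. But your proposed proof of the lower bound is broken: the events $\{\max_{\tau_{n-1}(i)<k\le\tau_{n}(i)}S_{k}>z\}$ are \emph{not} iid under $\Prob_{i}$, because $\max_{\tau_{n-1}<k\le\tau_{n}}S_{k}=S_{\tau_{n-1}}+H_{n}^{i}$ involves the accumulated walk $S_{\tau_{n-1}}$. Hence $\mu(i,z)$ is not geometric, and the estimate $h(z)\gtrsim\psi(i,z)^{-(1+\alpha)}$ collapses. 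If you instead redefine $\mu$ via the iid cycle maxima $H_{n}^{i}$, the events become iid but the inequality $\sg(z)>\tau_{\mu-1}$ fails (the walk can exceed $z$ in a cycle with small $H_{n}^{i}$ once $S_{\tau_{n-1}}$ is large). Your fallback tail comparison $\psi(i,z)\lesssim J_{i}(z)^{-1}$ is also unproven: since $H_{1}^{i}\ge S_{\tau(i)}^{+}$, one has $\Erw_{i}(H_{1}^{i}\wedge z)\ge\Erw_{i}(S_{\tau(i)}^{+}\wedge z)$, so the natural Markov bound goes the wrong way.

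The paper obtains $h(z)\gtrsim J_{i}(z)^{1+\alpha}$ by a separate, fairly technical lemma (Lemma \ref{lem:K/M Lemma 3.5}), which is a MRW extension of \cite[Lemma 3.5]{KesMal:96}. The argument there does not use any geometric trial structure; instead it introduces $m_{\delta}(x):=\inf\{n:\Prob_{i}(S_{n}^{*}\le x)<1-\delta\}$, shows $\sum_{n}n^{\alpha-1}\Prob_{i}(S_{n}^{*}\le x)\gtrsim m_{\delta}(x)^{\alpha}$, and then proves $m_{\delta}(x)\gtrsim x/A_{i}(x)$ by contradiction via a truncation plus Chebyshev estimate on the cycle increments, mirroring the Kesten--Maller computation. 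This is the piece your sketch is missing; the rest of your outline is essentially the paper's proof.
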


\begin{proof}
Fix any $i\in\cS$ and write again $\tau,\tau_{n},$ etc. as shorthand for $\tau(i),\tau_{n}(i),$ etc. Recalling $\nu(x)=\inf\{n\ge 1:S_{\tau_{n}}>x\}$, observe also that $\osg(x)\le\tau_{\nu(x)}$ $\Prob_{i}$-a.s.

\vspace{.1cm}
``(a)$\RA$(b)'' By Theorem \ref{thm:Spitzer-Erickson} or \ref{thm:Kesten-Maller}, (a) ensures $\Erw_{i}\nu(x)^{1+\alpha}<\infty$ for all $x\in\R_{\geqslant}$ which in combination with $\Erw_{i}\tau^{1+\alpha}<\infty$ provides us with $\Erw_{i}\tau_{\nu(x)}^{1+\alpha}<\infty$ for all $x\in\R_{\geqslant}$ as pointed out earlier (use Wald's equation for $\alpha=0$ and see the proof of Lemma \ref{lem:solidarity N(x)} for $\alpha>0$). Hence (b) follows from $\osg(x)\le\tau_{\nu(x)}$.

\vspace{.1cm}
``(b)$\RA$(a)'' Assuming $\Erw_{i}\osg(0)^{1+\alpha}<\infty$ and putting $p:=\Prob_{i}(\osg(0)=\tau)$, we distinguish the two cases
$p=1$ and $p<1$.

\vspace{.1cm}
If $p=1$ and thus $\osg(0)=\tau_{\nu(0)}$, then (a) follows by an appeal to Theorem \ref{thm:Spitzer-Erickson} or \ref{thm:Kesten-Maller}. If $p<1$, then we infer with the help of Lemma \ref{lem:K/M Lemma 3.5} below
\begin{align*}
\infty\ &>\ \Erw_{i}\osg(0)^{1+\alpha}\ \ge\ \Erw_{i}\osg(0)^{1+\alpha}\1_{\{\osg(0)>\tau\}}\\
&\ge\ \Erw_{i}(\osg(0)-\tau)^{1+\alpha}\1_{\{\osg(0)>\tau\}}\\
&=\ \int\Erw_{i}\sg(y)^{1+\alpha}\ \Prob_{i}(S_{\tau}^{-}\in dy,\,\osg(0)>\tau)\\
&\ge\ \int\Erw_{i}\sg(y)^{1+\alpha}\ \Prob_{i}(S_{\tau}^{-}\in dy,\,S_{\tau}<0)\\
&\gtrsim\ \int J_{i}(y)^{1+\alpha}\ \Prob_{i}(S_{\tau}^{-}\in dy)
\end{align*}
and thus the assertion.

\vspace{.1cm}
``(a)$\LRA$(c)'' This follows directly from Lemma \ref{lem:solidarity LI} when noting that its proviso in (b), namely $A_{i}(y)>0$ for all sufficiently large $y$, is guaranteed here by the positive divergence of $(S_{\tau_{n}})_{n\ge 0}$.\qed
\end{proof}

The following auxiliary lemma forms an extension of Lemma 3.5 in \cite{KesMal:96} to MRW $(M_{n},S_{n})_{n\ge 0}$, but we give the rather technical proof of the result only under the stronger assumption of positive divergence of the $(S_{\tau_n}(i))_{n\ge 0}$ because this is enough for our purposes.

\begin{Lemma}\label{lem:K/M Lemma 3.5}
Let $\alpha\ge 0$ and $(M_{n},S_{n})_{n\ge 0}$ be a nontrivial MRW such that $S_{\tau_{n}(i)}\to\infty$ in probability for some $i\in\cS$. Then, as $x\to\infty$,
\begin{align*}
&\hspace{1cm}\sum_{n\ge 1}\frac{1}{n}\,\Prob_{j}(S_{n}^{*}\le x)\ \gtrsim\ \log J_{j}(x)
\shortintertext{and}
&\Erw_{j}\sg(x)^{\alpha}\ \asymp\ \sum_{n\ge 1}n^{\alpha-1}\,\Prob_{j}(S_{n}^{*}\le x)\ \gtrsim\ J_{j}(x)^{\alpha}
\end{align*}
for all $j\in\cS$ and $\alpha>0$.
\end{Lemma}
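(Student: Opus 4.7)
The plan is to deduce the lower bounds by comparing $\sg(x)$ with the first passage time $\nu^{>}(j,x)=\inf\{n\ge 1:S_{\tg_{n}(j)}>x\}$ of the strict ascending ladder walk $(S_{\tg_{n}(j)})_{n\ge 0}$ associated with the embedded RW $(S_{\tau_{n}(j)})_{n\ge 0}$. Under the (stronger) standing assumption of positive divergence of the embedded walks, which propagates to every $j\in\cS$ via Lemma \ref{lem:solidarity}, the sequence $(S_{\tg_{n}(j)})_{n\ge 0}$ is, under $\Prob_{j}$, an ordinary random walk with strictly positive increments; it is automatically positive divergent, and $\sup_{n}S_{n}=\infty$ a.s., so $\sg(x)<\infty$ a.s.\ and $\sg(x)\to\infty$ a.s.\ as $x\to\infty$.

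The key pathwise inequality is $\sg(x)\ge \nu^{>}(j,x)$ a.s. Indeed, writing $\tg_{n}(j)=\tau_{\zeta_{n}(j)}(j)$ as in \eqref{eq:LI S_tau(i)} and observing that the strict ascending ladder indices satisfy $\zeta_{n}(j)\ge\zeta_{n-1}(j)+1\ge n$, together with $\tau_{k}(j)\ge k$, one gets $\tg_{n}(j)\ge n$ for every $n\ge 0$. On the other hand, by the very definition of $\nu^{>}$ one has $S_{\tg_{\nu^{>}(j,x)-1}(j)}\le x$, so the walk has not yet exceeded $x$ at time $\tg_{\nu^{>}(j,x)-1}(j)$; hence $\sg(x)>\tg_{\nu^{>}(j,x)-1}(j)\ge \nu^{>}(j,x)-1$, which yields the asserted bound.

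The second ingredient is the Spitzer--Erickson/Kesten--Maller asymptotics \eqref{eq:harmonic series asymptotics} (for $\alpha=0$) and \eqref{eq:harmonic series asymptotics, alpha>0} (for $\alpha>0$) applied to the ordinary positive-increment RW $(S_{\tg_{n}(j)})_{n\ge 0}$, for which all Kesten--Maller moment conditions on the negative part are trivial. They give, as $x\to\infty$,
\[
\Erw_{j}\log\nu^{>}(j,x)\ \asymp\ \log J_{j}^{>}(x)\quad\text{and}\quad \Erw_{j}\nu^{>}(j,x)^{\alpha}\ \asymp\ J_{j}^{>}(x)^{\alpha}\ \ (\alpha>0),
\]
where $J_{j}^{>}(x):=x/\Erw_{j}(S_{\tg(j)}\wedge x)\asymp J_{j}(x)$ by Lemma \ref{lem:trunc mean}(e).

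Finally, the identity $\{\sg(x)>n\}=\{S_{n}^{*}\le x\}$ together with the standard Abel-type equivalences $\sum_{n\ge 1}n^{\alpha-1}\Prob_{j}(\sg(x)>n)\asymp\Erw_{j}\sg(x)^{\alpha}$ for $\alpha>0$ and $\sum_{n\ge 1}n^{-1}\Prob_{j}(\sg(x)>n)\asymp\Erw_{j}\log\sg(x)$ for $\alpha=0$ (valid since $\sg(x)\to\infty$ a.s.) combines with the previous two steps to produce both claimed lower bounds. No serious obstacle is expected: the entire argument reduces the MRW statement to its well-known ordinary-RW counterpart applied to the embedded strict ascending ladder walk, and the only real observation needed is the pathwise inequality $\sg(x)\ge \nu^{>}(j,x)$; it is easy to verify but one must resist the misleading temptation to compare $\sg(x)$ with $\nu(j,x)$ or $\tau_{\nu(j,x)}(j)$ directly, since those give inequalities in the wrong direction.
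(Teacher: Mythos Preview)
Your key pathwise inequality $\sg(x)\ge\nu^{>}(j,x)$ is false, and with it the whole argument collapses. The step ``$S_{\tg_{\nu^{>}(j,x)-1}(j)}\le x$, so the walk has not yet exceeded $x$ at time $\tg_{\nu^{>}(j,x)-1}(j)$'' is the error: $S_{\tg_{n}(j)}$ is the running maximum of the \emph{embedded} walk $(S_{\tau_{k}(j)})_{k\ge 0}$, not of the full walk $(S_{n})_{n\ge 0}$. The full walk can overshoot $x$ during an excursion between two return times to $j$ and then drop back below $x$ by the next $\tau_{k}(j)$; in the paper's notation these upward excursions are the $H_{n}^{i}$ of \eqref{eq:def H^i}. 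A two-state example already breaks your inequality: take $\cS=\{0,1\}$, $p_{01}=p_{10}=1$, $K_{01}=\delta_{10}$, $K_{10}=\delta_{-9}$, and $j=0$. Then $S_{\tau_{n}(0)}=n$, so $\nu^{>}(0,x)\sim x$, whereas $S_{1}=10$ gives $\sg(x)=1$ for all $x<10$. You noticed yourself that comparing $\sg(x)$ with $\tau_{\nu(j,x)}(j)$ gives the wrong direction; the same obstruction applies here, because the embedded ladder process ignores the upward excursions.

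This is precisely why the paper does not route the argument through the embedded ladder walk. Instead it works with $m_{\delta}(x):=\inf\{n:\Prob_{i}(S_{n}^{*}\le x)<1-\delta\}$, which is defined in terms of the full maximum $S_{n}^{*}$, and then shows $x/A_{i}(x)\lesssim m_{\delta}(x)$ by a truncation--Chebyshev contradiction argument (adapted from Kesten--Maller, Lemma~3.5). The point of that argument is exactly to control the effect of the $H_{n}^{i}$ together with the embedded increments; see the decomposition of $\{S_{\tau_{m_{l}}}^{*}>x_{l}\}$ via the events $E_{n}$. If you want to salvage a comparison approach, you would need to work with the ladder epochs $\sgn$ of the \emph{full} MRW and then relate $\Erw_{j}(S_{\sg}\wedge x)$ to $\Erw_{j}(S_{\tau(j)}^{+}\wedge x)$, but that is not available from the quoted lemmata and is essentially what the paper's proof circumvents.
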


\begin{proof}
By Theorem \ref{thm:Spitzer-Erickson}, positive divergence of the $(S_{\tau_{n}(i)})_{n\ge 0}$ ensures $A_{i}(x)>0$ for sufficiently large $x$ and $\lim_{n\to\infty}\Prob_{i}(S_{n}^{*}\le x)=0$ for all $(i,x)\in\cS\times\R_{\geqslant}$, the latter because $\sg(x)\le\tau_{\nu(i,x)}(i)<\infty$ a.s. Fix $i\in\cS$ and define
$$ m_{\delta}(x)\ :=\ \inf\{n\ge 1:\Prob_{i}(S_{n}^{*}\le x)<1-\delta\} $$
for $x\in\R_{>}$ and $0<\delta<1$ which are all finite with $m_{\delta}(x)\uparrow\infty$ as $x\uparrow\infty$. Then
\begin{align*}
\sum_{n\ge 1}n^{\alpha-1}\,\Prob_{i}(S_{n}^{*}\le x)\ &\ge\ \sum_{n=1}^{m_{\delta}(x)}n^{\alpha-1}\,\Prob_{i}(S_{n}^{*}\le x)\ \ge\ 
\begin{cases}
(1-\delta)\log m_{\delta}(x),&\text{if }\alpha=0,\\
\hfill c(1-\delta)m_{\delta}(x)^{\alpha},&\text{if }\alpha>0.
\end{cases}
\end{align*}
for some $c\in\R_{>}$. Using $J_{i}(x)\le x/A_{i}(x)$, it therefore remains to show that
\begin{equation}\label{eq:lower bound m_delta(x)}
\frac{x}{A_{i}(x)}\ \lesssim\ m_{\delta}(x)\quad\text{as }x\to\infty.
\end{equation}

We will now assume that \eqref{eq:lower bound m_delta(x)} fails and produce a contradiction. First note that under this assumption, we find for all $\eps\in (0,1)$ an increasing nonnegative and unbounded sequence $(x_{l})_{l\ge 1}$ (depending on $\eps$) such that
\begin{align}
&\hspace{1.2cm}\sup_{l\ge 1}\,2m_{\delta}(x_{l})\frac{A_{i}(x_{l})}{x_{l}} \le\ \eps\label{eq1:m_delta}
\shortintertext{which may be restated as}
&(1-\eps)x_{l}+2m_{\delta}(x_{l})\ \le\ x_{l}\quad\text{for all }l\ge 1.\label{eq2:m_delta}
\end{align}
Putting
\begin{equation}\label{eq:def H^i}
H_{n}^{i}\ :=\ \max_{\tau_{n-1}<k\le\tau_{n}}(S_{k}-S_{\tau_{n}})^{+}
\end{equation}
for $n\in\N$ with generic copy $H^{i}$ under $\Prob_{i}$ and writing $m_{l}$ as shorthand for $m_{\delta}(x_{l})$, we infer
\begin{align*}
A\,:&=\,\Prob_{i}(S_{\tau_{m_{l}}}^{*}>x_{l})\ =\ \Prob_{i}\left(\max_{1\le k\le m_{l}}(S_{\tau_{k-1}}+H_{k}^{i})>x_{l}\right)\\
&=\ \sum_{n=1}^{m_{l}}\Prob_{i}\left(\max_{1\le k\le n-1}(S_{\tau_{k-1}}+H_{k}^{i})\le x_{l}<S_{\tau_{n-1}}+H_{n}^{i}\right)\ =\ \sum_{n=1}^{m_{l}}\Prob_{i}(E_{n}),
\end{align*}
where $W_{j,k}:=S_{\tau_{j+k-1}}-S_{\tau_{j}}$ and
$$ E_{n}\ :=\ \left\{\max_{1\le k\le n-1}(W_{2m_{l}-n+1,k}+H_{2m_{l}-n+k+1}^{i})\le x_{l}<W_{2m_{l}-n+1,n}+H_{2m_{l}+1}^{i}\right\}. $$
Moreover, we have used that $(S_{\tau_{n}}-S_{\tau_{n-1}},H_{n}^{i})_{n\ge 1}$ forms a sequence of iid random vectors under $\Prob_{i}$. Since $S_{\tau_{n}}\to\infty$ a.s., we can pick $h>0$ and $n_{0}\in\N$ such that
\begin{align*}
\Prob_{i}(H^{i}>h)\ <\ \frac{\delta}{4}\quad\text{and}\quad\theta\ :=\ \inf_{n\ge n_{0}}\Prob_{i}(S_{\tau_{n}}>h)\ >\ \frac{1}{2}.
\end{align*} 
Choosing $l$ so large that $m_{l}>n_{0}$, we now further estimate
\begin{align*}
A\ &\le\ \sum_{n=1}^{m_{l}}\Prob_{i}(E_{n})\,\frac{1}{\theta}\,\Prob_{i}(S_{\tau_{2m_{l}-n+1}}>h)\ \le\ \frac{1}{\theta}\sum_{n=1}^{m_{l}}\Prob_{i}\big(E_{n}\cap\{S_{\tau_{2m_{l}}}+H_{2m_{l}+1}^{i}>x_{l}+h\}\big)\\
&\le\ \frac{1}{\theta}\,\Prob_{i}(S_{\tau_{2m_{l}}}+H_{2m_{l}+1}^{i}>x_{l}+h)\ \le\ \frac{1}{\theta}\left(\Prob_{i}(S_{\tau_{2m_{l}}}>x_{l})\ +\ \Prob_{i}(H^{i}>h)\right)\\
&\le\ \frac{1}{\theta}\,\Prob_{i}(S_{\tau_{2m_{l}}}>x_{l})\ +\ \frac{\delta}{2}.
\end{align*}
Put $\zeta_{k,l}:=[(S_{\tau_{k}}-S_{\tau_{k-1}})\vee(-x_{l})]\wedge x_{l}$ and observe that $A_{i}(x_{l})=\Erw_{i}\zeta_{k,l}$. Use (3.71) in \cite{KesMal:96} to obtain
$$ \Prob_{i}(S_{\tau_{2m_{l}}}> x_{l})\ \le\ \Prob_{i}\left(\sum_{k=1}^{2m_{l}}\zeta_{k,l}>x_{l}\right)\ +\ 2m_{l}\,\Prob_{i}(S_{\tau}>x_{l}). $$
Now use \eqref{eq2:m_delta}, Chebychev's inequality and $x_{l}^{2}\,\Prob_{i}(S_{\tau}>x_{l})\le\Erw_{i}\zeta_{1,l}^{2}$ to obtain
\begin{align*}
\Prob_{i}(S_{\tau_{2m_{l}}}> x_{l})\ &\le\ \Prob_{i}\left(\sum_{k=1}^{2m_{l}}\big(\zeta_{k,l}-A_{i}(x_{l})\big)>(1-\eps)x_{l}\right)\,+\,2m_{l}\,\Prob_{i}(S_{\tau}>x_{l})\\
&\le\ \frac{2m_{l}\,\Erw_{i}\zeta_{1,l}^{2}}{(1-\eps)^{2}x_{l}^{2}}\ +\ 2m_{l}\,\Prob_{i}(S_{\tau}>x_{l})\ \le\ \frac{4m_{l}\,\Erw_{i}\zeta_{1,l}^{2}}{(1-\eps)^{2}x_{l}^{2}}.
\end{align*}
for all $l\ge 1$. By \cite[Lemma 3.2]{KesMal:96}, $\Erw_{i}\zeta_{1,l}^{2}\le 3x_{l}A_{i}(x_{l})$ for all sufficiently large $l$ which in combination with \eqref{eq1:m_delta} provides us with
$$ \Prob_{i}(S_{\tau_{2m_{l}}}^{*}>x_{l})\ \le\ \frac{12m_{l}A_{i}(x_{l})}{(1-\eps)^{2}x_{l}}\ \le\ \frac{6\eps}{(1-\eps)^{2}} $$
for all such $l$. We have thus shown that
$$ \Prob_{i}(S_{\tau_{m_{l}}}^{*}>x_{l})\ \le\ \frac{6\eps}{\theta(1-\eps)^{2}}\,+\,\frac{\delta}{2} $$
for any $\eps\in (0,1)$ and sufficiently large $l$ (not depending on $\eps$), say $l\ge l_{0}$. Finally, fix any $l\ge l_{0}$ and choose $\eps$ so small that
$$ \frac{6\eps}{\theta(1-\eps)^{2}}\ <\ \frac{\delta}{2}. $$
Then we arrive at 
$$ \delta\ >\ \Prob_{i}(S_{\tau_{2m_{l}}}^{*}>x_{l})\ \le\ \Prob_{i}(S_{m_{l}}^{*}>x_{l}) $$
which contradicts our definition of $m_{l}=m_{\delta}(x_{l})$.\qed
\end{proof}

\begin{proof}[of Theorem \ref{thm:Kesten-Maller MRW finite S},``\eqref{eq: sigma alpha cond}$\,\RA\Erw_{i}J_{i}(S_{\tau(i)}^-)^{1+\alpha}<\infty$'']
Suppose \eqref{eq: sigma alpha cond} is true and recall that $\cS^{>}$ denotes the set of recurrent states of the ladder chain $(M_{n}^{>})_{n\ge 0}$. Positive divergence ensures $\cS^{>}\ne\emptyset$. Pick any $i\in\cS^{>}$ and put $\kappa:=\inf\{n\ge 0: S_{n}=H_{1}^{i}\}$, $H_{1}^{i}$ as defined in \eqref{eq:def H^i}, and $\wh{\sigma}^{>}:=\inf\{n: S_{\kappa+n}-S_{\kappa}>0\}$. Using $|\cS^{>}|<\infty$, we then obtain
\begin{align*}
\Erw_{i}\osg(0)^{1+\alpha}\ &\le\ \Erw_{i}\left( \sigma^>\,\1_{\{H_{1}^{i}=0\}}+ \kappa\,\1_{\{H^{i}>0\}}+\sum_{j\in\cS^{>}}\1_{\{M_{\kappa}=j,\,H_{1}^{i}>0\}}\,\wh{\sigma}^{>}\right)^{1+\alpha}\\	
&\le\ (|\cS^{>}|+2)^\alpha\,\left(\Erw_{i}(\sg)^{1+\alpha}+\Erw_{i}\tau(i)^{1+\alpha}+ \sum_{j\in\cS^{>}}\Erw_j(\sigma^>)^{1+\alpha}\right)\ <\ \infty,
\end{align*}
which is equivalent to \eqref{eq: sigma alpha cond} by Proposition \ref{prop: MRW sigma Prop}.\qed
\end{proof}

\section{Asymptotic behavior of $S_{n}/n$}\label{sec:SLLN}

\subsection{Strong law of large numbers}\label{subsec:SLLN}

It is well-known that an ordinary random walk $(S_{n})_{n\ge 0}$ satisfies the strong law of large numbers (SLLN), viz. $n^{-1}S_{n}\to\mu$ a.s.\ for some $\mu\in\R$, iff $X_{1}$ is integrable and $\mu=\Erw X_{1}$. The natural substitute for the latter condition in the case of a MRW $(M_{n},S_{n})_{n\ge 0}$ is that $X_{1}$ is $\Prob_{\pi}$-integrable and $\Erw_{\pi}X_{1}=\mu$. However, this is only sufficient but not necessary for the SLLN to hold as shown by the next theorem and a subsequent example. For $n\ge 1$, put
$$ S_{n}^{\oplus}\,:=\,\sum_{k=1}^{n}X_{k}^{+}\quad\text{and}\quad S_{n}^{\ominus}\,:=\,\sum_{k=1}^{n}X_{k}^{-} $$
which are clearly again MRW with driving chain $(M_{n})_{n\ge 0}$.

\begin{Theorem}\label{thm:SLLN}
Given a MRW $(M_{n},S_{n})_{n\ge 0}$, the following assertions are equivalent for any $\mu\in\R$:
\begin{description}[xxx]\itemsep1pt
\item[(a)] $X_{1}$ is $\Prob_{\pi}$-integrable and $\Erw_{\pi}X_{1}=\mu$.\vspace{.05cm}
\item[(b)] $n^{-1}S_{n}\to\mu$ a.s. and $\Erw_{\pi}X_{1}$ exists, i.e. $\Erw_{\pi}X_{1}^{-}<\infty$ or $\Erw_{\pi}X_{1}^{+}<\infty$.
\item[(c)] $n^{-1}S_{n}^{\,\ominus}\to\mu^{-}$, $n^{-1}S_{n}^{\,\oplus}\to\mu^{+}$ a.s. and $\mu^{+}-\mu^{-}=\mu$.\vspace{.05cm}
\item[(d)] $\tau_{n}(i)^{-1}S_{\tau_{n}(i)}^{\,\ominus}\to\mu^{-}$, $\tau_{n}(i)^{-1}S_{\tau_{n}(i)}^{\,\oplus}\to\mu^{+}$ $\Prob_{i}$-a.s. for some/all $i\in\cS$ and $\mu^{+}-\mu^{-}=\mu$.\vspace{.05cm}
\item[(e)] $S_{\tau(i)}^{\,\ominus},S_{\tau(i)}^{\,\oplus}$ are $\Prob_{i}$-integrable and $\Erw_{i}S_{\tau(i)}=\pi_{i}^{-1}\mu$ for some/all $i\in\cS$.
\item[(f)] $\Erw_{\pi}X_{1}$ exists and $\sum_{n\ge 1}n^{-1}\Prob_{i}(|n^{-1}S_{n}-\mu|>\eps)<\infty$ for all $\eps>0$ and some/all $i\in\cS$.
\end{description}
\end{Theorem}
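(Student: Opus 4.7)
My plan is to use (a) as a central hub: prove (a)$\Leftrightarrow$(e)$\Leftrightarrow$(d) by combining the occupation measure formula with the classical iid SLLN, prove (a)$\Rightarrow$(c)$\Rightarrow$(b)$\Rightarrow$(a) via Birkhoff's ergodic theorem applied separately to $X_{n}^{+}$ and $X_{n}^{-}$, and finally prove (a)$\Leftrightarrow$(f) by reducing to Spitzer's iff-theorem for the embedded iid walk $(S_{\tau_{n}(i)})_{n\ge 0}$.

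The equivalence (a)$\Leftrightarrow$(e) follows immediately from the occupation measure formula \eqref{eq:occ measure formula}, applied with $f(M_{1},X_{1})=X_{1}^{+}$ and then $f(M_{1},X_{1})=X_{1}^{-}$: one obtains $\Erw_{\pi}X_{1}^{\pm}=\pi_{i}\Erw_{i}S_{\tau(i)}^{\pm}$ for every $i\in\cS$, so finiteness and the mean identity transfer between the two settings. The equivalence (d)$\Leftrightarrow$(e) is then standard: under $\Prob_{i}$ the sequences $(S_{\tau_{n}(i)}^{\pm}-S_{\tau_{n-1}(i)}^{\pm})_{n\ge 1}$ consist of iid copies of $S_{\tau(i)}^{\pm}$, so the iid SLLN gives $n^{-1}S_{\tau_{n}(i)}^{\pm}\to\Erw_{i}S_{\tau(i)}^{\pm}$ $\Prob_{i}$-a.s.\ iff $S_{\tau(i)}^{\pm}$ is $\Prob_{i}$-integrable; dividing by the SLLN limit $n^{-1}\tau_{n}(i)\to\Erw_{i}\tau(i)=\pi_{i}^{-1}$ converts these limits into $\Erw_{\pi}X_{1}^{\pm}=\mu^{\pm}$.

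The loop (a)$\Rightarrow$(c)$\Rightarrow$(b)$\Rightarrow$(a) proceeds through Birkhoff's ergodic theorem, which applies to the stationary ergodic sequence $(M_{n},X_{n})_{n\ge 1}$ under $\Prob_{\pi}$ as in the proof of Proposition \ref{prop:regular <-> mean exists}. Applied to $f=X_{1}^{\pm}$ it delivers (c), and (c)$\Rightarrow$(b) is immediate by subtraction. For (b)$\Rightarrow$(a), say $\Erw_{\pi}X_{1}^{-}<\infty$; Birkhoff forces $n^{-1}S_{n}^{\,\ominus}\to\Erw_{\pi}X_{1}^{-}$ a.s., so $n^{-1}S_{n}^{\,\oplus}=n^{-1}S_{n}+n^{-1}S_{n}^{\,\ominus}\to\mu+\Erw_{\pi}X_{1}^{-}$ a.s.; a second application of Birkhoff to the nonnegative $X_{n}^{+}$ forces this limit to coincide with $\Erw_{\pi}X_{1}^{+}\in[0,\infty]$, giving $\Erw_{\pi}X_{1}^{+}<\infty$ and $\Erw_{\pi}X_{1}=\mu^{+}-\mu^{-}=\mu$.

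The main obstacle is the equivalence (a)$\Leftrightarrow$(f), which the previous arguments do not touch. For (a)$\Rightarrow$(f), I would embark on Spitzer's classical iff-theorem for ordinary RW applied to the zero-delayed iid walk $(S_{\tau_{n}(i)})_{n\ge 0}$, whose one-step mean equals $\pi_{i}^{-1}\mu$ by (e); this delivers $\sum_{n\ge 1}n^{-1}\Prob_{i}(|n^{-1}S_{\tau_{n}(i)}-\pi_{i}^{-1}\mu|>\eps)<\infty$ along the subsequence $\tau_{n}(i)$. To transfer this to the full sequence $(S_{n})_{n\ge 0}$, I would invoke Lemma \ref{lem:solidarity increments} in the Appendix together with a blocking argument controlling the within-cycle excursion oscillations $\max_{\tau_{n-1}(i)<k\le\tau_{n}(i)}|S_{k}-S_{\tau_{n-1}(i)}|$, whose first $\Prob_{i}$-moment is finite under integrability of $S_{\tau(i)}^{\oplus}+S_{\tau(i)}^{\ominus}$ and is exactly what makes the harmonic weighted passage from subsequence to full sequence harmless. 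The converse (f)$\Rightarrow$(b) runs along the same lines: the assumed harmonic summability is transferred via Lemma \ref{lem:solidarity increments} to the embedded walk, Spitzer's iff-theorem yields its SLLN with rate $\pi_{i}^{-1}\mu$, and the assumed existence of $\Erw_{\pi}X_{1}$ together with cycle-oscillation control lifts this to $n^{-1}S_{n}\to\mu$ a.s. The delicate point throughout is that the Spitzer series and the a.s.\ SLLN only probe the ``cumulative'' asymptotics of the walk, so the cycle-oscillation estimate is indispensable in both directions.
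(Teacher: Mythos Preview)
Your treatment of the equivalences among (a)--(e) is essentially the paper's: occupation measure formula for (a)$\Leftrightarrow$(e), the iid SLLN plus $n^{-1}\tau_{n}(i)\to\pi_{i}^{-1}$ for (d)$\Leftrightarrow$(e), and Birkhoff applied to $X_{n}^{\pm}$ for the loop through (b) and (c). No objection there.

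Your handling of (f), however, is overcomplicated and partly misfires. Lemma \ref{lem:solidarity increments} compares $\sum_{n}n^{\alpha-1}\Prob_{i}(S_{\tau_{n}(i)}\le y)$ with $\Erw_{i}\sum_{n}\tau_{n}(i)^{\alpha-1}\1_{\{S_{\tau_{n}(i)}\le y\}}$ for a \emph{fixed} level $y$, whereas the series in (f) has the \emph{moving} threshold $n(\mu\pm\eps)$; the lemma does not transfer that, and your cycle-oscillation patch is left at the level of a hope rather than an estimate. The paper avoids all of this by shifting the MRW: since $n^{-1}S_{n}\to\mu$ a.s., the MRW $(M_{n},S_{n}-n(\mu-\eps))_{n\ge 0}$ is positive divergent for every $\eps>0$, and the implication (a)$\Rightarrow$(c) of Theorem \ref{thm:Spitzer-Erickson MRW} applied to this shifted MRW yields $\sum_{n\ge 1}n^{-1}\Prob_{i}(S_{n}\le n(\mu-\eps))<\infty$ at once; the companion series with $\mu+\eps$ follows by reflection. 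No embedding, no excursion control. For (f)$\Rightarrow$(b), the existence of $\Erw_{\pi}X_{1}$ already gives $n^{-1}S_{n}\to\Erw_{\pi}X_{1}$ a.s.\ by Birkhoff; were this limit $\ne\mu$, then for small $\eps$ one would have $\Prob_{i}(|n^{-1}S_{n}-\mu|>\eps)\to 1$ and the harmonic series would diverge. So $\Erw_{\pi}X_{1}=\mu\in\R$ and (b) follows. The ``indispensable'' cycle-oscillation estimate is not needed in either direction.
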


\begin{proof}
``(a)$\RA$(b)'' Since $(X_{n}^{-})_{n\ge 1}$ and $(X_{n}^{+})_{n\ge 1}$ are ergodic stationary sequences under $\Prob_{\pi}$ with finite means $\mu^{-}=\Erw_{\pi}X_{1}^{-}$ and $\mu^{+}=\Erw_{\pi}X_{1}^{+}$, respectively, $n^{-1}S_{n}\to\mu$ a.s. with $\mu=\mu^{+}-\mu^{-}$ follows from Birkhoff's ergodic theorem.

\vspace{.1cm}
``(b)$\RA$(c)'' Suppose that $\mu^{+}:=\Erw_{\pi}X_{1}<\infty$. By using ``(a)$\RA$(b)'' for the MRW $(M_{n},S_{n}^{\oplus})_{n\ge 0}$, we infer $n^{-1}S_{n}^{\oplus}\to\mu^{+}$ a.s. and then further
$$ n^{-1}S_{n}^{\ominus}\ =\ n^{-1}(S_{n}-S_{n}^{\oplus})\ \to\ \mu-\mu^{+}\ =:\ \mu^{-}\quad\Prob_{\pi}\text{-a.s.} $$
Assuming $\Erw_{\pi}X_{1}^{-}<\infty$, we may argue in a similar manner.

\vspace{.1cm}
``(c)$\RA$(d)'' is trivial.

\vspace{.1cm}
``(d)$\RA$(e)'' Since $n^{-1}\tau_{n}(i)\to\Erw_{i}\tau(i)=\pi_{i}^{-1}$ $\Prob_{i}$-a.s., we have that 
$$ n^{-1}S_{\tau_{n}(i)}^{\,\ominus}\to\pi_{i}^{-1}\mu^{-}\quad\text{and}\quad n^{-1}S_{\tau_{n}(i)}^{\,\oplus}\to\pi_{i}^{-1}\mu^{+}\quad\Prob_{i}\text{-a.s.} $$
which implies the $\Prob_{i}$-integrability of $S_{\tau(i)}^{\,\ominus},S_{\tau(i)}^{\,\oplus}$ and
$$ \Erw_{i}S_{\tau(i)}\ =\ \Erw_{i}S_{\tau(i)}^{\,\oplus}-\Erw_{i}S_{\tau(i)}^{\,\ominus}\ =\ \pi_{i}^{-1}(\mu^{+}-\mu^{-})\ =\ \pi_{i}^{-1}\mu. $$

\vspace{.1cm}
``(e)$\RA$(a)'' If $S_{\tau(i)}^{\,\ominus}$ is $\Prob_{i}$-integrable, then $\Erw_{i}S_{\tau(i)}^{\,\ominus}=\pi_{i}^{-1}\Erw_{\pi}X_{1}^{-}$ implies that $X_{1}^{-}$ is $\Prob_{\pi}$-integrable. Similarly, the $\Prob_{i}$-integrability of $S_{\tau(i)}^{\,\oplus}$ implies the $\Prob_{\pi}$-integrability of $X_{1}^{+}$. Hence, we may use \eqref{eq:stat mean drift} to obtain
$$ \Erw_{\pi}X_{1}\,\Erw_{i}\tau(i)\ =\ \Erw_{i}S_{\tau(i)}\ =\ \mu\,\pi_{i}^{-1} $$
and thus $\Erw_{\pi}X_{1}=\mu$.

\vspace{.1cm}
``(b)$\RA$(f)'' If $n^{-1}S_{n}\to\mu$ a.s., then, for any $\eps>0$, $(S_{n}-(\mu-\eps)n)_{n\ge 0}$ is positive divergent and $(S_{n}-(\mu+\eps)n)_{n\ge 0}$ negative divergent, hence
$$ \sum_{n\ge 1}\frac{1}{n}\Prob_{i}(S_{n}\le (\mu-\eps)n)\ <\ \infty\quad\text{and}\quad\sum_{n\ge 1}\frac{1}{n}\Prob_{i}(S_{n}\ge (\mu+\eps)n)\ <\ \infty $$
for all $i\in\cS$ by Theorem \ref{thm:Spitzer-Erickson MRW}(c), which in turn yields the asserted finiteness of the series $\sum_{n\ge 1}n^{-1}\Prob_{i}(|n^{-1}S_{n}-\mu|\ge\eps)$.

\vspace{.1cm}
``(f)$\RA$(b)'' If $\Erw_{\pi}X_{1}$ exists, w.l.o.g. positive, then $n^{-1}S_{n}\to\Erw_{\pi}X_{1}>0$ a.s. Moreover, $\sum_{n\ge 1}n^{-1}\Prob(|n^{-1}S_{n}-\mu|\ge\eps )<\infty$ for all $\eps>0$ in combination with a simple Borel-Cantelli argument entails the positive divergence of $(S_{n}-n(\mu+\eps))_{n\ge 0}$ and the negative divergence of $(S_{n}-n(\mu-\eps))_{n\ge 0}$ for any such $\eps$. Consequently, by another appeal to Birkhoff's ergodic theorem, we infer
$$ \Erw_{\pi}X_{1}-(\mu+\eps)\ =\ \lim_{n\to\infty}\frac{S_{n}-n(\mu-\eps)}{n}\ \ge\ 0\quad\text{a.s.} $$
for all $\eps>0$, thus $\Erw_{\pi}X_{1}\ge\mu$, and a similar argument for $(S_{n}-n(\mu-\eps))_{n\ge 0}$ shows $\Erw_{\pi}X_{1}\le\mu$. Hence, $n^{-1}S_{n}\to\mu$ a.s. as claimed.\qed
\end{proof}

It is readily seen that the above equivalences (a)-(e) remain valid if $\mu=\Erw_{\pi}X_{1}\in\{\pm\infty\}$. On the other hand, in contrast to ordinary random walks with iid increments, one cannot dispense with the finiteness of $\Erw_{\pi}X_{1}^{-}$ or $\Erw_{\pi}X_{1}^{+}$ in condition (b) as shown by the following example, where $n^{-1}S_{n}$ converges a.s. to 0 although $\Erw_{\pi}X_{1}^{-}=\Erw_{\pi}X_{1}^{+}=\infty$. 

\begin{Exa}\label{exa:SLLN}\rm
Let $(M_{n})_{n\ge 0}$ be a birth-death chain on $\N_{0}$ with transition probabilities 
$$ p_{i,i-1}\ =\ 1-p_{i,i+1}\ =\ \frac{i+2}{2(i+1)}, $$
thus $p_{01}=1$. The stationary distribution is given by
$$ \pi_{i}\ =\ \frac{p_{01}\cdot...\cdot p_{i-1,i}}{p_{10}\cdot...\cdot p_{i,i-1}}\pi_{0}\ \asymp\ \frac{1}{i^{2}} $$
for all $i\in\N_{0}$ and $\pi_{0}$ such that $\sum_{i\ge 0}\pi_{i}=1$. Define $\gamma:\N_{0}\to\R$ by $\gamma(0)=0$ and
\begin{equation*}
\gamma(2i)\ =\ -\gamma(2i-1)\,:=\,i
\end{equation*}
for $i\ge 1$. Then, since $p_{2i-1\,2i}\asymp \frac{1}{2}$ as $i\to\infty$, we obtain
\begin{align*}
\Erw_{\pi}X_{1}^{+}\ &\ge \ \sum_{i\ge 1}\pi_{2i-1}\cdot p_{2i-1\, 2i}\cdot \Erw(X_1|M_0=2i-1, M_1=2i)\ \asymp \ \sum_{i\ge 1} \frac{1} {(2i)^2}\cdot \frac{1}{2}\cdot  2i\ =\ \infty
\end{align*}
and $\Erw_{\pi} X_{1}^{-}=\infty$ follows analogously, whereas each $S_{\tau(i)}$ is $\Prob_{i}$-a.s. vanishing and thus particularly integrable. Put $N_n:=\inf\{k: \tau_k(0)\ge n\}$. Since
$$ |S_{\tau_{n}(0)+k}|\ \le\ |X_{\tau_{n}(0)+k}|\ \le\ k\quad\Prob_{0}\text{-a.s.} $$
for all $n\in\N_{0}$ and $\tau_{n}(0)\le k<\tau_{n+1}(0)$ and $\Erw_{0}\tau(0)<\infty$, we infer that
$$ \left|\frac{S_{n}}{n}\right|\ \le\ \left|\frac{X_{n}}{n}\right|\ \le\ \frac{\nu_{N_{n}}}{n}\ \stackrel{n\to\infty}{\longrightarrow}\ 0\quad\Prob_{0}\text{-a.s.} $$
and then the same convergence a.s.

Now choosing any integrable sequence $(Y_{n})_{n\ge 1}$ of nondegenerate iid\ random variables independent of $(M_{n})_{n\ge 0}$ and putting
$$ X_{n}\,:=\,Y_{n}+\gamma(M_{n})-\gamma(M_{n-1}) $$
for $n\ge 1$, the associated MRW $(M_{n},S_{n})_{n\ge 0}$ is easily seen to be regular with $n^{-1}S_{n}\to\mu=:\Erw Y_{1}$ a.s., and
$$ \Erw_{i}S_{\tau(i)}\ =\ \Erw_{i}\left(\sum_{k=1}^{\tau(i)}Y_{k}\right)\ =\ \mu\,\Erw_{i}\tau(i)\ <\ \infty $$
for all $i\in\N_{0}$, although $\Erw_{\pi}X_{1}$ does not exist.
\end{Exa}

\subsection{Proof of Theorem \ref{thm:Kesten trichotomy MRW} (Kesten trichotomy)}\label{subsec:Kesten trichotomy}

Since $(X_{n})_{n\ge 0}$ forms an ergodic stationary sequence under $\Prob_{\pi}$, 
$$ \liminf_{n\to\infty}\,n^{-1}\,S_{n}\quad\text{and}\quad\limsup_{n\to\infty}\,n^{-1}\,S_{n} $$ are both a.s. constant. Therefore, it suffices to prove the trichotomy under $\Prob_{i}$ for any fixed $i\in\cS$. Note that $\Erw_{i}|S_{\tau(i)}|=\infty$ rules out that the given MRW is null-homologous.

\vspace{.1cm}
If $(S_{n})_{n\ge 0}$ is positive divergent, then \eqref{eq:Pruitt1} and \eqref{eq:Pruitt2} provide us with
\begin{align*}
\liminf_{n\to\infty}\,\frac{S_{n}}{n}\ \ge\ \liminf_{n\to\infty}\,\frac{S_{\tau_{n}}-D_{n+1}^{i}}{\tau_{n+1}}\ =\ \liminf_{n\to\infty}\, \frac{S_{\tau_{n}}^{+}\,(1-o(1))}{\tau_{n+1}}\ =\ \infty\quad\text{a.s.}
\end{align*}
and thus the desired result \textsf{(PD+)}. In the negative divergent case, the same conclusion holds for $(-S_{n})_{n\ge 0}$, thus giving \textsf{(ND+)}.

\vspace{.1cm}
If $(S_{n})_{n\ge 0}$ is oscillating, then $\Erw_{i}|S_{\tau(i)}|=\infty$ entails at least one of
$$ \liminf_{n\to\infty}n^{-1}S_{\tau_{n}}\ =\ -\infty\quad\text{or}\quad\limsup_{n\to\infty}n^{-1}S_{\tau_{n}}\ =\ \infty\quad\text{a.s.} $$
by Kesten's trichotomy for ordinary random walks, and thus also
$$ \liminf_{n\to\infty}n^{-1}S_{n}\ =\ -\infty\quad\text{or}\quad\limsup_{n\to\infty}n^{-1}S_{n}\ =\ \infty\quad\text{a.s.} $$
W.l.o.g. assuming the second alternative, let $c:=\liminf_{n\to\infty}n^{-1}S_{n}\le 0$ be finite. Then $\liminf_{n\to\infty}\,n^{-1}(S_{n}+n\,(c+1))=1>0$ which in turn entails the positive divergence of $(S_{n}+n(c+1))_{n\ge 0}$ and so, by the first part of this proof,
$$ \infty\ =\ \liminf_{n\to\infty}\,n^{-1}(S_{n}+ n\,(c+1))\ =\ \liminf_{n\to\infty}\,n^{-1}S_{n}+c+1\quad\text{a.s.} $$
which contradicts the finiteness of $c$.\qed

\section{Counterexamples}\label{sec:counterexamples}

\subsection{Theorem \ref{thm:Spitzer-Erickson MRW}: $\Erw_{i}\tau(i)\log\tau(i)<\infty$ is necessary for the equivalence of part (c) and \eqref{eq:Spitzer series criterion}}

Our first counterexample will show that one cannot dispense with $\Erw_{i}\tau(i)\log\tau(i)<\infty$ for the equivalence of part (c) and the integral criterion \eqref{eq:Spitzer series criterion} in Theorem \ref{thm:Spitzer-Erickson MRW}, not even when $(M_{n},S_{n})_{n\ge 0}$ is positive divergent with finite stationary drift.

\begin{Exa}\label{exa:xlogx for tau(s)}\rm
The infinite petal flower chain from \ref{exa:infinite petal flower chain} may be generalized by having excursions of variable length away from the central state 0. To be more precise, suppose that $(M_{n})_{n\ge 0}$ has state space $\cS\subset\{0\}\cup\N^{2}$ and satisfies
\begin{align*}
p_{0,(n,1)}\,=\,\Prob(\Gamma=n)\quad\text{and}\quad p_{(n,k),(n,k+1)}\,=\,1\,=\,p_{(n,n-1),0}
\end{align*}
for all $n\ge 2$ and $k=1,\ldots,n-2$, where $\Gamma$ denotes an $\N$-valued random variable with finite mean. In other words, whenever in state 0, the chain picks a state $(n,n-1)$ with probability $\Prob(\Gamma=n)$ and then moves deterministically through $(n,n-2),\ldots,(n,1)$ before returning to 0, hence $\Prob_{0}(\tau(0)\in\cdot)=\Prob(\Gamma\in\cdot)$.

\vspace{.1cm}
Let $\Gamma$ further be such that $\Erw\Gamma\log\Gamma=\infty$. Define the increments of $(M_{n},S_{n})_{n\ge 0}$ by
\begin{align*}
X_{n}\ :=\ 
\begin{cases}
-(k-1),&\text{if }M_{n}=(k,1)\text{ for }k\in\N,\\
\hfill k,&\text{if }M_{n-1}=(k,k-1),M_{n}=0\text{ for }k\in\N,\\
\hfill 0,&\text{otherwise}.
\end{cases}
\end{align*}
Again, we then have $S_{\tau(0)}=1$, in particular positive divergence of $(S_{\tau_{n}(0)})_{n\ge 0}$ and also $\Erw_{\pi}|X_{1}|<\infty$, for
$$ \Erw_{\pi}|X_{1}|\ =\ \frac{1}{\Erw_{0}\tau(0)}\sum_{n\ge 1}(2n-1)\,\Prob_{0}(\tau(0)=n)\ =\ \frac{2\,\Erw_{0}\tau(0)-1}{\Erw_{0}\tau(0)}. $$
Hence, by Proposition \ref{prop:regular <-> mean exists}, $(M_{n},S_{n})_{n\ge 0}$ is regular and therefore also positive divergent, in particular Theorem \ref{thm:Spitzer-Erickson MRW}(c) holds true. On the other hand, using $J_{0}(x)\asymp x$ as $x\to\infty$, we find
\begin{align*}
\int\log J_{0}(x)\ \V_{0}(dx)\ &\asymp\ \int \log x\ \V_{0}(dx)\ =\ \Erw_{0}\left(\sum_{n=1}^{\tau(0)}\log S_{n}^{-}\right)\\
&=\ \Erw_{0}(\tau(0)-1)\log(\tau(0)-1)\ =\ \Erw(\Gamma-1)\log(\Gamma-1)\ =\ \infty.
\end{align*}
Here we have used that \eqref{eq:occ measure formula} and $S_{1}^{-}=\ldots=S_{\tau(0)-1}^{-}=\tau(0)-1$ $\Prob_{0}$-a.s.
\end{Exa}

\subsection{The integral criteria \ref{thm:Kesten-Maller MRW}(a), \eqref{eq:MRW Spitzer series} and \eqref{eq:MRW renewal counting}}

Let $\alpha>0$ and $(M_{n},S_{n})_{n\ge 0}$ be such that $S_{\tau_{n}(i)}\to\infty$ a.s. and $\Erw_{i}J_{i}(S_{\tau(i)}^{-})^{1+\alpha}<\infty$. The following example shows that the integral criteria
\begin{description}\itemsep2pt
\item[(1)]   $\Erw_{i}J_{i}(D^{i})^{1+\alpha}<\infty$ (equivalent to $\Erw_{i}\rho(0)^{\alpha}<\infty$),
\item[(2)]  $\int  J_{i}(y)^\alpha\ \V_{i}^1(dy)<\infty$ (equivalent to  $\sum_{n\ge 1} n^{\alpha-1}\,\Prob_{i}(S_{n}\le 0) <\infty$),
\item[(3)]  $ \int  J_{i}(y)\ \V_{i}^{\alpha}(dy)<\infty$ (equivalent to $\Erw_{i}N(0)^{\alpha}<\infty$ for $\alpha\ge 1$)
\end{description}
are generally not equivalent.

\begin{Exa}\label{exa:integral criteria}\rm
Let $(M_{n},S_{n})_{n\ge 0}$ be a MRW with infinite petal flower driving chain from Example \ref{exa:infinite petal flower chain} and
\begin{align*}
X_{n}\ :=\ 
\begin{cases}
\hfill -x_{i},&\text{if }M_{n-1}=0,\,M_{n}=i,\\
x_{i}+2,&\text{if }M_{n-1}=i,\,M_{n}=0,
\end{cases}
\quad n\ge 1,
\end{align*}
for a sequence of positive numbers $(x_{i})_{i\ge 1}$. Then $\tau(0)=2$ and thus, by \eqref{eq:tail D^s and V_alpha},
$$ \V_{0}^{\alpha}((y,\infty))\ \asymp\ \Prob_{0}(D^{0}>y) $$
for any $\alpha>0$. Moreover, $S_{\tau(0)}=2$, thus $J_{0}(x)\asymp x$, and $D_{1}^{0}=x_{M_{1}}$ $\Prob_{0}$-a.s. Therefore, (1)--(3) for $i=0$ may here be restated as
\begin{description}\itemsep2pt
\item[(1)]   $\Erw_{0}(D^{0})^{1+\alpha}=\sum_{i\ge 1}p_{0i}\,x_{i}^{1+\alpha}<\infty$,
\item[(2)]  $\int y^{\alpha}\ \V_{0}^{1}(dy)=\Erw_{0}(D^{0})^{\alpha}=\sum_{i\ge 1}p_{0i}\,x_{i}^{\alpha}<\infty$,
\item[(3)]  $\int  y\ \V_{0}^{1}(dy)=\Erw_{0}D^{0}=\sum_{i\ge 1}p_{0i}\,x_{i}<\infty$,
\end{description}
respectively, and it is now obvious that, by appropriate choice of the $x_{i}$, any of the three cases ``only (3) holds'', ``(2) and (3) hold, but (1) fails'', ``(1)--(3) all fail to hold'' may occur while at the same time $\Erw_{0}J_{0}(S_{\tau}^{-})^{\beta}<\infty$ is obviously satisfied for all $\beta\in\R_{>}$.
\end{Exa}

\subsection{Tail comparison: $\Prob_{\pi}(X_{1}\in\cdot)$ versus $\Prob_{i}(S_{\tau(i)}\in\cdot)$}

We show next that the tails of $\Prob_{\pi}(X_{1}\in\cdot)$ versus $\Prob_{i}(S_{\tau(i)}\in\cdot)$ can be very different.

\begin{Exa}\label{exa:Sisyphus}\rm
Let $\alpha\in (1,2)$ and $(M_{n})_{n\ge 0}$ be the Sisyphus chain on $\N_{0}$ as introduced in Remark \ref{rem:Sisyphus chain}. This chain has stationary probabilities
\begin{align*}
\pi_{n}\ =\ c\,\Erw_{0}\left(\sum_{k=1}^{\tau(0)}\1_{\{M_{k}=n\}}\right)\ =\ c\,\Prob_{0}(\tau(0)>n)\ =\ \frac{c}{n^{\alpha}}
\end{align*}
with $c=1/\Erw_{0}\tau(0)$.
Define $X_{n}=M_{n}$, thus $S_{n}=M_{1}+...+M_{n}$ for $n\ge 1$. Moreover, $S_{\tau(0)}=(\tau(0)-1)\tau(0)/2$ $\Prob_{0}$-a.s. and therefore
$$ \Prob_{0}(S_{\tau(0)}>n)\ =\ \Prob((\tau(0)-1)\tau(0)>2n)\ \asymp\ \Prob(\tau(0)>n^{1/2})\ \asymp\ \frac{1}{n^{\alpha/2}} $$
as $n\to\infty$. On the other hand,
$$ \Prob_{\pi}(X_{1}>n)\ =\ \Prob_{\pi}(M_{1}>n)\ =\ \sum_{k>n}\pi_{k}\ \asymp\ \frac{1}{n^{\alpha-1}}. $$ 
As a consequence, $\Erw_{0}(S_{\tau(0)}\wedge x)=\int_{0}^{x}\Prob(S_{\tau(0)}>y)\ dy $ grows like $x^{(2-\alpha)/2}$, while $\Erw_{\pi}(X_{1}\wedge x)$ grows like $x^{2-\alpha}$, as $x\to\infty$.
\end{Exa}

\subsection{Theorem \ref{thm:Kesten-Maller MRW min}: Part (c) does not imply part (b)}\label{subsec:min theorem}

Our last example will provide an instance where $\Erw_{i}|S_{\sle(-x)}|^{\alpha}\1_{\{\sle(-x)<\infty\}}<\infty$ for all $(i,x)\in\cS\times\R_{\geqslant}$, while $\Erw_{i}\left|\min_{n\ge 0}S_{n}\right|^{\alpha}=\infty$.

\begin{Exa}\label{exa:general IPFC example}\rm
Given any $\alpha>1$, let $(M_{n})_{n\ge 0}$ be the generalized infinite petal flower chain from Example \ref{exa:xlogx for tau(s)}, but with $\Gamma$ satisfying the moment conditions
$$ \Erw\Gamma^{2(1+1/\alpha)}\ <\ \infty\ =\ \Erw\Gamma^{(1+\alpha)(1+1/\alpha)}\quad\text{and}\quad\Erw\Gamma^{1+\alpha}\ <\ \infty. $$
Define the increments of $(M_{n},S_{n})_{n\ge 0}$ by
\begin{align*}
X_{n}\ :=\ 
\begin{cases}
\hfill -l^{1/\alpha},&\text{if }M_{n}=(k,l)\text{ for }k,l\in\N,\\
1+\sum_{l=1}^{k-1}l^{1/\alpha},&\text{if }M_{n-1}=(k,k-1),M_{n}=0\text{ for }k\in\N,
\end{cases}
\end{align*}
hence $S_{\tau(0)}=1$ $\Prob_{0}$-a.s., in particular $J_{0}(x)\asymp x$ as $x\to\infty$. Observe that 
$$ D^{0}\ =\ \sum_{k=1}^{\tau(0)-1}k^{1/\alpha}\ \asymp\ \tau(0)^{1+1/\alpha}\quad\Prob_{0}\text{-a.s.} $$
which, by construction, entails
$$ \Erw_{0}D^{0}\ \le\ \Erw_{0}(D^{0})^{2}\ \asymp\ \Erw_{0}\tau(0)^{2(1+1/\alpha)}\ <\ \infty\ =\ \Erw_{0}\tau(0)^{(1+\alpha)(1+1/\alpha)}\ \asymp\ \Erw_{0}(D^{0})^{1+\alpha}. $$
and thereupon the positive divergence of $(M_{n},S_{n})_{n\ge 0}$ and $\Erw_{i}\left|\min_{n\ge 0}S_{n}\right|^{\alpha}=\infty$ for all $i\in\N_{0}$ by invoking Theorem \ref{thm:Kesten-Maller MRW min}.

\vspace{.1cm}
Finally, we prove $\Erw_{0}|S_{\sle(-x)}|^{\alpha}\1_{\{\sle(-x)<\infty\}}<\infty$ for all $x\in\R_{\geqslant}$ which easily implies that $\Erw_{i}|S_{\sle(-x)}|^{\alpha}\1_{\{\sle(-x)<\infty\}}<\infty$ for all $(i,x)\in\N_{0}\times\R_{\geqslant}$. Define
$$ \kappa(x)\ :=\ \inf\{n\ge 1:\tau_{n}(0)\ge\sle(-x)\} $$
and notice that $\kappa(x)\le\sle(-x)$ as well as
$$ |S_{\sle(-x)}|\ \le\ \tau_{\kappa(x)}(0)^{1/\alpha}\quad\Prob_{0}\text{-a.s. on }\{\sle(-x)<\infty\}. $$
Consequently, by making use of Wald's identity,
\begin{align*}
\Erw_{0}|S_{\sle(-x)}|^{\alpha}\1_{\{\sle(-x)<\infty\}}\ &\le\ \Erw_{0}\tau_{\kappa(x)}(0)\1_{\{\kappa(x)<\infty\}}\\
&=\ \Erw_{0}\tau(0)\,\Erw_{0}\kappa(x)\1_{\{\kappa(x)<\infty\}}\ \lesssim\ \Erw_{0}\sle(-x)\1_{\{\sle(-x)<\infty\}}.
\end{align*}
The last expectation is finite by an appeal to Theorem \ref{thm:Kesten-Maller MRW}.
\end{Exa}

\section{Comparison with perturbed random walks}\label{sec:PRW}

There is partial overlap of the present work with a recent article by the first author with Iksanov and Meiners \cite{AlsIksMei:15} on fluctuation theory for \emph{perturbed random walks (PRW)}, defined by $(\sum_{k=1}^{n-1}Z_{k}+\eta_{n})_{n\ge 1}$ for an iid $\R^{2}$-valued sequence $(Z_{n},\eta_{n})_{n\ge 1}$.
Although MRW and PRW are quite different stochastic sequences, in some regards and under additional assumptions, their study reduces to similar objects. For example, positive divergence of a MRW $(M_{n},S_{n})_{n\ge 0}$ discussed in this paper is equivalent to the positive divergence of the PRW $(S_{\tau_{n-1}(i)}-D_{n}^{i})_{n\ge 1}$ for some/all $i\in\cS$. Moreover, if $\Erw_{i}\tau(i)^{1+\alpha}<\infty$, then Lemma \ref{lem:T>tau_n formula} implies that $\Erw_{i}\rho(i)^{\alpha}<\infty$ holds iff the $\alpha$-moment of the last exit time of $(S_{\tau_{n-1}(i)}-D_{n}^{i})_{n\ge 1}$ is finite. This indicates that one can translate
results for a MRW in terms of a suitable PRW and then draw on the fluctuation theory for the latter class of sequences developed in \cite{AlsIksMei:15}. The translation actually also goes the other way when assuming the $\eta_{n}$ to be integer-valued which is possible without loss of generality for only the tails of their distribution matters here. On the other hand, this correspondence has its limitations. There are in fact equivalences with no counterpart for PRW where we have used the particular structure of a MRW, notably its dual $({}^{\#}M_{n},{}^{\#}S_{n})_{n\ge 0}$ and the ladder chain $(M_{n}^{>})_{n\ge 0}$. Theorem \ref{thm:Kesten-Maller MRW min} on the power moments of
$$ \left|\min_{n\ge 0}S_{n}\right|\ =\ \left|\min_{n\ge 1}\,(S_{\tau_{n-1}(i)}-D_{n}^{i})\right|\quad\Prob_{i}\text{-a.s.} $$
may serve as another example where we have benefitted from the use of MRW and which has no counterpart in \cite{AlsIksMei:15}. Last but not least, the study of power moments of $N(x)$ and of the weighted renewal measures $\sum_{n\ge 1}n^{\alpha-1}\,\Prob_{i}(S_{n}\le x)$ provide instances where PRW cannot be used at all because the behavior of these quantities depends on the entire excursions of the $S_{n}$ between visits of the driving chain to a state $i$ rather than just their minima $D_{n}^{i}$.

\section{Appendix}

\begin{Lemma}\label{Appendix: asymp}
Let $(S_n)_{n\ge 0}$ be a positive divergent random walk. Then, as $y\to\infty$,
$$ \sum_{n\ge 1}\Prob(0\le S_{n}\le y)\ \asymp \ J(y). $$
\end{Lemma}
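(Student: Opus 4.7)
The plan is to compare the series with the renewal measure of the strictly ascending ladder heights. Setting $\sigma^>_0:=0$ and $\sigma^>_k:=\inf\{n>\sigma^>_{k-1}:S_n>S_{\sigma^>_{k-1}}\}$ for $k\ge 1$, positive divergence of $(S_n)$ ensures that every $\sigma^>_k$ is a.s.\ finite, so $(S_{\sigma^>_k})_{k\ge 0}$ is an ordinary random walk with strictly positive increments. Let $U^>([0,y]):=\sum_{k\ge 0}\Prob(S_{\sigma^>_k}\le y)$ denote its renewal measure. The assertion will follow once the two claims
\begin{equation*}
\mathrm{(I)}\quad U^>([0,y])\,\asymp\,J(y),\qquad\mathrm{(II)}\quad\sum_{n\ge 1}\Prob(0\le S_n\le y)\,\asymp\,U^>([0,y])
\end{equation*}
are both established as $y\to\infty$.

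Claim (I) is classical: the elementary renewal theorem applied to the positive RW $(S_{\sigma^>_k})$ yields $U^>([0,y])\asymp y/\Erw(S_{\sigma^>}\wedge y)$, and Lemma \ref{lem:trunc mean}(e), in its trivial single-state (ordinary-RW) instance, provides $\Erw(S_{\sigma^>}\wedge y)\asymp\Erw(X^+\wedge y)$; combined, these give $U^>([0,y])\asymp J(y)$ (compare the proof of \cite[Lemma 3.1]{KesMal:96}).

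For claim (II), the lower bound is immediate: every $k\ge 1$ with $S_{\sigma^>_k}\le y$ satisfies $0<S_{\sigma^>_k}\le y$ and thus contributes to the sum on the left, giving $\sum_{n\ge 1}\Prob(0\le S_n\le y)\ge U^>([0,y])-1$. For the upper bound, I would decompose each $n\ge 0$ by the unique $k$ with $\sigma^>_k\le n<\sigma^>_{k+1}$ and apply the strong Markov property at $\sigma^>_k$ to obtain
\begin{equation*}
\sum_{n\ge 0}\Prob(0\le S_n\le y)\ =\ \sum_{k\ge 0}\Erw\big[\phi_y(S_{\sigma^>_k})\big],
\end{equation*}
where $\phi_y(x):=\Erw[\#\{0\le m<\sigma^>:x+\tilde S_m\in[0,y]\}]$ and $(\tilde S_m)$ is an independent copy of $(S_m)$. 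Since $\tilde S_m\le 0$ while $m<\sigma^>$, the function $\phi_y$ vanishes on $(-\infty,0)$; moreover, Sparre-Andersen together with Theorem \ref{thm:Spitzer-Erickson} gives $\Erw\sigma^>=\exp\big(\sum_{n\ge 1}n^{-1}\Prob(S_n\le 0)\big)<\infty$, hence the crude uniform bound $\phi_y(x)\le\Erw\sigma^>$. The main obstacle lies in controlling cycles with $S_{\sigma^>_k}>y$, where $(S_n)$ returns to $[0,y]$ only via a deep downward excursion; rewriting the corresponding contribution as a tail integral $\int_{(y,\infty)}F(x-y)\,U^>(dx)$ with $F(z):=\Prob(\min_{m<\sigma^>}\tilde S_m\le -z)$, and exploiting the uniform local boundedness of $U^>$ near infinity (Blackwell-type), one shows that this tail is absorbed into a constant multiple of $U^>([0,y])$, completing the upper bound and the proof.
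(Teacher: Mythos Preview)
Your Claim (I) and the lower bound in Claim (II) are correct and coincide with the paper's argument. The gap is in your upper bound. Decomposing over \emph{ascending} ladder epochs forces you to control cycles with $S_{\sg_{k}}>y$, and the sketch you give for this tail does not close: bounding $\phi_y(x)$ through $F(z)=\Prob(\min_{m<\sg}\tilde S_m\le -z)$ and invoking only uniform local boundedness of $\Ug$ leads to an estimate of order $\int_0^\infty F(z)\,dz=\Erw\big[-\min_{m<\sg}\tilde S_m\big]$, and this expectation need not be finite. Positive divergence is compatible with $\Erw X^-=\infty$ (take two-sided regularly varying tails with the positive one heavier, so that $\Erw J(X^-)<\infty$ still holds); then already $X_1^-\le -\min_{m<\sg}S_m$ forces the integral to diverge. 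Blackwell's theorem proper is unavailable as well, since $\Erw S_{\sg}$ may be infinite.

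The paper instead asserts the clean inequality $\sum_{n\ge 1}\Prob(0\le S_n\le y)\le\Erw\sg\cdot\Ug([0,y])$ with no tail to handle. The shortest justification is to decompose over the weak \emph{descending} ladder epochs $\sle_{k}$ instead: there are in expectation only $1/\Prob(\sle=\infty)=\Erw\sg$ of them; within each cycle the walk stays above its nonpositive starting level $s=S_{\sle_{k}}$, and by the standard duality the pre-$\sle$ occupation measure on $[0,\infty)$ equals $\Ug$, so the expected number of visits to $[0,y]$ per cycle is at most $\Ug([-s,y-s])\le\Ug([0,y])$ by renewal-measure subadditivity. Equivalently, your own decomposition works without any splitting once you identify $\phi_y(x)$ with the descending ladder renewal measure $\Ule([-x,(y-x)\wedge 0])$ via duality and apply Fubini: one obtains the Wiener--Hopf form $\int_{(-\infty,0]}\Ug([-z,y-z])\,\Ule(dz)$, and the same subadditivity bound $\Ug([-z,y-z])\le\Ug([0,y])$ for $z\le 0$ immediately yields $\le\Erw\sg\cdot\Ug([0,y])$.
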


\begin{proof}
If $\Ug$ denotes the renewal measure of the strictly ascending ladder height process $(S_{\sgn})_{n\ge 0}$, then it is easily verified that
$$ \Ug([0,y])-1\ \le\ \sum_{n\ge 1}\Prob(0\le S_{n}\le y)\ \le\ \Erw\sg\Ug([0,y]) $$
for all $y\in\R_{\geqslant}$. Now use \eqref{eq:harmonic series asymptotics, alpha>0} with $\alpha=1$ and $\Erw(S_{\sg}\wedge y)\asymp\Erw(S_{1}^{+}\wedge y)$ (cf. \cite[Eq. (4.5)]{KesMal:96}) to infer
$$ \Ug([0,y])\ =\ \sum_{n\ge 0}\Prob(S_{\sgn}\le y)\ \asymp\ \frac{y}{\Erw(S_{\sg}\wedge y)}\ \asymp\ \frac{y}{\Erw(S_{1}^{+}\wedge y)}\ =\ J(y) $$
as $y\to\infty$.\qed
\end{proof}

\begin{Lemma}\label{lem:series tau_n(s)}
Let $\alpha\in\R_{\geqslant}$ and $(M_{n},S_{n})_{n\ge 0}$ be a nontrivial MRW such that
$\Erw_{i}\tau(i)^{1+\alpha}<\infty$ for some/all $i\in\cS$. Then
\begin{equation}\label{eq2:Spitzer condition tau_n}
\sum_{n\ge 1}\Erw_{i}\tau_{n}(i)^{\alpha-1}\,\1_{\{\tau_{n}(i)>bn\}}\ <\ \infty,
\end{equation}
for all $b>\Erw_{i}\tau(i)$ and $i\in\cS$.
\end{Lemma}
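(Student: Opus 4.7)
The plan is to reduce the sum to a single moment of a last-exit time of an auxiliary ordinary RW. Write $\chi_k := \tau_k(i) - \tau_{k-1}(i)$, so that the $\chi_k$ are iid copies of $\tau(i)$ under $\Prob_i$ with mean $\mu := \Erw_i\tau(i)$ and finite $(1+\alpha)$-th moment; fix $b > \mu$. Introduce the ordinary random walk $W_n := bn - \tau_n(i)$ with iid increments $b-\chi_k$ of positive mean $b-\mu$; it is in particular positive divergent. Set
$$ N\ :=\ \sup\{n\ge 0:\tau_n(i) > bn\}\ =\ \sup\{n\ge 0:W_n<0\}, $$
which is $\Prob_i$-a.s. finite by the SLLN.

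For $\alpha = 0$ the desired bound reduces to $\sum_{n\ge 1} n^{-1}\,\Prob_i(\tau_n(i)>bn)$, and since $\{\tau_n(i)>bn\}\subset\{W_n\le 0\}$, this is dominated by $\sum_{n\ge 1} n^{-1}\,\Prob_i(W_n\le 0)$, which is finite by Theorem \ref{thm:Spitzer-Erickson}(c) applied to the positive divergent $(W_n)_{n\ge 0}$. For $\alpha > 0$ the key observation is that on $\{N \ge 1\}$ the very definition of $N$ forces $\tau_{N+1}(i) \le b(N+1)$, so that $\tau_n(i) \le b(N+1)$ for every $1 \le n \le N$. Combining this with the trivial lower bound $\tau_n(i)\ge n$ and distinguishing $\alpha\ge 1$ (where one uses $\tau_n^{\alpha-1}\le (b(N+1))^{\alpha-1}$ and there are at most $N$ terms) from $0<\alpha\le 1$ (where one uses $\tau_n^{\alpha-1}\le n^{\alpha-1}$ and sums $\sum_{n=1}^N n^{\alpha-1}$), one obtains in both cases the pathwise bound
$$ \sum_{n=1}^N \tau_n(i)^{\alpha-1}\,\1_{\{\tau_n(i)>bn\}} \ \lesssim\ (N+1)^{\alpha}. $$
Taking $\Prob_i$-expectations, the problem reduces to showing $\Erw_i(N+1)^\alpha<\infty$.

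For this last step I invoke $N\le \rho_W(0):=\sup\{n\ge 0:W_n\le 0\}$ and apply Theorem \ref{thm:Kesten-Maller}(b) to the positive-divergent ordinary RW $(W_n)_{n\ge 0}$. Its increment $b-\chi$ satisfies $(b-\chi)^+ \le b$, whence the associated function $J_W(x) = x/\Erw((b-\chi)^+\wedge x)$ is comparable to the identity at infinity. The integral condition of Theorem \ref{thm:Kesten-Maller}(a), namely $\Erw J_W((b-\chi)^-)^{1+\alpha} < \infty$, therefore amounts to $\Erw_i (\chi - b)_+^{1+\alpha} < \infty$, which is an immediate consequence of $\Erw_i\tau(i)^{1+\alpha} < \infty$. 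Thus $\Erw_i N^\alpha \le \Erw_i\rho_W(0)^\alpha < \infty$, completing the argument.

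The main obstacle is recognising that the apparently delicate weighted Baum-Katz-type sum in the statement is in fact dominated by the single moment $\Erw_i N^\alpha$; the pathwise bound $\tau_n(i) \le b(N+1)$ for $n\le N$ is the crucial step which converts the problem into an application of the classical fluctuation-theoretic machinery for ordinary RW already developed in the paper, thereby bypassing any need for quantitative large-deviation or Rosenthal-type inequalities.
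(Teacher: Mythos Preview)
Your proof is correct but takes a genuinely different route from the paper's for the case $\alpha>1$. Both arguments start from the same auxiliary ordinary random walk $W_n=bn-\tau_n(i)$ and invoke the Kesten--Maller criterion (Theorem~\ref{thm:Kesten-Maller}) for it, using that $(b-\chi)^+\le b$ forces $J_W(x)\asymp x$. The difference lies in how the weight $\tau_n^{\alpha-1}$ is handled. The paper first proves the scalar series bound
\[
\sum_{n\ge 1}n^{\alpha-1}\,\Prob_i(\tau_n(i)>bn)\ <\ \infty,
\]
which already settles $0\le\alpha\le 1$ via $\tau_n\ge n$, and then for $\alpha>1$ writes $\Erw_i\tau_n^{\alpha-1}\1_{\{\tau_n>bn\}}\asymp\int_0^\infty x^{\alpha-2}\,\Prob_i(\tau_n>bn\vee x)\,dx$, swaps the order of summation, and uses monotonicity of $n\mapsto\tau_n$ to collapse the double sum back to the scalar series. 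You instead work pathwise: from $\tau_{N+1}\le b(N+1)$ you extract $\tau_n\le b(N+1)$ for all $n\le N$, which immediately gives $\sum_{n\ge 1}\tau_n^{\alpha-1}\1_{\{\tau_n>bn\}}\lesssim (N+1)^\alpha$ and reduces everything to $\Erw_i\rho_W(0)^\alpha<\infty$.

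Your argument is cleaner and more unified across $\alpha>0$; the pathwise bound avoids the integral decomposition entirely. The paper's detour, however, is not wasted: the displayed scalar series above is labeled \eqref{eq:Spitzer condition tau_n} and reused verbatim in the proofs of Lemma~\ref{lem:T>tau_n formula}, Lemma~\ref{lem:solidarity increments}, and Theorem~\ref{thm:K/M MRW Spitzer series}. Your proof does not isolate this statement (except for $\alpha=0$), though it is of course an immediate consequence of the same Kesten--Maller input you invoke. One minor phrasing point: for $\alpha=0$ the sum does not literally ``reduce to'' $\sum n^{-1}\Prob_i(\tau_n>bn)$; rather it is \emph{bounded by} it via $\tau_n^{-1}\le n^{-1}$, which is surely what you meant.
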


\begin{proof}
We start by pointing out that, for any $b>\Erw_{i}\tau(i)$, $(\tau_{n}(i)-bn)_{n\ge 0}$ is negative divergent which in combination with $\Erw_{i}\tau(i)^{1+\alpha}<\infty$ implies
\begin{equation}\label{eq:Spitzer condition tau_n}
\sum_{n\ge 1}n^{\alpha-1}\,\Prob_{i}(\tau_{n}(i)>bn)\ <\ \infty
\end{equation}
by Theorem \ref{thm:Spitzer-Erickson}(c) $(\alpha=0)$ or Theorem \ref{thm:Kesten-Maller}(e) $(\alpha>0)$. Since $\tau_{n}(i)\ge n$ for all $n\in\N$, this further implies \eqref{eq2:Spitzer condition tau_n} if $0\le\alpha\le 1$.

\vspace{.1cm}
Left with the case $\alpha>1$, we find
\begin{align*}
\sum_{n\ge 1}&\Erw_{i}\tau_{n}(i)^{\alpha-1}\,\1_{\{\tau_{n}(i)>bn\}}\\
&\asymp\ \sum_{n\ge 1}\int_{0}^{\infty}x^{\alpha-2}\,\Prob_{i}(\tau_{n}(i)>bn\vee x)\ dx\\
&\asymp\ \sum_{n\ge 1}n^{\alpha-1}\Prob_{i}(\tau_{n}(i)>bn)\ +\ \sum_{n\ge 1}\sum_{k\ge n}\int_{bk}^{b(k+1)}x^{\alpha-2}\,\Prob_{i}(\tau_{n}(i)>x)\ dx
\end{align*}
so that, in view of \eqref{eq:Spitzer condition tau_n}, we must still verify finiteness of the second term. We infer
\begin{align*}
\sum_{n\ge 1}&\sum_{k\ge n}\int_{bk}^{b(k+1)}x^{\alpha-2}\,\Prob_{i}(\tau_{n}(i)>x)\ dx\ \asymp\ \sum_{n\ge 1}\sum_{k\ge n}k^{\alpha-2}\,\Prob_{i}(\tau_{n}(i)>bk)\\
&=\ \sum_{k\ge 1}\sum_{n=1}^{k}k^{\alpha-2}\,\Prob_{i}(\tau_{n}(i)>bk)\ \le\ \sum_{k\ge 1}k^{\alpha-1}\,\Prob_{i}(\tau_{k}(i)>bk),
\end{align*}
and the last sum is again finite by \eqref{eq:Spitzer condition tau_n}.\qed
\end{proof}

\begin{Lemma}\label{lem:solidarity increments}
Let $\alpha\in\R_{\geqslant}$ and $(M_{n},S_{n})_{n\ge 0}$ be a nontrivial MRW such that
$\Erw_{i}\tau(i)^{1+\alpha}<\infty$ for some/all $i\in\cS$. Then
$$ \sum_{n\ge 1}n^{\alpha-1}\Prob_{i}(S_{\tau_{n}(i)}\le y)\ \asymp\ \Erw_{i}\left(\sum_{n\ge 1}\tau_{n}(i)^{\alpha-1}\1_{\{S_{\tau_{n}(i)}\le y\}}\right) $$
for all $i\in\cS$.
\end{Lemma}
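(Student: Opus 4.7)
The plan is to split each series according to whether $\tau_n(i) \le bn$ or $\tau_n(i) > bn$, for a fixed constant $b$ strictly exceeding $\mu := \Erw_i\tau(i)$, and then compare the ``typical'' parts pathwise while controlling the ``atypical'' tails via Lemma \ref{lem:series tau_n(s)}.

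For the typical part, I use that $\tau_n(i) \ge n$ always, so on the event $\{\tau_n(i) \le bn\}$ one has $n \le \tau_n(i) \le bn$, which gives the pointwise bounds
$$\min(1,b^{\alpha-1})\,n^{\alpha-1}\ \le\ \tau_n(i)^{\alpha-1}\ \le\ \max(1,b^{\alpha-1})\,n^{\alpha-1}.$$
Multiplying by $\1_{\{S_{\tau_n(i)}\le y,\,\tau_n(i)\le bn\}}$, taking $\Erw_i$, and summing in $n$ immediately yields
$$\sum_{n\ge 1}\Erw_i\bigl[\tau_n(i)^{\alpha-1}\,\1_{\{\tau_n(i)\le bn,\,S_{\tau_n(i)}\le y\}}\bigr]\ \asymp\ \sum_{n\ge 1}n^{\alpha-1}\,\Prob_i\bigl(\tau_n(i)\le bn,\,S_{\tau_n(i)}\le y\bigr)$$
with constants depending only on $\alpha$ and $b$. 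Note that for $\alpha=1$ both sides of the lemma are literally equal by Fubini, so only $\alpha\ne 1$ needs genuine work.

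For the atypical part, since $b>\mu$ Lemma \ref{lem:series tau_n(s)} provides (via \eqref{eq:Spitzer condition tau_n} and \eqref{eq2:Spitzer condition tau_n})
$$\sum_{n\ge 1}n^{\alpha-1}\,\Prob_i(\tau_n(i)>bn)\,<\,\infty\quad\text{and}\quad\sum_{n\ge 1}\Erw_i\bigl[\tau_n(i)^{\alpha-1}\,\1_{\{\tau_n(i)>bn\}}\bigr]\,<\,\infty,$$
so the contributions to either side coming from the event $\{\tau_n(i)>bn\}$ are uniformly bounded by absolute constants depending only on $i,\alpha,b$.

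Combining, writing each side as a sum over $\{\tau_n(i)\le bn\}$ plus $\{\tau_n(i)>bn\}$, the two main parts are comparable up to a fixed multiplicative constant, while the two remainders are each bounded by a finite constant. Consequently the two series are simultaneously finite or infinite, and when finite they are comparable up to the additive and multiplicative errors just described; this is precisely the $\asymp$-relation in the sense defined in Section \ref{sec:ORW}. There is no serious obstacle here: the argument is a straightforward concentration/truncation exercise, with the pointwise bound $\tau_n(i)\ge n$ supplying one-sided control and the moment assumption $\Erw_i\tau(i)^{1+\alpha}<\infty$ (through Lemma \ref{lem:series tau_n(s)}) supplying the other.
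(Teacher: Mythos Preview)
Your proposal is correct and follows essentially the same approach as the paper's proof. The paper fixes $b=2\,\Erw_{i}\tau(i)$ and writes the chain of $\asymp$-relations by first dropping the contribution from $\{\tau_{n}(i)>bn\}$ (finite by Lemma~\ref{lem:series tau_n(s)} and \eqref{eq:Spitzer condition tau_n}), then replacing $\tau_{n}(i)^{\alpha-1}$ by $n^{\alpha-1}$ on $\{n\le\tau_{n}(i)\le bn\}$, and finally adding back the corresponding tail on the other side; your decomposition and the ingredients invoked are identical.
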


\begin{proof}
Using the previous lemma and \eqref{eq:Spitzer condition tau_n}, we obtain
\begin{align*}
\Erw_{i}&\left(\sum_{n\ge 1}\tau_{n}(i)^{\alpha-1}\1_{\{S_{\tau_{n}(i)}\le y\}}\right)\\
&\asymp\ \Erw_{i}\left(\sum_{n\ge 1}\tau_{n}(i)^{\alpha-1}\1_{\{S_{\tau_{n}(i)}\le y,\,n\le\tau_{n}(i)\le 2n\,\Erw_{i}\tau(i)\}}\right)\\
&\asymp\ \Erw_{i}\left(\sum_{n\ge 1}n^{\alpha-1}\1_{\{S_{\tau_{n}(i)}\le y,\,n\le\tau_{n}(i)\le 2n\,\Erw_{i}\tau(i)\}}\right)\\
&\asymp\ \sum_{n\ge 1}n^{\alpha-1}\,\Prob_{i}(S_{\tau_{n}(i)}\le y)
\end{align*}
as claimed.\qed
\end{proof}

\section*{Glossary}

\begin{tabular}[c]{p{0.15\textwidth} p{0.85\textwidth}}
 	$\N_{0}$		& set of nonnegative integers $\{0,1,2,\ldots\}$\\[.3mm]
	$\R_{>}$		& positive halfline $(0,\infty)$\\[.3mm]
	$\R_{\geqslant}$	& nonnegative halfline $[0,\infty)$\\[.3mm]
	$\cS$	&countable state space of the driving chain\\[.3mm]
	$f(x)\lesssim g(x)$	&shorthand for $\limsup_{x\to\infty}\frac{f(x)}{g(x)}<\infty$\\[1.5mm]
	$f(x)\gtrsim g(x)$	&shorthand for $\liminf_{x\to\infty}\frac{f(x)}{g(x)}>0$\\[1.5mm]
	$f(x)\asymp g(x)$	&shorthand for $f(x)\lesssim g(x)$ and $f(x)\gtrsim g(x)$\\[1.5mm]
	$A\lesssim(\gtrsim) B$	&shorthand for $A\le(\ge) cB$ for some $c\in\R_{>}$\\[1mm]
	$A\asymp B$	&shorthand for $A\lesssim B$ and $A\gtrsim B$, thus $c^{-1}B\le A\le cB$ for some $c\in\R_{>}$\\[1mm]
	$(M_{n})_{n\ge 0}$ &positive recurrent driving chain with transition matrix $(p_{ij})_{i,j\in\cS}$ and stationary distribution $\pi=(\pi_{i})_{i\in\cS}$\\[.3mm]
	$({}^{\#}M_{n})_{n\ge 0}$ &dual of $(M_{n})_{n\ge 0}$ with transition matrix $(\pi_{j}p_{ji}/\pi_{i})_{i,j\in\cS}$\\[.3mm]
	$\tau(i),\tau_{n}(i)$	& first, $n^{th}$ return epoch of the driving chain to state $i$, thus $\tau_{1}(i)=\tau(i)$\\[.3mm]
	${}^{\#}\tau(i),{}^{\#}\tau_{n}(i)$	&same as $\tau(i),\tau_{n}(i)$ for the dual chain $({}^{\#}M_{n})_{n\ge 0}$\\[.4mm]
	$\upsilon=\upsilon(i,j)$	&$=\inf\{n\ge 1: \tau_n(i)>\tau(j)\}=$ first return to state $i$ after a visit to $j$.\\[.3mm]
	$(S_{n})_{n\ge 0}$ &(additive part of the) MRW with driving chain $(M_{n})_{n\ge 0}$\\[.3mm]
	$X_{n}$	&$=S_{n}-S_{n-1}=$ $n^{th}$ increment of the MRW\\[.3mm]
	$({}^{\#}S_{n})_{n\ge 0}$ &(additive part of the) dual MRW with driving chain $({}^{\#}M_{n})_{n\ge 0}$\\[.3mm]
	$\sg$	& $=\inf\{n\ge 1:S_{n}>0\}=$ first strictly ascending ladder epoch of the MRW\\[.3mm]
	$\sle$	&$=\inf\{n\ge 1:S_{n}\le 0\}=$ first weakly descending ladder epoch of the MRW\\[.3mm]
	$(\sgn)_{n\ge 1}$ &sequence of strictly ascending ladder epochs, thus $\sg_{1}=\sg$\\[.3mm]
		$(\slen)_{n\ge 1}$ &sequence of weakly descending ladder epochs, thus $\sle_{1}=\sle$
\end{tabular}

\begin{tabular}[c]{p{0.15\textwidth} p{0.85\textwidth}}
	$\sg(x)$	&$=\inf\{n\ge 1:S_{n}>x\}=$ level $x$ first passage time, thus $\sg(0)=\sg$\\[.3mm]
	$\osg(x)$	&$=\inf\{n>\tau(M_{0}):S_{n}>x\}=$ level $x$ first passage time after first return to initial state of the driving chain\\[.3mm]
	$\rho(x)$	&$=\sup\{n\ge 0:S_{n}\le x\}=$ level $x$ last exit time\\[.3mm]
	$\sigma_{\rm min}$ &$=\inf\{n\ge 1:S_{n}=\min_{k\ge 1}S_{k}\}=$ hitting of minimum epoch\\[.3mm]
	$N(x)$	&$=\sum_{n\ge 1}\1_{\{S_{n}\le x\}}=$ renewal counting process of the MRW\\[1.2mm]
	$\nu(i,x)$	&$=\inf\{n\ge 1:S_{\tau_{n}(i)}>x\}=$ level $x$ first passage time of $(S_{\tau_{n}(i)})_{n\ge 0}$\\[.5mm]
	$\zeta_{n}(i)$	&$=\inf\{k>\zeta_{n-1}(i):S_{\tau_{k}(i)}>S_{\tau_{\zeta_{n-1}(i)}}\}$ for $n\ge 1$, where $\zeta_{0}(i):=0$\\[.4mm]
	$\tg_{n}(i)$	&$=\tau_{\zeta_{n}(i)}$ for $n\ge 1$, and $\tg(i)$ also used for $\tg_{1}(i)=\nu(i,0)$\\[.4mm]
	${}^{\#}\tg_{n}(i)$	&same as $\tg_{n}(i)$ for the dual MRW $({}^{\#}M_{n},{}^{\#}S_{n})_{n\ge 0}$\\[.4mm]
	$\nu^{>}(i,x)$	&$=\inf\{n\ge 1:S_{\tg_{n}(i)}>x\}=$ level x first passage time of $(S_{\tg_{n}(i)})_{n\ge 0}$\\[.4mm]
	$A_{i}(x)$	&$=\Erw_{i}(S_{\tau(i)}^{+}\wedge x)-\Erw_{i}(S_{\tau(i)}^{-}\wedge x)$ for $x>0$\\[.4mm]
	$J_{i}(x)$	&$=\begin{cases}\frac{x}{\Erw(S_{\tau(i)}^{+}\wedge x)},&\text{if }\Prob_{i}(S_{\tau(i)}>0)>0\\[-1.5mm]
\hfill x&\text{otherwise}\\
\end{cases}$\quad for $x>0$, and $J_{i}(0)=1$\\[.4mm]
	$J_{i,\gamma}(x)$ &same as $J_{i}(x)$, but with $[\Erw_{i}(S_{\tau(i)}^{+}\wedge x)]^{\gamma}$ in the denominator, $\gamma\in [0,1]$\\[.4mm]
	$J_{i}^{>}(x)$ &same as $J_{i}(x)$, but with $\Erw_{i}(S_{\tg(i)}\wedge x)$ in the denominator, $\gamma\in [0,1]$\\[.4mm]
	$D_{n}^{i}$	&$=\max_{\tau_{n-1}(i)<k\le\tau_{n}(i)}(S_{k}-S_{\tau_{n-1}(i)})^{-}$ for $n\in\N$,\\[.3mm]
	&$=$ maximal downward excursion of the MRW in the discrete stochastic time interval $\{\tau_{n-1}(i),\ldots,\tau_{n}(i)\}$, iid under $\Prob_{i}$ with generic copy $D^{i}$\\[.5mm]
	$D_{n}^{i,>}$	&$=\max_{\tg_{n-1}(i)<k\le\tg_{n}(i)}(S_{k}-S_{\tg_{n-1}(i)})^{-}$ for $n\in\N$,\\[1.5mm]
	$\V_{i}^{\alpha}((x,\infty))$ &$=\,\Erw_{i}\big(\sum_{n=1}^{\tau(i)}\1_{\{S_{n}^{-}>x\}}\big)^{\alpha}$, $x\in\R_{\geqslant}$, $\alpha>0$\\[1.5mm]
	$\V_{i}$	&$=\V_{1}^{1}$\\[1mm]
	$\Sigma_{\alpha}(i,x)$	&$=\sum_{n\ge 1}n^{\alpha-1}\,\Prob_{i}(S_{n}\le x)$, $x\in\R_{\geqslant}$, $\alpha\ge 0$
\end{tabular}

\section*{Acknowledgment}

We are indebted to an anonymous referee for many constructive comments that helped to improve the presentation of this article.

\bibliographystyle{abbrv}
\bibliography{StoPro}

\end{document}